\documentclass[a4paper, 12pt, reqno,final]{amsart}
%reqno=Position der Seitenzahl
\usepackage[foot]{amsaddr} %Adresse auf erste Seite
\usepackage[utf8x]{inputenc}
\usepackage[T1]{fontenc}
\usepackage[english]{babel}
\selectlanguage{english}
\usepackage{amsmath, amsfonts, amsthm, amssymb, amscd}
\usepackage[final]{graphicx}
\usepackage[below]{placeins}
\usepackage{subfig}
\usepackage{multirow}
\usepackage{mathtools}

\usepackage[hidelinks]{hyperref}
\hypersetup{
  pdfauthor = {Stefan Metzger},
  pdftitle = {SSAV}
}

\usepackage{showkeys} 
\usepackage{esint}
\usepackage{caption}
\usepackage{dsfont}

\setlength{\oddsidemargin}{-0.00cm}
\setlength{\evensidemargin}{-0.00cm}
\setlength{\textwidth}{16cm}

\usepackage{tikz}
\usetikzlibrary{patterns}
\usetikzlibrary{shapes.geometric}

%todo
\usepackage[]{todonotes}
\renewcommand{\todo}[1]{}
\usepackage{diagbox}

%Lemma etc
\newtheorem{theorem}{Theorem}[section]
\newtheorem{lemma}[theorem]{Lemma}
\newtheorem*{lemma*}{Lemma}
\newtheorem{corollary}[theorem]{Corollary}

\newtheorem{remark}[theorem]{Remark}

\numberwithin{equation}{section}

\makeatletter

\makeatother %not \makeatletter
\newcounter{IMASTYLE}
\setcounter{IMASTYLE}{0}

% fuer item-Labels mit Namen
\makeatletter
\newcommand{\labitem}[2]{%
\def\@itemlabel{\textbf{#1}}
\item
\def\@currentlabel{#1}\label{#2}}
\makeatother
%\labitem{"Nummer"}{label}

\newcommand{\norm}[1]{\left\|{#1}\right\|}
\newcommand{\abs}[1]{\left|{#1}\right|}

\newcommand{\rkla}[1]{{\left(#1\right)}}
\newcommand{\trkla}[1]{{(#1)}}
\newcommand{\gkla}[1]{{\left\{#1\right\}}}
\newcommand{\tgkla}[1]{{\{#1\}}}
\newcommand{\skla}[1]{{\left\langle#1\right\rangle}}
\newcommand{\ekla}[1]{{\left[#1\right]}}
\newcommand{\tekla}[1]{{[#1]}}
\newcommand{\tabs}[1]{|{#1}|}

%boldsymbol
\newcommand{\bs}[1]{\boldsymbol{#1}}

%mathbf

%Integrale

\newcommand{\Om}{\mathcal{O}}

\newcommand{\iO}{\int_{\Om}}

%Ableitungen
\newcommand{\para}[1]{\partial _{#1}}

\newcommand{\dx}{\, \mathrm{d}{x}}

\newcommand{\dt}{\, \mathrm{d}t}
\newcommand{\ds}{\, \mathrm{d}s}
\newcommand{\dr}{\, \mathrm{d}r}

%div, grad

%Identitaet

%Zeitindex
\newcommand{\nn}{^{n}}

\newcommand{\no}{^{n-1}}

\newcommand{\tl}{^{\tau}}
\newcommand{\tp}{^{\tau,+}}
\newcommand{\tm}{^{\tau,-}}
\newcommand{\tpm}{^{\tau,(\pm)}}
\newcommand{\pml}{^{(\pm)}}

\newcommand{\h}{_{h}}
\newcommand{\hj}{_{h_j}}

%Konvergenz
\newcommand{\weak}{\rightharpoonup}
\newcommand{\weakstar}{\stackrel{*}{\rightharpoonup}}
\newcommand{\weaktop}{{\mathrm{weak}}}

\newcommand{\Uh}{U_{h}}
\newcommand{\Uhj}{U_{h_j}}
\newcommand{\Th}{\mathcal{T}_h}

%reelle Zahlen:

%Kompaktheit
 %kompakte Teilmenge
%kompakte Einbettung
 %stetige Einbettung

%Triangulierung, etc

%Interpolation
\newcommand{\Ihop}{\mathcal{I}_h}

\newcommand{\Ih}[1]{\Ihop\gkla{#1}}

\newcommand{\Ihjop}{\mathcal{I}_{h_j}}
\newcommand{\Ihj}[1]{\Ihjop\gkla{#1}}

%Trafo

%Restriction
\newcommand{\restr}[2]{\ensuremath{% we make the whole thing an ordinary symbol
  \left.\kern-\nulldelimiterspace % automatically resize the bar with \right
  #1 % the function
  \vphantom{\big|} % pretend it's a little taller at normal size
  \right|_{#2} % this is the delimiter
  }}
  
%Extension
\newcommand{\extend}[2]{\ensuremath{% we make the whole thing an ordinary symbol
  \left.\kern-\nulldelimiterspace % automatically resize the bar with \right
  #1 % the function
  \vphantom{\big|} % pretend it's a little taller at normal size
  \right|^{#2} % this is the delimiter
  }}
  
%Einheitsmatrix

%Bild

%floor

\newcommand{\diam}{\operatorname{diam}}

%Anfuehrungszeichen englisch

%Eigenwerte

%Eigenfunktionen
\newcommand{\g}[1]{\mathfrak{g}_{#1}}

%Subsets of Z

\newcommand{\Zh}{\mathds{Z}\h}

%Rauschen

\newcommand{\db}[1]{\,\operatorname{d}\!\beta_{#1}}

%FE-Basis

%kartesische Basis

%Exponent
\newcommand{\p}{{\mathfrak{p}}}

%maximale Energie

%Differenzenquotienten

%expected value
\newcommand{\expected}[1]{\mathds{E}\ekla{#1}}
\newcommand{\expectedt}[1]{\widetilde{\mathds{E}}\ekla{#1}}
%probability
\newcommand{\Prob}{\mathds{P}}
\newcommand{\prob}[1]{\Prob\ekla{#1}}

%Jakubowskiraeume

%L2-projection
\newcommand{\projop}{\mathcal{P}_{\Uh}}

%periodic

%characteristic function

%dist

%Martingal

%Ritz projection
\newcommand{\Ritzop}{\mathcal{R}_{\Uhj}}
\newcommand{\Ritz}[1]{\Ritzop{#1}}

%ess sup
%\newcommand{\esssup}{{\operatorname{ess\,sup}}}
\DeclareMathOperator*{\esssup}{ess\,sup}

\newcommand{\Ito}{It\^{o}}

%discrete stochastic increments
\newcommand{\sinc}[1]{\blacktriangle^{#1}\boldsymbol{\xi}^\tau}

\newcommand{\sinctilde}[1]{\blacktriangle^{#1}\widetilde{\boldsymbol{\xi}}^j}

%colors for consistent plots
\definecolor{colorONE}{RGB}{0, 0, 0}
\definecolor{colorTWO}{RGB}{56, 125, 184}
\definecolor{colorTHREE}{RGB}{153, 79, 163}
\definecolor{colorFOUR}{RGB}{255, 0, 0}
\definecolor{colorFIVE}{RGB}{255, 128, 0}
\definecolor{colorSIX}{RGB}{0, 170, 0}

\begin{document}
\title[A convergent SSAV method]{A convergent stochastic scalar auxiliary variable method}
\date{\today}
\author[S.~Metzger]{Stefan Metzger}
\address[S.~Metzger]{Friedrich--Alexander Universität Erlangen--Nürnberg,~Cauerstraße 11,~91058~Erlangen,~Germany}
\email{stefan.metzger@fau.de}

%\author[S.~Metzger,~G.~Gr\"un]{Stefan Metzger and G\"unther Gr\"un}
%\address{Friedrich--Alexander Universit\"at Erlangen--N\"urnberg --- Cauerstrasse 11 --- 91058 Erlangen --- Germany}
%\email{stefan.metzger@fau.de, gruen@math.fau.de}

\keywords{stochastic Allen--Cahn equation, multiplicative noise, finite elements, convergence, scalar auxiliary variable}
\subjclass[2010]{60H35, 65M60, 60H15, 65M12}
%Primary

%60H35 Computational methods for stochastic equations (aspects of stochastic analysis)
%65M60   	Finite elements, Rayleigh-Ritz and Galerkin methods, finite methods

%Secondary
% 	60H15   	Stochastic partial differential equations (aspects of stochastic analysis)
%   65M12   	Stability and convergence of numerical methods for initial value and initial-boundary value problems involving PDEs

%
\selectlanguage{english}

\allowdisplaybreaks

\begin{abstract}
We discuss an extension of the scalar auxiliary variable approach, which was originally introduced by Shen et al.~([Shen, Xu, Yang, J.~Comput.~Phys., 2018]) for the discretization of deterministic gradient flows.
By introducing an additional scalar auxiliary variable, this approach allows to derive a linear scheme, while still maintaining unconditional stability.
Our extension augments the approximation of the evolution of this scalar auxiliary variable with higher order terms, which enables its application to stochastic partial differential equations.
Using the stochastic Allen--Cahn equation as a prototype for nonlinear stochastic partial differential equations with multiplicative noise, we propose an unconditionally energy stable, linear, fully discrete finite element scheme based on our augmented scalar auxiliary variable method.
Recovering a discrete version of the energy estimate and establishing Nikolskii estimates with respect to time, we are able to prove convergence of discrete solutions towards pathwise unique martingale solutions by applying Jakubowski's generalization of Skorokhod's theorem.
A generalization of the Gyöngy--Krylov characterization of convergence in probability to quasi-Polish spaces finally provides convergence of fully discrete solutions towards strong solutions of the stochastic Allen--Cahn equation.
Finally, we present numerical simulations underlining the practicality of the scheme and the importance of the introduced augmentation terms.
\end{abstract}
\maketitle

\section{Introduction}
We propose a new approach for the numerical approximation of solutions to nonlinear stochastic partial differential equations.
During the last decades, major progress in the numerical treatment of stochastic partial differential equations was made.
Concerning schemes for the numerical approximation of linear stochastic partial differential equations we refer the reader to \cite{GyongyNualart1997,Yan2005} and the references therein.
Results on various different nonlinear stochastic partial differential equations (SPDEs) were discussed e.g.~in \cite{Printems2001, DeBouardDebussche2004, GyongyMillet2009, Debussche2011, CarelliProhl2012, Banas2013,  Brzezniak2013, MajeeProhl2018, CuiHongLiuZhou2019, GrillmeierGruen2019, Grillmeier2020, HutzenthalerJentzen2020, AntonopoulouBanasNurnbergProhl2021, CuiHongSun2022_arxiv} and the references therein.
While most discrete schemes are tailored to It\^o noise, a specific approximation of the Stratonovich integral was used in \cite{Banas2013} to transfer the sphere constraint imposed on solutions to the stochastic Landau--Lifshitz--Gilbert equation to the discrete setting.
A recent attempt on the unification of the techniques from stochastic and numerical analysis was published in \cite{Ondrejat2022}.
There, a general convergence theory for nonlinear stochastic partial differential equations based on the Lax--Richtmeyer theorem was presented.
This general approach consists of the following three steps:
In the first step, uniform bounds on the numerical solutions need to be established. 
Based on these bounds, compactness results can then be obtained via Prokhorov's theorem.
Finally, these compactness results can be used to identify (weakly) converging subsequences and to pass to the limit.\\
In particular the first step of this approach is highly problem dependent and requires mimicking the structural properties of the original PDE in the discrete setting.
Although this is typically well understood for deterministic PDEs, an application to stochastic PDEs is not straightforward.
While the typical stability results for discrete schemes of deterministic PDEs require a testing procedure relying on mostly implicit test functions, the tools available for the treatment of the stochastic terms rely on the independence of the stochastic increments -- i.e.~they require completely explicit test functions.
To satisfy both of these requirements, suitable absorption arguments have to be used requiring in particular a careful treatment of the nonlinearities.
In this publication, we present a new approach for the discretization of stochastic PDEs.
For the simplicity of representation, we shall consider the stochastic Allen--Cahn equation as an example of a nonlinear stochastic PDE with multiplicative noise:\\
Let $\trkla{\Omega,\mathcal{A},\mathcal{F},\Prob}$ be a filtered probability space with a filtration $\mathcal{F}=\trkla{\mathcal{F}_t}_{t\in\tekla{0,T}}$ and $\Om\subset\mathds{R}^d$ ($d\in\tgkla{2,3}$) be a bounded Lipschitz domain.
On this domain, we consider the process $\phi\,:\,\Omega\times\tekla{0,T}\times\overline{\Om}\rightarrow\mathds{R}$ solving
\begin{subequations}\label{eq:model}
\begin{align}
\mathrm{d}\phi+\trkla{-\varepsilon\Delta\phi+\tfrac1\varepsilon F^\prime\trkla{\phi}}\dt&=\Phi\trkla{\phi}\mathrm{d}W\,,\label{eq:model:phi}\\
\nabla\phi\cdot\bs{n}&=0\label{eq:model:bc}\,,
\end{align}
\end{subequations}
with given parameter $\varepsilon>0$.
Alternatively, one can formulate \eqref{eq:model:phi} on the $d$-dimensional torus and hence replace \eqref{eq:model:bc} by periodic boundary conditions. 
In this case the results presented in this paper remain valid.
The deterministic version of \eqref{eq:model}, the Allen--Cahn equation, describes the dynamics of a diffuse interface whose width is governed by the parameter $\varepsilon$. 
It can be derived as the $L^2$-gradient flow of the Helmholtz free energy functional
\begin{align}\label{eq:Helmholtz}
\mathcal{E}\trkla{\phi}=\tfrac12\varepsilon\iO \tabs{\nabla\phi}^2\dx+\varepsilon^{-1}\iO F\trkla{\phi}\dx
\end{align}
and satisfies the energy equality
\begin{align}
\mathcal{E}\trkla{\phi}\big\vert_{t=T}+\int_0^T\iO \abs{-\varepsilon\Delta\phi+\tfrac1\varepsilon F^\prime\trkla{\phi}}^2\dx\dt=\mathcal{E}\trkla{\phi}\big\vert_{t=0}\,.
\end{align}
Here, $F$ denotes a double-well potential with minima in (or close to) $\phi=\pm1$ describing the mixing energy.
Typical choices for $F$ that constrain $\phi$ to the physically relevant interval $\tekla{-1,+1}$ are the logarithmic double-well potential
\begin{align}
F_{\log}\trkla{\phi}:=\tfrac\vartheta2\trkla{1+\phi}\log\trkla{1+\phi}+\tfrac\vartheta2\trkla{1-\phi}\log\trkla{1-\phi}-\tfrac{\vartheta_c}2\phi^2
\end{align}
with $0<\vartheta<\vartheta_c$ and the double obstacle potential
\begin{align}
F_{\operatorname{obst}}\trkla{\phi}:=\left\{\begin{array}{cc}
\vartheta\trkla{1-\phi^2}&\phi\in\tekla{-1,+1}\\
\infty&\text{else}
\end{array}\right.&&\text{with~}\vartheta>0\,.
\end{align}
To avoid the technical issues arising from the singularities, $F_{\log}$ is often approximated by the polynomial double-well potential $F_{\operatorname{pol}}\trkla{\phi}:=\tfrac14\trkla{\phi^2-1}^2$.
In this publication, we will neglect singular potentials like the logarithmic double-well potential or the double obstacle potential and focus on variations of the polynomial double-well potential.
The stochastic part of \eqref{eq:model} describing the influence of thermal fluctuations on the evolution of the interface is driven by a $\mathcal{Q}$-Wiener process $W= \trkla{W_t}_{t\in\tekla{0,T}}$ defined on $\trkla{\Omega,\mathcal{A},\mathcal{F},\Prob}$.\\
The stochastic Allen--Cahn equation and its numerical treatment have already been extensively studied.
Although it was shown that purely explicit discretizations of SPDEs with non-globally Lipschitz continuous coefficients are prone to divergence (cf.~\cite{HutzenthalerJentzenKloeden2011}), convergence and error estimates can still be established for tamed or truncated schemes (see e.g.~\cite{HutzenthalerJentzen2015, BeckerJentzen2019, JentzenPusnik2019, Wang2020, CaiGanWang2021, BeckerGessJentzenKloeden2023}).
Other results provide error estimates for splitting schemes or semi-implicit Euler--Maruyama schemes that treat the Laplacian implicitly, but discretize the nonlinear parts still explicitly (cf.~\cite{BrehierCuiHong2018, BrehierGoudenege2019, BrehierGoudenege2020, QiAzaiezHuangXu2022}).
For the case of additive noise, error estimates for schemes that employ an implicit time-discretization of the nonlinearity were studied e.g.~in \cite{KovacsLarssonLindgren2015, KovacsLarssonLindgren2018, LiuQiao2019, QiWang2019}.
The case of multiplicative \Ito~noise was discussed e.g.~in \cite{ MajeeProhl2018, LiuQiao2021, BreitProhl2024}.
Here, \cite{LiuQiao2021} analyzed drift-implicit Euler schemes as well as higher-order Milstein schemes.
Gradient-type multiplicative Stratonovich noise was analyzed in \cite{FengLiZhang2017}.\\
In this publication, we want to adapt the scalar auxiliary variable (SAV) method to derive discrete schemes that are linear in the unknown quantities, but still unconditionally stable with respect to a modified energy.
Such a method was recently employed in \cite{QiZhangXu2023} for the numerical simulation of the stochastic Allen--Cahn equation with random diffusion coefficient field and multiplicative noise.
An analytical convergence proof is yet still missing.\\
The advantages of a convergent SAV method are two-fold.
First, the SAV approach allows for a significant reduction in computation time.
Secondly, it also allows to hide the nonlinearity in the newly introduced auxiliary variable, hence, opening a pathway to generalized results that do not depend on the exact form of the nonlinearity anymore.\\

The SAV approach was originally introduced by
\ifnum\value{IMASTYLE}>1 {\cite{ShenXuYang2018}}\else {Shen et al.~in \cite{ShenXuYang2018}\fi~for deterministic PDEs describing gradient flows.
It has been applied to various problems (see e.g.~\cite{ShenXuYang19, ShenYang20} and the references therein) and many different variations of this approach have been proposed and tested (see e.g.~\cite{HouAzaiezXu19, LiuLi20, HuangShenYang2020, YangDong2020, ZhangShen2022}).
From the analytical point of view, convergence results providing error estimates under the assumption that a sufficiently regular solution exists have been extensively studied (see e.g.~\cite{ShenXu18,LinCaoZhangSun20,YangZhang20,LiShen2020}).
Yet, convergence results without such regularity assumptions on the solution are scarce:
In \cite{ShenXu18}, the convergence behavior of discrete solutions to $L^2$- and $H^{-1}$-gradient flows was investigated.
Assuming that the initial conditions have $H^4$-regularity, (weak) convergence of subsequences of discrete solutions was established.
For an investigation of the asymptotic behavior of SAV schemes, we refer the reader to \cite{BouchritiPierreAlaa20}.
Recently, the author applied the SAV method to Cahn--Hilliard equations with reaction rate dependent dynamic boundary conditions and established the convergence of subsequences of discrete solutions towards weak solutions of the original model (see \cite{KnopfLamLiuMetzger2021} for the derivation of the model and \cite{Metzger2023} for the analysis  of the SAV scheme).
Analytical results for an SAV discretization of the Cahn--Hilliard equation with deterministic source terms can be found in \cite{LamWang2023}.\\
The goal of this publication is the adaption of the SAV method to SPDEs.
The intricacy in this endeavor stems from the severe limitation of the time-regularity of the solution caused by the additional stochastic terms.
In the deterministic case, it is typically rather easy to show that solutions are Hölder continuous in time with an exponent greater than $1/2$, which is sufficient to establish the convergence of the auxiliary variable.
For most stochastic PDEs, however, this is more intricate. 
Although for some SPDEs like the stochastic wave equation the standard SAV method as it was proposed in \cite{ShenXuYang2018} can be applied successfully and convergence can be established (cf.~\cite{CuiHongSun2022_arxiv}), for most stochastic PDEs a successful application of the SAV method is not straightforward.
In particular, we can often only anticipate that the solution is Hölder continuous with an exponent smaller than $1/2$ which is not sufficient to show convergence for the standard SAV schemes.
Hence, we propose an augmented version of the SAV method and establish  convergence of the scheme even for SPDEs with less regular solutions.
To ensure the convergence, we need to augment the evolution equation for the auxiliary variable by a particular approximation of higher order terms to obtain a linear and convergent scheme (cf.~Remark \ref{rem:taylor} below).\\

The outline of the paper is as follows:
In Section \ref{sec:notation}, we introduce the finite and infinite dimensional function spaces and collect our assumptions on the data.
Our augmented SAV scheme is presented in Section \ref{sec:discscheme}.
The main results (existence of discrete solutions, convergence towards unique martingale solutions, and convergence towards unique strong solutions) are collected in Section \ref{sec:mainresults}. 
Their proofs can be found in the subsequent sections.
The existence of discrete solutions is established in Section \ref{sec:discexistence}.
Sections \ref{sec:regularity}-\ref{sec:uniqueness} contain the proof of Theorem \ref{thm:maintheorem}, i.e.~convergence towards pathwise unique martingale solutions.
In particular, we will use Section \ref{sec:regularity} to show that the proposed scheme is unconditionally stable with respect to a modified energy and to establish improved uniform regularity results for the discrete solutions.
These results will be used in Section \ref{sec:compactness} to identify weakly and strongly converging subsequences by applying Jakubowski's theorem.
Section \ref{sec:limit} is devoted to the convergence of the stochastic source term. 
Following the arguments in e.g.~\cite{Ondrejat2022}, we show that the discrete approximations of the stochastic source term converge towards an \Ito-integral driven by a $\mathcal{Q}$-Wiener process.
We conclude the proof of Theorem \ref{thm:maintheorem} in Section \ref{sec:uniqueness} by establishing the pathwise uniqueness of the martingale solutions.
The proof of Theorem \ref{thm:strongconvergence} can be found in Section \ref{sec:strong}, where we use a generalized version of the Gyöngy--Krylov characterization of convergence in probability to show that our discrete solutions converge for a given Wiener process towards strong solutions.
We conclude this paper by presenting numerical simulations in Section \ref{sec:numerics}. 
By comparing the augmented SAV scheme to the discretization analyzed in \cite{MajeeProhl2018} we validate the practicality of the proposed scheme.
By comparing the results with a straightforward application of the SAV method, we underline the importance of the augmentation terms in the stochastic setting.

\section{Notation and assumptions}\label{sec:notation}
The spatial domain $\Om\subset\mathds{R}^d$ with $d\in\tgkla{2,3}$ is assumed to be bounded and convex.
To avoid additional technicalities, we further assume that $\Om$ is polygonal (or polyhedral, respectively).
We denote the space of $k$-times weakly differentiable functions with weak derivatives in $L^p\trkla{\Om}$ by $W^{k,p}\trkla{\Om}$.
For $p=2$, we denote the Hilbert spaces $W^{k,2}\trkla{\Om}$ by $H^k\trkla{\Om}$. 
For a Banach space $X$ and a set $I$, the symbol $L^p\trkla{I;X}$ ($p\in\tekla{1,\infty}$) stands for the Bochner space of strongly measurable $L^p$-integrable functions on $I$ with values in $X$.
If $X$ is only separable (and not reflexive), we follow the notation used in \cite[Chapter 0.3]{FeireislNovotny17} and denote the dual space of $L^{p/\trkla{p-1}}\trkla{I;X}$ by $L^p_{\operatorname{weak-}\trkla{*}}\trkla{I;X^\prime}$.
By $C^{k,\alpha}\trkla{I;X}$ with $k\in\mathds{N}_0$ and $\alpha\in(0,1]$, we denote the space of $k$-times continuously differentiable functions from $I$ to $X$ whose $k$-th derivatives are Hölder continuous with Hölder exponent $\alpha$.
If $I=X=\mathds{R}$, we will write $C^{k,\alpha}\trkla{\mathds{R}}$.
We shall also introduce the Nikolskii spaces $N^{\alpha,\beta}$ ($\alpha\in\trkla{0,1}$, $\beta\in\tekla{1,\infty}$) which are defined for a time interval $\trkla{0,T}$ and a Banach space $X$ via
\begin{align}
N^{\alpha,\beta}\trkla{0,T;X}:=\gkla{f\in L^\beta\trkla{0,T;X}\,:\, \sup_{k>0} k^{-\alpha}\norm{f\trkla{\cdot+k}-f\trkla{\cdot}}_{L^\beta\trkla{-k,T-k;X}}<\infty}\,.
\end{align}
Here, the standard convention $f\equiv0$ outside of $\trkla{0,T}$ for $f\in L^\beta\trkla{0,T;X}$ is used.\\

Concerning the discretization with respect to time, we assume that
\begin{itemize}
\labitem{\textbf{(T)}}{item:timedisc} the time interval $I:=[0,T]$ is subdivided into $N$ equidistant intervals $I_n$ given by nodes $\trkla{t^n}_{n=0,\ldots,N}$ with $t^0=0$, $t^N=T$, and $t^{n+1}-t^n=\tau=\tfrac{T}{N}$. Without loss of generality, we assume $\tau<1$.
\end{itemize}
The spatial discretization is based on partitions $\Th$ of $\Om$ depending on a discretization parameter $h>0$ satisfying the following assumption:
\begin{itemize}
\labitem{\textbf{(S)}}{item:spatialdisc} $\tgkla{\Th}_{h>0}$ is a quasi-uniform family of partitions of $\Om$ (in the sense of \cite{BrennerScott}) into disjoint, open simplices $K$, satisfying
\begin{align*}
\overline{\Om}\equiv\bigcup_{K\in\Th}\overline{K}&&&\text{with~}\max_{K\in\Th}\diam\trkla{K}\leq h\,.
\end{align*}
\end{itemize}
For our approximation, we consider the space of continuous, piecewise linear finite element functions on $\Th$. 
This space shall be denoted by $\Uh$ and is spanned by functions $\tgkla{\chi_{h,k}}_{k=1,\ldots,\dim\Uh}$ forming a dual basis to the vertices $\tgkla{\bs{x}_{h,k}}_{k=1,\ldots,\dim\Uh}$ of $\Th$.
We consider the nodal interpolation operators $\Ihop\,:\,C^0\trkla{\overline{\Om}}\rightarrow\Uh$ defined by
\begin{align}
\Ih{a}:=\sum_{k=1}^{\dim\Uh} a\trkla{\bs{x}_{h,k}}\chi_{h,k}\,.
\end{align}
For future reference, we state the following norm equivalence for $p\in[1,\infty)$ and $\zeta\h\in\Uh$:
\begin{align}\label{eq:normequivalence}
c\norm{\zeta\h}_{L^p\trkla{\Om}}\leq \rkla{\iO\Ih{\tabs{\zeta\h}^p}\dx}^{1/p}\leq C\norm{\zeta\h}_{L^p\trkla{\Om}}
\end{align}
with constants $c,\,C>0$ independent of $h$. 
A proof of this norm equivalence can be found e.g.~in \cite[Lemma 3.2.11]{SieberDiss2021}\footnote{In \cite{SieberDiss2021}, the additional global assumption that the quasi-uniform family of partitions consists of non-obtuse simplices was used. Yet, this stronger assumption is not required in the proof of Lemma 3.2.11 which solely relies on standard inverse estimates.}.
To simplify the notation, we shall define the discrete norms
\begin{align}
\norm{\zeta\h}\h:=\sqrt{\iO\Ih{\tabs{\zeta\h}^2}\dx}&&\text{and}&&\norm{\zeta\h}_{H^1\h\trkla{\Om}}:=\sqrt{\norm{\zeta\h}\h^2+\norm{\nabla\zeta\h}^2_{L^2\trkla{\Om}}}
\end{align}
on $\Uh$.
By \eqref{eq:normequivalence}, these norms are equivalent to the usual $L^2\trkla{\Om}$-norm and $H^1\trkla{\Om}$-norm, respectively.
For future reference, we also recall the following error estimates which were proven e.g.~in Lemma 2.1 in \cite{Metzger2020}:
\begin{lemma}\label{lem:interpolation}
Let $\mathcal{T}\h$ satisfy \ref{item:spatialdisc} and let $p\in[1,\infty)$ and $1\leq q\leq\infty$.
Then, for $q^*=\tfrac{q}{q-1}$, if $q<\infty$, and $q^*=1$, if $q=\infty$, the estimates
\begin{subequations}
\begin{align*}
\norm{\trkla{1-\Ihop}\tgkla{f\h g\h}}_{L^p\trkla{\Om}}&\leq Ch^2\norm{\nabla f\h}_{L^{pq}\trkla{\Om}}\norm{\nabla g\h}_{L^{pq^*}\trkla{\Om}}\,,\\
\norm{\trkla{1-\Ihop}\tgkla{f\h g\h}}_{W^{1,p}\trkla{\Om}}&\leq Ch\norm{\nabla f\h}_{L^{pq}\trkla{\Om}}\norm{\nabla g\h}_{L^{pq^*}\trkla{\Om}}
\end{align*}
\end{subequations}
hold true for all $f\h,\,g\h\in\Uh$.
\end{lemma}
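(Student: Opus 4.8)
The plan is to exploit the fact that, since $f\h$ and $g\h$ are continuous and piecewise affine, their product $f\h g\h$ is a piecewise quadratic polynomial, so that on each simplex $K\in\Th$ the difference $\trkla{1-\Ihop}\tgkla{f\h g\h}$ is nothing but the local interpolation error of a $W^{2,p}$-function. I would therefore localize both estimates to a single simplex and then appeal to the standard polynomial interpolation theory on shape-regular simplices (see e.g.~\cite{BrennerScott}), which for $w\in W^{2,p}\trkla{K}$ furnishes the local bounds $\norm{\trkla{1-\Ihop}w}_{L^p\trkla{K}}\leq Ch_K^2\abs{w}_{W^{2,p}\trkla{K}}$ and $\abs{\trkla{1-\Ihop}w}_{W^{1,p}\trkla{K}}\leq Ch_K\abs{w}_{W^{2,p}\trkla{K}}$, where $h_K:=\diam\trkla{K}$ and the constants depend only on the shape-regularity guaranteed by \ref{item:spatialdisc}.

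The key computation is the evaluation of the second-order seminorm of the product on $K$. Since $f\h|_K$ and $g\h|_K$ are affine, their second derivatives vanish, so the Hessian of the product reduces to
\begin{align*}
D^2\trkla{f\h g\h}=\nabla f\h\otimes\nabla g\h+\nabla g\h\otimes\nabla f\h\quad\text{on }K\,,
\end{align*}
which is constant on $K$ and satisfies the pointwise bound $\tabs{D^2\trkla{f\h g\h}}\leq C\tabs{\nabla f\h}\tabs{\nabla g\h}$. Consequently $\abs{f\h g\h}_{W^{2,p}\trkla{K}}\leq C\norm{\tabs{\nabla f\h}\tabs{\nabla g\h}}_{L^p\trkla{K}}$. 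Inserting this into the two local interpolation bounds, raising to the $p$-th power, summing over all $K\in\Th$, and using $h_K\leq h$ from \ref{item:spatialdisc} then yields the global estimates
\begin{align*}
\norm{\trkla{1-\Ihop}\tgkla{f\h g\h}}_{L^p\trkla{\Om}}&\leq Ch^2\norm{\tabs{\nabla f\h}\tabs{\nabla g\h}}_{L^p\trkla{\Om}}\,,\\
\norm{\nabla\trkla{1-\Ihop}\tgkla{f\h g\h}}_{L^p\trkla{\Om}}&\leq Ch\norm{\tabs{\nabla f\h}\tabs{\nabla g\h}}_{L^p\trkla{\Om}}\,.
\end{align*}

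It then remains to split the product $\tabs{\nabla f\h}\tabs{\nabla g\h}$ via Hölder's inequality. Applying Hölder with the conjugate exponents $q$ and $q^*$ to $\int_\Om\tabs{\nabla f\h}^p\tabs{\nabla g\h}^p\dx$ gives $\norm{\tabs{\nabla f\h}\tabs{\nabla g\h}}_{L^p\trkla{\Om}}\leq\norm{\nabla f\h}_{L^{pq}\trkla{\Om}}\norm{\nabla g\h}_{L^{pq^*}\trkla{\Om}}$, which settles the first asserted estimate and, combined with the gradient bound above, the $W^{1,p}$-seminorm part of the second one; the case $q=\infty$ (hence $q^*=1$) is covered by the endpoint version of Hölder's inequality. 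Finally, for the full $W^{1,p}$-norm I would add the already established $L^p$-estimate, whose contribution carries a factor $h^2$; for $h$ bounded above this is absorbed into the dominant $h$-contribution (enlarging $C$ if necessary), so that the factor $h$ survives. The only point requiring genuine care, rather than routine bookkeeping, is keeping all interpolation constants independent of $h$, which is ensured by the quasi-uniformity in \ref{item:spatialdisc} through the usual scaling argument to a fixed reference simplex.
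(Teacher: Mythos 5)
Your proof is correct and is essentially the standard argument behind this lemma; the paper itself does not prove it but cites Lemma 2.1 of \cite{Metzger2020}, whose proof proceeds exactly as you describe (localize to a simplex, observe that the product of two affine functions is quadratic with Hessian $\nabla f\h\otimes\nabla g\h+\nabla g\h\otimes\nabla f\h$, apply the scaled local interpolation estimates, and split with H\"older). The only point worth a remark is that for small $p$ in three dimensions one should invoke the interpolation estimate on the finite-dimensional space of quadratics via norm equivalence on the reference simplex rather than the general $W^{2,p}$ theory, since $W^{2,p}\trkla{K}\emb C^0\trkla{\overline{K}}$ can fail there; your observation that $f\h g\h$ is piecewise quadratic already covers this.
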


To approximate the Laplacian, we use the discrete Laplacian $\Delta\h\,:\,\Uh\rightarrow \Uh$ defined via
\begin{align}\label{eq:def:discLaplacian}
\iO\Ih{\Delta\h\zeta\h\psi\h}\dx=-\iO\nabla\zeta\h\cdot\nabla\psi\h\dx&&\text{for~}\zeta\h,\psi\h\in\Uh\,.
\end{align}

For the double-well potential $F$, we shall make the following assumptions:
\begin{itemize}
\labitem{\textbf{(P)}}{item:potentialF} $F\in C^2\trkla{\mathds{R}}$ is bounded from below by a positive constant $\gamma>0$ and satisfies the growth estimates
\begin{align*}
c_\gamma\trkla{1+\tabs{\zeta}^4}&\leq F\trkla{\zeta}\leq C\trkla{1+\tabs{\zeta}^4}\,,&\tabs{F^\prime\trkla{\zeta}}&\leq C\trkla{1+\tabs{\zeta}^3}\,,&\tabs{F^{\prime\prime}\trkla{\zeta}}\leq C\trkla{1+\tabs{\zeta}^2}
\end{align*}
for all $\zeta\in\mathds{R}$ and a positive constant $c_\gamma$.\\
Furthermore, $F$ can be decomposed into $F_1\in C^{2,\nu}\trkla{\mathds{R}}$ and $F_2\in C^3\trkla{\mathds{R}}$ satisfying $\tabs{F_2^{\prime\prime\prime}\trkla{\zeta}}\leq C\trkla{1+\tabs{\zeta}^2}$.
\end{itemize}
\begin{remark}
Having a positive lower bound for the double-well potential $F$ is purely a technical assumption which is required to state the SAV scheme. 
As the stochastic Allen--Cahn equation \eqref{eq:model} only depends on the derivative $F^\prime$, we can always add an arbitrary constant to $F$ without changing \eqref{eq:model}.
Hence, any lower bound for the double-well potential is sufficient and the necessary shift can be interpreted as a numerical parameter.\\
The assumptions stated in \ref{item:potentialF} are satisfied for instance by a shifted polynomial double-well potential $\widetilde{F}_{\operatorname{pol}}\trkla{\phi}:=\tfrac14\trkla{\phi^2-1}^2+\gamma$ for any $\gamma>0$.
We are, however, not limited to this specific choice.
Hence, our assumptions on the potential are more general than the typical ones used e.g.~in \cite{MajeeProhl2018}.\\
In comparison with the assumptions used in \cite{Metzger2023} to treat the deterministic Cahn--Hilliard equation with dynamic boundary conditions, the assumptions stated in \ref{item:potentialF} are stricter.
This is owed to the fact that the stochastic noise term in \eqref{eq:model} reduces the solution's regularity with respect to time.
Therefore, we need to use a higher order approximation for the evolution of the scalar auxiliary variable which involves higher derivatives (cf.~\eqref{eq:discscheme:r}) to ensure convergence.
Furthermore, the polynomial lower bound on $F$ is necessary, to allow for cancellation effects needed to control the source terms (see e.g.~\eqref{eq:ehtrick} below or \cite[Remark 5]{LamWang2023}).
\end{remark}

For simplicity, we consider deterministic initial data and assume that
\begin{itemize}
\labitem{\textbf{(I)}}{item:initial} $\phi_0\in H^1\trkla{\Om}$ and there is a family of mappings $\mathcal{P}\h\,:\,H^1\trkla{\Om}\rightarrow \Uh$, $h\in(0,1]$, such that the estimate $\norm{\mathcal{P}\h\phi_0}_{H^1\trkla{\Om}}\leq C\trkla{\phi_0}$ with $C$ independent of $h$, and $\mathcal{P}\h\phi_0\rightarrow\phi_0$ in $L^2\trkla{\Om}$ for $h\searrow0$.
\end{itemize}
%L2 konvergenz phi => pkt f ueberall konvergenz fuer Teilfolge => Vitali konvergenz in L^6- => Konvergenz Potential
An example for such a mapping is the $H^1$-stable $L^2$-projection $\projop\,:\, L^2\trkla{\Om}\rightarrow \Uh$ (cf.~\cite{BramblePasciakSteinbach2002}).
If the initial data is more regular, i.e.~$\phi_0\in H^2\trkla{\Om}$, one could also use the nodal interpolation operator $\Ihop$.\\
The stochastic source term in \eqref{eq:model} is governed by a $\mathcal{Q}$-Wiener process and an operator $\Phi$.
We shall assume that
\begin{itemize}
\labitem{\textbf{(W1)}}{item:stochastic} the trace class operator $\mathcal{Q}\,:\,L^2\trkla{\Om}\rightarrow L^2\trkla{\Om}$ satisfies
\begin{align}
\mathcal{Q}g:=\sum_{k\in\mathds{Z}}\lambda_k^2\trkla{g,\g{k}}_{L^2\trkla{\Om}}\g{k}\,,
\end{align}
where $\trkla{.,.}_{L^2\trkla{\Om}}$ denotes the $L^2$-inner product, $\trkla{\lambda_k}_{k\in\mathds{Z}}$ are given real numbers, and $\trkla{\g{k}}_{k\in\mathds{Z}}$ is an orthonormal basis of $L^2\trkla{\Om}$. 
\end{itemize}
Hence, $\mathcal{Q}^{1/2}$ given by $\mathcal{Q}^{1/2}g:=\sum_{k\in\mathds{Z}}\lambda_k\trkla{g,\g{k}}_{L^2\trkla{\Om}}\g{k}$ is a Hilbert--Schmidt operator from $L^2\trkla{\Om}$ to $L^2\trkla{\Om}$.
We shall denote its image of $L^2\trkla{\Om}$ by
\begin{align}
\mathcal{Q}^{1/2}L^2\trkla{\Om}:=\gkla{\mathcal{Q}^{1/2}g\,:\,g\in L^2\trkla{\Om}}\,.
\end{align}

Hence, the $\mathcal{Q}$-Wiener process $\trkla{W_t}_{t\in\tekla{0,T}}$ has the representation
\begin{align}\label{eq:representationWiener}
W_t:=\sum_{k\in\mathds{Z}}\lambda_k\g{k}\beta_k\trkla{t}
\end{align}
with $\trkla{\beta_k}_{k\in\mathds{Z}}$ being mutually independent Brownian motions.
Furthermore, we shall assume that the $\mathcal{Q}$-Wiener process is colored in the sense that
\begin{itemize}
\labitem{\textbf{(W2)}}{item:Wcolor} there exists a positive constant $\widetilde{C}$ such that
\begin{align}
\sum_{k\in\mathds{Z}}\lambda_k^2\norm{\g{k}}_{W^{1,\infty}\trkla{\Om}}^2\leq\widetilde{C}\,.\label{eq:Wcolor}
\end{align}
\end{itemize}
The operator $\Phi$ mapping a stochastic process $\phi$ into the space of Hilbert--Schmidt operators from $\mathcal{Q}^{1/2}L^2\trkla{\Om}$ to $L^2\trkla{\Om}$ is defined via
\begin{align}\label{eq:Phi}
\Phi\trkla{\phi}g:=\sigma\trkla{\phi}\sum_{k\in\mathds{Z}}\trkla{g,\g{k}}_{L^2\trkla{\Om}}\g{k}
\end{align}
for all $g\in\mathcal{Q}^{1/2}L^2\trkla{\Om}$.
With respect to the coefficient function $\sigma\,:\,\mathds{R}\rightarrow\mathds{R}$ our assumptions are similar to the ones used in \cite{MajeeProhl2018}.
In particular, we shall assume that
\begin{itemize}
\labitem{\textbf{(C)}}{item:sigma} $\sigma\in L^\infty\trkla{\mathds{R}}\cap C^{0,1}\trkla{\mathds{R}}$.
\end{itemize}
This assumption guarantees that $\Phi$ as it is defined in \eqref{eq:Phi} is indeed a mapping into the space of Hilbert--Schmidt operators from $\mathcal{Q}^{1/2}L^2\trkla{\Om}$ to $L^2\trkla{\Om}$, since
\begin{align}
\sum_{k\in\mathds{Z}}\norm{\Phi\trkla{\phi}\trkla{\lambda_k\g{k}}}_{L^2\trkla{\Om}}^2=\sum_{k\in\mathds{Z}}\norm{\sigma\trkla{\phi}\lambda_k\g{k}}_{L^2\trkla{\Om}}^2\leq \norm{\sigma}_{L^\infty\trkla{\mathds{R}}}^2 \sum_{k\in\mathds{Z}}\norm{\mathcal{Q}^{1/2}\g{k}}_{L^2\trkla{\Om}}^2\leq C\,.
\end{align}
Assumption \ref{item:sigma} also implies that $\norm{\nabla\Ih{\sigma\trkla{\zeta\h}}}_{L^p\trkla{\Om}}\leq \overline{C}\norm{\nabla\zeta\h}_{L^p\trkla{\Om}}$ for any $\zeta\h\in \Uh$ and $p\in\tekla{1,\infty}$ with a constant $\overline{C}$ depending on the Lipschitz constant of $\sigma$.
Hence, the source term on the right-hand side of \eqref{eq:model:phi} can be written as
\begin{align}
\Phi\trkla{\phi}\mathrm{d}W:=\sum_{k\in\mathds{Z}}\lambda_k\g{k}\sigma\trkla{\phi}\db{k}\,.
\end{align}\\
In our discrete scheme, we shall approximate $W$ by a sequence of discrete stochastic increments denoted $\tgkla{\sinc{n}}_{n=1,\ldots,N}$ which are supposed to have the following properties:
\begin{itemize}
\labitem{\textbf{(D0)}}{item:filtration} Let $\mathcal{F}^\tau=\trkla{\mathcal{F}_t^\tau}_{t\in[0,T]}$ be defined by $\mathcal{F}_t^\tau=\mathcal{F}_{t\no}$ for $t\in[t\no,t\nn)$.
\labitem{\textbf{(D1)}}{item:increments} $\sinc{n}$ is $\mathcal{F}_{t\nn}^\tau$-measurable and independent of $\mathcal{F}_{t^m}^\tau$ for all $0\leq m\leq n-1$.
\labitem{\textbf{(D2)}}{item:randomvariables} There exist mutually independent symmetric random variables $\xi_k^{n,\tau}$ such that
\begin{align*}
\sinc{n}=\sqrt{\tau}\sum_{k\in\mathds{Z}}\lambda_k\g{k}\xi_k^{n,\tau}\,,
\end{align*}
where $\expected{\xi_k^{n,\tau}}=0$, $\expected{\tabs{\xi_k^{n,\tau}}^2}=1$, and $\expected{\tabs{\xi_k^{n,\tau}}^p}\leq C_p$ for all integer $p\geq2$ with a constant $C_p$ depending on the exponent $p$ but not on $h$, $\tau$, $k$, or $n$.
\end{itemize}
Here, $\trkla{\g{k}}_{k\in\mathds{Z}}$ are an orthonormal basis of $L^2\trkla{\Om}$ and $\trkla{\lambda_k}_{k\in\mathds{Z}}$ are given real numbers such that
\begin{itemize}
\labitem{\textbf{(D3)}}{item:color} the noise $\sinc{n}$ is colored in the sense that \eqref{eq:Wcolor} in assumption \ref{item:Wcolor} is satisfied for a positive constant $\widetilde{C}$ that is independent of $h$ and $\tau$.
\end{itemize}

\begin{remark}\label{rem:eigenfunctions}
Assumption \ref{item:color} is similar to the assumptions used in \cite{LiuQiao2021, BreitProhl2024} and implies that also spatial derivatives of the $\mathcal{Q}$-Wiener process can be controlled.
This is necessary to establish suitable a priori estimates:
To overcome the loss of temporal regularity of the solution caused by the stochastic term on the right-hand side of \eqref{eq:model:phi}, one typically multiplies the (discrete) equation pathwise by the variational derivative of the energy (see e.g.~\cite{Ondrejat2022}).
In case of the stochastic Allen--Cahn equation the Helmholtz free energy includes $\tfrac12\iO \tabs{\nabla\phi}^2\dx$ (cf.~\eqref{eq:Helmholtz}).
Hence, we need to multiply our discrete equations by an approximation of $-\Delta\phi$ which at this point has to be understood as an element of $H^1\trkla{\Om}^\prime$.
Consequently, the right-hand side of \eqref{eq:model:phi} needs to be in $H^1\trkla{\Om}$.
These considerations are made rigorous in Lemma \ref{lem:energy} below.\\ 
In Section \ref{sec:numerics}, we will consider the two-dimensional torus and define the functions $\g{k}\equiv\g{k\trkla{l,m}}$ as products of eigenfunctions of the one-dimensional Laplacian with periodic boundary conditions, i.e.
\begin{subequations}
\begin{align}
g_l^x:=&\sqrt{\frac{2}{L_x}}\left\{\begin{array}{cl}
\cos\rkla{\frac{2\pi lx}{L_x}}&\text{for~}l\geq1\,,\\
\frac{1}{\sqrt{2}}&\text{for~}l=0\,,\\
\sin\rkla{\frac{2\pi lx}{L_x}}&\text{for~}l\leq-1\,,
\end{array}\right.\\
g_m^y:=&\sqrt{\frac{2}{L_y}}\left\{\begin{array}{cl}
\cos\rkla{\frac{2\pi my}{L_y}}&\text{for~}m\geq1\,,\\
\frac{1}{\sqrt{2}}&\text{for~}m=0\,,\\
\sin\rkla{\frac{2\pi my}{L_y}}&\text{for~}m\leq-1\,,
\end{array}\right.
\end{align}
\end{subequations}
where $L_x$ and $L_y$ denote the length of $\Om$ in $x$- and $y$-direction.
In this case, we have $\norm{\g{k\trkla{l,m}}}_{W^{1,\infty}\trkla{\Om}}\sim\trkla{l+m}$.
Hence, choosing $\lambda_{k\trkla{l,m}}\sim \trkla{lm}^{-2}$ is sufficient to guarantee \ref{item:color}.
\end{remark}

Summing $\sinc{n}$ from $n=1$ to $m$ provides the discrete approximation $\bs{\xi}^{m,\tau}$ of the $\mathcal{Q}$-Wiener process $W$.
We want to emphasize that the assumptions in \ref{item:randomvariables} are more general than considering a $\mathcal{Q}$-Wiener process satisfying \ref{item:stochastic}.
If we evaluate the $\mathcal{Q}$-Wiener process at given times $t\no$ and $t\nn$, we obtain a piecewise constant approximation where the mutually independent random variables $\xi_k^{n,\tau}$ satisfy \ref{item:randomvariables} and are in addition $\mathcal{N}\trkla{0,1}$-distributed.
The more general assumption \ref{item:randomvariables} on the other hand allows for bounded increments satisfying e.g.~$\tabs{\xi_k^{n,\tau}}\leq \nu$ for a suitable constant $\nu\geq0$.
Such an approximation was used by \cite{GrillmeierGruen2019} to establish the non-negativity of discrete solutions to the stochastic porous-medium equations.\\
We will approximate $\Phi$ by $\Phi\h$ which is defined via
\begin{align}\label{eq:defPhih}
\Phi\h\trkla{\zeta\h}f:=\sum_{k\in\Zh}\Ih{\sigma\trkla{\zeta\h}\trkla{f,\g{k}}_{L^2\trkla{\Om}}\g{k}}
\end{align}
for all $f\in\mathcal{Q}^{1/2}L^2\trkla{\Om}$ and $\zeta\h\in\Uh$.
Here, $\Zh$ is an $h$-dependent finite subset of $\mathds{Z}$ satisfying $\mathds{Z}_{h_1}\subset\mathds{Z}_{h_2}$ for $h_1\geq h_2$ and $\bigcup_{h>0}\Zh=\mathds{Z}$.
This results in only a finite sum of modes entering the discrete scheme for each given $h$.
Hence, we introduce a finite dimensional approximation of the $\mathcal{Q}$-Wiener process defined by
\begin{align}\label{eq:deffinteprocess}
\bs{\xi}\h^{m,\tau}:=\sum_{n=1}^m\sqrt{\tau}\sum_{k\in\mathds{Z}\h}\lambda_k\g{k}\xi_k^{n,\tau}\,.
\end{align}
Definition \eqref{eq:defPhih} also entails that $\Phi\h\trkla{\zeta\h}f$ is an element of $\Uh$.
If we want to emphasize this fact, we will write $\Ih{\Phi\h\trkla{\zeta\h}f}$ although the application of $\Ihop$ is superfluous.\\
In order to control higher moments of the (discrete) stochastic integrals, we will need the following estimates which are proven in Appendix \ref{sec:prelim}:
\begin{lemma}\label{lem:BDG}
Let $\trkla{\sinc{n}}_{n=1,\ldots,N}$ be a sequence of discrete stochastic increments satisfying \ref{item:filtration}-\ref{item:color}.
Furthermore, let $\trkla{\Phi\h\no}_{n=1,\ldots,N}$ be a sequence of $\trkla{\mathcal{F}_{t\no}^\tau}_{n=1,\ldots,N}$-measurable mappings from $\Omega$ to the set of Hilbert--Schmidt operators mapping $\mathcal{Q}^{1/2}L^2\trkla{\Om}$ to a separable Hilbert space $H$, i.e.~$\Phi\h\no\,:\,\Omega\rightarrow L_2\trkla{\mathcal{Q}^{1/2}L^2\trkla{\Om};H}$, satisfying $\Phi\h\no \g{k}=0$ for all $k\notin\Zh$ (cf.~\eqref{eq:defPhih}).
Then, for $p\in[1,\infty)$ the estimates
\begin{subequations}
\begin{align}\label{eq:bdg1}
\expected{\norm{\Phi\h\no\sinc{n}}_H^p}\leq&\, C_p\tau^{p/2}\expected{\rkla{\sum_{k\in\Zh}\norm{\lambda_k\Phi\h\no\g{k}}_H^2}^{p/2}}\,,\\
\label{eq:bdg2}\expected{\max_{1\leq l\leq m }\norm{\sum_{n=1}^l \Phi\h\no\sinc{n}}_H^p}\leq &\,C_p\rkla{m\tau}^{\tfrac{p-2}2}\sum_{n=1}^m\tau\expected{\rkla{\sum_{k\in\Zh}\norm{\lambda_k\Phi\h\no\g{k}}_H^2}^{p/2}}
\end{align}
\end{subequations}
hold true with a constant $C_p>0$ which depends on $p$, but is independent of $h$ and $\tau$.
\end{lemma}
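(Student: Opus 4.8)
The plan is to regard the partial sums $M_l:=\sum_{n=1}^l\Phi\h\no\sinc{n}$ as a discrete $H$-valued martingale and to reduce both estimates to a Burkholder--Davis--Gundy (BDG) inequality together with a careful bookkeeping of exponents. First I would note that $\trkla{M_l}_{l=0,\ldots,m}$ is a martingale with respect to $\trkla{\mathcal{F}^\tau_{t^l}}_{l}$: by hypothesis $\Phi\h\no$ is $\mathcal{F}^\tau_{t\no}$-measurable, while $\sinc{n}$ is independent of $\mathcal{F}^\tau_{t\no}$ by \ref{item:increments} and centred by \ref{item:randomvariables}, whence $\expected{\Phi\h\no\sinc{n}\mid\mathcal{F}^\tau_{t\no}}=\Phi\h\no\,\expected{\sinc{n}}=0$. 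It is convenient to abbreviate $a_k:=\lambda_k\Phi\h\no\g{k}$, which is $\mathcal{F}^\tau_{t\no}$-measurable and vanishes for $k\notin\Zh$, so that $\Phi\h\no\sinc{n}=\sqrt\tau\sum_{k\in\Zh}\xi_k^{n,\tau}a_k$ by \ref{item:randomvariables}.

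For \eqref{eq:bdg1} I would condition on $\mathcal{F}^\tau_{t\no}$. Given this $\sigma$-algebra the vectors $a_k$ are frozen, while by \ref{item:increments} the family $\trkla{\xi_k^{n,\tau}}_{k\in\Zh}$ keeps its law from \ref{item:randomvariables}, i.e.~it consists of independent, symmetric, normalised random variables. Enumerating the finite set $\Zh$, the conditional partial sums of $\sum_k\xi_k^{n,\tau}a_k$ form a martingale, and the vector-valued BDG (equivalently Marcinkiewicz--Zygmund) inequality -- available with a universal constant because $H$ is a Hilbert space -- gives $\expected{\norm{\sum_{k\in\Zh}\xi_k^{n,\tau}a_k}_H^p\mid\mathcal{F}^\tau_{t\no}}\leq C_p\,\expected{\rkla{\sum_{k\in\Zh}\tabs{\xi_k^{n,\tau}}^2\norm{a_k}_H^2}^{p/2}\mid\mathcal{F}^\tau_{t\no}}$. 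For $p\geq2$ I then use Minkowski's inequality in $L^{p/2}$ together with the moment bound $\expected{\tabs{\xi_k^{n,\tau}}^p}\leq C_p$ from \ref{item:randomvariables} (extended from integer to real $p\geq2$ by Lyapunov's inequality) to replace the random weights $\tabs{\xi_k^{n,\tau}}^2$ by constants; this bounds the right-hand side by $C_p\rkla{\sum_{k\in\Zh}\norm{a_k}_H^2}^{p/2}$, and multiplying by $\tau^{p/2}$ and taking expectations yields \eqref{eq:bdg1}. For $1\leq p<2$ the estimate follows directly from conditional Jensen, since the orthonormality $\expected{\xi_k^{n,\tau}\xi_l^{n,\tau}}=\delta_{kl}$ gives $\expected{\norm{\Phi\h\no\sinc{n}}_H^2\mid\mathcal{F}^\tau_{t\no}}=\tau\sum_{k\in\Zh}\norm{a_k}_H^2$.

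For \eqref{eq:bdg2} I apply the maximal BDG inequality to $\trkla{M_l}_l$ to obtain $\expected{\max_{1\leq l\leq m}\norm{M_l}_H^p}\leq C_p\,\expected{\rkla{\sum_{n=1}^m\norm{\Phi\h\no\sinc{n}}_H^2}^{p/2}}$, i.e.~the $p/2$-moment of the quadratic variation. For $p\geq2$ I bound this moment by Minkowski's inequality in $L^{p/2}$, estimating each $\norm{\Phi\h\no\sinc{n}}_H^2$ in $L^{p/2}$ by $\rkla{\expected{\norm{\Phi\h\no\sinc{n}}_H^p}}^{2/p}$ and inserting \eqref{eq:bdg1}. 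A subsequent Hölder inequality in the index $n$ with exponents $p/2$ and $p/\trkla{p-2}$ converts $\sum_n\rkla{\cdots}^{2/p}$ into $m^{\trkla{p-2}/p}\rkla{\sum_n\rkla{\cdots}}^{2/p}$; raising everything to the power $p/2$ then produces precisely the weight $\tau^{p/2}m^{\trkla{p-2}/2}=\trkla{m\tau}^{\trkla{p-2}/2}\tau$ in front of $\sum_{n=1}^m\expected{\rkla{\sum_{k\in\Zh}\norm{\lambda_k\Phi\h\no\g{k}}_H^2}^{p/2}}$, which is \eqref{eq:bdg2}. The remaining range $1\leq p<2$ reduces to the case $p=2$ -- where \eqref{eq:bdg1} holds with equality and the maximum is controlled by Doob's $L^2$-inequality -- by Jensen's inequality.

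In each step the constants depend only on $p$: the BDG / Marcinkiewicz--Zygmund constant is universal for Hilbert spaces, and the moment constants $C_p$ from \ref{item:randomvariables} are uniform in $h$, $\tau$, $k$, and $n$; hence the final constant is independent of $h$ and $\tau$, as required. I expect the main difficulty to be twofold. First, the passage to the quadratic variation via the conditional vector-valued BDG inequality must be carried out without letting the constant depend on $\tabs{\Zh}$ or on $\tau$ -- this is exactly where the finiteness of $\Zh$, the independence and symmetry in \ref{item:increments}--\ref{item:randomvariables}, and the Hilbert-space (hence $2$-smooth) structure of $H$ enter. Second, the exponent bookkeeping in \eqref{eq:bdg2} has to reproduce the sublinear weight $\trkla{m\tau}^{\trkla{p-2}/2}$ exactly rather than a cruder power of $m$, which forces the particular combination of Minkowski's and Hölder's inequalities indicated above.
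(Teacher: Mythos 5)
Your argument is correct for $p\geq2$ and reaches both estimates by a genuinely different route than the paper. For \eqref{eq:bdg1} the paper exploits the symmetry of the $\xi_k^{n,\tau}$ from \ref{item:randomvariables}: it replaces $\xi_k^{n,\tau}$ by $\widehat{\epsilon}_k\xi_k^{n,\tau}$ with auxiliary Rademacher variables, applies the Kahane--Khintchine inequality pathwise to pass to the square function, and then removes the random weights $\tabs{\xi_k^{n,\tau}}^2$ via a multinomial expansion, which forces it to first treat $p\in2\mathds{N}$ and then interpolate by H\"older. You instead condition on $\mathcal{F}_{t\no}^\tau$, apply the Hilbert-space Burkholder--Davis--Gundy (Marcinkiewicz--Zygmund) inequality in the index $k$, and remove the weights by Minkowski's inequality in $L^{p/2}$; this treats all real $p\geq2$ in one stroke, uses only the centering (not the symmetry) of the increments, and the constants are manifestly independent of $\tabs{\Zh}$, $h$ and $\tau$, exactly as required. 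For \eqref{eq:bdg2} the paper combines Doob's maximal inequality with the Burkholder--Rosenthal inequality, which yields a conditional square function (evaluated exactly via \ref{item:increments}--\ref{item:randomvariables}) plus a maximal-jump term that is then controlled by \eqref{eq:bdg1}; your maximal BDG followed by Minkowski in $L^{p/2}$ and H\"older in $n$ is an equally valid variant and reproduces the weight $\trkla{m\tau}^{\trkla{p-2}/2}$ with the same bookkeeping. The one place where your write-up is not airtight is the range $1\leq p<2$ in \eqref{eq:bdg2}: the reduction to $p=2$ \emph{by Jensen} does not deliver the stated right-hand side, because moving the exponent $p/2$ inside the sum over $n$ and the expectation goes in the wrong direction when $p<2$, so the factor $\trkla{m\tau}^{\trkla{p-2}/2}$ in front of $\sum_n\tau\,\expected{\trkla{\sum_k\tnorm{\lambda_k\Phi\h\no\g{k}}_H^2}^{p/2}}$ is not recovered this way; note, however, that the paper's own displayed H\"older step $\expected{\trkla{\sum_n\tau X_n}^{p/2}}\leq\trkla{m\tau}^{\trkla{p-2}/2}\expected{\sum_n\tau X_n^{p/2}}$ carries the same restriction to $p\geq2$, so this is a shared, minor wrinkle rather than a defect specific to your approach.
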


In order to pass to the limit $\trkla{h,\tau}\rightarrow\trkla{0,0}$ with families of fully discrete random variables, we define time-interpolants of a time-discrete function $a\nn$, $n=0,\ldots,N$, and introduce some time-index-free notation as follows:
\begin{subequations}\label{eq:deftimeinterpol}
\begin{align}
a\tl\trkla{.,t}&:=\tfrac{t-t\no}{\tau}a\nn\trkla{.}+\tfrac{t\nn-t}{\tau}a\no\trkla{.}&&t\in\tekla{t\no,t\nn},\,n\geq1\,,\\
a\tm\trkla{.,t}&:=a\no\trkla{.}&&t\in[t\no,t\nn),\,n\geq1\,,\\
a\tp\trkla{.,t}&:=a\nn\trkla{.}&&t\in(t\no,t\nn],\,n\geq1\,.
\end{align}
\end{subequations}
For completeness, we define $a\tp\trkla{.,0}:=a^0\trkla{.}$.
Obviously, we have
\begin{align}
&&a\tl\trkla{.,t}-a\tm\trkla{.,t}&=\tfrac{t-t\no}{\tau}\trkla{a\nn-a\no}&&\forall t\in[t\no,t\nn)\,.
\end{align}
If a statement is valid for all three time-interpolants defined in \eqref{eq:deftimeinterpol}, we shall use the abbreviation $a\tpm$.

\section{The discrete scheme}\label{sec:discscheme}
For the ease of representation, we set $\varepsilon=1$ and define the abbreviation
\begin{align}
E\h\trkla{\zeta\h}:=\iO\Ih{F\trkla{\zeta\h}}\dx
\end{align}
for $\zeta\h\in\Uh$.
To derive a linear discrete scheme for \eqref{eq:model}, we introduce a stochastic scalar auxiliary variable $r\,:\,\Omega\times\tekla{0,T}\rightarrow\mathds{R}$ with $r\trkla{\omega,t}=\sqrt{\iO F\trkla{\phi\trkla{\omega,t,x}}\dx}$ as the square root of the non-quadratic parts of the energy and approximate it using a sequence of random variables $\trkla{r\h\nn}_{n=0,\ldots,N}$ which are supposed to approximate $\trkla{\sqrt{E\h\trkla{\phi\h\nn}}}_{n=0,\ldots,N}$.
Defining the discrete initial data via $\phi\h^0:=\mathcal{P}\h\phi_0$ (cf.~Assumption \ref{item:initial}) and $r\h^0:=\sqrt{E\h\trkla{\phi\h^0}}$, we state the following discrete scheme:\\
For given $\Uh\times\mathds{R}$-valued random variables $\trkla{\phi\h\no,r\h\no}$, find an $\Uh\times\mathds{R}$-valued random variable $\trkla{\phi\h\nn,r\h\nn}$ such that pathwise
\begin{subequations}\label{eq:discscheme}
\begin{multline}\label{eq:discscheme:phi}
\iO\Ih{\trkla{\phi\h\nn-\phi\h\no}\psi\h}\dx+\tau\iO\nabla\phi\h\nn\cdot\nabla\psi\h\dx+\tau\frac{r\h\nn}{\sqrt{E\h\trkla{\phi\h\no}}}\iO\Ih{F^\prime\trkla{\phi\h\no}\psi\h}\dx\\
-\tfrac14\tau\frac{r\h\nn}{\tekla{E\h\trkla{\phi\h\no}}^{3/2}}\iO\Ih{F^\prime\trkla{\phi\h\no}\Phi\h\trkla{\phi\h\no}\sinc{n}}\dx\iO\Ih{F^\prime\trkla{\phi\h\no}\psi\h}\dx\\
+\tfrac12\tau\frac{r\h\nn}{\sqrt{E\h\trkla{\phi\h\no}}}\iO\Ih{F^{\prime\prime}\trkla{\phi\h\no}\Phi\h\trkla{\phi\h\no}\sinc{n}\psi\h}\dx = \iO\Ih{\Phi\h\trkla{\phi\h\no}\sinc{n}\psi\h}\dx\,,
\end{multline}
for all $\psi\h\in\Uh$ and 
\begin{multline}\label{eq:discscheme:r}
r\h\nn-r\h\no=\frac{1}{2\sqrt{E\h\trkla{\phi\h\no}}}\iO\Ih{F^\prime\trkla{\phi\h\no}\trkla{\phi\h\nn-\phi\h\no}}\dx\\
-\frac{1}{8\tekla{E\h\trkla{\phi\h\no}}^{3/2}}\iO\Ih{F^\prime\trkla{\phi\h\no}\Phi\h\trkla{\phi\h\no}\sinc{n}}\dx\iO\Ih{F^\prime\trkla{\phi\h\no}\trkla{\phi\h\nn-\phi\h\no}}\dx\\
+\frac{1}{4\sqrt{E\h\trkla{\phi\h\no}}}\iO\Ih{F^{\prime\prime}\trkla{\phi\h\no}\trkla{\phi\h\nn-\phi\h\no}\Phi\h\trkla{\phi\h\no}\sinc{n}}\dx\,.
\end{multline}
\end{subequations}

To simplify the notation, we introduce the chemical potential
\begin{align}\label{eq:defmu}
\begin{split}
\mu\h\nn:=&-\Delta\h\phi\h\nn+\frac{r\h\nn}{\sqrt{E\h\trkla{\phi\h\no}}}\Ih{F^\prime\trkla{\phi\h\no}}\\
&-\frac{r\h\nn}{4\tekla{E\h\trkla{\phi\h\no}}^{3/2}}\iO\Ih{F^\prime\trkla{\phi\h\no}\Phi\h\trkla{\phi\h\no}\sinc{n}}\dx\,\Ih{F^\prime\trkla{\phi\h\no}}\\
&+\frac{r\h\nn}{2\sqrt{E\h\trkla{\phi\h\no}}}\Ih{F^{\prime\prime}\trkla{\phi\h\no}\Phi\h\trkla{\phi\h\no}\sinc{n}}\,\\
=:&\,-\Delta\h\phi\h\nn+\frac{r\h\nn}{\sqrt{E\h\trkla{\phi\h\no}}}\Ih{F^\prime\trkla{\phi\h\no}} +\Xi\h\nn\,
\end{split}
\end{align}
with the discrete Laplacian $\Delta\h$ being defined in \eqref{eq:def:discLaplacian} and
\begin{align}\label{eq:def:errormu}
\begin{split}
\Xi\h\nn:=&-\frac{r\h\nn}{4\tekla{E\h\trkla{\phi\h\no}}^{3/2}}\iO\Ih{F^\prime\trkla{\phi\h\no}\Phi\h\trkla{\phi\h\no}\sinc{n}}\dx\,\Ih{F^\prime\trkla{\phi\h\no}}\\
&+\frac{r\h\nn}{2\sqrt{E\h\trkla{\phi\h\no}}}\Ih{F^{\prime\prime}\trkla{\phi\h\no}\Phi\h\trkla{\phi\h\no}\sinc{n}}\,
\end{split}
\end{align}
denoting additional terms added to the standard SAV method to ensure convergence of the scheme (cf.~Remark \ref{rem:taylor} below and Lemma \ref{lem:SAVerror} for rigorous computations).

With this definition \eqref{eq:discscheme:phi} simplifies to
\begin{align}\label{eq:modeldisc:phimu}
\iO\Ih{\trkla{\phi\h\nn-\phi\h\no}\psi\h}\dx+\tau\iO\Ih{\mu\h\nn\psi\h}\dx=\iO\Ih{\Phi\h\trkla{\phi\h\no}\sinc{n}\psi\h}\dx\,.
\end{align}
\begin{remark}\label{rem:taylor}
In the original derivation of the SAV method (cf.~\cite{ShenXuYang2018}), the evolution of the scalar auxiliary variable was approximated by
\begin{align}\label{eq:SAVshort}
r\h\nn-r\h\no=\frac{1}{2\sqrt{E\h\trkla{\phi\h\no}}}\iO\Ih{F^\prime\trkla{\phi\h\no}\trkla{\phi\h\nn-\phi\h\no}}\dx\,,
\end{align}
i.e.~without the last two terms on the right-hand side of \eqref{eq:discscheme:r}.
As the scalar auxiliary variable was originally introduced as the square root of the nonlinear part of the energy, i.e.~ $r=\sqrt{\iO F\trkla{\phi}\dx}$, this approximation was motivated by the formal computations 
\begin{align}
\frac{\mathrm{d}}{\mathrm{d}t} r =\frac{\mathrm{d}}{\mathrm{d}t} \sqrt{\iO F\trkla{\phi}\dx} = \frac{1}{2\sqrt{\iO F\trkla{\phi}\dx}}\iO F^\prime\trkla{\phi}\para{t}\phi\dx\,.
\end{align}
In order to prove convergence, one could interpret the right-hand side of \eqref{eq:SAVshort} as an approximation of $\sqrt{E\h\trkla{\phi\h\nn}}-\sqrt{E\h\trkla{\phi\h\no}}$.
Assuming sufficient regularity, a Taylor expansion suggests that the error of this approximation depends on $\trkla{\phi\h\nn-\phi\h\no}^2$.
Hence, roughly speaking, if the time-interpolation of the discrete solution $\trkla{\phi\h\nn}_n$ is Hölder continuous with an exponent greater than $1/2$ (or uniformly bounded in a corresponding Nikolskii space), the discretization error scaling with $\sum_{n=1}^N\trkla{\phi\h\nn-\phi\h\no}^2$ will vanish.\\
In the stochastic setting, however, this argument does not work anymore, as the regularity restrictions caused by the random walk approximating the Wiener process will only allow for exponents that do not exceed $1/2$.
In order to still obtain a convergent scheme, we augmented \eqref{eq:SAVshort} using higher order terms from the Taylor expansion of $\sqrt{E\h\trkla{\phi\h\nn}}$.
Consequently, the discretization error will scale with approximately $\sum_{n=1}^N\tabs{\phi\h\nn-\phi\h\no}^3$ allowing for convergence even for less regular solutions.
These additional terms, however include second powers of $\phi\h\nn-\phi\h\no$ and, hence, destroy the linearity of the discrete scheme.
To recover the linearity, we replaced one factor $\trkla{\phi\h\nn-\phi\h\no}$ by $\Phi\h\trkla{\phi\h\no}\sinc{n}$ and ended up with \eqref{eq:discscheme:r}.
As we will show in Lemma \ref{lem:SAVerror} below, this additional linearization does not destroy the convergence argument.
Hence, we still have $\abs{r\h\nn-\sqrt{E\h\trkla{\phi\h\nn}}}\rightarrow0$.\\
In order to preserve stability of the scheme, we need to also include the additional terms $\Xi\h\nn$ in \eqref{eq:discscheme:phi}.
In Corollary \ref{cor:muerror}, we will show that these additional terms also vanish when passing to the limit.
Yet, these additional terms are not only important for the analytical treatment of the scheme, but also affect the quality of the numerical simulations significantly.
As we will show in Section \ref{sec:numerics}, neglecting these higher order approximations will lead to wrong results even for very small time-increments.
\end{remark}

\begin{remark}
The assumptions on the initial data stated in \ref{item:initial} are sufficient to obtain convergence of the discrete initial data $\trkla{\phi\h^0,r\h^0}$.
In particular, one can show that
\begin{align*}
\phi\h^0&\rightarrow\phi_0&\text{in~}L^q\trkla{\Om}\text{~for~}q\in[1,\tfrac{2d}{d-2})\,,\\
r\h^0&\rightarrow \sqrt{\iO F\trkla{\phi_0}\dx}\,.
\end{align*}
While the convergence of $\phi\h^0$ is a direct result of \ref{item:initial}, the standard Sobolev embeddings, and Vitali's convergence theorem, the convergence of $r\h^0$ follows from
\begin{multline*}
\abs{r\h^0-\sqrt{\iO F\trkla{\phi_0}\dx}}\leq \abs{r\h^0-\sqrt{\iO F\trkla{\phi\h^0}\dx}} +\abs{\sqrt{\iO F\trkla{\phi\h^0}\dx}-\sqrt{\iO F\trkla{\phi_0}\dx}}\\
\leq \frac{1}{r\h^0+\sqrt{\iO F\trkla{\phi\h^0}\dx}}\abs{\iO\trkla{1-\Ihop}\gkla{F\trkla{\phi\h^0}}}\\
+\frac{1}{\sqrt{\iO F\trkla{\phi\h^0}\dx}+\sqrt{\iO F\trkla{\phi_0}\dx}}\abs{\iO F\trkla{\phi\h^0}-F\trkla{\phi_0}\dx}\,.
\end{multline*}
Due to the lower bound on $F$, both coefficients are bounded by a constant depending on $\gamma^{-1}$.
Due to the growth condition of $F$ and the convergence of $\phi\h^0$, the second term on the right-hand side vanishes.
As shown in (4.8) in \cite{Metzger2023}, the first term is bounded by $C h\rkla{\tabs{\Om}+\norm{\phi\h^0}_{H^1\trkla{\Om}}^6}$ and will therefore also vanish.
\end{remark}
\section{Main results}\label{sec:mainresults}
In this section, we shall state the main results which will be proven in the subsequent sections.
We start by establishing the existence of discrete solutions:
\begin{lemma}\label{lem:existence}
Let assumptions \ref{item:spatialdisc}, \ref{item:timedisc}, \ref{item:initial}, and \ref{item:filtration} hold true.
Then, there exists a sequence $\trkla{\phi\h\nn,r\h\nn}_{n\geq1}$ of $\Uh\times\mathds{R}$-valued random variables, which solves \eqref{eq:discscheme} for each $\omega\in\Omega$, such that the map $\trkla{\phi\h\nn,r\h\nn}\,:\,\Omega\rightarrow\Uh\times\mathds{R}$ is $\mathcal{F}_{t\nn}$-measurable.
\end{lemma}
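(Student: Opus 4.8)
The plan is to solve the coupled system \eqref{eq:discscheme} pathwise, i.e.\ for each fixed $\omega\in\Omega$, by first eliminating the scalar auxiliary variable $r\h\nn$ to obtain a single linear equation for $\phi\h\nn\in\Uh$, and then recovering measurability. The key observation is that, although \eqref{eq:discscheme:phi}--\eqref{eq:discscheme:r} looks like a coupled system, the structure is in fact triangular once we view $r\h\nn$ as an affine function of $\phi\h\nn$. Indeed, \eqref{eq:discscheme:r} expresses $r\h\nn$ explicitly in terms of $\phi\h\nn-\phi\h\no$ through the linear functional
\begin{align*}
\ell\trkla{\phi\h\nn}:=\iO\Ih{F^\prime\trkla{\phi\h\no}\trkla{\phi\h\nn-\phi\h\no}}\dx\,,
\end{align*}
namely $r\h\nn=r\h\no+\trkla{c_1-c_2\,S}\,\ell\trkla{\phi\h\nn}$, where the constants $c_1=\tfrac{1}{2\sqrt{E\h\trkla{\phi\h\no}}}$, $c_2=\tfrac{1}{8\tekla{E\h\trkla{\phi\h\no}}^{3/2}}$ and the scalar $S:=\iO\Ih{F^\prime\trkla{\phi\h\no}\Phi\h\trkla{\phi\h\no}\sinc{n}}\dx$ all depend only on the \emph{given} data $\trkla{\phi\h\no,r\h\no}$ and the increment $\sinc{n}$, and crucially the lower bound $F\geq\gamma>0$ from \ref{item:potentialF} guarantees $E\h\trkla{\phi\h\no}\geq\gamma\tabs{\Om}>0$, so all these constants are well defined. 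Substituting this affine expression for $r\h\nn$ into \eqref{eq:discscheme:phi} yields a purely linear equation for $\phi\h\nn\in\Uh$ alone.

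Next I would set this up as a square linear system. Writing the resulting equation as $\mathcal{A}\trkla{\phi\h\nn}=b$ with $\phi\h\nn,b\in\Uh$ (identified with $\mathds{R}^{\dim\Uh}$ via the nodal basis) and $\mathcal{A}\,:\,\Uh\to\Uh$ a linear operator, existence and uniqueness follow from invertibility of $\mathcal{A}$, which for a square system is equivalent to positive definiteness of its symmetric part, or to the implication $\mathcal{A}\psi\h=0\Rightarrow\psi\h=0$. To see coercivity I would test the homogeneous version of the assembled equation with $\psi\h=\phi\h\nn$ and with the corresponding value of $r\h\nn$: the diffusion term $\tau\iO\abs{\nabla\phi\h\nn}^2\dx$ together with the $L^2\h$-mass term $\norm{\phi\h\nn}\h^2$ is nonnegative, and the auxiliary-variable coupling is designed precisely so that the cross terms telescope into a nonnegative contribution of the form $2\tabs{r\h\nn}^2$ (this is exactly the stability structure that Lemma \ref{lem:energy} will later exploit). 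This shows that the only solution of the homogeneous problem is $\phi\h\nn=0$, $r\h\nn=0$, hence $\mathcal{A}$ is injective on the finite-dimensional space $\Uh$ and therefore bijective, giving a unique pathwise solution $\trkla{\phi\h\nn,r\h\nn}$.

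Finally I would address measurability, which is the step requiring the most care. For fixed $\omega$, the data $\trkla{\phi\h\no,r\h\no}$ are $\mathcal{F}_{t\no}$-measurable by an induction hypothesis (the base case $\trkla{\phi\h^0,r\h^0}$ being deterministic), and $\sinc{n}$ is $\mathcal{F}_{t\nn}^\tau$-measurable by \ref{item:increments}; by \ref{item:filtration} this makes all the coefficients of $\mathcal{A}$ and the right-hand side $b$ measurable functions of $\omega$ with values in the space of matrices, respectively $\Uh$. Since matrix inversion is continuous on the open set of invertible matrices and we have just shown $\mathcal{A}\trkla{\omega}$ is invertible for every $\omega$, the map $\omega\mapsto\phi\h\nn\trkla{\omega}=\mathcal{A}\trkla{\omega}^{-1}b\trkla{\omega}$ is $\mathcal{F}_{t\nn}$-measurable, and then $r\h\nn$ is measurable as an affine function of $\phi\h\nn$. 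The main obstacle I anticipate is purely bookkeeping: verifying that the algebraic substitution of $r\h\nn$ does not introduce a dependence that breaks either the coercivity of the symmetric part or the measurability chain. Both potential pitfalls are neutralized by the positivity $E\h\trkla{\phi\h\no}\geq\gamma\tabs{\Om}$, which keeps every denominator bounded away from zero uniformly in $\omega$, so no measurable-selection subtleties (e.g.\ from vanishing determinants) arise.
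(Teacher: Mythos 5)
Your proposal is correct and follows essentially the same route as the paper: for fixed $\omega$ the system is linear and finite-dimensional, uniqueness of the homogeneous problem is obtained by testing with $\phi\h\nn$ and using \eqref{eq:discscheme:r} to collapse the coupling terms into $2\tau\tabs{r\h\nn}^2$ (yielding $\norm{\hat\phi}\h^2+\tau\norm{\nabla\hat\phi}_{L^2\trkla{\Om}}^2+2\tau\tabs{\hat r}^2=0$), and existence plus measurability then follow from injectivity and the pathwise uniqueness. Your explicit elimination of $r\h\nn$ and the continuity-of-matrix-inversion argument for measurability are only cosmetic variations on the paper's argument, which instead cites a general measurability result for uniquely solvable pathwise problems.
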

We shall then prove the following theorem stating the convergence towards pathwise unique martingale solutions:
\begin{theorem}\label{thm:maintheorem}
Let the assumptions \ref{item:spatialdisc}, \ref{item:timedisc}, \ref{item:potentialF}, \ref{item:initial}, \ref{item:sigma}, \ref{item:filtration}, \ref{item:increments}, \ref{item:randomvariables}, and \ref{item:color}  hold true.
Then, there exists a filtered probability space $\trkla{\widetilde{\Omega},\widetilde{\mathcal{A}},\widetilde{\mathcal{F}},\widetilde{\Prob}}$ and a sequence of random variables $\trkla{\widetilde{\phi}\h\tl}_{h,\tau}$ on $\widetilde{\Omega}$ whose laws coincide with the laws of linear time-interpolants $\trkla{{\phi}\h\tl}$ of the discrete solutions to \eqref{eq:discscheme} established in Lemma \ref{lem:existence}.
Furthermore, there exists a $\widetilde{\mathcal{F}}$-measurable $\mathcal{Q}$-Wiener process $\widetilde{W}$ on $\widetilde{\Omega}$ and a progressively $\widetilde{\mathcal{F}}$-measurable process
\begin{align*}
\widetilde{\phi}\in L^{2\p}_{\operatorname{weak-}\trkla{*}}&\trkla{\widetilde{\Omega};L^\infty\trkla{0,T;H^1\trkla{\Om}}}\cap L^{2\p}\trkla{\widetilde{\Omega};C^{0,\trkla{\p-1}/\trkla{2\p}}\trkla{\tekla{0,T};L^2\trkla{\Om}}}\\
&\cap L^{2\p}\trkla{\widetilde{\Omega};L^2\trkla{0,T;H^2\trkla{\Om}}}
\end{align*}
for any $\p\in\trkla{1,\infty}$ such that $\widetilde{\Prob}$-almost surely
\begin{align*}
\lim_{\trkla{h,\tau}\rightarrow\trkla{0,0}}\widetilde{\phi}\h\tl=\widetilde{\phi}&&\text{in~}C\trkla{\tekla{0,T};L^s\trkla{\Om}}
\end{align*}
for $s\in[1,\tfrac{2d}{d-2})$.
This process is pathwise unique and satisfies
\begin{align}\label{eq:solution}
\widetilde{\phi}\trkla{t}-\phi_0+\int_0^t \rkla{-\Delta\widetilde{\phi}+F^\prime\trkla{\widetilde{\phi}}}\ds=\int_0^t\Phi\trkla{\widetilde{\phi}}\mathrm{d}\widetilde{W}
\end{align}
$\widetilde{\Prob}$-a.s. in $L^2\trkla{\Om}$ for all $t\in\tekla{0,T}$.
\end{theorem}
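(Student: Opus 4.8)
The plan is to follow the three-step programme sketched in the introduction: first derive a priori bounds that are uniform in $\trkla{h,\tau}$, then extract converging subsequences on a new probability space by a Skorokhod-type argument, and finally pass to the limit in \eqref{eq:modeldisc:phimu} and identify the stochastic integral, before establishing pathwise uniqueness. The cornerstone is a discrete energy estimate (which I expect to package as Lemma \ref{lem:energy}). Following the deterministic SAV philosophy, I would test \eqref{eq:modeldisc:phimu} with the discrete chemical potential $\mu\h\nn$ and combine the outcome with the evolution law \eqref{eq:discscheme:r} for $r\h\nn$, which is precisely the discrete analogue of $\tfrac{\mathrm d}{\mathrm dt}\tfrac12\abs{r}^2=r\,\tfrac{\mathrm d}{\mathrm dt}r$. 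The implicit character of $\mu\h\nn$ produces the dissipative term $\tau\norm{\mu\h\nn}\h^2$ needed for stability, while the explicit, $\mathcal F_{t\no}^\tau$-measurable increment $\sinc{n}$ lets me take expectations and exploit independence. The augmentation terms $\Xi\h\nn$ from \eqref{eq:def:errormu} are designed exactly so that the \Ito-type second-order contributions of $\sinc{n}$ cancel against the quadratic remainder of the Taylor expansion underlying \eqref{eq:discscheme:r}; the polynomial lower bound on $F$ in \ref{item:potentialF} supplies the cancellation effects alluded to in \eqref{eq:ehtrick}. Controlling higher moments would rely on the discrete Burkholder--Davis--Gundy inequalities of Lemma \ref{lem:BDG}, yielding
\begin{align*}
\expected{\norm{\phi\h\tpm}_{L^\infty\trkla{0,T;H^1\trkla{\Om}}}^{2\p}}+\expected{\norm{\phi\h\tpm}_{L^2\trkla{0,T;H^2\trkla{\Om}}}^{2\p}}\leq C
\end{align*}
for every $\p\in\trkla{1,\infty}$, where the $H^2$-bound follows from the $L^2$-bound on $\Delta\h\phi\h\nn$ via the defining relation \eqref{eq:def:discLaplacian} and elliptic regularity on the convex domain $\Om$.

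Next I would establish fractional time regularity. Because the random-walk approximation of $W$ only permits Hölder exponents below $1/2$, I would bound $\phi\h\tl$ uniformly in expectation in the Nikolskii space $N^{\alpha,2}\trkla{0,T;L^2\trkla{\Om}}$ for $\alpha<1/2$, again invoking Lemma \ref{lem:BDG} for the stochastic increments and the energy bound for the deterministic part. Together with the $L^\infty\trkla{0,T;H^1\trkla{\Om}}$ bound this gives, through an Aubin--Lions--Simon compact embedding, tightness of the laws of $\phi\h\tl$ in $C\trkla{\tekla{0,T};L^s\trkla{\Om}}$ for $s\in[1,\tfrac{2d}{d-2})$. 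I would simultaneously track the laws of $r\h\tl$, of the discrete process $\bs{\xi}\h^{m,\tau}$ from \eqref{eq:deffinteprocess}, and of the discrete stochastic integral. Since several of the carrier spaces are equipped with weak or weak-$(*)$ topologies and are only quasi-Polish, the classical Skorokhod theorem does not apply; this is where Jakubowski's generalization enters, producing the space $\trkla{\widetilde{\Omega},\widetilde{\mathcal A},\widetilde{\mathcal F},\widetilde{\Prob}}$, the equidistributed copies $\widetilde{\phi}\h\tl$, and the $\widetilde{\Prob}$-a.s.\ limit $\widetilde{\phi}$ with the stated convergence in $C\trkla{\tekla{0,T};L^s\trkla{\Om}}$, as carried out in Section \ref{sec:compactness}.

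I would then pass to the limit in \eqref{eq:modeldisc:phimu} rewritten on $\widetilde{\Omega}$. The diffusion term converges by strong convergence of $\nabla\widetilde{\phi}\h\tl$; the nonlinear term converges to $F^\prime\trkla{\widetilde{\phi}}$ because $r\h\nn/\sqrt{E\h\trkla{\phi\h\no}}\to1$ by Lemma \ref{lem:SAVerror}, combined with the growth bounds in \ref{item:potentialF}, a.s.\ strong convergence, and Vitali's theorem; the nodal interpolation defects vanish by Lemma \ref{lem:interpolation}; and the augmentation terms $\Xi\h\nn$ vanish by Corollary \ref{cor:muerror}. The delicate point, and what I expect to be the main obstacle, is the identification of the stochastic term: following the martingale-characterization method of \cite{Ondrejat2022}, I would show that the limiting noise $\widetilde{W}$ is a $\mathcal Q$-Wiener process and that $\int_0^t\Phi\trkla{\widetilde{\phi}}\,\mathrm d\widetilde{W}$ arises as the limit of $\sum_n\Phi\h\trkla{\phi\h\no}\sinc{n}$ by matching the quadratic and cross variations of the candidate martingales against those forced by \ref{item:randomvariables} and \ref{item:color}; this is the content of Section \ref{sec:limit} and yields \eqref{eq:solution}. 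The difficulty here is that stability forced the implicit, $\mathcal F_{t\nn}^\tau$-measurable structure used in step one, whereas the martingale identification requires the explicit, independent-increment structure, so the two viewpoints must be reconciled carefully on $\widetilde{\Omega}$.

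Finally, pathwise uniqueness (Section \ref{sec:uniqueness}) would follow by considering the difference of two solutions driven by the same $\widetilde{W}$ with the same initial datum, testing the equation for the difference with the difference itself, applying the \Ito\ formula, using the Lipschitz continuity of $\sigma$ from \ref{item:sigma} to absorb the noise contribution, and closing with Gronwall's lemma. The existence of a martingale solution from the limit passage together with this pathwise uniqueness then yields the asserted pathwise unique martingale solution.
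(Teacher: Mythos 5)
Your proposal follows essentially the same route as the paper: a modified energy estimate obtained by testing with $\mu\h\nn$ and combining with \eqref{eq:discscheme:r}, higher-moment control via Lemma \ref{lem:BDG}, Nikolskii-in-time bounds feeding a Simon-type compactness argument, Jakubowski's theorem to pass to a new probability space, identification of the stochastic integral through the martingale's quadratic and cross variations as in \cite{Ondrejat2022}, and pathwise uniqueness via an It\^o/Gronwall argument. The only point you leave implicit is that the cubic nonlinearity forces the Gronwall constant in the uniqueness step to be random (a local monotonicity estimate with weight depending on the a.s.~finite $L^\infty\trkla{0,T;H^1\trkla{\Om}}$-norms, handled in the paper by the exponential-weight trick of \cite{Liu2013}), but this is the standard closure of exactly the argument you describe.
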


\begin{proof}[Sketch of the proof]
The proof of Theorem \ref{thm:maintheorem} can be found in Sections \ref{sec:regularity}-\ref{sec:uniqueness}.
For the reader's convenience we provide a short overview:\\
In Section \ref{sec:regularity}, we start by establishing $\trkla{h,\tau}$-independent regularity results for the family of random variables satisfying \eqref{eq:discscheme}.
In particular, we establish a modified version of the energy estimate in Lemma \ref{lem:energy} which provides control over the full $H^1\trkla{\Om}$-norm.
In Corollary \ref{cor:h2} we improve these results by establishing a discrete $L^q\trkla{\Omega;L^2\trkla{0,T;L^2\trkla{\Om}}}$-bound ($q\in[1,\infty)$) on the discrete Laplacian of $\phi\h\nn$.
To obtain improved regularity with respect to time, we shall show that the phase-field parameter is uniformly bounded in a suitable Nikolskii space (cf.~Lemma \ref{lem:nikolskii}).
Section \ref{sec:regularity} is concluded by an estimate on the approximation error of the scalar auxiliary variable.\\
In Section \ref{sec:compactness}, we apply Jakubowski's theorem and deduce the existence of a sequence of random variables defined on a different probability space $\trkla{\widetilde{\Omega},\widetilde{\mathcal{A}},\widetilde{\Prob}}$ that converges $\widetilde{\Prob}$-almost surely and whose laws coincide with the laws of the time-interpolants of the fully discrete solutions (cf.~Theorem \ref{thm:jakubowski}).
By combining this convergence result with the already established uniform bounds, we can deduce additional convergence results which are collected in Lemma \ref{lem:convergence}.\\
In Section \ref{sec:limit}, we pass to the limit $\trkla{h,\tau}\rightarrow\trkla{0,0}$ and establish convergence towards martingale solutions.
The main difficulty of this step is showing that the right-hand side of \eqref{eq:discscheme:phi} converges towards an \Ito-integral driven by a $\mathcal{Q}$-Wiener process.
For this reason, we show in Lemma \ref{lem:martingale} that the limit process is a martingale with respect to the filtration $\trkla{\widetilde{\mathcal{F}}_t}_{t\in\tekla{0,T}}$ and derive expressions for its quadratic and cross variation processes in Lemma \ref{eq:variations}.
This allows us to follow the approach introduced in \cite{BrzezniakOndrejat, Ondrejat2010, HofmanovaSeidler} (see also \cite{HofmanovaRoegerRenesse,BreitFeireislHofmanova, Ondrejat2022}) and to show that this martingale can be written as an \Ito-integral with the Wiener process $\widetilde{W}$ (cf.~Lemma \ref{lem:itointegral}).\\
The pathwise uniqueness of the obtained martingale solutions is established in Section \ref{sec:uniqueness} using a local monotonicity argument.
The pathwise uniqueness also allows us to extend the convergence results stated in the prior sections for subsequences to the complete sequence.
\end{proof}

\begin{remark}
As the martingale solutions obtained in Theorem \ref{thm:maintheorem} are pathwise unique, the Yamada--Watanabe theorem provides the existence of strong solutions (cf.~\cite{RoecknerSchmulandZhang2008} or Theorem E.0.8 in \cite{LiuRoeckner}).
\end{remark}
If we approximate a given $\mathcal{Q}$-Wiener process $W$ satisfying \ref{item:stochastic} and \ref{item:Wcolor} via
\begin{align}\label{eq:approxW}
\bs{\xi}\h^{m,\tau}=\sum_{k\in\mathds{Z}\h}\trkla{W\trkla{t^m},\g{k}}_{L^2\trkla{\Om}}\g{k}\,,
\end{align}
$\bs{\xi}_h^{m,\tau}$ still satisfies \ref{item:filtration}-\ref{item:color} with Gaussian random variables $\xi_k^{n,\tau}$. 
For this choice, we can use the pathwise uniqueness established in Theorem \ref{thm:maintheorem} to establish convergence towards strong solutions:
\begin{theorem}\label{thm:strongconvergence}
Let $W$ be a Wiener process defined on $\trkla{\Omega,\mathcal{A},\mathcal{F},\Prob}$ satisfying \ref{item:stochastic} and \ref{item:Wcolor} with finite dimensional approximations given by \eqref{eq:approxW} and let the assumptions \ref{item:spatialdisc}, \ref{item:timedisc}, \ref{item:potentialF}, \ref{item:initial}, \ref{item:sigma}, \ref{item:filtration}, \ref{item:increments}, and \ref{item:randomvariables} hold true.
Then, there exists a unique, progressively $\mathcal{F}$-measurable process
\begin{align*}
\phi\in L^{2\p}_{\operatorname{weak-}\trkla{*}}&\trkla{\Omega;L^\infty\trkla{0,T;H^1\trkla{\Om}}}\cap L^{2\p}\trkla{\Omega;C^{0,\trkla{\p-1}/\trkla{2\p}}\trkla{\tekla{0,T};L^2\trkla{\Om}}}\\
&\cap L^{2\p}\trkla{\Omega;L^2\trkla{0,T;H^2\trkla{\Om}}}
\end{align*}
for any $\p\in\trkla{1,\infty}$, which is the limit of the linear time-interpolants $\trkla{\phi\h\tl}_{h,\tau}$ of the discrete solutions to \eqref{eq:discscheme} established in Lemma \ref{lem:existence}.
In particular, we have $\Prob$-almost surely
\begin{align*}
\lim_{\trkla{h,\tau}\rightarrow\trkla{0,0}}\phi\h\tl=\phi&&\text{in~}C\trkla{\tekla{0,T};L^s\trkla{\Om}}
\end{align*}
for $s\in[1,\tfrac{2d}{d-2})$. 
This process satisfies
\begin{align*}
\phi\trkla{t}-\phi_0+\int_0^t\rkla{-\Delta\phi}+F^\prime\trkla{\phi}\ds=\int_0^t\Phi\trkla{\phi}\mathrm{d}W
\end{align*}
$\Prob$-almost surely in $L^2\trkla{\Om}$ for all $t\in\tekla{0,T}$.
\end{theorem}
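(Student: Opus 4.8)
The plan is to derive Theorem~\ref{thm:strongconvergence} from Theorem~\ref{thm:maintheorem} by upgrading martingale convergence (convergence in law on an auxiliary probability space) to pathwise convergence on the \emph{original} space $\trkla{\Omega,\mathcal{A},\mathcal{F},\Prob}$ carrying the fixed Wiener process $W$. The key tool, as announced in the introduction, is the generalized Gy\"ongy--Krylov characterization of convergence in probability adapted to quasi-Polish spaces, which converts the combination of (i) tightness of laws and (ii) pathwise uniqueness of the limiting SPDE into genuine convergence in probability of the discrete approximations on the original space.

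\textbf{Step 1 (tightness and joint laws).} First I would fix the specific approximation \eqref{eq:approxW} of the given $W$ and verify that it satisfies \ref{item:filtration}--\ref{item:color} with Gaussian $\xi_k^{n,\tau}$, so that all hypotheses of Theorem~\ref{thm:maintheorem} apply. The regularity bounds from Section~\ref{sec:regularity} (the modified energy estimate of Lemma~\ref{lem:energy}, the $H^2$-bound of Corollary~\ref{cor:h2}, and the Nikolskii estimate of Lemma~\ref{lem:nikolskii}) are uniform in $\trkla{h,\tau}$ and hence yield tightness of the laws of $\trkla{\phi\h\tl}$ in the path space $C\trkla{\tekla{0,T};L^s\trkla{\Om}}$ for $s\in[1,\tfrac{2d}{d-2})$, exactly as exploited in the proof of Theorem~\ref{thm:maintheorem}. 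To run Gy\"ongy--Krylov, I would consider \emph{pairs} of discretization parameters and study the joint laws of $\trkla{\phi_{h,\tau}\tl,\phi_{h^\prime,\tau^\prime}\tl,W}$ on the product path space. Since both components are tight and $W$ is fixed, the joint family is tight on the enlarged (quasi-Polish) space.

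\textbf{Step 2 (both marginal limits solve the same equation, hence coincide).} Given any subsequence of pairs, Jakubowski's theorem provides a further subsequence and an auxiliary space on which copies of the pair converge almost surely, together with a limiting Wiener process $\widetilde W$. Passing to the limit in the discrete scheme \eqref{eq:discscheme} exactly as in Sections~\ref{sec:compactness}--\ref{sec:limit}, each of the two marginal limits is a martingale solution of \eqref{eq:solution} driven by the \emph{same} $\widetilde W$. By the pathwise uniqueness proved in Section~\ref{sec:uniqueness}, the two marginal limits coincide almost surely. This is precisely the diagonal-concentration hypothesis that Gy\"ongy--Krylov requires: every joint subsequential limit is supported on the diagonal. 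The quasi-Polish version of the characterization then yields that the original sequence $\trkla{\phi\h\tl}$ converges in probability (on $\trkla{\Omega,\mathcal{A},\mathcal{F},\Prob}$, with respect to the topology of $C\trkla{\tekla{0,T};L^s\trkla{\Om}}$) to a limit $\phi$.

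\textbf{Step 3 (identification, regularity, and measurability).} From convergence in probability I would extract an almost surely convergent subsequence to obtain the stated $\widetilde\Prob$-a.s., here $\Prob$-a.s., convergence in $C\trkla{\tekla{0,T};L^s\trkla{\Om}}$. The uniform bounds of Section~\ref{sec:regularity} transfer to $\phi$ by lower semicontinuity, giving membership in the asserted spaces $L^{2\p}_{\operatorname{weak-}\trkla{*}}\trkla{\Omega;L^\infty\trkla{0,T;H^1\trkla{\Om}}}\cap L^{2\p}\trkla{\Omega;C^{0,\trkla{\p-1}/\trkla{2\p}}\trkla{\tekla{0,T};L^2\trkla{\Om}}}\cap L^{2\p}\trkla{\Omega;L^2\trkla{0,T;H^2\trkla{\Om}}}$ for every $\p\in\trkla{1,\infty}$. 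Because the convergence now happens on the original space with the genuine $W$ (not an auxiliary one), passing to the limit in \eqref{eq:modeldisc:phimu} identifies $\phi$ as a strong solution of the SPDE driven by $W$; progressive $\mathcal{F}$-measurability follows since each $\phi\h\tl$ is $\mathcal{F}$-adapted by Lemma~\ref{lem:existence}, a property preserved under the limit. Uniqueness of $\phi$ is inherited from the pathwise uniqueness of Theorem~\ref{thm:maintheorem}, and this uniqueness also guarantees that the \emph{whole} sequence converges, not merely a subsequence.

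\textbf{The main obstacle} I anticipate is verifying the precise hypotheses of the quasi-Polish generalization of Gy\"ongy--Krylov in the present setting: the path space $C\trkla{\tekla{0,T};L^s\trkla{\Om}}$ is Polish, but the natural space in which the uniform bounds live (e.g.\ the weak-$*$ measurable $L^\infty\trkla{0,T;H^1}$ component, or the $L^2_{\mathrm{weak-}(*)}$-type duals introduced in Section~\ref{sec:notation}) is only quasi-Polish, so care is needed to state convergence in the correct topology and to ensure that the identification of \emph{both} subsequential limits as solutions of the same equation is legitimate there. Concretely, the delicate point is re-running the stochastic-integral identification of Section~\ref{sec:limit} simultaneously for the two marginals against a common limiting Wiener process, so that pathwise uniqueness can be invoked to force them onto the diagonal.
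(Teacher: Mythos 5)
Your proposal follows essentially the same route as the paper: pairs of subsequences, tightness of the joint laws on an extended path space that includes the fixed Wiener process, Jakubowski to identify both marginal limits as martingale solutions driven by the same limiting Wiener process, pathwise uniqueness to force concentration on the diagonal, and the quasi-Polish Gy\"ongy--Krylov characterization from \cite{BreitFeireislHofmanova} to obtain convergence in probability on the original space. The only cosmetic difference is that the paper carries the full quadruple $\trkla{\phi\h\tl,r\h\tl,\Delta\h\phi\h\tp,\mu\h\tp}$ through the product space $\mathcal{Y}\times\mathcal{Y}$, which is exactly the care you flag as the ``main obstacle.''
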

The proof of this theorem relies on a generalization of the Gyöngy--Krylov characterization of convergence in probability which was established in \cite{BreitFeireislHofmanova} and can be found in Section \ref{sec:strong}.

\section{Existence of discrete solutions}\label{sec:discexistence}
In this section, we shall establish the existence of discrete solutions to \eqref{eq:discscheme}, i.e.~we prove Lemma \ref{lem:existence}.\\
We start by showing that for each fixed $\omega\in\Omega$, \eqref{eq:discscheme} has a unique solution:
Since $\sinc{n}$ is given for fixed $\omega$, \eqref{eq:discscheme} is linear with respect to the unknown quantities $\phi\h\nn$, $r\h\nn$.
As for finite dimensional linear problems, uniqueness of possible solutions guarantees the existence of solutions for arbitrary right-hand sides, we assume that for given $\phi\h\no$, $r\h\no$, and $\sinc{n}$ there exist two solutions.
We denote their difference by $\hat{\phi}$ and $\hat{r}$.
Obviously, these differences satisfy
\begin{multline}\label{eq:existence1}
\iO\Ih{\hat{\phi}\psi\h}\dx +\tau\iO\nabla\hat{\phi}\cdot\nabla\psi\h\dx+\tau\frac{\hat{r}}{\sqrt{E\h\trkla{\phi\h\no}}}\iO\Ih{F^\prime\trkla{\phi\h\no}\psi\h}\dx\\
-\tfrac14\tau\frac{\hat{r}}{\tekla{E\h\trkla{\phi\h\no}}^{3/2}}\iO\Ih{F^\prime\trkla{\phi\h\no}\Phi\h\trkla{\phi\h\no}\sinc{n}}\dx\iO\Ih{F^\prime\trkla{\phi\h\no}\psi\h}\dx\\
+\tfrac12\tau\frac{\hat{r}}{\sqrt{E\h\trkla{\phi\h\no}}}\iO\Ih{F^{\prime\prime}\trkla{\phi\h\no}\Phi\h\trkla{\phi\h\no}\sinc{n}\psi\h}\dx=0\,,
\end{multline}
for all $\psi\h\in\Uh$ and
\begin{align}\label{eq:existence2}
\begin{split}
\hat{r}=&\,\frac{1}{2\sqrt{E\h\trkla{\phi\h\no}}}\iO\Ih{F^\prime\trkla{\phi\h\no}\hat{\phi}}\dx\\
&-\frac{1}{8\tekla{E\h\trkla{\phi\h\no}}^{3/2}}\iO\Ih{F^\prime\trkla{\phi\h\no}\Phi\h\trkla{\phi\h\no}\sinc{n}}\dx\iO\Ih{F^\prime\trkla{\phi\h\no}\hat{\phi}}\dx\\
&+\frac{1}{4\sqrt{E\h\trkla{\phi\h\no}}}\iO\Ih{F^{\prime\prime}\trkla{\phi\h\no}\hat{\phi}\Phi\h\trkla{\phi\h\no}\sinc{n}}\dx\,.
\end{split}
\end{align}
Choosing $\psi\h\equiv\hat{\phi}\trkla{\omega}$ in \eqref{eq:existence1} and applying \eqref{eq:existence2}, we obtain
\begin{align}
\iO\Ih{\tabs{\hat{\phi}}^2}\dx+\tau\iO\tabs{\nabla\hat{\phi}}^2\dx+2\tau\tabs{\hat{r}}^2=0\,,
\end{align}
i.e.~uniqueness and therefore existence of solutions.
The uniqueness of the solution for each $\omega\in\Omega$ entails its measurability (see e.g.~Theorem 6.7 in \cite{Grillmeier2020}).
Hence, Lemma \ref{lem:existence} is proven.

\section{Regularity results}\label{sec:regularity}
In this section, we shall establish uniform regularity results for the discrete solutions.
Our first result is based on the unconditional stability of the scheme with respect to the modified SAV energy.
In the absence of force terms, one can (pathwise) choose $\psi\h=\mu\h\nn$ in \eqref{eq:discscheme:phi} (or \eqref{eq:modeldisc:phimu}, respectively).
Substituting \eqref{eq:discscheme:r} provides
\begin{multline}\label{eq:formalenergy}
\tfrac12\norm{\nabla\phi\h\nn}_{L^2\trkla{\Om}}^2 + \tabs{r\h\nn}^2 +\tfrac12\norm{\nabla\phi\h\nn-\nabla\phi\h\no}_{L^2\trkla{\Om}}^2+\tabs{r\h\nn-r\h\no}^2 +\tau\norm{\mu\h\nn}\h^2\\
\leq \tfrac12 \norm{\nabla\phi\h\no}_{L^2\trkla{\Om}}^2+\tabs{r\h\no}^2\,,
\end{multline}
i.e.~the SAV energy $\tfrac12\norm{\nabla\phi\h\nn}_{L^2\trkla{\Om}}^2+\tabs{r\h\nn}^2$ is non-increasing over time.
As $\tabs{r\h\nn}^2\neq \iO\Ih{F\trkla{\phi\h\nn}}\dx$, this result does not guarantee that the original energy $\iO\tfrac12\tabs{\nabla\phi\h\nn}^2 +\Ih{F\trkla{\phi\h\nn}}\dx$ is also non-increasing.
This in particular entails that the SAV energy does not provide any control over the deviation of the phase-field parameter from $\pm1$.
Hence, despite \eqref{eq:formalenergy} being valid, oscillations may occur if the time-step size is too large.
From the analytical point of view, however, \eqref{eq:formalenergy} still allows to control the solution by constants depending only on the given data.
A further downside of the modified energy is the missing bound on the $L^2\trkla{\Om}$-norm of $\phi\h\nn$, as \eqref{eq:formalenergy} only contains the $H^1\trkla{\Om}$-seminorm.
To overcome this obstacle, we choose $\psi\h=\phi\h\nn$ in \eqref{eq:modeldisc:phimu} and obtain via Young's inequality
\begin{align}\label{eq:formall2}
\tfrac12\norm{\phi\h\nn}\h^2+\tfrac12\norm{\phi\h\nn-\phi\h\no}\h^2\leq \tfrac12\norm{\phi\h\no}\h^2 +\tfrac34\tau\norm{\mu\h\nn}\h^2+\tfrac13\tau\norm{\phi\h\nn}\h^2\,.
\end{align}
Hence, combining \eqref{eq:formalenergy} and \eqref{eq:formall2} and applying a discrete version of Gronwall's inequality allows us to estimate the $H^1\trkla{\Om}$-norm of $\phi\h\nn$ by a constant depending only on the given data.
In the presence of (stochastic) source terms, these estimates are more intricate as we also need to estimate the source terms using Gronwall's inequality.
The arguments showing that the proposed scheme is stable in the sense that the $H^1\trkla{\Om}$-norm of $\phi\h\nn$ and the modulus of $r\h\nn$ are bounded by a constant depending only on the given data are made rigorous in the following lemma:

\begin{lemma}\label{lem:energy}
Let the assumptions \ref{item:timedisc}, \ref{item:spatialdisc}, \ref{item:potentialF}, \ref{item:initial}, \ref{item:sigma}, and \ref{item:filtration}--\ref{item:color} hold true.
Then, for every $1\leq\p<\infty$, there exists a constant $C\equiv C\trkla{\p,T}>0$ independent of $h$ and $\tau$ such that
\begin{multline*}
\expected{\max_{0\leq n\leq N}\norm{\phi\h^n}_{H^1\trkla{\Om}}^{2\p}}+\expected{\max_{0\leq n\leq N}\tabs{r\h^n}^{2\p}}\\
+\expected{\rkla{\sum_{n=1}^N\norm{\phi\h\nn-\phi\h\no}_{H^1\trkla{\Om}}^2}^\p}+ \expected{\rkla{\sum_{n=1}^N\tabs{r\h\nn-r\h\no}^2}^\p}\\
+\expected{\rkla{\sum_{n=1}^N\tau\norm{\mu\h\nn}_{L^2\trkla{\Om}}^2}^\p}\leq C\,.
\end{multline*}
\end{lemma}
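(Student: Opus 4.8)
The plan is to derive a fully discrete analogue of the energy identity \eqref{eq:formalenergy} that keeps the stochastic source term, and then to control that source term by a combination of the Burkholder--Davis--Gundy bounds of Lemma \ref{lem:BDG}, absorption into the dissipative quantities on the left-hand side, and a stochastic discrete Gronwall argument. Throughout, the lower bound $E\h\trkla{\phi\h\no}\geq\gamma\abs{\Om}$ furnished by \ref{item:potentialF} is used to bound the denominators $\sqrt{E\h\trkla{\phi\h\no}}$ and $\tekla{E\h\trkla{\phi\h\no}}^{3/2}$ appearing in \eqref{eq:discscheme} and \eqref{eq:defmu}.

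First I would test \eqref{eq:modeldisc:phimu} pathwise with $\psi\h=\mu\h\nn$. Expanding $\mu\h\nn$ via \eqref{eq:defmu}, the term $-\iO\Ih{\Delta\h\phi\h\nn\trkla{\phi\h\nn-\phi\h\no}}\dx$ produces the gradient telescope $\tfrac12\norm{\nabla\phi\h\nn}_{L^2\trkla{\Om}}^2-\tfrac12\norm{\nabla\phi\h\no}_{L^2\trkla{\Om}}^2+\tfrac12\norm{\nabla\phi\h\nn-\nabla\phi\h\no}_{L^2\trkla{\Om}}^2$ through \eqref{eq:def:discLaplacian}, while the $F^\prime$-term and the augmentation $\Xi\h\nn$ are designed so that, tested against $\trkla{\phi\h\nn-\phi\h\no}$, they reproduce exactly $2r\h\nn\trkla{r\h\nn-r\h\no}$ after substituting the auxiliary-variable update \eqref{eq:discscheme:r}; this is precisely where the quadratic-in-$\Phi\h\trkla{\phi\h\no}\sinc{n}$ corrections in $\Xi\h\nn$ cancel the matching corrections in \eqref{eq:discscheme:r}. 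Polarisation then yields $\abs{r\h\nn}^2-\abs{r\h\no}^2+\abs{r\h\nn-r\h\no}^2$, and one arrives at the per-step identity
\begin{multline*}
\tfrac12\norm{\nabla\phi\h\nn}_{L^2\trkla{\Om}}^2+\abs{r\h\nn}^2+\tfrac12\norm{\nabla\phi\h\nn-\nabla\phi\h\no}_{L^2\trkla{\Om}}^2+\abs{r\h\nn-r\h\no}^2+\tau\norm{\mu\h\nn}\h^2\\
=\tfrac12\norm{\nabla\phi\h\no}_{L^2\trkla{\Om}}^2+\abs{r\h\no}^2+\iO\Ih{\Phi\h\trkla{\phi\h\no}\sinc{n}\mu\h\nn}\dx\,.
\end{multline*}
To recover the $L^2\trkla{\Om}$-norm missing from this seminorm estimate, I would additionally test \eqref{eq:modeldisc:phimu} with $\psi\h=\phi\h\nn$ exactly as in \eqref{eq:formall2}.

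The heart of the proof, and the main obstacle, is the stochastic term $\iO\Ih{\Phi\h\trkla{\phi\h\no}\sinc{n}\mu\h\nn}\dx$ after summing over $n$: because $\mu\h\nn$ depends on the current-step quantities $\phi\h\nn$, $r\h\nn$ and, through $\Xi\h\nn$, explicitly on $\sinc{n}$, this is \emph{not} a martingale increment and has nonzero conditional expectation. I would resolve this by inserting \eqref{eq:defmu} and splitting the sum into three parts: (a) a genuine discrete martingale whose integrand is $\mathcal{F}_{t\no}^\tau$-measurable multiplied by $\sinc{n}$, to which the maximal estimate \eqref{eq:bdg2} applies; (b) the It\^o-type corrections, quadratic in $\sinc{n}$, coming from the cross-correlation between the noise and the $\sinc{n}$-dependence of $\mu\h\nn$, whose conditional expectation is of order $\tau$ by $\expected{\tabs{\xi_k^{n,\tau}}^2}=1$ in \ref{item:randomvariables}; and (c) remainder terms of genuinely higher order in the increments, absorbable into $\norm{\phi\h\nn-\phi\h\no}_{H^1\trkla{\Om}}^2$, $\abs{r\h\nn-r\h\no}^2$ and $\tau\norm{\mu\h\nn}\h^2$ via Young's inequality. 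In part (b) the contribution $-\iO\Ih{\Phi\h\trkla{\phi\h\no}\sinc{n}\Delta\h\phi\h\nn}\dx=\iO\nabla\phi\h\nn\cdot\nabla\trkla{\Phi\h\trkla{\phi\h\no}\sinc{n}}\dx$ is where the colored-noise assumption \ref{item:color} is indispensable, since it controls $\norm{\nabla\trkla{\Phi\h\trkla{\phi\h\no}\sinc{n}}}_{L^2\trkla{\Om}}$. The augmentation terms are engineered precisely so that the leading correction from (b) reproduces the It\^o correction of the continuous equation, leaving only contributions bounded by $\tau\norm{\phi\h\no}_{H^1\trkla{\Om}}^2$-type quantities plus the absorbable remainders, with the growth bounds of \ref{item:potentialF} and $H^1\trkla{\Om}\hookrightarrow L^6\trkla{\Om}$ ($d\leq3$) used to estimate $\norm{\Ih{F^\prime\trkla{\phi\h\no}}}_{L^2\trkla{\Om}}$ and $\norm{\Ih{F^{\prime\prime}\trkla{\phi\h\no}\Phi\h\trkla{\phi\h\no}\sinc{n}}}_{L^2\trkla{\Om}}$.

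Finally I would take the $\p$-th power of the summed identity and then expectations. The maximal martingale part is estimated by \eqref{eq:bdg2}, producing a factor $\expected{\trkla{\sum_n\tau\trkla{\cdots}}^{\p/2}}$; a further Young's inequality splits this into a small multiple of $\expected{\max_n\trkla{\tfrac12\norm{\nabla\phi\h\nn}_{L^2\trkla{\Om}}^2+\abs{r\h\nn}^2}^\p}$, absorbed on the left, plus a $\tau$-weighted sum of lower-order energies. Combining these with the order-$\tau$ It\^o corrections from (b), I obtain an inequality of the form required by a stochastic discrete Gronwall lemma with the maximum on the left-hand side; applying it for each fixed $\p\in[1,\infty)$ yields the asserted bound, the increment sums and $\sum_n\tau\norm{\mu\h\nn}_{L^2\trkla{\Om}}^2$ surviving on the left because they are never fully absorbed. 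The only genuinely delicate point is the bookkeeping ensuring that the absorption constants in parts (a)--(c) stay strictly below the coefficients of the dissipative terms, which is exactly what the augmentation cancellations described above guarantee.
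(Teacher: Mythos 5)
Your strategy is structurally the same as the paper's proof: test \eqref{eq:modeldisc:phimu} with $\mu\h\nn$ so that the augmentation terms, combined with \eqref{eq:discscheme:r}, telescope into the SAV energy; test additionally with $\phi\h\nn$ to recover the full $H^1\trkla{\Om}$-norm; split the stochastic source into a genuine martingale part (handled by \eqref{eq:bdg2}), quadratic-in-noise corrections of order $\tau$ (handled by \eqref{eq:bdg1}), and increments absorbed by Young's inequality into the dissipative terms; then close with a discrete Gronwall argument. Your identification of \ref{item:color} as the ingredient controlling $\norm{\nabla\Ih{\Phi\h\trkla{\phi\h\no}\sinc{n}}}_{L^2\trkla{\Om}}$ is also exactly what the paper uses.

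There is, however, one genuine gap, and it sits precisely where you claim the drift contributions are ``$\tau\norm{\phi\h\no}_{H^1\trkla{\Om}}^2$-type quantities.'' You propose to bound the denominators $\sqrt{E\h\trkla{\phi\h\no}}$ and $E\h\trkla{\phi\h\no}$ only by the constant lower bound $\gamma\tabs{\Om}$, and the numerators $\norm{\Ih{F^\prime\trkla{\phi\h\no}}}$ by the growth condition together with $H^1\trkla{\Om}\emb L^6\trkla{\Om}$. For a term such as $E\h\trkla{\phi\h\no}^{-1}\tabs{\iO\Ih{\Phi\h\trkla{\phi\h\no}\sinc{n}F^\prime\trkla{\phi\h\no}}\dx}^2$ this yields, after the BDG step, a bound of the form $\tau\trkla{1+\norm{\phi\h\no}_{H^1\trkla{\Om}}^6}$, the power $6$ coming from $\tabs{F^\prime\trkla{\zeta}}\leq C\trkla{1+\tabs{\zeta}^3}$; a Gronwall inequality whose left-hand side carries the power $2\p$ but whose right-hand side carries $6\p$ does not close. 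The missing ingredient is the cancellation \eqref{eq:ehtrick}: the quartic lower bound $F\trkla{\zeta}\geq c_\gamma\trkla{1+\tabs{\zeta}^4}$ from \ref{item:potentialF} gives $\norm{\tabs{\phi\h\no}^2}_{L^2\trkla{\Om}}^2\leq C\,E\h\trkla{\phi\h\no}$, whence $\norm{\Ih{\sigma\trkla{\phi\h\no}F^\prime\trkla{\phi\h\no}}}_{L^1\trkla{\Om}}^2\leq C+C\norm{\phi\h\no}_{L^2\trkla{\Om}}^2E\h\trkla{\phi\h\no}$, so that one factor of $E\h\trkla{\phi\h\no}$ in the denominator is consumed and only the Gronwall-compatible quantity $\norm{\phi\h\no}_{L^2\trkla{\Om}}^2$ survives. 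This is exactly the cancellation the paper singles out as the reason for demanding a polynomial, rather than merely constant, lower bound on $F$; without it your argument stalls at the Gronwall step.
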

\begin{proof}
In a first step, we shall prove
\begin{align}\label{eq:energyaux}
\max_{0\leq n\leq N}\expected{\norm{\phi\h\nn}_{H^1\trkla{\Om}}^{2\p}}+\max_{0\leq n\leq N}\expected{\tabs{r\h\nn}^{2\p}}\leq C\,.
\end{align}
For fixed $\omega\in\Omega$, we test \eqref{eq:discscheme:phi} (or \eqref{eq:modeldisc:phimu}, respectively) by $\mu\h\nn$ and multiply \eqref{eq:discscheme:r} by $r\h\nn$ to obtain
\begin{align}\label{eq:energy1}
\!\!\!\!\!\!\!\!\begin{split}
\tfrac12&\norm{\nabla\phi\h\nn}_{L^2\trkla{\Om}}^2+\tfrac12\norm{\nabla\phi\h\nn-\nabla\phi\h\no}_{L^2\trkla{\Om}}^2-\tfrac12\norm{\nabla\phi\h\no}_{L^2\trkla{\Om}}^2\\
&+\tabs{r\h\nn}^2+\tabs{r\h\nn-r\h\no}^2-\tabs{r\h\no}^2+\tau\norm{\mu\h\nn}\h^2\\
=&-\iO\Ih{\Phi\h\trkla{\phi\h\no}\sinc{n}\Delta\h\phi\h\nn}\dx+\frac{r\h\nn}{\sqrt{E\h\trkla{\phi\h\no}}}\iO\Ih{\Phi\h\trkla{\phi\h\no}\sinc{n} F^\prime\trkla{\phi\h\no}}\dx\\
&-\frac{r\h\nn}{4\tekla{E\h\trkla{\phi\h\no}}^{3/2}}\rkla{\iO\Ih{F^\prime\trkla{\phi\h\no}\Phi\h\trkla{\phi\h\no}\sinc{n}}\dx}^2\\
&+\frac{r\h\nn}{2\sqrt{E\h\trkla{\phi\h\no}}}\iO\Ih{F^{\prime\prime}\trkla{\phi\h\no}\trkla{\Phi\h\trkla{\phi\h\no}\sinc{n}}^2}\dx\\
=:&\,S_1+S_2+S_3+S_4\,.
\end{split}
\end{align}
As the scalar auxiliary variable $r\h\nn$ is merely an approximation of a positive term, there is no non-negativity result available.
Hence, $S_3$ can not simply be neglected.
Next, we apply Young's inequality to separate the implicit terms and the stochastic increments $\sinc{n}$. 
In particular, we deduce for $S_1,\ldots,S_4$:
{\allowdisplaybreaks
\begin{align}
\begin{split}
S_1=&-\iO\Ih{\Phi\h\trkla{\phi\h\no}\sinc{n}\trkla{\Delta\h\phi\h\nn-\Delta\h\phi\h\no}}\dx\\
&-\iO\Ih{\Phi\h\trkla{\phi\h\no}\sinc{n}\Delta\h\phi\h\no}\dx\\
\leq&\,\tfrac14\norm{\nabla\phi\h\nn-\nabla\phi\h\no}_{L^2\trkla{\Om}}^2 +C\norm{\nabla\Ih{\Phi\h\trkla{\phi\h\no}\sinc{n}}}_{L^2\trkla{\Om}}^2\\
&-\iO\Ih{\Phi\h\trkla{\phi\h\no}\sinc{n}\Delta\h\phi\h\no}\dx\,,
\end{split}\\
\begin{split}
S_2=&\,\frac{r\h\nn-r\h\no}{\sqrt{E\h\trkla{\phi\h\no}}}\iO\Ih{\Phi\h\trkla{\phi\h\no}\sinc{n} F^\prime\trkla{\phi\h\no}}\dx \\
&+\frac{r\h\no}{\sqrt{E\h\trkla{\phi\h\no}}}\iO\Ih{\Phi\h\trkla{\phi\h\no}\sinc{n}F^\prime\trkla{\phi\h\no}}\dx\\
\leq&\,\tfrac14\tabs{r\h\nn-r\h\no}^2+C\frac{1}{E\h\trkla{\phi\h\no}}\abs{\iO\Ih{\Phi\h\trkla{\phi\h\no}\sinc{n} F^\prime\trkla{\phi\h\no}}\dx}^2\\
&+\frac{r\h\no}{\sqrt{E\h\trkla{\phi\h\no}}}\iO\Ih{\Phi\h\trkla{\phi\h\no}\sinc{n}F^\prime\trkla{\phi\h\no}}\dx\,,
\end{split}\\
\begin{split}
S_3\leq&\,\tfrac14\tabs{r\h\nn-r\h\no}^2+C\frac{1}{\tekla{E\h\trkla{\phi\h\no}}^{3}}\rkla{\iO\Ih{F^\prime\trkla{\phi\h\no}\Phi\h\trkla{\phi\h\no}\sinc{n}}\dx}^4\\
&-\frac{r\h\no}{4\tekla{E\h\trkla{\phi\h\no}}^{3/2}}\rkla{\iO\Ih{F^\prime\trkla{\phi\h\no}\Phi\h\trkla{\phi\h\no}\sinc{n}}\dx}^2\,,
\end{split}\\
\begin{split}
S_4\leq&\,\tfrac14\tabs{r\h\nn-r\h\no}^2 +C\frac{1}{E\h\trkla{\phi\h\no}}\rkla{\iO\Ih{F^{\prime\prime}\trkla{\phi\h\no}\trkla{\Phi\h\trkla{\phi\h\no}\sinc{n}}^2}\dx}^2\\
&+\frac{r\h\no}{2\sqrt{E\h\trkla{\phi\h\no}}}\iO\Ih{F^{\prime\prime}\trkla{\phi\h\no}\trkla{\Phi\h\trkla{\phi\h\no}\sinc{n}}^2}\dx\,.
\end{split}
\end{align}}
In order to obtain the full $H^1\trkla{\Om}$-norm on the left-hand side of \eqref{eq:energy1}, we choose $\psi\h=\phi\h\nn$ in \eqref{eq:discscheme:phi} which provides after applying Young's inequality
\begin{align}\label{eq:energy2}
\begin{split}
\tfrac12\norm{\phi\h\nn}\h^2&+\tfrac12\norm{\phi\h\nn-\phi\h\no}\h^2-\tfrac12\norm{\phi\h\no}\h^2\\
\leq&\,\tau\tfrac34\norm{\mu\h\nn}\h^2+\tau\tfrac13\norm{\phi\h\nn}\h^2+\tfrac14\norm{\phi\h\nn-\phi\h\no}\h^2+\norm{\Ih{\Phi\h\trkla{\phi\h\no}\sinc{n}}}\h^2\\
&+\iO\Ih{\Phi\h\trkla{\phi\h\no}\sinc{n}\phi\h\no}\dx\,.
\end{split}
\end{align}
Combining \eqref{eq:energy1} and \eqref{eq:energy2} and summing from $n=1$ to $m$, we obtain
\begin{align}
\tfrac12&\norm{\phi\h^m}_{H^1\h\trkla{\Om}}^2+\tabs{r\h^m}^2 +\tfrac14\sum_{n=1}^m\norm{\phi\h\nn-\phi\h\no}_{H^1\h\trkla{\Om}}^2 +\tfrac14\sum_{n=1}^m\tabs{r\h\nn-r\h\no}^2+\tfrac14\tau\sum_{n=1}^m\norm{\mu\h\nn}\h^2\nonumber\\
\leq&\,\tfrac12\norm{\phi\h^0}_{H^1\h\trkla{\Om}}^2+\tabs{r\h^0}^2 +\tfrac13\sum_{n=1}^m\tau\norm{\phi\h\nn}\h^2+C\sum_{n=1}^m\norm{\nabla\Ih{\Phi\h\trkla{\phi\h\no}\sinc{n}}}_{L^2\trkla{\Om}}^2\nonumber\\
&-\sum_{n=1}^m\iO\Ih{\Phi\h\trkla{\phi\h\no}\sinc{n}\Delta\h\phi\h\no}\dx\nonumber \\
\begin{split}
&+ C\sum_{n=1}^m\frac{1}{E\h\trkla{\phi\h\no}}\abs{\iO\Ih{\Phi\h\trkla{\phi\h\no}\sinc{n} F^\prime\trkla{\phi\h\no}}\dx}^2\\
&+\sum_{n=1}^m\frac{r\h\no}{\sqrt{E\h\trkla{\phi\h\no}}}\iO\Ih{\Phi\h\trkla{\phi\h\no}\sinc{n}F^\prime\trkla{\phi\h\no}}\dx\\
&+C\sum_{n=1}^m\frac{1}{\tekla{E\h\trkla{\phi\h\no}}^{3}}\abs{\iO\Ih{F^\prime\trkla{\phi\h\no}\Phi\h\trkla{\phi\h\no}\sinc{n}}\dx}^4\\
&-\sum_{n=1}^m\frac{r\h\no}{4\tekla{E\h\trkla{\phi\h\no}}^{3/2}}\abs{\iO\Ih{F^\prime\trkla{\phi\h\no}\Phi\h\trkla{\phi\h\no}\sinc{n}}\dx}^2\\
&+C\sum_{n=1}^m\frac{1}{E\h\trkla{\phi\h\no}}\abs{\iO\Ih{F^{\prime\prime}\trkla{\phi\h\no}\trkla{\Phi\h\trkla{\phi\h\no}\sinc{n}}^2}\dx}^2\\
&+\sum_{n=1}^m\frac{r\h\no}{2\sqrt{E\h\trkla{\phi\h\no}}}\iO\Ih{F^{\prime\prime}\trkla{\phi\h\no}\trkla{\Phi\h\trkla{\phi\h\no}\sinc{n}}^2}\dx\,.
\end{split}
\end{align}
Absorbing $\tau\tfrac13\norm{\phi\h^m}\h^2$ on the left-hand side, taking the $\p$-th power, and using the norm equivalence \eqref{eq:normequivalence}, we obtain for any $m\in\tgkla{0,\ldots,N}$
\begin{align}\label{eq:energy3}
\begin{split}
&\norm{\phi\h^m}_{H^1\trkla{\Om}}^{2\p}+\tabs{r\h^m}^{2\p}+\rkla{\sum_{n=1}^m\norm{\phi\h\nn-\phi\h\no}_{H^1\trkla{\Om}}^2}^\p+\rkla{\sum_{n=1}^m\tabs{r\h\nn-r\h\no}^2}^\p\\
&\qquad+\rkla{\tau\sum_{n=1}^m\norm{\mu\h\nn}_{L^2\trkla{\Om}}^2}^\p\\
\end{split}\nonumber\\
\begin{split}
&\quad\leq C\norm{\phi\h^0}_{H^1\trkla{\Om}}^{2\p}+C\tabs{r\h^0}^{2\p}+C\sum_{n=1}^m\tau\norm{\phi\h\no}_{L^2\trkla{\Om}}^{2\p}\\
&\qquad+C\rkla{\sum_{n=1}^m\norm{\nabla\Ih{\Phi\h\trkla{\phi\h\no}\sinc{n}}}_{L^2\trkla{\Om}}^2}^\p \\
&\qquad+C\rkla{\max_{1\leq l\leq m}\abs{\sum_{n=1}^l\iO\Ih{\Phi\h\trkla{\phi\h\no}\sinc{n}\Delta\h\phi\h\no}\dx}}^\p\\
&\qquad +C\rkla{\sum_{n=1}^m\frac{1}{E\h\trkla{\phi\h\no}}\abs{\iO\Ih{\Phi\h\trkla{\phi\h\no}\sinc{n} F^\prime\trkla{\phi\h\no}}\dx}^2}^\p
\end{split}\nonumber\\
\begin{split}
&\qquad +C\rkla{\max_{1\leq l\leq m}\abs{\sum_{n=1}^l\frac{r\h\no}{\sqrt{E\h\trkla{\phi\h\no}}}\iO\Ih{\Phi\trkla{\phi\h\no}\sinc{n} F^\prime\trkla{\phi\h\no}}\dx}}^\p\\
&\qquad+C\rkla{\sum_{n=1}^m\frac{1}{\tekla{E\h\trkla{\phi\h\no}}^3}\abs{\iO\Ih{F^\prime\trkla{\phi\h\no}\Phi\h\trkla{\phi\h\no}\sinc{n}}\dx}^4}^\p\\
&\qquad+C\rkla{\max_{1\leq l\leq m}\abs{\sum_{n=1}^l\frac{r\h\no}{\tekla{E\h\trkla{\phi\h\no}}^{3/2}}\abs{\iO\Ih{F^\prime\trkla{\phi\h\no}\Phi\h\trkla{\phi\h\no}\sinc{n}}\dx}^2}}^\p\\
&\qquad+C\rkla{\sum_{n=1}^m\frac{1}{E\h\trkla{\phi\h\no}}\abs{\iO\Ih{F^{\prime\prime}\trkla{\phi\h\no}\trkla{\Phi\h\trkla{\phi\h\no}\sinc{n}}^2}\dx}^2}^\p\\
&\qquad+C\rkla{\max_{1\leq l\leq m}\abs{\sum_{n=1}^l\frac{r\h\no}{\sqrt{E\h\trkla{\phi\h\no}}}\iO\Ih{F^{\prime\prime}\trkla{\phi\h\no}\trkla{\Phi\h\trkla{\phi\h\no}\sinc{n}}^2}\dx}}^\p\\
&\quad=: C\norm{\phi\h^0}_{H^1\trkla{\Om}}^{2\p}+C\tabs{r\h^0}^{2\p} +C\sum_{n=1}^m\tau\norm{\phi\h\no}_{L^2\trkla{\Om}}^{2\p}+R_{1,m}+R_{2,m}+R_{3,m}\\
&\qquad+R_{4,m}+R_{5,m}+R_{6,m}+R_{7,m}+R_{8,m}\,,
\end{split}
\end{align}
where the constant $C$ depends on $\p$ but not on $h$ or $\tau$.
We shall now derive estimates for the expected values of the stochastic terms $R_{1,m},\ldots,R_{8,m}$.
To obtain an estimate for $R_{1,m}$, we use Hölder's inequality, Lemma \ref{lem:BDG}, \eqref{eq:defPhih}, \ref{item:color}, and \ref{item:sigma} to compute
\begin{align}
\begin{split}
\expected{R_{1,m}}\leq& C m^{\p-1}\sum_{n=1}^m\expected{\norm{\nabla\Ih{\Phi\h\trkla{\phi\h\no}\sinc{n}}}_{L^2\trkla{\Om}}^{2\p}}\\
\leq &\,Cm^{\p-1}\sum_{n=1}^m\tau^\p\expected{\rkla{\sum_{k\in\Zh}\norm{\lambda_k\nabla\Ih{\Phi\h\trkla{\phi\h\no}\g{k}}}_{L^2\trkla{\Om}}^2}^{\p}}\\
\leq&\, C\sum_{n=1}^m\tau\expected{\norm{\Ih{\sigma\trkla{\phi\h\no}}}_{H^1\trkla{\Om}}^{2\p}}\leq C\sum_{n=1}^m\tau\expected{\rkla{1+\norm{\nabla\phi\h\no}_{L^2\trkla{\Om}}^{2\p}}}\,.
\end{split}
\end{align}
As $\sum_{n=1}^l\iO\Ih{\Phi\h\trkla{\phi\h\no}\sinc{n}\Delta\h\phi\h\no}\dx$ is an $\mathds{R}$-valued (discrete) martingale, we apply Lemma \ref{lem:BDG}, \eqref{eq:defPhih}, \ref{item:color}, and \ref{item:sigma} which yields
\begin{align}
\begin{split}
\expected{R_{2,m}}=&\,C\expected{\rkla{\max_{1\leq l\leq m}\abs{\sum_{n=1}^l\iO\nabla\Ih{\Phi\h\trkla{\phi\h\no}\sinc{n}}\cdot\nabla\phi\h\no\dx}}^\p}\\
\leq &\,C \sum_{n=1}^m\tau\expected{\norm{\Ih{\sigma\trkla{\phi\h\no}}}_{H^1\trkla{\Om}}^\p\norm{\nabla\phi\h\no}_{L^2\trkla{\Om}}^\p}\\
\leq&\, C+C\expected{\sum_{n=1}^m\tau\norm{\nabla\phi\h\no}_{L^2\trkla{\Om}}^{2\p}}\,.
\end{split}
\end{align}
Using similar arguments, we obtain for $R_{3,m}$
\begin{align}
\begin{split}
\expected{R_{3,m}}\leq&\,C\sum_{n=1}^m\tau\expected{\frac{1}{\tekla{E\h\trkla{\phi\h\no}}^\p}\norm{\Ih{\sigma\trkla{\phi\h\no} F^\prime\trkla{\phi\h\no}}}_{L^1\trkla{\Om}}^{2\p}}\,.
\end{split}
\end{align}
As $\sigma$ is bounded (cf.~\ref{item:sigma}), we obtain from \ref{item:potentialF} using \eqref{eq:normequivalence} and Hölder's inequality
\begin{align}\label{eq:ehtrick}
\begin{split}
&\rkla{\iO\abs{\Ih{\sigma\trkla{\phi\h\no}F^\prime\trkla{\phi\h\no}}}\dx}^2\leq C\rkla{\iO\Ih{\abs{F^\prime\trkla{\phi\h\no}}}\dx}^2\\
&\qquad\leq C\rkla{\iO\Ih{1+\tabs{\phi\h\no}^3}\dx}^2\leq C+C\norm{\phi\h\no}_{L^2\trkla{\Om}}^2\norm{\tabs{\phi\h\no}^2}_{L^2\trkla{\Om}}^2\\
&\qquad\leq C+C\norm{\phi\h\no}_{L^2\trkla{\Om}}^2\iO\Ih{F\trkla{\phi\h\no}}\dx\,.
\end{split}
\end{align}
Therefore, we deduce
\begin{align}
\expected{R_{3,m}}\leq C+C\sum_{n=1}^m\tau\expected{\norm{\phi\h\no}_{L^2\trkla{\Om}}^{2\p}}
\end{align}
due to the lower bound on $F$ expressed in \ref{item:potentialF}.
For $R_{4,m}$, we obtain
\begin{align}
\begin{split}
\expected{R_{4,m}}\leq &\,C\sum_{n=1}^m\tau\expected{\rkla{\frac{\tabs{r\h\no}}{\sqrt{E\h\trkla{\phi\h\no}}}\norm{\Ih{\sigma\trkla{\phi\h\no}F^\prime\trkla{\phi\h\no}}}_{L^1\trkla{\Om}}}^\p}\\
\leq&\,C+C\sum_{n=1}^m\tau\expected{\tabs{r\h\no}^{2\p}}+C\sum_{n=1}^m\tau\expected{\norm{\phi\h\no}_{L^2\trkla{\Om}}^{2\p}}
\end{split}
\end{align}
due to \eqref{eq:ehtrick} and Young's inequality.
Using similar arguments, we obtain for the remaining terms
\begin{align}
\begin{split}
\expected{R_{5,m}}\leq&\,Cm^{\p-1}\sum_{n=1}^m\expected{\abs{\frac{1}{\tekla{E\h\trkla{\phi\h\no}}^{3/4}}\iO\Ih{F^\prime\trkla{\phi\h\no}\Phi\h\trkla{\phi\h\no}\sinc{n}}\dx}^{4\p}}\\
\leq&\, Cm^{\p-1}\!\sum_{n=1}^m\expected{\!\rkla{\!\tau\sum_{k\in\mathds{Z}}\!\rkla{\frac{1}{\tekla{E\h\trkla{\phi\h\no}}^{3/4}}\!\iO\!\Ih{F^\prime\trkla{\phi\h\no}\sigma\trkla{\phi\h\no}\lambda_k\g{k}}\dx}^2}^{2\p}}\\
\leq&\,C\tau^\p \sum_{n=1}^m\tau\expected{\rkla{\frac{1}{\tekla{E\h\trkla{\phi\h\no}}^{3/4}}\iO\abs{\Ih{F^\prime\trkla{\phi\h\no}}}\dx}^{4\p}}\\
\leq &\,C\tau^\p\,,
\end{split}\\
\begin{split}
\expected{R_{6,m}}\leq&\,C \sum_{n=1}^m\tau\expected{\rkla{\frac{\tabs{r\h\no}^{1/2}}{\tekla{E\h\trkla{\phi\h\no}}^{3/4}}\norm{\Ih{F^\prime\trkla{\phi\h\no}\sigma\trkla{\phi\h\no}}}_{L^1\trkla{\Om}}}^{2\p}}\\
\leq&\, C\sum_{n=1}^m\tau\expected{\tabs{r\h\no}^\p}\leq C\sum_{n=1}^m\tau\expected{\tabs{r\h\no}^{2\p}}+C\,,
\end{split}\\
\begin{split}
\expected{R_{7,m}}\leq&\,C m^{\p-1}\sum_{n=1}^m\expected{\norm{\Ih{\frac{1}{\tekla{E\h\trkla{\phi\h\no}}^{1/4}}\abs{F^{\prime\prime}\trkla{\phi\h\no}}^{1/2}\Phi\h\trkla{\phi\h\no}\sinc{n}}}\h^{4\p}}\\
\leq&\, Cm^{\p-1}\sum_{n=1}^m\expected{\tau^{2\p}\frac{1}{\tekla{E\h\trkla{\phi\h\no}}^{\p}}\norm{\Ih{\abs{F^{\prime\prime}\trkla{\phi\h\no}}^{1/2}\sigma\trkla{\phi\h\no}}}\h^{4\p}}\\
\leq&\,C\tau^\p\sum_{n=1}^m\tau\expected{\rkla{\frac{\iO\Ih{\abs{F^{\prime\prime}\trkla{\phi\h\no}}}\dx}{\sqrt{E\h\trkla{\phi\h\no}}}}^{2\p}}\leq C\tau^\p\,,
\end{split}\\
\begin{split}
\expected{R_{8,m}}\leq&\,C\sum_{n=1}^m\tau\expected{\norm{\Ih{\abs{F^{\prime\prime}\trkla{\phi\h\no}}^{1/2}}}\h^{2\p}\frac{\tabs{r\h\no}^\p}{\tekla{E\h\trkla{\phi\h\no}}^{\p/2}}}\\
\leq&\,C\sum_{n=1}^m\tau\expected{1+\tabs{r\h\no}^\p}\leq C\sum_{n=1}^m\tau\expected{\tabs{r\h\no}^{2\p}}+C\,.
\end{split}
\end{align}
Collecting the results and recalling assumption \ref{item:initial}, we obtain
\begin{align}
\expected{\norm{\phi\h^m}_{H^1\trkla{\Om}}^{2\p}}+\expected{\tabs{r\h^m}^{2\p}}\leq C\trkla{\phi_0}+C\sum_{n=1}^m\tau\expected{\norm{\phi\h\no}_{H^1\trkla{\Om}}^{2\p}} +C\sum_{n=1}^m\tau\expected{\tabs{r\h\no}^{2\p}}
\end{align}
for all $m\in\tgkla{1,\ldots,N}$.
Now, a discrete version of Gronwall's inequality provides \eqref{eq:energyaux}.\\
In a second step, we shall now establish the original claim of the lemma.
Starting from \eqref{eq:energy3}, we take the maximum over $m\in\tgkla{1,\ldots,N}$ before taking the expected value.
As the right-hand side is maximized for $m=N$, we can reuse the above calculations to obtain estimates for $R_{1,N},\ldots,R_{8,N}$ and obtain
\begin{align}
\begin{split}
&\expected{\max_{0\leq m\leq N}\norm{\phi\h^m}_{H^1\trkla{\Om}}^{2\p}}+\expected{\max_{0\leq m\leq N}\tabs{r\h^m}^{2\p}}+\expected{\rkla{\sum_{n=1}^N\norm{\phi\h\nn-\phi\h\no}_{H^1\trkla{\Om}}^2}^\p}\\
&\qquad+\expected{\rkla{\sum_{n=1}^N\abs{r\h\nn-r\h\no}^2}^\p}+\expected{\rkla{\sum_{n=1}^N\tau\norm{\mu\h\nn}_{L^2\trkla{\Om}}^2}^\p}\\
&\quad\leq C\trkla{\phi_0} + C\sum_{n=1}^N\tau\expected{\norm{\phi\h\no}_{H^1\trkla{\Om}}^{2\p}}+C\sum_{n=1}^N\tau\expected{\tabs{r\h\no}^{2\p}}\leq C\,,
\end{split}
\end{align}
due to \eqref{eq:energyaux}.
\end{proof}

As shown in the following corollary, the discrete version of the $L^{2\p}\trkla{\Omega;L^2\trkla{0,T;L^2\trkla{\Om}}}$-bound on the chemical potential provides also a discrete $L^{2q}\trkla{\Omega;L^2\trkla{0,T;L^2\trkla{\Om}}}$-bound for the discrete Laplacian of $\phi$.
\begin{corollary}\label{cor:h2}
Let the assumptions \ref{item:timedisc}, \ref{item:spatialdisc}, \ref{item:potentialF}, \ref{item:initial}, \ref{item:sigma}, and \ref{item:filtration}--\ref{item:color} hold true.
Then, for all $q\in[1,\infty)$ there exists a constant $C>0$ which is independent of $h$ and $\tau$ such that
\begin{align*}
\expected{\rkla{\sum_{n=1}^N\tau\norm{\Delta\h\phi\h\nn}_{L^2\trkla{\Om}}^2}^q}\leq C\,.
\end{align*}
\end{corollary}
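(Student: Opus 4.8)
My starting point would be the definition of the chemical potential \eqref{eq:defmu}, rearranged as
\begin{align*}
-\Delta\h\phi\h\nn=\mu\h\nn-\frac{r\h\nn}{\sqrt{E\h\trkla{\phi\h\no}}}\Ih{F^\prime\trkla{\phi\h\no}}-\Xi\h\nn\,.
\end{align*}
Using $\trkla{a+b+c}^2\leq3\trkla{a^2+b^2+c^2}$ and the lower bound $E\h\trkla{\phi\h\no}\geq\gamma\tabs{\Om}$ from \ref{item:potentialF}, I would deduce the pathwise inequality
\begin{align*}
\norm{\Delta\h\phi\h\nn}_{L^2\trkla{\Om}}^2\leq3\norm{\mu\h\nn}_{L^2\trkla{\Om}}^2+\frac{3\tabs{r\h\nn}^2}{\gamma\tabs{\Om}}\norm{\Ih{F^\prime\trkla{\phi\h\no}}}_{L^2\trkla{\Om}}^2+3\norm{\Xi\h\nn}_{L^2\trkla{\Om}}^2\,.
\end{align*}
Multiplying by $\tau$, summing over $n$, raising to the $q$-th power and taking the expectation then splits the claim into three contributions to be estimated separately.

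The first, $\expected{\rkla{\sum_{n=1}^N\tau\norm{\mu\h\nn}_{L^2\trkla{\Om}}^2}^q}$, is controlled directly by Lemma \ref{lem:energy} with $\p=q$. For the second I would exploit the growth bound $\tabs{F^\prime\trkla{\zeta}}\leq C\trkla{1+\tabs{\zeta}^3}$ from \ref{item:potentialF}, the norm equivalence \eqref{eq:normequivalence} and the embedding $H^1\trkla{\Om}\emb L^6\trkla{\Om}$ (valid for $d\leq3$) to obtain $\norm{\Ih{F^\prime\trkla{\phi\h\no}}}_{L^2\trkla{\Om}}^2\leq C\trkla{1+\norm{\phi\h\no}_{H^1\trkla{\Om}}^6}$. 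Since $\sum_{n=1}^N\tau=T$, the associated sum is bounded pathwise by $T\max_{0\leq n\leq N}\tabs{r\h\nn}^2\max_{0\leq n\leq N}\trkla{1+\norm{\phi\h\nn}_{H^1\trkla{\Om}}^6}$, and after raising to the $q$-th power a Cauchy--Schwarz step in $\expected{\cdot}$ together with Lemma \ref{lem:energy} (used once with $\p=2q$ and once with $\p=6q$) finishes this term.

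The third contribution, carrying $\Xi\h\nn$, is where the real work lies. Hölder's inequality in the discrete time measure of total mass $T$ reduces it to
\begin{align*}
\expected{\rkla{\sum_{n=1}^N\tau\norm{\Xi\h\nn}_{L^2\trkla{\Om}}^2}^q}\leq T^{q-1}\sum_{n=1}^N\tau\,\expected{\norm{\Xi\h\nn}_{L^2\trkla{\Om}}^{2q}}\,,
\end{align*}
so I would only need a uniform bound on $\expected{\norm{\Xi\h\nn}_{L^2\trkla{\Om}}^{2q}}$. Looking at the two summands of $\Xi\h\nn$ in \eqref{eq:def:errormu}, I would factor out $\tabs{r\h\nn}$ and estimate the remaining $\sinc{n}$-linear expressions by the boundedness of $\sigma$ in \ref{item:sigma}, the growth of $F^\prime$ and $F^{\prime\prime}$, the lower bound on $E\h$, and the cancellation estimate \eqref{eq:ehtrick}, mirroring the treatment of $S_3$ and $S_4$ in the proof of Lemma \ref{lem:energy}. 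As each factor $\sinc{n}$ carries $\sqrt{\tau}$, the per-increment moment bound \eqref{eq:bdg1} of Lemma \ref{lem:BDG}, combined with a further Cauchy--Schwarz step and the moment bounds of Lemma \ref{lem:energy}, should yield $\expected{\norm{\Xi\h\nn}_{L^2\trkla{\Om}}^{2q}}\leq C\tau^q$, so that this term is not merely bounded but vanishes as $\trkla{h,\tau}\to\trkla{0,0}$. The main obstacle I anticipate is exactly here: the factor $r\h\nn$ is implicit and hence not $\mathcal{F}_{t\no}^\tau$-measurable, so the martingale estimate \eqref{eq:bdg2} is unavailable and one must argue through the pathwise magnitude bound and the uniform moments of Lemma \ref{lem:energy} instead.
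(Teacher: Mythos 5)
Your proposal is correct and follows essentially the same route as the paper: decompose $\Delta\h\phi\h\nn$ via \eqref{eq:defmu}, control the $\mu\h\nn$-term by Lemma \ref{lem:energy}, the $r\h\nn F^\prime$-term by the growth condition, norm equivalence, and the moment bounds of Lemma \ref{lem:energy}, and the $\Xi\h\nn$-terms by separating the implicit factor $r\h\nn$ via Cauchy--Schwarz and applying \eqref{eq:bdg1} to the $\mathcal{F}_{t\no}^\tau$-measurable stochastic factors, yielding the extra decay $\tau^q$. The only (immaterial) difference is that you apply Hölder in time before the per-increment moment estimate, whereas the paper keeps the two summands of $\Xi\h\nn$ separate and applies BDG to the time-summed quantities.
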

\begin{proof}
Starting from the definition of the chemical potential in \eqref{eq:defmu}, we obtain by applying Young's inequality and an absorption argument
\begin{align}
\begin{split}
\norm{\Delta\h\phi\h\nn}\h^2\leq&\,C\norm{\mu\h\nn}_{L^2\trkla{\Om}}^2 +C\frac{\tabs{r\h\nn}^2}{E\h\trkla{\phi\h\no}}\norm{\Ih{F^\prime\trkla{\phi\h\no}}}\h^2\\
&+C\frac{\tabs{r\h\nn}^2}{\tekla{E\h\trkla{\phi\h\no}}^3}\abs{\iO\Ih{F^\prime\trkla{\phi\h\no}\Phi\h\trkla{\phi\h\no}\sinc{n}}\dx}^2\norm{\Ih{F^\prime\trkla{\phi\h\no}}}\h^2\\
&+C\frac{\tabs{r\h\nn}^2}{E\h\trkla{\phi\h\no}}\norm{\Ih{F^{\prime\prime}\trkla{\phi\h\no}\Phi\h\trkla{\phi\h\no}\sinc{n}}}\h^2\\
=:&\,R_{1,n}+R_{2,n}+R_{3,n}+R_{4,n}\,.
\end{split}
\end{align}
Multiplying by $\tau$, summing from $n=1$ to $N$, and taking the $q$-th power and the expected value yields
\begin{multline}
\expected{\rkla{\sum_{n=1}^N\tau\norm{\Delta\h\phi}\h^2}^q}\leq C\expected{\rkla{\sum_{n=1}^N\tau\norm{\mu\h\nn}_{L^2\trkla{\Om}}^2}^q}+C\expected{\rkla{\sum_{n=1}^N\tau R_{2,n}}^q}\\
+C\expected{\rkla{\sum_{n=1}^N\tau R_{3,n}}^q}+C\expected{\rkla{\sum_{n=1}^N\tau R_{4,n}}^q}\,.
\end{multline}
While the first term on the right-hand side can be controlled using the result of Lemma \ref{lem:energy}, we use growth conditions from \ref{item:potentialF}, the lower bound of $E\h\trkla{\phi\h\no}$, and Young's inequality to derive
\begin{align}
\begin{split}
\expected{\rkla{\sum_{n=1}^N\tau R_{2,n}}^q}\leq&\, C\expected{\max_{1\leq n\leq N}\tabs{r\h\nn}^{2q}\rkla{1+\max_{1\leq n\leq N}\norm{\phi\h\no}_{H^1\trkla{\Om}}^{6q}}}\leq C\,,
\end{split}
\end{align}
due to Lemma \ref{lem:energy}.
To estimate the remaining terms, we need to apply Lemma \ref{lem:BDG}.
Using Hölder's inequality and recalling that $E\h\trkla{.}$ is bounded from below by a constant independent of $h$ and $\tau$, we obtain
\begin{align}
\begin{split}
\expected{\rkla{\sum_{n=1}^N\tau R_{3,n}}^q}\leq&\,C\expected{\rkla{\rkla{\max_{1\leq n\leq N}\tabs{r\h\nn}^2}\rkla{1+\max_{1\leq n\leq N}\norm{\phi\h\no}_{H^1\trkla{\Om}}^6}}^{2q}}^{1/2}\\
&\times\expected{\rkla{\sum_{n=1}^N\tau\abs{\iO\Ih{F^\prime\trkla{\phi\h\no}\Phi\h\trkla{\phi\h\no}\sinc{n}}\dx}^2}^{2q}}^{1/2}\,.
\end{split}
\end{align}
We estimate the second factor via
\begin{align}
&\expected{\rkla{\sum_{n=1}^N\tau\abs{\iO\Ih{F^\prime\trkla{\phi\h\no}\Phi\h\trkla{\phi\h\no}\sinc{n}}\dx}^2}^{2q}}\nonumber\\
&\qquad\leq C\sum_{n=1}^N\tau\expected{\abs{\iO\Ih{F^\prime\trkla{\phi\h\no}\Phi\h\trkla{\phi\h\no}\sinc{n}}\dx}^{4q}}\nonumber\\
&\qquad\leq C\sum_{n=1}^N\tau\expected{\rkla{\tau\sum_{k\in\mathds{Z}}\abs{\iO\Ih{F^\prime\trkla{\phi\h\no}\sigma\trkla{\phi\h\no}\lambda_k\g{k}}\dx}^2}^{2q}}\nonumber\\
&\qquad\leq C\expected{\rkla{1+\max_{1\leq n\leq N}\norm{\phi\h\no}_{H^1\trkla{\Om}}^{12q}}\tau^{2q}}\,.
\end{align}
Therefore, we have 
\begin{align}\label{eq:muerror:1}
\expected{\rkla{\sum_{n=1}^N\tau R_{3,n}}^q}\leq C\tau^q\,
\end{align}
due to the regularity results established in Lemma \ref{lem:energy}.
To obtain an estimate for the last term, we again apply Hölder's inequality and compute
\begin{align}
\begin{split}
\expected{\rkla{\sum_{n=1}^N\tau R_{4,n}}^q}\leq&\,C\expected{\max_{1\leq n\leq N}\tabs{r\h\nn}^{4q}}^{1/2}\expected{\sum_{n=1}^N\tau\norm{\Ih{F^{\prime\prime}\trkla{\phi\h\no}\Phi\h\trkla{\phi\h\no}\sinc{n}}}\h^{4q}}^{1/2}\,.
\end{split}
\end{align}
Again, the second factor can be controlled by applying Lemma \ref{lem:BDG}:
\begin{multline}
\expected{\sum_{n=1}^N\tau\norm{\Ih{F^{\prime\prime}\trkla{\phi\h\no}\Phi\h\trkla{\phi\h\no}\sinc{n}}}\h^{4q}}\\
\leq C\sum_{n=1}^N\tau \expected{\rkla{\tau\sum_{k\in\mathds{Z}}\norm{\Ih{F^{\prime\prime}\trkla{\phi\h\no}\sigma\trkla{\phi\h\no}\lambda_k\g{k}}}\h^2}^{2q}}\\
\leq C\sum_{n=1}^N\tau\expected{\tau^{2q}\norm{\Ih{F^{\prime\prime}\trkla{\phi\h\no}}}\h^{4q}}\leq C\tau^{2q}\,.
\end{multline}
Therefore,
\begin{align}\label{eq:muerror:2}
\expected{\rkla{\sum_{n=1}^N\tau R_{4,n}}^q}\leq C\tau^q\,
\end{align}
which provides the result.
\end{proof}

The proof of Corollary \ref{cor:h2} also shows that the higher order terms used to approximate $F^\prime\trkla{\phi}$ will vanish when passing to the limit $\tau\searrow0$.
In particular, \eqref{eq:muerror:1} and \eqref{eq:muerror:2} provide the following result:\\
\begin{corollary}\label{cor:muerror}
Let the assumptions \ref{item:timedisc}, \ref{item:spatialdisc}, \ref{item:potentialF}, \ref{item:initial}, \ref{item:sigma}, and \ref{item:filtration}--\ref{item:color} hold true.
Then, for all $q\in[1,\infty)$ there exists a constant $C>0$ which is independent of $h$ and $\tau$ such that
\begin{align*}
\expected{\rkla{\sum_{n=1}^N\tau\norm{\Xi\h\nn}_{L^2\trkla{\Om}}^2}^q}\leq C\tau^q\,.
\end{align*}
\end{corollary}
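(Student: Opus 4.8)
The plan is to reduce the claim entirely to the two auxiliary estimates \eqref{eq:muerror:1} and \eqref{eq:muerror:2} that were already established along the way in the proof of Corollary \ref{cor:h2}. First I would observe that, by its very definition \eqref{eq:def:errormu}, the quantity $\Xi\h\nn$ is the sum of exactly two summands, each of which is an element of $\Uh$. Applying the elementary inequality $\trkla{a+b}^2\leq 2a^2+2b^2$ together with the norm equivalence \eqref{eq:normequivalence} to pass from the $L^2\trkla{\Om}$-norm to the discrete norm $\norm{\cdot}\h$, I would obtain the pointwise (in $\omega$ and $n$) bound
\begin{align*}
\norm{\Xi\h\nn}_{L^2\trkla{\Om}}^2\leq&\,C\frac{\tabs{r\h\nn}^2}{\tekla{E\h\trkla{\phi\h\no}}^{3}}\abs{\iO\Ih{F^\prime\trkla{\phi\h\no}\Phi\h\trkla{\phi\h\no}\sinc{n}}\dx}^2\norm{\Ih{F^\prime\trkla{\phi\h\no}}}\h^2\\
&+C\frac{\tabs{r\h\nn}^2}{E\h\trkla{\phi\h\no}}\norm{\Ih{F^{\prime\prime}\trkla{\phi\h\no}\Phi\h\trkla{\phi\h\no}\sinc{n}}}\h^2\,.
\end{align*}

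The decisive observation is that the two terms on the right-hand side coincide, up to the universal constant $C$, with the quantities $R_{3,n}$ and $R_{4,n}$ introduced in the proof of Corollary \ref{cor:h2}. Hence I would record the compact bound $\norm{\Xi\h\nn}_{L^2\trkla{\Om}}^2\leq C\trkla{R_{3,n}+R_{4,n}}$, which transfers all the remaining work to the already-established estimates.

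To finish, I would multiply this inequality by $\tau$, sum from $n=1$ to $N$, raise to the $q$-th power by means of $\trkla{a+b}^q\leq 2^{q-1}\trkla{a^q+b^q}$ (valid for $q\geq1$), and take the expectation, arriving at
\begin{align*}
\expected{\rkla{\sum_{n=1}^N\tau\norm{\Xi\h\nn}_{L^2\trkla{\Om}}^2}^q}\leq C\expected{\rkla{\sum_{n=1}^N\tau R_{3,n}}^q}+C\expected{\rkla{\sum_{n=1}^N\tau R_{4,n}}^q}\,.
\end{align*}
The two terms on the right are precisely the left-hand sides of \eqref{eq:muerror:1} and \eqref{eq:muerror:2}, each of which is bounded by $C\tau^q$; this yields the asserted estimate.

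Since all of the genuine stochastic work -- the applications of the Burkholder--Davis--Gundy-type bounds of Lemma \ref{lem:BDG}, the use of the growth and lower bounds in \ref{item:potentialF}, and the control of $\max_n\tabs{r\h\nn}$ and $\max_n\norm{\phi\h\no}_{H^1\trkla{\Om}}$ through Lemma \ref{lem:energy} -- has already been carried out en route to \eqref{eq:muerror:1} and \eqref{eq:muerror:2}, I do not expect any substantial obstacle here. The only points requiring (minor) care are the bookkeeping of the norm equivalence \eqref{eq:normequivalence} between $\norm{\cdot}\h$ and $\norm{\cdot}_{L^2\trkla{\Om}}$ and keeping track of the constants generated when splitting $\Xi\h\nn$ into its two summands.
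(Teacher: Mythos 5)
Your proposal is correct and follows essentially the same route as the paper: the paper obtains Corollary \ref{cor:muerror} precisely by observing that the two summands of $\Xi\h\nn$ are, up to constants, the quantities whose squared norms appear as $R_{3,n}$ and $R_{4,n}$ in the proof of Corollary \ref{cor:h2}, so that \eqref{eq:muerror:1} and \eqref{eq:muerror:2} immediately yield the bound $C\tau^q$.
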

To obtain compactness with respect to time, we will show that our solutions are contained in a suitable Nikolskii space:
\begin{lemma}\label{lem:nikolskii}
Let the assumptions \ref{item:timedisc}, \ref{item:spatialdisc}, \ref{item:potentialF}, \ref{item:initial}, \ref{item:sigma}, and \ref{item:filtration}--\ref{item:color} hold true.
Then, for all $\alpha\geq1$ there exists a constant $C>0$ which is independent of $h$ and $\tau$ such that
\begin{align*}
\expected{\sum_{m=0}^{N-l}\tau\norm{\phi\h^{m+l}-\phi\h^m}_{L^2\trkla{\Om}}^{2\alpha}}\leq C\trkla{l\tau}^\alpha
\end{align*}
for all $l=0,\ldots,N$.
\end{lemma}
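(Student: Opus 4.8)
The plan is to establish the Nikolskii estimate by writing the telescoping difference $\phi\h^{m+l}-\phi\h^m$ as a sum of one-step increments, and then to split the resulting terms into a deterministic (implicit) part governed by the chemical potential and a stochastic part governed by the discrete increments $\sinc{n}$. Starting from the scheme in the form \eqref{eq:modeldisc:phimu}, testing pathwise with $\psi\h\in\Uh$ and summing the identity from $n=m+1$ to $n=m+l$, I obtain for each fixed $m$
\begin{align*}
\iO\Ih{\trkla{\phi\h^{m+l}-\phi\h^m}\psi\h}\dx=-\sum_{n=m+1}^{m+l}\tau\iO\Ih{\mu\h\nn\psi\h}\dx+\sum_{n=m+1}^{m+l}\iO\Ih{\Phi\h\trkla{\phi\h\no}\sinc{n}\psi\h}\dx\,.
\end{align*}
The natural choice is $\psi\h=\phi\h^{m+l}-\phi\h^m$, which produces $\norm{\phi\h^{m+l}-\phi\h^m}\h^2$ on the left. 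By the norm equivalence \eqref{eq:normequivalence} this controls the $L^2\trkla{\Om}$-norm, so after raising to the power $\alpha$ and summing in $m$ it suffices to estimate the two terms on the right.

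\textbf{Deterministic term.} For the implicit part I would apply Hölder's inequality in the time index (over the $l$ summands) to pull out a factor $l^{\alpha-1}$ or $(l\tau)^{\alpha}$, then use Cauchy--Schwarz in space together with \eqref{eq:normequivalence} to bound $\abs{\iO\Ih{\mu\h\nn\trkla{\phi\h^{m+l}-\phi\h^m}}\dx}$ by $\tau\norm{\mu\h\nn}_{L^2\trkla{\Om}}\norm{\phi\h^{m+l}-\phi\h^m}_{L^2\trkla{\Om}}$. After a Young-type splitting that absorbs a small multiple of $\norm{\phi\h^{m+l}-\phi\h^m}_{L^2\trkla{\Om}}^{2\alpha}$ back to the left, summing over $m$ and taking expectations leaves a quantity controlled by $\expected{\trkla{\sum_{n=1}^N\tau\norm{\mu\h\nn}_{L^2\trkla{\Om}}^2}^\alpha}$, which is bounded by Lemma \ref{lem:energy}. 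The bookkeeping of powers of $l\tau$ is exactly what yields the target scaling $\trkla{l\tau}^\alpha$ from this contribution.

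\textbf{Stochastic term — the main obstacle.} The delicate part is the martingale sum $\sum_{n=m+1}^{m+l}\iO\Ih{\Phi\h\trkla{\phi\h\no}\sinc{n}\psi\h}\dx$, because the test function $\psi\h=\phi\h^{m+l}-\phi\h^m$ is \emph{not} $\mathcal{F}_{t\no}^\tau$-measurable and therefore cannot be pulled inside a BDG-type estimate directly. The standard remedy is to not test with the full difference but instead, for this contribution, to estimate using the tower property and the independence structure of \ref{item:increments}--\ref{item:randomvariables}: writing the left-hand side in terms of $\norm{\phi\h^{m+l}-\phi\h^m}\h$ and using duality, one bounds the stochastic term by $\norm{\sum_{n=m+1}^{m+l}\Ih{\Phi\h\trkla{\phi\h\no}\sinc{n}}}\h$ acting on $\psi\h$, so that after a Young splitting and absorption the task reduces to estimating $\expected{\sum_{m}\tau\norm{\sum_{n=m+1}^{m+l}\Phi\h\trkla{\phi\h\no}\sinc{n}}\h^{2\alpha}}$. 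To this sum I apply Lemma \ref{lem:BDG}, specifically \eqref{eq:bdg2} with the Hilbert space $H=L^2\trkla{\Om}$ over the window of length $l$, which produces the factor $\trkla{l\tau}^{\alpha-1}\sum_{n=m+1}^{m+l}\tau\expected{\trkla{\sum_{k\in\Zh}\norm{\lambda_k\Phi\h\no\g{k}}_H^2}^{\alpha}}$; invoking \ref{item:color} and the boundedness of $\sigma$ from \ref{item:sigma} bounds the inner sum by a constant, and summing the remaining $\sum_m\sum_{n=m+1}^{m+l}\tau$ over $m$ contributes exactly one more factor of $l$, giving the clean $\trkla{l\tau}^\alpha$. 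Combining the deterministic and stochastic contributions and recalling $\tau<1$ from \ref{item:timedisc} yields the claimed bound; the crux throughout is handling the non-adapted test function so that \eqref{eq:bdg2} can legitimately be applied window-by-window.
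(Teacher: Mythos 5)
Your proposal is correct and follows essentially the same route as the paper: sum the scheme over the window, test with $\phi\h^{m+l}-\phi\h^m$, control the drift part via Hölder/Young and the bound on $\sum_n\tau\norm{\mu\h\nn}\h^2$ from Lemma \ref{lem:energy}, and decouple the non-adapted test function from the stochastic sum by Cauchy--Schwarz and Young before applying \eqref{eq:bdg2} to $\norm{\sum_{n=m+1}^{m+l}\Ih{\Phi\h\trkla{\phi\h\no}\sinc{n}}}\h^{2\alpha}$ together with \ref{item:sigma}. The accounting of the factors $\trkla{l\tau}$ matches the paper's, so no gap remains.
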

\begin{proof}
Summing \eqref{eq:discscheme:phi} from $n=m+1$ to $m+l\leq N$, choosing $\psi\h=\trkla{\phi\h^{m+l}-\phi\h^m}$, taking the $\alpha$-th power on both sides, multiplying by $\tau$, summing the result from $m=0$ to $N-l$, and computing the expected value, we obtain
\begin{align}
\begin{split}
\expected{\sum_{m=0}^{N-l}\tau\norm{\phi\h^{m+l}-\phi\h^m}\h^{2\alpha}}\leq&\,C\expected{\sum_{m=0}^{N-l}\tau\abs{\sum_{n=m+1}^{m+l}\tau\iO\Ih{\mu\h\nn\trkla{\phi\h^{m+l}-\phi\h^m}}\dx}^\alpha}\\
&+C\expected{\sum_{m=0}^{N-l}\tau\abs{\iO\sum_{n=m+1}^{m+l}\Ih{\Phi\h\trkla{\phi\h\no}\sinc{n}\trkla{\phi\h^{m+l}-\phi\h^m}}\dx}^\alpha}\\
=:&\,A+B\,.
\end{split}
\end{align}
Applying Hölder's inequality and Young's inequality to $A$, we obtain
\begin{align}
\begin{split}
A\leq&\,\tfrac14\expected{\sum_{m=0}^{N-l}\tau\norm{\phi\h^{m+l}-\phi\h^m}\h^{2\alpha}}+C\expected{\sum_{m=0}^{N-l}\tau\rkla{\sum_{n=m+1}^{m+l}\tau\norm{\mu\h\nn}\h}^{2\alpha}}\,.
\end{split}
\end{align}
Applying Hölder's inequality to the second term on the right-hand side, we deduce
\begin{align}
\begin{split}
\expected{\sum_{m=0}^{N-l}\tau\rkla{\sum_{n=m+1}^{m+l}\tau\norm{\mu\h\nn}\h}^{2\alpha}}\leq&\, C\trkla{l\tau}^\alpha\expected{\sum_{m=0}^{N-l}\tau\rkla{\sum_{n=m+1}^{m+l}\tau\norm{\mu\h\nn}\h^2}^\alpha}\\
\leq&\, C\trkla{l\tau}^{\alpha}T\expected{\rkla{\sum_{n=1}^N\tau\norm{\mu\h\nn}\h^2}^\alpha}\leq C\trkla{l\tau}^\alpha\,,
\end{split}
\end{align}
due to Lemma \ref{lem:energy}.
To obtain an estimate for $B$, we apply Hölder's inequality, Young's inequality, and Lemma \ref{lem:BDG} which provides
\begin{align}
\begin{split}
B\leq&\,C\expected{\sum_{m=0}^{N-l}\tau\rkla{\norm{\sum_{n=m+1}^{m+l}\Ih{\Phi\h\trkla{\phi\h\no}\sinc{n}}}\h\norm{\phi\h^{m+l}-\phi\h^m}\h}^\alpha}\\
\leq&\,\tfrac14\expected{\sum_{m=0}^{N-l}\tau\norm{\phi\h^{m+l}-\phi\h^m}\h^{2\alpha}}+C\sum_{m=0}^{N-l}\tau\expected{\norm{\sum_{n=m+1}^{m+l}\Ih{\Phi\h\trkla{\phi\h\no}\sinc{n}}}\h^{2\alpha}}\\
\leq&\,\tfrac14\expected{\sum_{m=0}^{N-l}\tau\norm{\phi\h^{m+l}-\phi\h^m}\h^{2\alpha}}+C\sum_{m=0}^{N-l}\tau\trkla{l\tau}^{\alpha-1}\sum_{n=m+1}^{m+l}\tau\expected{\norm{\Ih{\sigma\trkla{\phi\h\no}}}_{L^2\trkla{\Om}}^{2\alpha}}\,.
\end{split}
\end{align}
As $\sigma$ is uniformly bounded (cf.~\ref{item:sigma}), combining the above results concludes the proof.
\end{proof}

Based on the above result, we shall now show that the scalar auxiliary variable $r\h\nn$ is indeed a suitable approximation of $\sqrt{E\h\trkla{\phi\h\nn}}=\sqrt{\iO\Ih{F\trkla{\phi\h\nn}}\dx}$.
In particular, we will establish the following convergence result:
\begin{lemma}\label{lem:SAVerror}
Let the assumptions \ref{item:timedisc}, \ref{item:spatialdisc}, \ref{item:potentialF}, \ref{item:initial}, \ref{item:sigma}, and \ref{item:filtration}--\ref{item:color} hold true.
Then, for all $p\in[1,\infty)$, there exists a constant $C>0$ which is independent of $h$ and $\tau$ such that the estimate
\begin{align*}
\expected{\max_{0\leq m\leq N}\abs{r\h^m-\sqrt{E\h\trkla{\phi\h^m}}}^p}\leq C\tau^{p\min\tgkla{\nu/2,\,\trkla{1/4-\delta/2}}}
\end{align*}
holds true for any $0<\delta<1/2$, where $\nu$ is the Hölder exponent from \ref{item:potentialF}.
\end{lemma}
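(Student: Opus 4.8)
The plan is to control the error $e\h^m:=r\h^m-\sqrt{E\h\trkla{\phi\h^m}}$, which vanishes for $m=0$ by the definition of $r\h^0$, by comparing the increment prescribed in \eqref{eq:discscheme:r} with the exact increment of $\sqrt{E\h\trkla{\cdot}}$ along the discrete trajectory. Abbreviating $d^n:=\phi\h\nn-\phi\h\no$ and $a_n:=E\h\trkla{\phi\h\no}\geq\gamma\abs{\Om}$ (the lower bound coming from \ref{item:potentialF}), I would expand $\sqrt{E\h\trkla{\phi\h\nn}}-\sqrt{a_n}=\tfrac{1}{2\sqrt{a_n}}\trkla{E\h\trkla{\phi\h\nn}-a_n}-\tfrac{1}{8a_n^{3/2}}\trkla{E\h\trkla{\phi\h\nn}-a_n}^2+\rho_n^S$ with $\abs{\rho_n^S}\leq C\abs{E\h\trkla{\phi\h\nn}-a_n}^3$ (both arguments of $\sqrt{\cdot}$ being bounded below), and expand $F$ nodally via the splitting $F=F_1+F_2$, so that $E\h\trkla{\phi\h\nn}-a_n=\iO\Ih{F^\prime\trkla{\phi\h\no}d^n}\dx+\tfrac12\iO\Ih{F^{\prime\prime}\trkla{\phi\h\no}\trkla{d^n}^2}\dx+\rho_n^F$, where the Hölder continuity of $F_1^{\prime\prime}$ and the quadratic growth of $F_2^{\prime\prime\prime}$ give $\abs{\rho_n^F}\leq C\iO\Ih{\abs{d^n}^{2+\nu}}\dx+C\iO\Ih{\trkla{1+\abs{\phi\h\no}^2+\abs{\phi\h\nn}^2}\abs{d^n}^3}\dx$.

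The decisive algebraic observation is the nodal identity $\Phi\h\trkla{\phi\h\no}\sinc{n}-d^n=\tau\mu\h\nn$, read off from \eqref{eq:modeldisc:phimu}. Substituting the two expansions and subtracting \eqref{eq:discscheme:r}, the first-order terms cancel exactly, and the two augmentation terms of \eqref{eq:discscheme:r} are precisely the second-order terms of the expansion with one factor $d^n$ replaced by $\Phi\h\trkla{\phi\h\no}\sinc{n}$. Consequently the increment of $e\h$ equals $D_1^n+D_2^n-\tilde\rho_n$, where $D_1^n=\tfrac{\tau}{4\sqrt{a_n}}\iO\Ih{F^{\prime\prime}\trkla{\phi\h\no}\mu\h\nn d^n}\dx$ and $D_2^n=-\tfrac{\tau}{8a_n^{3/2}}\iO\Ih{F^\prime\trkla{\phi\h\no}\mu\h\nn}\dx\,\iO\Ih{F^\prime\trkla{\phi\h\no}d^n}\dx$ each carry the factor $\tau\mu\h\nn$, and $\tilde\rho_n$ gathers $\rho_n^S$, $\tfrac{1}{2\sqrt{a_n}}\rho_n^F$ and the cubic-order cross terms produced by $\trkla{E\h\trkla{\phi\h\nn}-a_n}^2$. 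Since none of these remainders has a sign or a martingale structure, I would simply estimate $\max_{0\leq m\leq N}\abs{e\h^m}\leq\sum_{n=1}^N\trkla{\abs{D_1^n}+\abs{D_2^n}+\abs{\tilde\rho_n}}$ and bound each group in $L^p\trkla{\Omega}$.

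For $D_1^n$ and $D_2^n$ the factor $\tau\mu\h\nn$ is what saves the day: a Cauchy--Schwarz step in $n$ gives $\sum_n\abs{D_i^n}\lesssim\bigl(\tau\sum_n\norm{\mu\h\nn}\h^2\bigr)^{1/2}\bigl(\tau\sum_n\norm{d^n}_{L^2\trkla{\Om}}^2\bigr)^{1/2}$, which is $O\trkla{\tau^{1/2}}$ by Lemma \ref{lem:energy} and therefore beats the asserted rate. The term $\rho_n^S$ and the cubic cross terms are dominated by $\sum_n\norm{d^n}_{L^2\trkla{\Om}}^3$ and by mixed expressions of the same (third) order; the Nikolskii bound of Lemma \ref{lem:nikolskii} with $\alpha=3/2$ (whose higher $\Omega$-moments follow from the lemma through Jensen's inequality) again yields $O\trkla{\tau^{1/2}}$. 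For the $F_1$-part I would interpolate $\iO\Ih{\abs{d^n}^{2+\nu}}\dx\lesssim\norm{d^n}_{L^2\trkla{\Om}}^{2-\nu/2}\norm{d^n}_{H^1\trkla{\Om}}^{3\nu/2}$ by Gagliardo--Nirenberg (worst case $d=3$) and then apply Hölder in $n$ with the conjugate exponents that turn the $H^1$-factor into $\sum_n\norm{d^n}_{H^1\trkla{\Om}}^2=O(1)$ and the $L^2$-factor into a Nikolskii sum; a short bookkeeping of the exponents returns exactly the rate $\tau^{\nu/2}$.

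The main obstacle is the remaining cubic term $\sum_n\iO\Ih{\trkla{1+\abs{\phi\h\no}^2+\abs{\phi\h\nn}^2}\abs{d^n}^3}\dx$. Here I would absorb the quadratic growth through $\norm{\phi\h\no}_{L^6\trkla{\Om}}+\norm{\phi\h\nn}_{L^6\trkla{\Om}}\leq C\bigl(\norm{\phi\h\no}_{H^1\trkla{\Om}}+\norm{\phi\h\nn}_{H^1\trkla{\Om}}\bigr)=O(1)$ and Hölder, reducing a representative summand to $\norm{d^n}_{L^{9/2}\trkla{\Om}}^3\lesssim\norm{d^n}_{L^2\trkla{\Om}}^{1/2}\norm{d^n}_{H^1\trkla{\Om}}^{5/2}$ (Gagliardo--Nirenberg in $d=3$). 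The difficulty is that the $H^1$-power $5/2$ exceeds the summable power $2$, so no pathwise Hölder in $n$ closes; the surplus must instead be discarded through a maximum, $\sum_n\norm{d^n}_{L^2}^{1/2}\norm{d^n}_{H^1}^{5/2}\leq\max_m\bigl(\norm{d^m}_{L^2\trkla{\Om}}^{1/2}\norm{d^m}_{H^1\trkla{\Om}}^{1/2}\bigr)\sum_n\norm{d^n}_{H^1\trkla{\Om}}^2$. The only factor carrying a rate is $\max_m\norm{d^m}_{L^2\trkla{\Om}}^{1/2}$, whose moments I would estimate by a union bound over the time steps together with the identity $\Phi\h\trkla{\phi\h\no}\sinc{n}-d^n=\tau\mu\h\nn$ and Lemma \ref{lem:BDG}, giving $\expected{\max_{1\leq n\leq N}\norm{d^n}_{L^2\trkla{\Om}}^s}\leq C\tau^{s/2-1}$ for every $s\geq2$. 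Taking the outer $p$-th power, splitting off this maximum by Hölder in $\Omega$ (the remaining factors being bounded in every moment by Lemmas \ref{lem:energy} and \ref{lem:nikolskii}), and optimizing over $s$ produces $\tau^{p\trkla{1/4-\delta/2}}$ with $\delta\sim 1/s$ arbitrarily small. Collecting the four groups gives the claim, the binding contributions being the $F_1$- and $F_2$-remainders that realize $\nu/2$ and $1/4-\delta/2$ respectively; the case $d=2$ is strictly better and follows a fortiori.
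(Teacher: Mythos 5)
Your proposal is correct and follows essentially the same route as the paper: the same third-order Taylor expansion of $\sqrt{E\h\trkla{\cdot}}$, the same nodal expansion of $F$ via the splitting $F=F_1+F_2$, the same use of the identity $\Ih{\Phi\h\trkla{\phi\h\no}\sinc{n}}-\trkla{\phi\h\nn-\phi\h\no}=\tau\mu\h\nn$ to cancel the first- and second-order terms against \eqref{eq:discscheme:r}, the same crude summation of increments, and the same combination of Lemma \ref{lem:energy}, Lemma \ref{lem:nikolskii}, and Gagliardo--Nirenberg for the remainders, yielding the two binding rates $\tau^{\nu/2}$ and $\tau^{1/4-\delta/2}$ from the $F_1$- and $F_2$-parts respectively. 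The one genuine divergence is the treatment of the supercritical $F_2$-remainder $\sum_n\norm{\phi\h\nn-\phi\h\no}_{L^2\trkla{\Om}}^{1/2}\norm{\phi\h\nn-\phi\h\no}_{H^1\trkla{\Om}}^{5/2}$: the paper lowers the $H^1$-exponent from $5/2$ to $2-\delta$ by absorbing $\norm{\phi\h\nn-\phi\h\no}_{H^1\trkla{\Om}}^{1/2+\delta}$ into $\max_n\norm{\phi\h\nn}_{H^1\trkla{\Om}}^{5/2+\delta}$ and then applies a $\tau$-weighted Young inequality together with the Nikolskii bound at exponent $1/\delta$ (cf.~\eqref{eq:optimal}); you instead extract $\max_m\trkla{\norm{\phi\h^m-\phi\h^{m-1}}_{L^2\trkla{\Om}}^{1/2}\norm{\phi\h^m-\phi\h^{m-1}}_{H^1\trkla{\Om}}^{1/2}}$ and bound $\expected{\max_n\norm{\phi\h\nn-\phi\h\no}_{L^2\trkla{\Om}}^{s}}\leq C\tau^{s/2-1}$ by a union bound over time steps via the one-step equation and Lemma \ref{lem:BDG}. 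Both mechanisms sacrifice an arbitrarily small power and deliver the same rate; yours is arguably more elementary, at the cost of needing $s=1/\delta$ large (the resulting restriction $\delta<2/p$ in the Hölder split over $\widetilde{\Omega}$ is harmless by monotonicity of the claim in $\delta$, since $\tau<1$). Two small points of bookkeeping: the "higher $\Omega$-moments" of the Nikolskii sums follow from the discrete power-mean inequality in $n$ combined with Lemma \ref{lem:nikolskii} at the enlarged exponent $2\alpha=3p$, not from Jensen on the expectation itself; and your $D_1^n$, $D_2^n$ and the cross terms are exactly the paper's $R_{1,n}$, $R_{3,n}$, $R_{4,n}$, $R_{5,n}$, so the estimates close as you describe.
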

\begin{proof}
We start with a Taylor expansion of $\sqrt{E\h\trkla{\phi\h\nn}}$ to quantify the difference between the increments $r\h\nn-r\h\no$ and $\sqrt{E\h\trkla{\phi\h\nn}}-\sqrt{E\h\trkla{\phi\h\no}}$:
\begin{align}
\begin{split}
\sqrt{E\h\trkla{\phi\h\nn}}&-\sqrt{E\h\trkla{\phi\h\no}}\\
=&\,\frac{1}{2\sqrt{E\h\trkla{\phi\h\no}}}\trkla{E\h\trkla{\phi\h\nn}-E\h\trkla{\phi\h\no}}-\frac{1}{8\tekla{E\h\trkla{\phi\h\no}}^{3/2}}\rkla{E\h\trkla{\phi\h\nn}-E\h\trkla{\phi\h\no}}^2\\
&+\frac{1}{16\tekla{E\h\trkla{\varphi_1}}^{5/2}}\rkla{E\h\trkla{\phi\h\nn}-E\h\trkla{\phi\h\no}}^3\\
=&\,\frac{1}{2\sqrt{E\h\trkla{\phi\h\no}}}\rkla{\iO\Ih{F^\prime\trkla{\phi\h\no}\trkla{\phi\h\nn-\phi\h\no}+\tfrac12F^{\prime\prime}\trkla{\phi\h\no}\trkla{\phi\h\nn-\phi\h\no}^2}\dx}\\
&+\frac{1}{2\sqrt{E\h\trkla{\phi\h\no}}}\iO\Ih{\tfrac12\rkla{F^{\prime\prime}\trkla{\varphi_2}-F^{\prime\prime}\trkla{\phi\h\no}}\trkla{\phi\h\nn-\phi\h\no}^2}\dx\\
&-\frac{1}{8\tekla{E\h\trkla{\phi\h\no}}^{3/2}}\rkla{\iO\Ih{F^\prime\trkla{\phi\h\no}\trkla{\phi\h\nn-\phi\h\no}}\dx}^2\\
&-\frac{1}{8\tekla{E\h\trkla{\phi\h\no}}^{3/2}}\rkla{\iO\Ih{F^\prime\trkla{\phi\h\no}\trkla{\phi\h\nn-\phi\h\no}}\dx}\\
&\qquad\qquad\times\rkla{\iO\Ih{F^{\prime\prime}\trkla{\varphi_3}\trkla{\phi\h\nn-\phi\h\no}^2}\dx}\\
&-\frac{1}{8\tekla{E\h\trkla{\phi\h\no}}^{3/2}}\rkla{\tfrac12\iO\Ih{F^{\prime\prime}\trkla{\varphi_3}\trkla{\phi\h\nn-\phi\h\no}^2}\dx}^2\\
&+\frac{1}{16\tekla{E\h\trkla{\varphi_1}}^{5/2}}\rkla{\iO\Ih{F^\prime\trkla{\varphi_4}\trkla{\phi\h\nn-\phi\h\no}}\dx}^3\,
\end{split}\label{eq:taylor1}
\end{align}
with $\varphi_1,\,\varphi_2,\,\varphi_3,\,\varphi_4\in\operatorname{conv}\tgkla{\phi\h\nn,\phi\h\no}$.
Due to \eqref{eq:modeldisc:phimu}, we have $\trkla{\phi\h\nn-\phi\h\no}=-\tau\mu\h\nn+\Ih{\Phi\h\trkla{\phi\h\no}\sinc{n}}$ which allows us to rewrite \eqref{eq:taylor1} as
\begin{align}\label{eq:taylor2}
\begin{split}
\sqrt{E\h\trkla{\phi\h\nn}}&\,-\sqrt{E\h\trkla{\phi\h\no}}=r\h\nn-r\h\no\\
&-\frac{1}{4\sqrt{E\h\trkla{\phi\h\no}}}\iO\Ih{F^{\prime\prime}\trkla{\phi\h\no}\trkla{\phi\h\nn-\phi\h\no}\tau\mu\h\nn}\dx\\
&+\frac{1}{4\sqrt{E\h\trkla{\phi\h\no}}}\iO\Ih{\trkla{F^{\prime\prime}\trkla{\varphi_2}-F^{\prime\prime}\trkla{\phi\h\no}}\trkla{\phi\h\nn-\phi\h\no}^2}\dx\\
&+\frac{1}{8\tekla{E\h\trkla{\phi\h\no}}^{3/2}}\rkla{\iO\Ih{F^\prime\trkla{\phi\h\no}\trkla{\phi\h\nn-\phi\h\no}}\dx}\rkla{\iO\Ih{F^\prime\trkla{\phi\h\no}\tau\mu\h\nn}\dx}\\
&-\frac{1}{8\tekla{E\h\trkla{\phi\h\no}}^{3/2}}\rkla{\iO\Ih{F^\prime\trkla{\phi\h\no}\trkla{\phi\h\nn-\phi\h\no}}\dx}\\
&\qquad\qquad\times\rkla{\iO\Ih{F^{\prime\prime}\trkla{\varphi_3}\trkla{\phi\h\nn-\phi\h\no}^2}\dx}\\
&-\frac{1}{8\tekla{E\h\trkla{\phi\h\no}}^{3/2}}\rkla{\tfrac12\iO\Ih{F^{\prime\prime}\trkla{\varphi_3}\trkla{\phi\h\nn-\phi\h\no}^2}\dx}^2\\
&+\frac{1}{16\tekla{E\h\trkla{\varphi_1}}^{5/2}}\rkla{\iO\Ih{F^\prime\trkla{\varphi_4}\trkla{\phi\h\nn-\phi\h\no}}\dx}^3\\
=:&\,r\h\nn-r\h\no +R_{1,n}+R_{2,n}+R_{3,n}+R_{4,n}+R_{5,n}+R_{6,n}\,.
\end{split}
\end{align}
Summing \eqref{eq:taylor2} from $n=1$ to $m\leq N$, noting that by definition $r\h^0=\sqrt{E\h\trkla{\phi\h^0}}$, taking the $p$-th power and the supremum over all $m\in\tgkla{0,\ldots,N}$, and computing the expected value, we obtain
\begin{align}
\begin{split}
\expected{\sup_{0\leq m\leq N}\abs{r\h^m-\sqrt{E\h\trkla{\phi\h^m}}}^p}\leq&\, C\expected{\abs{\sum_{n=1}^N R_{1,n}}^p}+C\expected{\abs{\sum_{n=1}^N R_{2,n}}^p}+C\expected{\abs{\sum_{n=1}^N R_{3,n}}^p}\\
+&C\expected{\abs{\sum_{n=1}^N R_{4,n}}^p}+C\expected{\abs{\sum_{n=1}^N R_{5,n}}^p}+C\expected{\abs{\sum_{n=1}^N R_{6,n}}^p}\,.
\end{split}
\end{align}
To estimate $R_{1,n}$, we combine Hölder's inequality, \ref{item:potentialF}, Young's inequality, and the results from Lemma \ref{lem:energy} to compute
\begin{align}
\begin{split}
&\expected{\abs{\sum_{n=1}^N R_{1,n}}^p}\leq C\expected{\abs{\sum_{n=1}^N\tau\norm{\mu\h\nn}_{L^2\trkla{\Om}} \norm{F^{\prime\prime}\trkla{\phi\h\no}}_{L^3\trkla{\Om}}\norm{\phi\h\nn-\phi\h\no}_{L^6\trkla{\Om}} }^p}\\
&\leq C\expected{\abs{\rkla{1+\max_{1\leq n\leq N}\norm{\phi\h\nn}_{H^1\trkla{\Om}}^2}\tau^{1/2}\rkla{\sum_{n=1}^N\tau\norm{\mu\h\nn}_{L^2\trkla{\Om}}^2+\sum_{n=1}^N\norm{\phi\h\nn-\phi\h\no}_{H^1\trkla{\Om}}^2}}^p}\\
&\leq C\tau^{p/2}\,.
\end{split}
\end{align}
Concerning $R_{2,n}$ we first consider the $C^{2,\nu}$-part $F_1$ (cf.~\ref{item:potentialF}) and obtain the pathwise estimate
\begin{multline}\label{eq:R2F1}
\abs{\iO\Ih{\rkla{F_1^{\prime\prime}\trkla{\varphi_2}-F_1^{\prime\prime}\trkla{\phi\h\no}}\trkla{\phi\h\nn-\phi\h\no}^2}\dx}\leq C\iO\Ih{\tabs{\phi\h\nn-\phi\h\no}^{2+\nu}}\dx\\
\leq C\norm{\phi\h\nn-\phi\h\no}_{H^1\trkla{\Om}}^{3\nu/2}\norm{\phi\h\nn-\phi\h\no}_{L^2\trkla{\Om}}^{\trkla{4-\nu}/2}\\
\leq C\tau^{\nu/2}\norm{\phi\h\nn-\phi\h\no}_{H^1\trkla{\Om}}^2+C\tau^{-3\nu^2/\trkla{8-6\nu}}\norm{\phi\h\nn-\phi\h\no}_{L^2\trkla{\Om}}^{\trkla{8-2\nu}/\trkla{4-3\nu}}\,,
\end{multline}
due to the Gagliardo--Nirenberg inequality and Young's inequality.
The remaining part can be estimated using the growth condition on $F_2^{\prime\prime\prime}$ stated in \ref{item:potentialF} and the Gagliardo--Nirenberg inequality.
For any $1/2>\delta>0$, we obtain
\begin{align}\label{eq:optimal}
\begin{split}
\!\!\!\!\!&\abs{\iO\Ih{\trkla{F_2^{\prime\prime}\trkla{\varphi_2}-F_2^{\prime\prime}\trkla{\phi\h\no}}\trkla{\phi\h\nn-\phi\h\no}^2}\dx}\leq C\norm{\Ih{F_2^{\prime\prime\prime}\trkla{\hat{\varphi}}}}_{L^3\trkla{\Om}}\norm{\phi\h\nn-\phi\h\no}_{L^{9/2}\trkla{\Om}}^3\\
&\qquad\leq C\rkla{1+\norm{\phi\h\nn}_{H^1\trkla{\Om}}^2+\norm{\phi\h\no}_{H^1\trkla{\Om}}^2}\norm{\phi\h\nn-\phi\h\no}_{H^1\trkla{\Om}}^{5/2}\norm{\phi\h\nn-\phi\h\no}_{L^2\trkla{\Om}}^{1/2}\\
&\qquad\leq C\rkla{1+\norm{\phi\h\nn}_{H^1\trkla{\Om}}^{5/2+\delta}+\norm{\phi\h\no}_{H^1\trkla{\Om}}^{5/2+\delta}}\norm{\phi\h\nn-\phi\h\no}_{H^1\trkla{\Om}}^{2-\delta}\norm{\phi\h\nn-\phi\h\no}_{L^2\trkla{\Om}}^{1/2}\\
&\qquad\leq C\rkla{1+\norm{\phi\h\nn}_{H^1\trkla{\Om}}^{5/2+\delta}+\norm{\phi\h\no}_{H^1\trkla{\Om}}^{5/2+\delta}}\\
&\qquad\qquad\times\rkla{\tau^{\trkla{1/4-\delta/2}}\norm{\phi\h\nn-\phi\h\no}_{H^1\trkla{\Om}}^2+\tau^{-\tfrac{\trkla{2-\delta}\trkla{1/2-\delta}}{2\delta}}\norm{\phi\h\nn-\phi\h\no}_{L^2\trkla{\Om}}^{1/\delta}}
\end{split}
\end{align}
%\begin{multline}\label{eq:notoptimal}
%\abs{\iO\Ih{\trkla{F_2^{\prime\prime}\trkla{\varphi_2}-F_2^{\prime\prime}\trkla{\phi\h\no}}\trkla{\phi\h\nn-\phi\h\no}^2}\dx}\leq C\norm{\Ih{F_2^{\prime\prime\prime}\trkla{\hat{\varphi}}}}_{L^3\trkla{\Om}}\norm{\phi\h\nn-\phi\h\no}_{L^{9/2}\trkla{\Om}}^3\\
%\leq C\rkla{1+\norm{\phi\h\nn}_{H^1\trkla{\Om}}^2+\norm{\phi\h\no}_{H^1\trkla{\Om}}^2}\norm{\phi\h\nn-\phi\h\no}_{H^1\trkla{\Om}}^{5/2}\norm{\phi\h\nn-\phi\h\no}_{L^2\trkla{\Om}}^{1/2}\\
%\leq C\rkla{1+\norm{\phi\h\nn}_{H^\trkla{\Om}}^{11/4}+\norm{\phi\h\no}_{H^1\trkla{\Om}}^{11/4}}\norm{\phi\h\nn-\phi\h\no}_{H^1\trkla{\Om}}^{7/4}\norm{\phi\h\nn-\phi\h\no}_{L^2\trkla{\Om}}^{1/2}\\
%\leq C\rkla{1+\norm{\phi\h\nn}_{H^1\trkla{\Om}}^{11/4}+\norm{\phi\h\no}_{H^1\trkla{\Om}}^{11/4}}\rkla{\tau^{1/8}\norm{\phi\h\nn-\phi\h\no}_{H^1\trkla{\Om}}^2+\tau^{-7/8}\norm{\phi\h\nn-\phi\h\no}_{L^2\trkla{\Om}}^4}
%\end{multline}
with a suitable $\hat{\varphi}\in\operatorname{conv}\tgkla{\phi\h\nn,\phi\h\no}$.
%The estimates used in \eqref{eq:notoptimal} are not optimal in the sense that higher powers of $\norm{\phi\h\nn-\phi\h\no}_{L^2\trkla{\Om}}$ could be achieved resulting in a better rate of convergence.
%Yet, the presented estimates are sufficient to show that $r\h\nn$ converges to the same limit as $\sqrt{E\h\trkla{\phi\h\nn}}$ for $\tau\searrow0$.\\
Combining \eqref{eq:R2F1} and \eqref{eq:optimal} with the uniform lower bound on $E\h\trkla{\phi\h\no}$, Hölder's inequality, and the results from Lemma \ref{lem:energy} and Lemma \ref{lem:nikolskii}, we compute
\begin{align}
\begin{split}
&\expected{\abs{\sum_{n=1}^N R_{2,n}}^p}\leq\, C\expected{\rkla{\tau^{\nu/2}\sum_{n=1}^N\norm{\phi\h\nn-\phi\h\no}_{H^1\trkla{\Om}}^2}^p}\\
&\quad+C\expected{\rkla{\tau^{-2\nu^2/\trkla{8-6\nu}}\sum_{n=1}^N\norm{\phi\h\nn-\phi\h\no}_{L^2\trkla{\Om}}^{\trkla{8-2\nu}/\trkla{4-3\nu}}}^p}\\
&\quad+C\rkla{\expected{1+\max_{0\leq n\leq N}\norm{\phi\h\nn}_{H^1\trkla{\Om}}^{\trkla{5/2+\delta}p}}\expected{\rkla{\tau^{\trkla{1/4-\delta/2}}\sum_{n=1}^N\norm{\phi\h\nn-\phi\h\no}_{H^1\trkla{\Om}}^2}^{2p}}}^{1/2}\\
&\quad+C\rkla{\expected{1+\max_{0\leq n\leq N}\norm{\phi\h\nn}_{H^1\trkla{\Om}}^{\trkla{5/2+\delta}p}}\expected{\rkla{\tau^{-\tfrac{\trkla{2-\delta}\trkla{1/2-\delta}}{2\delta}}\sum_{n=1}^N\norm{\phi\h\nn-\phi\h\no}_{L^2\trkla{\Om}}^{1/\delta}}^{2p}}}^{1/2}\\
&\leq \,C\tau^{p\nu/2}+C\tau^{\trkla{1/4-\delta/2}p}\,.
\end{split}
\end{align}
%\begin{align}
%\begin{split}
%\expected{\abs{\sum_{n=1}^N R_{2,n}}^p}\leq&\, C\expected{\rkla{\tau^{\nu/2}\sum_{n=1}^N\norm{\phi\h\nn-\phi\h\no}_{H^1\trkla{\Om}}^2}^p}\\
%&+C\expected{\rkla{\tau^{-2\nu^2/\trkla{8-6\nu}}\sum_{n=1}^N\norm{\phi\h\nn-\phi\h\no}_{L^2\trkla{\Om}}^{\trkla{8-2\nu}/\trkla{4-3\nu}}}^p}\\
%&+C\rkla{\expected{1+\max_{0\leq n\leq N}\norm{\phi\h\nn}_{H^1\trkla{\Om}}^{11p/2}}\expected{\rkla{\tau^{1/8}\sum_{n=1}^N\norm{\phi\h\nn-\phi\h\no}_{H^1\trkla{\Om}}^2}^2p}}^{1/2}\\
%&+C\rkla{\expected{1+\max_{0\leq n\leq N}\norm{\phi\h\nn}_{H^1\trkla{\Om}}^{11p/2}}\expected{\rkla{\tau^{-7/8}\sum_{n=1}^N\norm{\phi\h\nn-\phi\h\no}_{L^2\trkla{\Om}}^4}^{2p}}}^{1/2}\\
%\leq &\,C\tau^{p\nu/2}+C\tau^{p/8}\,.
%\end{split}
%\end{align}
For $R_{3,n}$, we obtain using Hölder's inequality, \ref{item:potentialF}, Young's inequality, and the results from Lemma \ref{lem:energy} that
\begin{multline}
\expected{\abs{\sum_{n=1}^N R_{3,n}}^p}\leq\, C\expected{\rkla{\sum_{n=1}^N\tau\norm{\Ih{F^\prime\trkla{\phi\h\no}}}_{L^2\trkla{\Om}}^2\norm{\phi\h\nn-\phi\h\no}_{L^2\trkla{\Om}}\norm{\mu\h\nn}_{L^2\trkla{\Om}}}^p}\\
\leq\,C\expected{\rkla{1+\max_{1\leq n\leq N}\norm{\phi\h\no}_{H^1\trkla{\Om}}^6}^p\tau^{p/2}\rkla{\sum_{n=1}^N\tau\norm{\mu\h\nn}_{L^2\trkla{\Om}}^2+\sum_{n=1}^N\norm{\phi\h\nn-\phi\h\no}_{L^2\trkla{\Om}}^2}^p}\\
\leq C\tau^{p/2}\,.
\end{multline}
The remaining terms $R_{4,n},R_{5,n},R_{6,n}$ can be estimated in a similar manner as $R_{2,n}$ by applying the Gagliardo--Nirenberg inequality and the growth conditions stated in \ref{item:potentialF}.
This provides
\begin{align}
\begin{split}
\expected{\abs{\sum_{n=1}^N R_{4,n}}^p}\leq&\, C\expected{\rkla{\rkla{1+\max_{0\leq n\leq N}\norm{\phi\h\nn}_{H^1\trkla{\Omega}}^5}\sum_{n=1}^N\tau^{1/2}\norm{\phi\h\nn-\phi\h\no}_{H^1\trkla{\Om}}^2}^p}\\
&\, +C\expected{\rkla{\rkla{1+\max_{0\leq n\leq N}\norm{\phi\h\nn}_{H^1\trkla{\Omega}}^5}\sum_{n=1}^N\tau^{-1/2}\norm{\phi\h\nn-\phi\h\no}_{L^2\trkla{\Om}}^{4}}^p}\\
\leq&\,C\tau^{p/2}\,,
\end{split}\\
\begin{split}
\expected{\abs{\sum_{n=1}^N R_{5,n}}^p}\leq&\,C\expected{\rkla{\sum_{n=1}^N\norm{\Ih{F^{\prime\prime}\trkla{\varphi_3}}}_{L^3\trkla{\Om}}^2\norm{\phi\h\nn-\phi\h\no}_{H^1\trkla{\Om}}^{3/2}\norm{\phi\h\nn-\phi\h\no}_{L^2\trkla{\Om}}^{5/2}}^p}\\
\leq&\,C\expected{\rkla{1+\max_{0\leq n\leq N}\norm{\phi\h\nn}_{H^1\trkla{\Om}}^{5p}}\rkla{\sum_{n=1}^N\norm{\phi\h\nn-\phi\h\no}_{H^1\trkla{\Om}}\norm{\phi\h\nn-\phi\h\no}_{L^2\trkla{\Om}}^2}^p}\\
\leq&\,C\tau^{p/2}\,,
\end{split}\\
\begin{split}
\expected{\abs{\sum_{n=1}^N R_{6,n}}^p}\leq&\,C\expected{\rkla{\max_{0\leq n\leq N}\rkla{1+\norm{\phi\h\nn}_{H^1\trkla{\Om}}^9}\sum_{n=1}^N\norm{\phi\h\nn-\phi\h\no}_{L^2\trkla{\Om}}^3}^p}\\
\leq&\, C\tau^{p/2}\,.
\end{split}
\end{align}
Combining the above estimates provides the result.
\end{proof}
\section{Compactness properties of discrete solutions}\label{sec:compactness}
The goal of this section is to prove tightness of the laws of the discrete solutions and to deduce the existence of weakly and strongly converging subsequences.

Using the time-index-free notation defined in \eqref{eq:deftimeinterpol}, we can restate the regularity results obtained in the last section as follows:
\begin{subequations}\label{eq:regularity}
\begin{align}\label{eq:regularity:energy}
\begin{split}
\norm{\phi\h\tpm}_{L^{2\p}\trkla{\Omega;L^\infty\trkla{0,T;H^1\trkla{\Om}}}} +\norm{r\h\tpm}_{L^{2\p}\trkla{\Omega;L^\infty\trkla{0,T}}}+\norm{\mu\h\tp}_{L^{2\p}\trkla{\Omega;L^2\trkla{0,T;L^2\trkla{\Om}}}}&\\
+\tau^{-1/2}\norm{\phi\h\tp-\phi\h\tm}_{L^{2\p}\trkla{\Omega;L^2\trkla{0,T;H^1\trkla{\Om}}}}+\tau^{-1/2}\norm{r\h\tp-r\h\tm}_{L^{2\p}\trkla{\Omega;L^2\trkla{0,T}}}&\leq C\,,
\end{split}
\end{align}
\begin{align}\label{eq:regularity:discLap}
\norm{\Delta\h\phi\h\tp}_{L^{2q}\trkla{L^2\trkla{0,T;L^2\trkla{\Om}}}}\leq C\,,
\end{align}
\begin{align}\label{eq:regularity:hoelder}
\norm{\phi\h\tl}_{L^{2\alpha}\trkla{\Omega; N^{1/2,2\alpha}\trkla{0,T;L^2\trkla{\Om}}}}+\norm{\phi\h\tl}_{L^{2\alpha}\trkla{\Omega;C^{0,\trkla{\alpha-1}/\trkla{2\alpha}}\trkla{\tekla{0,T};L^2\trkla{\Om}}}}\leq C\,,
\end{align}
\begin{align}\label{eq:regularity:muerror}
\norm{\Xi\h\tp}_{L^q\trkla{\Omega;L^2\trkla{0,T;L^2\trkla{\Om}}}}\leq C\tau^{1/2}\,,
\end{align}
\end{subequations}
for $\p,q\in[1,\infty)$ and $\alpha\in\trkla{1,\infty}$.
While \eqref{eq:regularity:energy} and \eqref{eq:regularity:discLap} are direct consequences of Lemma \ref{lem:energy} and Corollary \ref{cor:h2}, \eqref{eq:regularity:hoelder} follows from Lemma 3.2 in \cite{Banas2013}, which guarantees that Lemma \ref{lem:nikolskii} is sufficient to show that $\phi\h\tl$ is in the corresponding Nikolskii space, and the embedding result in \cite{Simon1990}.
The final estimate \eqref{eq:regularity:muerror} follows from Corollary \ref{cor:muerror}.\\
We shall now identify almost surely converging subsequences by applying Jakubowski's generalization of Skorokhod's theorem (cf.~\cite{Jakubowski1998}).
In particular, we are interested in the convergence properties of $\trkla{\phi\h\tl,r\h\tl,\Delta\h\phi\h\tp,\mu\h\tp}$ and the linear interpolation $\bs{\xi}\h\tl$ of $\tgkla{\bs{\xi}\h^{m,\tau}}_m$.
We want to remark that these time-continuous processes $\bs{\xi}\h^\tau$ are not martingales.
Yet, we shall later show that they converge towards martingales. 
We start by establishing uniform estimates for $\bs{\xi}\h\tl$.
\begin{lemma}\label{lem:regWiener}
Let the assumptions \ref{item:filtration}-\ref{item:color} hold true.
Then, the piecewise linear processes $\bs{\xi}\h\tl$ satisfy
\begin{align}
\norm{\bs{\xi}\h\tl}_{L^{2p}\trkla{\Omega;C^{0,\trkla{p-1}/\trkla{2p}}\trkla{\tekla{0,T};H^1\trkla{\Om}}}}\leq C
\end{align}
for arbitrary $p\in\trkla{1,\infty}$ with a constant $C>0$ that depends on $p$ but not on $h$ or $\tau$.
\end{lemma}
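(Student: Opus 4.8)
The plan is to follow exactly the route used to obtain \eqref{eq:regularity:hoelder} for $\phi\h\tl$: first establish a discrete Nikolskii-type increment estimate for the finite-dimensional process $\bs{\xi}\h^{m,\tau}$ defined in \eqref{eq:deffinteprocess}, then invoke Lemma 3.2 in \cite{Banas2013} to transfer this into a uniform bound for the piecewise linear interpolant $\bs{\xi}\h\tl$ in $L^{2p}\trkla{\Omega;N^{1/2,2p}\trkla{0,T;H^1\trkla{\Om}}}$, and finally apply the embedding result of \cite{Simon1990}, which gives $N^{1/2,2p}\trkla{0,T;H^1\trkla{\Om}}\emb C^{0,1/2-1/\trkla{2p}}\trkla{\tekla{0,T};H^1\trkla{\Om}}$. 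Since $1/2-1/\trkla{2p}=\trkla{p-1}/\trkla{2p}$ and $p>1$, this is precisely the claimed Hölder exponent.

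The concrete estimate I would prove in the first step is
\begin{align*}
\expected{\sum_{m=0}^{N-l}\tau\norm{\bs{\xi}\h^{m+l,\tau}-\bs{\xi}\h^{m,\tau}}_{H^1\trkla{\Om}}^{2p}}\leq C\trkla{l\tau}^p
\end{align*}
for all $l=0,\ldots,N$. The key observation is that
\begin{align*}
\bs{\xi}\h^{m+l,\tau}-\bs{\xi}\h^{m,\tau}=\sum_{n=m+1}^{m+l}\sqrt{\tau}\sum_{k\in\Zh}\lambda_k\g{k}\xi_k^{n,\tau}=\sum_{n=m+1}^{m+l}\Psi\h\sinc{n}\,,
\end{align*}
where $\Psi\h\,:\,\mathcal{Q}^{1/2}L^2\trkla{\Om}\rightarrow H^1\trkla{\Om}$ denotes the deterministic (hence trivially adapted) truncation operator $\Psi\h f:=\sum_{k\in\Zh}\trkla{f,\g{k}}_{L^2\trkla{\Om}}\g{k}$, which satisfies $\Psi\h\g{k}=0$ for $k\notin\Zh$. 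This is exactly the structure required by Lemma \ref{lem:BDG}, so applying \eqref{eq:bdg2} with exponent $2p$ to the block $n=m+1,\ldots,m+l$ (the hypotheses \ref{item:filtration}--\ref{item:randomvariables} being invariant under the time shift) would yield
\begin{align*}
\expected{\norm{\sum_{n=m+1}^{m+l}\Psi\h\sinc{n}}_{H^1\trkla{\Om}}^{2p}}\leq C\trkla{l\tau}^{p-1}\sum_{n=m+1}^{m+l}\tau\,\rkla{\sum_{k\in\Zh}\lambda_k^2\norm{\g{k}}_{H^1\trkla{\Om}}^2}^{p}\,.
\end{align*}
Because $\Om$ is bounded we have $\norm{\g{k}}_{H^1\trkla{\Om}}\leq C\norm{\g{k}}_{W^{1,\infty}\trkla{\Om}}$, so the colored-noise assumption \ref{item:color} bounds the inner sum by the deterministic constant $C\widetilde{C}$. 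Hence each block is controlled by $C\trkla{l\tau}^{p-1}\cdot l\tau=C\trkla{l\tau}^p$, and summing $\sum_{m=0}^{N-l}\tau$ over the starting points (with $N\tau=T$) gives the displayed Nikolskii estimate.

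The remaining two steps are then purely deterministic and identical to the treatment of $\phi\h\tl$ in Section \ref{sec:compactness}: Lemma 3.2 in \cite{Banas2013} converts the discrete difference bound into control of the continuous Nikolskii seminorm of $\bs{\xi}\h\tl$, and the Simon embedding upgrades this pathwise to the Hölder norm, after which taking $L^{2p}\trkla{\Omega}$-norms preserves the uniform bound. I expect no genuine obstacle here; the only points requiring care are the correct identification of the summand as a Hilbert--Schmidt operator into $H^1\trkla{\Om}$ rather than into $L^2\trkla{\Om}$ — so that \ref{item:color}, and not merely the trace-class boundedness of $\mathcal{Q}$, is the relevant hypothesis — and the observation that the resulting bound on $\sum_{k\in\Zh}\lambda_k^2\norm{\g{k}}_{H^1\trkla{\Om}}^2$ is deterministic, which renders the outer expectation in \eqref{eq:bdg2} trivial.
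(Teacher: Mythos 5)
Your proposal is correct and follows essentially the same route as the paper: the discrete Nikolskii increment bound $\expected{\sum_{m}\tau\norm{\bs{\xi}\h^{m+l,\tau}-\bs{\xi}\h^{m,\tau}}_{H^1\trkla{\Om}}^{2p}}\leq C\trkla{l\tau}^p$ via estimate \eqref{eq:bdg2} of Lemma \ref{lem:BDG} together with \ref{item:color}, followed by Lemma 3.2 in \cite{Banas2013} and the Simon embedding. Your explicit identification of the truncation operator $\Psi\h$ and of the deterministic nature of the Hilbert--Schmidt norm merely spells out details the paper leaves implicit.
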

\begin{proof}
Recalling the definition of $\bs{\xi}\h^{m,\tau}$ in \eqref{eq:deffinteprocess}, we obtain from Lemma \ref{lem:BDG}
\begin{multline}
\expected{\sum_{n=0}^{N-l}\tau\norm{\bs{\xi}\h^{m+l,\tau}-\bs{\xi}\h^{m,\tau}}_{H^1\trkla{\Om}}^{2p}}=\expected{\sum_{m=0}^{N-l}\tau\norm{\sum_{n=m+1}^{m+l}\sqrt{\tau}\sum_{k\in\mathds{Z}\h}\lambda_k\g{k}\xi\h^{n,\tau}}_{H^1\trkla{\Om}}^{2p}}\\
\leq C\sum_{m=0}^{N-l}\tau\trkla{l\tau}^{p-1}\sum_{n=m+1}^{m+l}\tau\expected{\rkla{\sum_{k\in\mathds{Z}\h}\norm{\lambda_k\g{k}}_{H^1\trkla{\Om}}^2}^p}\leq C\trkla{l\tau}^p\,.
\end{multline}
According to Lemma 3.2 in \cite{Banas2013}, this is sufficient to establish the result.
\end{proof}

The bound established in Lemma \ref{lem:regWiener} in particular provides the tightness of the laws of $\trkla{\bs{\xi}\h^\tau}_{h,\tau}$ in $C\trkla{\tekla{0,T};L^2\trkla{\Om}}$.
In the next lemma, we show that the laws of $\trkla{\phi\h\tl}_{h,\tau}$ are tight on $C\trkla{\tekla{0,T};L^s\trkla{\Om}}$ with $s\in[1,\tfrac{2d}{d-2})$:
\begin{lemma}\label{lem:tightness}
Let $\trkla{\phi\h\tl}_{h,\tau}$ be a family of continuous, piecewise linear processes that satisfy the bounds stated in \eqref{eq:regularity}. 
Then the family of laws $\nu_{\phi\h\tl}$ is tight on $C\trkla{\tekla{0,T};L^s\trkla{\Om}}$ with $s\in[1,\tfrac{2d}{d-2})$.
\end{lemma}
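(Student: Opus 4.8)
The plan is to obtain tightness from Prokhorov's theorem: for every $\eta>0$ I would produce a set $K_\eta$ that is compact in $C\trkla{\tekla{0,T};L^s\trkla{\Om}}$ and satisfies $\prob{\phi\h\tl\in K_\eta}\geq 1-\eta$ uniformly in $h$ and $\tau$. The building blocks are precisely the two quantities controlled in \eqref{eq:regularity}. First I would fix a Hölder exponent $\gamma\in\trkla{0,1/2}$ and choose the integrability parameter $\alpha>1$ in \eqref{eq:regularity:hoelder} so large that $\trkla{\alpha-1}/\trkla{2\alpha}\geq\gamma$; then $\trkla{\phi\h\tl}_{h,\tau}$ is bounded in $L^{2\alpha}\trkla{\Omega;C^{0,\gamma}\trkla{\tekla{0,T};L^2\trkla{\Om}}}$, while \eqref{eq:regularity:energy} bounds it in $L^{2}\trkla{\Omega;L^\infty\trkla{0,T;H^1\trkla{\Om}}}$. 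For $R>0$ I set
\begin{align*}
B_R:=\gkla{f:\ \norm{f}_{L^\infty\trkla{0,T;H^1\trkla{\Om}}}\leq R,\ \norm{f}_{C^{0,\gamma}\trkla{\tekla{0,T};L^2\trkla{\Om}}}\leq R}
\end{align*}
and take $K_R$ to be its closure in $C\trkla{\tekla{0,T};L^s\trkla{\Om}}$.

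The key step is to verify that $B_R$ is relatively compact in $C\trkla{\tekla{0,T};L^s\trkla{\Om}}$, which I would do through the Arzelà--Ascoli characterization. For the pointwise part, at each $t\in\tekla{0,T}$ the set $\tgkla{f\trkla{t}:f\in B_R}$ lies in the closed $H^1\trkla{\Om}$-ball of radius $R$, which is compact in $L^s\trkla{\Om}$ thanks to the Rellich--Kondrachov embedding $H^1\trkla{\Om}\compemb L^s\trkla{\Om}$, valid precisely for $s<\tfrac{2d}{d-2}$. For equicontinuity, if $s\leq2$ the boundedness of $\Om$ gives $\norm{f\trkla{t}-f\trkla{t'}}_{L^s\trkla{\Om}}\leq C\norm{f\trkla{t}-f\trkla{t'}}_{L^2\trkla{\Om}}\leq CR\tabs{t-t'}^\gamma$ directly.

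The hard part will be equicontinuity in the range $s>2$, where the temporal Hölder control is only available in the weaker $L^2\trkla{\Om}$-norm. Here I would interpolate: choosing $s^*\in\trkla{s,\tfrac{2d}{d-2}}$ and $\theta\in\trkla{0,1}$ with $\tfrac1s=\tfrac{1-\theta}2+\tfrac{\theta}{s^*}$, the inequality $\norm{g}_{L^s\trkla{\Om}}\leq\norm{g}_{L^2\trkla{\Om}}^{1-\theta}\norm{g}_{L^{s^*}\trkla{\Om}}^\theta$ applied to $g=f\trkla{t}-f\trkla{t'}$ combines the $C^{0,\gamma}$-bound on the first factor with the Sobolev embedding $H^1\trkla{\Om}\emb L^{s^*}\trkla{\Om}$ (so that the second factor is $\leq\trkla{2CR}^\theta$) to yield $\norm{f\trkla{t}-f\trkla{t'}}_{L^s\trkla{\Om}}\leq C_R\tabs{t-t'}^{\trkla{1-\theta}\gamma}$, that is, uniform equicontinuity with the reduced exponent $\trkla{1-\theta}\gamma>0$. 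This makes $K_R$ compact.

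Finally I would close the argument with Chebyshev's inequality: since
\begin{align*}
\prob{\phi\h\tl\notin K_R}\leq R^{-2}\expected{\norm{\phi\h\tl}_{L^\infty\trkla{0,T;H^1\trkla{\Om}}}^2}+R^{-2\alpha}\expected{\norm{\phi\h\tl}_{C^{0,\gamma}\trkla{\tekla{0,T};L^2\trkla{\Om}}}^{2\alpha}}\leq CR^{-2},
\end{align*}
with $C$ independent of $h,\tau$ by \eqref{eq:regularity:energy} and \eqref{eq:regularity:hoelder}, choosing $R$ large makes the right-hand side smaller than $\eta$, which yields the claimed tightness on $C\trkla{\tekla{0,T};L^s\trkla{\Om}}$.
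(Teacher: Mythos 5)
Your proof is correct and follows the same architecture as the paper's: one shows that a ball in $L^\infty\trkla{0,T;H^1\trkla{\Om}}\cap C^{0,\gamma}\trkla{\tekla{0,T};L^2\trkla{\Om}}$ is compact in $C\trkla{\tekla{0,T};L^s\trkla{\Om}}$ and then concludes with Chebyshev's inequality and the uniform moment bounds \eqref{eq:regularity}. The only difference is that the paper obtains the compact embedding by invoking Simon's compactness theorem \cite{Simon1987}, whereas you prove that embedding directly via Arzel\`a--Ascoli, Rellich--Kondrachov, and $L^2$--$L^{s^*}$ interpolation, which is a valid self-contained substitute for the citation.
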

\begin{proof}
Due to the well-known compactness theorem by Simon (cf.~\cite{Simon1987}), the closed ball $\overline{B}_R$ in $L^\infty\trkla{0,T;H^1\trkla{\Om}}\cap C^{0,\trkla{\alpha-1}/\trkla{2\alpha}}\trkla{\tekla{0,T};L^2\trkla{\Om}}$ is a compact subset of $C\trkla{\tekla{0,T};L^s\trkla{\Om}}$ for $s\in[1,\tfrac{2d}{d-2})$.
Furthermore, we have for any $R>0$
\begin{align}
\begin{split}
\nu_{\phi\h\tl}\trkla{\mathcal{X}_\phi\setminus\overline{B}_R}=&\,\prob{\norm{\phi\h\tl}_{L^\infty\trkla{0,T;H^1\trkla{\Om}}}^{2\alpha}+\norm{\phi\h\tl}_{C^{0,\trkla{\alpha-1}/\trkla{2\alpha}}\trkla{\tekla{0,T};L^2\trkla{\Om}}}^{2\alpha}>R^{2\alpha}}\\
\leq&\,R^{-2\alpha}\expected{\norm{\phi\h\tl}_{L^\infty\trkla{0,T;H^1\trkla{\Om}}}^{2\alpha}+\norm{\phi\h\tl}_{C^{0,\trkla{\alpha-1}/\trkla{2\alpha}}\trkla{\tekla{0,T};L^2\trkla{\Om}}}^{2\alpha}}\,,
\end{split}
\end{align}
which shows the tightness of $\trkla{\nu_{\phi\h\tl}}_{h,\tau}$.
\end{proof}

As closed balls in $L^2\trkla{0,T}$ and $L^2\trkla{0,T;L^2\trkla{\Om}}$ are compact in the weak topology, the laws of $r\h\tl$, $\Delta\h\phi\h\tp$, and $\mu\h\tp$ are tight in the spaces $L^2\trkla{0,T}_{\weaktop}$, $L^2\trkla{0,T;L^2\trkla{\Om}}_{\weaktop}$, and $L^2\trkla{0,T;L^2\trkla{\Om}}_{\weaktop}$, due to Markov's inequality and the bounds collected in \eqref{eq:regularity}.
Hence, the joint laws of $\trkla{\phi\h\tl}_{h,\tau}$, $\trkla{r\h\tl}_{h,\tau}$, $\trkla{\Delta\h\phi\h\tp}_{h,\tau}$, $\trkla{\mu\h\tp}_{h,\tau}$, and $\trkla{\bs{\xi}\h\tl}_{h,\tau}$ are tight on the path space
\begin{align}
\begin{split}
\mathcal{X}:=& C\trkla{\tekla{0,T};L^s\trkla{\Om}}\times L^2\trkla{0,T}_{\weaktop}\times L^2\trkla{0,T;L^2\trkla{\Om}}_{\weaktop}\\
&\quad\times L^2\trkla{0,T;L^2\trkla{\Om}}_{\weaktop}\times C\trkla{\tekla{0,T};L^2\trkla{\Om}}\,,
\end{split}
\end{align}
with $s\in[1,\tfrac{2d}{d-2})$.
Hence, we obtain the following convergence results:
\begin{theorem}\label{thm:jakubowski}
Let $\trkla{\phi\h\tl,r\h\tl,\Delta\h\phi\h\tp,\mu\h\tp}_{h,\tau}$ satisfy the estimates \eqref{eq:regularity} and let the family $\trkla{\bs{\xi}\h\tl}_{h,\tau}$ satisfy the bounds stated in Lemma \ref{lem:regWiener}.
Then, there exists a subsequence
\begin{align*}
\trkla{\phi_j,r_j,\Delta\hj\phi_j^+,\mu_j^+,\bs{\xi}_j}_j:=\trkla{\phi\hj^{\tau_j},r\hj^{\tau_j},\Delta\hj\phi\hj^{\tau_j,+},\mu\hj^{\tau_j,+},\bs{\xi}\hj^{\tau_j}}_j\,,
\end{align*}
a stochastic basis $\trkla{\widetilde{\Omega},\widetilde{\mathcal{A}},\widetilde{\Prob}}$, a sequence of random variables
\begin{align*}
\trkla{\widetilde{\phi}_j,\widetilde{r}_j,\Delta\hj\widetilde{\phi}_j^+,\widetilde{\mu}_j^+,\widetilde{\bs{\xi}}_j}\,:\,\widetilde{\Omega}\rightarrow\mathcal{X}\,,
\end{align*}
and random variables
\begin{align*}
\trkla{\widetilde{\phi},\widetilde{r},\widetilde{L},\widetilde{\mu},\widetilde{W}}\,:\,\widetilde{\Omega}\rightarrow\mathcal{X}
\end{align*}
such that the following holds:
\begin{itemize}
\item The law of $\trkla{\widetilde{\phi}_j,\widetilde{r}_j, \Delta\hj\widetilde{\phi}_j^+,\widetilde{\mu}_j^+,\widetilde{\bs{\xi}}_j}$ on $\mathcal{X}$ under $\widetilde{\Prob}$ coincides for any $j\in\mathds{N}$ with the law of $\trkla{\phi_j,r_j,\Delta\hj\phi_j^+,\mu_j^+,\bs{\xi}_j}$ under $\Prob$.
\item The sequence $\trkla{\widetilde{\phi}_j,\widetilde{r}_j,\Delta\hj\widetilde{\phi}_j^+,\widetilde{\mu}_j^+,\widetilde{\bs{\xi}}_j}$ converges $\widetilde{\Prob}$-almost surely towards $\trkla{\widetilde{\phi},\widetilde{r},\widetilde{L},\widetilde{\mu},\widetilde{W}}$ in the topology of $\mathcal{X}$.
\end{itemize}
\end{theorem}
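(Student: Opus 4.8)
The statement is a direct application of Jakubowski's generalization of Skorokhod's representation theorem (cf.~\cite{Jakubowski1998}). The plan is to verify its two hypotheses for the path space $\X$: first, the tightness of the joint laws of the quintuple $\trkla{\phi\h\tl,r\h\tl,\Delta\h\phi\h\tp,\mu\h\tp,\bs{\xi}\h\tl}_{h,\tau}$ on $\X$, which has essentially been assembled in the discussion preceding the theorem, and second, the topological requirement that $\X$ be a \emph{quasi-Polish} space, i.e.~that it admit a countable family of continuous real-valued functions separating its points. The reason we cannot simply invoke the classical Skorokhod theorem is the presence of the three weak-topology factors $L^2\trkla{0,T}_{\weaktop}$ and $L^2\trkla{0,T;L^2\trkla{\Om}}_{\weaktop}$, which fail to be metrizable.

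First I would verify the topological hypothesis. The factors $C\trkla{\tekla{0,T};L^s\trkla{\Om}}$ and $C\trkla{\tekla{0,T};L^2\trkla{\Om}}$ are separable Banach spaces, hence Polish and in particular quasi-Polish. For a factor of the form $H_{\weaktop}$ with $H$ a separable Hilbert space, I would fix a countable dense subset $\tgkla{e_l}_{l\in\N}$ of $H$ and observe that the maps $v\mapsto\trkla{v,e_l}_H$ are continuous for the weak topology and separate the points of $H$; thus $H_{\weaktop}$ is quasi-Polish. Since a finite product of quasi-Polish spaces is again quasi-Polish -- one concatenates the separating families composed with the coordinate projections, which remain continuous and jointly separate points -- the space $\X$ satisfies the hypothesis of Jakubowski's theorem.

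Next I would reassemble the tightness statement. The tightness of each marginal has already been established: for $\trkla{\phi\h\tl}_{h,\tau}$ on $C\trkla{\tekla{0,T};L^s\trkla{\Om}}$ this is Lemma \ref{lem:tightness}; for $\trkla{\bs{\xi}\h\tl}_{h,\tau}$ on $C\trkla{\tekla{0,T};L^2\trkla{\Om}}$ it follows from the bound in Lemma \ref{lem:regWiener} together with the same compact-embedding argument ($H^1\trkla{\Om}\compemb L^2\trkla{\Om}$ combined with Hölder equicontinuity in time) used in Lemma \ref{lem:tightness}; and for $\trkla{r\h\tl}_{h,\tau}$, $\trkla{\Delta\h\phi\h\tp}_{h,\tau}$, and $\trkla{\mu\h\tp}_{h,\tau}$ it follows from the weak compactness of closed balls in the respective separable Hilbert spaces combined with Markov's inequality applied to the uniform moment bounds collected in \eqref{eq:regularity}. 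Given $\delta>0$ I would choose, for each of the five factors, a compact set capturing at least $1-\delta/5$ of the mass of the corresponding marginal; the product of these sets is compact in $\X$ and, by a union bound, captures at least $1-\delta$ of the mass of the joint law, which proves joint tightness.

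Finally, Jakubowski's theorem furnishes a subsequence $\trkla{\phi_j,r_j,\Delta\hj\phi_j^+,\mu_j^+,\bs{\xi}_j}_j$, a probability space $\trkla{\widetilde{\Omega},\widetilde{\mathcal{A}},\widetilde{\Prob}}$, and random variables $\trkla{\widetilde{\phi}_j,\widetilde{r}_j,\Delta\hj\widetilde{\phi}_j^+,\widetilde{\mu}_j^+,\widetilde{\bs{\xi}}_j}$ and $\trkla{\widetilde{\phi},\widetilde{r},\widetilde{L},\widetilde{\mu},\widetilde{W}}$ on $\widetilde{\Omega}$ with values in $\X$ such that the law of each new quintuple coincides with that of its original counterpart and the new sequence converges $\widetilde{\Prob}$-almost surely in the topology of $\X$ to the limit, which is exactly the asserted conclusion. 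I expect the only genuine obstacle to be the careful verification of the quasi-Polish property -- exhibiting the countable separating family on the weak-topology factors and checking its stability under the finite product -- since the tightness is inherited directly from the already-proven marginal estimates and, once both hypotheses are in place, the conclusion is immediate.
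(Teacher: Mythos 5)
Your overall route coincides with the paper's: tightness of the joint laws on $\X$ (assembled from Lemma \ref{lem:tightness}, Lemma \ref{lem:regWiener}, and Markov's inequality together with the weak compactness of closed balls for the remaining three factors) followed by an application of Jakubowski's theorem. Your explicit verification that $\X$ is quasi-Polish -- exhibiting the countable separating family $v\mapsto\trkla{v,e_l}_H$ on the weak-topology factors and noting stability under finite products -- is a detail the paper leaves implicit, and it is correct.

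There is, however, one genuine gap. Jakubowski's theorem, applied to the $\X$-valued random variables $\trkla{\phi_j,r_j,\Delta\hj\phi_j^+,\mu_j^+,\bs{\xi}_j}$, produces on the new probability space a quintuple $\trkla{\widetilde{\phi}_j,\widetilde{r}_j,\widetilde{L}_j^+,\widetilde{\mu}_j^+,\widetilde{\bs{\xi}}_j}$ whose components are a priori just five random variables with the correct \emph{joint} law; equality of laws does not by itself deliver the functional relation asserted in the statement, namely that the third component equals $\Delta\hj\widetilde{\phi}_j^+$, the discrete Laplacian of the interpolant built from the first component. Your final paragraph simply writes the new third component as $\Delta\hj\widetilde{\phi}_j^+$ without justification. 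To close this, one argues as the paper does: since the law of $\widetilde{\phi}_j$ coincides with that of $\phi_j$, the process $\widetilde{\phi}_j$ is $\widetilde{\Prob}$-almost surely a continuous, piecewise linear time-interpolant of $\Uhj$-valued finite element functions, so pointwise-in-time evaluation and hence $\Delta\hj\widetilde{\phi}_j^+$ are well defined; and because the joint law of $\trkla{\phi_j,\Delta\hj\phi_j^+}$ is concentrated on the graph of the (measurable, indeed continuous on the relevant finite-dimensional spaces) map $\phi_j\mapsto\Delta\hj\phi_j^+$, the same holds for $\trkla{\widetilde{\phi}_j,\widetilde{L}_j^+}$, whence $\widetilde{L}_j^+=\Delta\hj\widetilde{\phi}_j^+$ $\widetilde{\Prob}$-almost surely. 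The same remark would in principle apply to the relation between $\widetilde{\mu}_j^+$ and the other components, but the theorem does not assert that relation at this stage, so only the Laplacian identification needs to be supplied.
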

\begin{proof}
As the combined laws on $\mathcal{X}$ are tight, we can deduce the existence of a stochastic basis and a sequence of random variables $\trkla{\widetilde{\phi}_j,\widetilde{r}_j,\widetilde{L}_j^+,\widetilde{\mu}_j^+,\widetilde{\bs{\xi}}_j}$ on $\widetilde{\Omega}$ with the desired convergence properties by applying Jakubowski's theorem (cf.~\cite{Jakubowski1998}).
Hence, it remains to identify $\widetilde{L}_j^+$ with $\Delta\hj\widetilde{\phi}_j^+$.
As the laws of $\widetilde{\phi}_j$ and $\phi_j$ coincide, $\widetilde{\phi}_j$ is $\widetilde{\Prob}$-almost surely a piecewise linear time-interpolation of a $\Uh$-valued finite element function.
Hence, pointwise in time evaluations are possible and the continuous dependence of $\Delta\hj\phi_j^+$ on $\phi_j$ completes the argument.
\end{proof}

\begin{remark}
The restriction to subsequences in Theorem \ref{thm:jakubowski} is necessary, as Jakubowski's theorem relies on a generalization of Prokhorov's theorem, i.e.~the uniform bounds on $\trkla{\phi\h\tl,r\h\tl,\Delta\h\phi\h\tp,\mu\h\tp}_{h,\tau}$ are used to deduce convergence in distribution for a subsequence. For $\trkla{\bs{\xi}\h\tl}_{h,\tau}$ convergence in distribution can already be deduced from Donsker's invariance theorem.
\end{remark}

As the random variables $\widetilde{\phi}_j$ and $\widetilde{r}_j$ introduced in Theorem \ref{thm:jakubowski} are $\widetilde{\Prob}$-almost surely continuous and piecewise linear with respect to time, we can evaluate these functions at the nodes of the time grid and recover the remaining piecewise constant time-interpolants.
Analogously to \eqref{eq:def:errormu}, we introduce the $\widetilde{\Prob}$-almost surely piecewise constant in time finite element functions $\widetilde{\Xi}_j^+$.
Due to the identity of laws, these sequence satisfies the bounds stated in \eqref{eq:regularity:muerror} with respect to the new probability space $\trkla{\widetilde{\Omega},\widetilde{\mathcal{A}},\widetilde{\Prob}}$.
Hence, we can establish the following additional convergence properties:

\begin{lemma}\label{lem:convergence}
Let $\trkla{\widetilde{\phi}_j\pml}_{j\in\mathds{N}}$, $\trkla{\widetilde{r}_j\pml}_{j\in\mathds{N}}$, $\trkla{\widetilde{\mu}_j^+}_{j\in\mathds{N}}$, and $\trkla{\widetilde{\Xi}_j^+}_{j\in\mathds{N}}$ be the sequences of stochastic processes defined on the basis of the sequence from Theorem \ref{thm:jakubowski}, and let the assumptions  \ref{item:timedisc}, \ref{item:spatialdisc}, \ref{item:potentialF}, \ref{item:initial}, \ref{item:sigma}, and \ref{item:filtration}--\ref{item:color} hold true.
Then, there exists a process
\begin{align*}
\begin{split}
\widetilde{\phi}\in L^{2\p}&\trkla{\widetilde{\Omega};L^\infty\trkla{0,T;H^1\trkla{\Om}}}\cap L^{2\p}\trkla{\widetilde{\Omega};C^{0,\trkla{\p-1}/\trkla{2\p}}\trkla{\tekla{0,T};L^2\trkla{\Om}}}\\
&\cap L^{2\p}\trkla{\widetilde{\Omega};L^2\trkla{0,T;H^2\trkla{\Om}}}
\end{split}
\end{align*}
for any $\p\in\trkla{1,\infty}$ such that
\begin{subequations}
\begin{align}
\widetilde{\phi}_j&\rightarrow\widetilde{\phi}&&\text{in~} L^{p}\trkla{\widetilde{\Omega};C\trkla{\tekla{0,T};L^q\trkla{\Om}}}\,,\label{eq:conv:strong:cont}\\
\widetilde{\phi}_j\pml&\rightarrow\widetilde{\phi}&&\text{in~}L^{p}\trkla{\widetilde{\Omega};L^{p}\trkla{0,T};L^q\trkla{\Om}}\,,\label{eq:conv:strong:lp}\\
\widetilde{\phi}_j^+&\rightarrow\widetilde{\phi}&&\text{in~}L^p\trkla{\widetilde{\Omega};L^2\trkla{0,T;W^{1,q}\trkla{\Om}}}\,,\label{eq:conv:strong:w1p}\\
\widetilde{\phi}_j\pml&\weakstar\widetilde{\phi}&&\text{in~} L^{p}_{\operatorname{weak-}\trkla{*}}\trkla{\widetilde{\Omega};L^\infty\trkla{0,T;H^1\trkla{\Om}}}\,,\label{eq:conv:weakstar}\\
\Delta\hj\widetilde{\phi}_j^+&\weak\Delta\widetilde{\phi}&&\text{in~}L^{p}\trkla{\widetilde{\Omega};L^2\trkla{0,T;L^2\trkla{\Om}}}\,\label{eq:conv:weakLap},\\
\iO\Ihj{F\trkla{\widetilde{\phi}_j\pml}}\dx&\rightarrow \iO F\trkla{\widetilde{\phi}}\dx&&\text{in~}L^p\trkla{\widetilde{\Omega};L^p\trkla{0,T}}\,,\label{eq:conv:potential}\\
\widetilde{r}_j\pml&\rightarrow\sqrt{\iO F\trkla{\widetilde{\phi}}\dx}&&\text{in~}L^p\trkla{\widetilde{\Omega};L^p\trkla{0,T}}\label{eq:conv:strongr}\,,\\
\widetilde{r}_j\pml&\weakstar \sqrt{\iO F\trkla{\widetilde{\phi}}\dx}&&\text{in~}L^p_{\operatorname{weak-}\trkla{*}}\trkla{\widetilde{\Omega};L^\infty\trkla{0,T}}\label{eq:conv:weakstarr}\,,\\
\widetilde{\mu}_j^+&\weak-\Delta\widetilde{\phi}+F^\prime\trkla{\widetilde{\phi}}&&\text{in~}L^p\trkla{\widetilde{\Omega};L^2\trkla{0,T;L^2\trkla{\Om}}}\,,\label{eq:conv:weakmu}\\
\widetilde{\Xi}_j^+&\rightarrow0&&\text{in~}L^p\trkla{\widetilde{\Omega};L^2\trkla{0,T;L^2\trkla{\Om}}}\,\label{eq:conv:muerrorstrong}
\end{align}
\end{subequations}
as $j\rightarrow\infty$ for $p\in[1,\infty)$ and $q\in[1,\tfrac{2d}{d-2})$ after restriction to an appropriate subsequence.
\end{lemma}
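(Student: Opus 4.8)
The unifying principle is to promote the $\widetilde{\Prob}$-almost sure convergence granted by Theorem \ref{thm:jakubowski} to convergence in $L^p\trkla{\widetilde{\Omega};\cdot}$ by means of Vitali's convergence theorem. Since the laws of $\trkla{\widetilde\phi_j,\widetilde r_j,\Delta\hj\widetilde\phi_j^+,\widetilde\mu_j^+,\widetilde{\bs{\xi}}_j}$ agree with those of the fully discrete solutions, every uniform moment bound in \eqref{eq:regularity} transfers verbatim to $\widetilde\Omega$, so that almost sure convergence together with a bound of strictly higher order in $p$ supplies the uniform integrability required for Vitali. After passing to a further subsequence I may additionally assume $\widetilde\phi_j\to\widetilde\phi$ almost everywhere on $\widetilde\Omega\times\trkla{0,T}\times\Om$. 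For \eqref{eq:conv:strong:cont} I interpolate the a.s.\ convergence in $C\trkla{\tekla{0,T};L^s\trkla{\Om}}$ against the uniform $L^{2\p}\trkla{\widetilde\Omega;L^\infty\trkla{0,T;H^1\trkla{\Om}}}$-bound and the embedding $H^1\trkla{\Om}\emb L^q\trkla{\Om}$ to obtain, for every $q<\tfrac{2d}{d-2}$, a.s.\ convergence in $C\trkla{\tekla{0,T};L^q\trkla{\Om}}$, and conclude by Vitali. Statement \eqref{eq:conv:strong:lp} then follows upon adding $\norm{\widetilde\phi_j\pml-\widetilde\phi_j}_{L^2\trkla{0,T;L^2\trkla{\Om}}}^2=\tfrac\tau3\sum_n\norm{\widetilde\phi_j^n-\widetilde\phi_j^{n-1}}_{L^2\trkla{\Om}}^2$, which vanishes by Lemma \ref{lem:energy}, while \eqref{eq:conv:weakstar} is immediate from the same $L^\infty\trkla{0,T;H^1\trkla{\Om}}$-bound and uniqueness of limits.

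The weak convergence \eqref{eq:conv:weakLap} is inherited from Theorem \ref{thm:jakubowski} and \eqref{eq:regularity:discLap}; I identify its limit with $\Delta\widetilde\phi$ by passing to the limit in $\iO\Ihj{\Delta\hj\widetilde\phi_j^+\psi}\dx=-\iO\nabla\widetilde\phi_j^+\cdot\nabla\psi\dx$ for smooth $\psi$ (the interpolation error vanishing by Lemma \ref{lem:interpolation}), which simultaneously yields $\widetilde\phi\in L^2\trkla{0,T;H^2\trkla{\Om}}$ through elliptic regularity on the convex domain $\Om$. For the strong gradient convergence \eqref{eq:conv:strong:w1p} I use an energy identity: choosing $\psi=\widetilde\phi_j^+$ in \eqref{eq:def:discLaplacian} gives $\iO\tabs{\nabla\widetilde\phi_j^+}^2\dx=-\iO\Ihj{\Delta\hj\widetilde\phi_j^+\,\widetilde\phi_j^+}\dx$, whose right-hand side converges (weak times strong, using \eqref{eq:conv:strong:lp}) to $-\iO\Delta\widetilde\phi\,\widetilde\phi\dx=\iO\tabs{\nabla\widetilde\phi}^2\dx$. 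Convergence of the $L^2$-norms together with the weak convergence of the gradients yields strong convergence of $\nabla\widetilde\phi_j^+$ in $L^2\trkla{\widetilde\Omega;L^2\trkla{0,T;L^2\trkla{\Om}}}$, which I raise to $L^p\trkla{\widetilde\Omega;L^2\trkla{0,T;W^{1,q}\trkla{\Om}}}$ by interpolating against the higher gradient integrability contained in \eqref{eq:regularity:discLap}.

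For \eqref{eq:conv:potential} I split off the interpolation error $\iO\trkla{1-\Ihjop}\gkla{F\trkla{\widetilde\phi_j\pml}}\dx$, bounded by $Ch_j\trkla{1+\norm{\widetilde\phi_j\pml}_{H^1\trkla{\Om}}^6}$ by Lemma \ref{lem:interpolation} and \ref{item:potentialF} (cf.\ \cite{Metzger2023}), and treat $\iO F\trkla{\widetilde\phi_j\pml}\dx$ with the growth bound $\tabs{F\trkla{\zeta}}\leq C\trkla{1+\tabs{\zeta}^4}$, the almost everywhere convergence, and Vitali (note $4<\tfrac{2d}{d-2}$). Statement \eqref{eq:conv:strongr} follows by combining \eqref{eq:conv:potential} with the error estimate of Lemma \ref{lem:SAVerror} (again transferred through equality of laws) and the Lipschitz continuity of $\sqrt{\cdot}$ away from zero (ensured by $F\geq\gamma>0$), and \eqref{eq:conv:weakstarr} is the accompanying weak-$*$ statement. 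The strong convergence \eqref{eq:conv:muerrorstrong} is immediate from Corollary \ref{cor:muerror}. Finally, I obtain \eqref{eq:conv:weakmu} by passing to the limit in $\widetilde\mu_j^+=-\Delta\hj\widetilde\phi_j^++\tfrac{\widetilde r_j^+}{\sqrt{E\hj\trkla{\widetilde\phi_j^-}}}\Ihj{F^\prime\trkla{\widetilde\phi_j^-}}+\widetilde\Xi_j^+$, using \eqref{eq:conv:weakLap}, \eqref{eq:conv:muerrorstrong}, the convergence of the scalar prefactor to $1$ (from \eqref{eq:conv:potential} and \eqref{eq:conv:strongr}), and the weak convergence $\Ihj{F^\prime\trkla{\widetilde\phi_j^-}}\weak F^\prime\trkla{\widetilde\phi}$.

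The principal difficulty is exactly this last identification of the cubic term in the borderline case $d=3$, where the growth $\tabs{F^\prime\trkla{\zeta}}\leq C\trkla{1+\tabs{\zeta}^3}$ sits precisely at the Sobolev exponent $\tfrac{2d}{d-2}=6$, so that the strong convergence $\widetilde\phi_j^-\to\widetilde\phi$, available only in $L^q$ with $q<6$, does not deliver strong $L^2$-convergence of $F^\prime\trkla{\widetilde\phi_j^-}$. I plan to circumvent this by deducing the weak $L^2$-convergence of $F^\prime\trkla{\widetilde\phi_j^-}$ from its almost everywhere convergence together with the uniform $L^2\trkla{\widetilde\Omega\times\trkla{0,T};L^2\trkla{\Om}}$-bound implied by $H^1\trkla{\Om}\emb L^6\trkla{\Om}$ and the moment estimates of Lemma \ref{lem:energy}, moving the nodal interpolant $\Ihjop$ onto the smooth test function via Lemma \ref{lem:interpolation}. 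The strongly convergent prefactor then passes through the weak limit once the correction $\trkla{\tfrac{\widetilde r_j^+}{\sqrt{E\hj\trkla{\widetilde\phi_j^-}}}-1}\Ihj{F^\prime\trkla{\widetilde\phi_j^-}}$ is absorbed by Hölder's inequality, and the same care settles \eqref{eq:conv:weakmu}.
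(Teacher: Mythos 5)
Your proposal is correct and follows the paper's overall strategy (transferring the uniform moment bounds through equality of laws and upgrading the $\widetilde{\Prob}$-a.s.\ convergence from Theorem \ref{thm:jakubowski} via Vitali), but it deviates in two places. For \eqref{eq:conv:strong:w1p} the paper compares $\widetilde{\phi}_j^+$ with the Ritz projection $\Ritz{\widetilde{\phi}}$ and bounds $\norm{\nabla(\widetilde{\phi}_j^+-\Ritz{\widetilde{\phi}})}_{L^2}^2$ by $(\norm{\Delta\hj\widetilde{\phi}_j^+}+\norm{\Delta\widetilde{\phi}})\norm{\widetilde{\phi}_j^+-\Ritz{\widetilde{\phi}}}_{L^2}$, whereas you use convergence of the Dirichlet energies plus weak convergence of the gradients; both work and both ultimately reduce the matter to \eqref{eq:conv:strong:lp} and the discrete Gagliardo--Nirenberg interpolation against \eqref{eq:regularity:discLap}. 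Two points in your version need explicit care: (i) the pairing $-\iO\Ihj{\Delta\hj\widetilde{\phi}_j^+\,\widetilde{\phi}_j^+}\dx$ is mass-lumped, so before invoking weak-times-strong you must discard the error $\iO(1-\Ihjop)\{\Delta\hj\widetilde{\phi}_j^+\,\widetilde{\phi}_j^+\}\dx$, which Lemma \ref{lem:interpolation} together with an inverse estimate bounds by $Ch_j\norm{\Delta\hj\widetilde{\phi}_j^+}_{L^2}\norm{\nabla\widetilde{\phi}_j^+}_{L^2}$; and (ii) the identity $-\iO\Delta\widetilde{\phi}\,\widetilde{\phi}\dx=\iO|\nabla\widetilde{\phi}|^2\dx$ requires the vanishing Neumann trace of $\widetilde{\phi}$, so the identification of $\widetilde{L}$ must be carried out against (Scott--Zhang interpolants of) test functions in all of $H^1\trkla{\Om}$, not merely $C_0^\infty\trkla{\Om}$ as "smooth $\psi$" might suggest. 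For \eqref{eq:conv:weakmu} you replace the paper's claim of \emph{strong} $L^2$-convergence of $\Ihj{F^\prime(\widetilde{\phi}_j^-)}$ by weak convergence obtained from a.e.\ convergence and the uniform $L^2$-bound; this is valid and suffices once the prefactor error is absorbed by Hölder as you indicate, but note that your worry about the borderline exponent in $d=3$ dissolves if one applies Vitali over the full product space $\widetilde{\Omega}\times(0,T)\times\Om$: the arbitrarily high moments in $\omega$ from Lemma \ref{lem:energy} make $|\widetilde{\phi}_j^-|^6$ uniformly integrable there, which is how the paper gets strong convergence directly. The remaining items \eqref{eq:conv:strong:cont}--\eqref{eq:conv:weakLap} and \eqref{eq:conv:potential}--\eqref{eq:conv:muerrorstrong} are handled exactly as in the paper.
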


\begin{proof}
Due to Theorem \ref{thm:jakubowski}, we have $\widetilde{\phi}_j\rightarrow\widetilde{\phi}$ in $C\trkla{\tekla{0,T};L^q\trkla{\Om}}$ $\widetilde{\Prob}$-almost surely.
By Vitali's convergence theorem and the bounds in \eqref{eq:regularity}, we obtain the strong convergence expressed in \eqref{eq:conv:strong:cont}.
Restricting ourselves to a suitable subsequence, we note that $\widetilde{\phi}_j\rightarrow\widetilde{\phi}$ pointwise almost everywhere in $\widetilde{\Omega}\times\tekla{0,T}\times\Om$.
As estimate \eqref{eq:regularity:energy} yields $\norm{\widetilde{\phi}_j\pml-\widetilde{\phi}_j}_{L^{2\p}\trkla{\widetilde{\Omega};L^2\trkla{0,T;H^1\trkla{\Om}}}}\rightarrow0$ for $j\rightarrow\infty$, we deduce the existence of a subsequence $\trkla{\widetilde{\phi}_j\pml}_{j\in\mathds{N}}$ converging pointwise almost everywhere towards $\widetilde{\phi}$.
Again, a combination of Vitali's convergence theorem and the uniform bounds in \eqref{eq:regularity:energy} provide \eqref{eq:conv:strong:lp}.\\
In order to establish the strong convergence of $\nabla\widetilde{\phi}_j^+$, we use the arguments from Lemma 5.1 in \cite{Metzger2020}.
We first consider the case $d=3$:
Applying Hölder's inequality, we obtain for any $\tilde{\varepsilon}>0$ and $\varrho=\tfrac{\tilde{\varepsilon}}{12-2\tilde{\varepsilon}}$
\begin{multline}\label{eq:tmp:convw1p:1}
\expectedt{\norm{\widetilde{\phi}_j^+-\widetilde{\phi}}_{L^2\trkla{0,T;W^{1,p-\tilde{\varepsilon}}\trkla{\Om}}}^p}\leq C\expectedt{\norm{\widetilde{\phi}_j^+-\widetilde{\phi}}_{L^2\trkla{0,T;W^{1,6}\trkla{\Om}}}^{p\trkla{2-2\varrho}}\norm{\widetilde{\phi}_j^+-\widetilde{\phi}}_{L^2\trkla{0,T;H^1\trkla{\Om}}}^{2\varrho p}}\\
\leq C\expectedt{\norm{\widetilde{\phi}_j^+-\widetilde{\phi}}_{L^2\trkla{0,T;W^{1,6}\trkla{\Om}}}^{2p\trkla{2-2\varrho}}}^{1/2}\expectedt{\norm{\widetilde{\phi}_j^+-\widetilde{\phi}}_{L^2\trkla{0,T;H^1\trkla{\Om}}}^{4\varrho p}}^{1/2}\,.
\end{multline} 
Recalling a discrete version of the Gagliardo--Nirenberg inequality (see e.g.~\cite{Metzger2020}) which states
\begin{align}
\norm{\zeta\h}_{W^{1,p}\trkla{\Om}}\leq C\norm{\Delta\h\zeta\h}_{L^2\trkla{\Om}}^\alpha\norm{\zeta\h}_{H^1\trkla{\Om}}^{1-\alpha}+C\norm{\zeta\h}_{H^1\trkla{\Om}}
\end{align}
for all $\zeta\h\in\Uh$ and $\alpha=\tfrac{3p-6}{2p}$ with $p\in\tekla{2,6}$, we note that the first factor on the right-hand side of \eqref{eq:tmp:convw1p:1} is uniformly bounded.
It therefore suffices to show strong convergence in $L^p\trkla{\widetilde{\Omega};L^2\trkla{0,T;H^1\trkla{\Om}}}$ for $p<\infty$.
The case $d=2$ can be treated using similar arguments with different exponents.
Therefore, it remains to establish the strong convergence in $L^p\trkla{\widetilde{\Omega};L^2\trkla{0,T;H^1\trkla{\Om}}}$.
Denoting the Ritz projection onto $\Uhj$ by $\Ritzop$, we obtain
\begin{align}
\lim_{j\rightarrow\infty}\norm{\widetilde{\phi}_j^+-\widetilde{\phi}}_{L^p\trkla{\widetilde{\Omega};L^2\trkla{0,T;H^1\trkla{\Om}}}}=\lim_{j\rightarrow\infty} \norm{\widetilde{\phi}_j^+-\Ritz{\widetilde{\phi}}}_{L^p\trkla{\widetilde{\Omega};L^2\trkla{0,T;H^1\trkla{\Om}}}}\,,
\end{align}
due to the strong convergence of $\Ritz{\widetilde{\phi}}$ towards $\widetilde{\phi}$ (cf.~Theorem A.2 in \cite{Girault1986}).
Following the arguments in \cite{Metzger2020}, we have
\begin{multline}
\norm{\nabla\trkla{\widetilde{\phi}_j^+-\Ritz{\widetilde{\phi}}}}_{L^2\trkla{0,T;L^2\trkla{\Om}}}^2\\
\leq \rkla{\norm{\Delta\hj\widetilde{\phi}_j}_{L^2\trkla{0,T;L^2\trkla{\Om}}}+\norm{\Delta\widetilde{\phi}}_{L^2\trkla{0,T;L^2\trkla{\Om}}}}\norm{\widetilde{\phi}_j^+-\Ritz{\widetilde{\phi}}}_{L^2\trkla{0,T;L^2\trkla{\Om}}}\,.
\end{multline}
Therefore, \eqref{eq:conv:strong:w1p} follows from \eqref{eq:conv:strong:lp}.\\
The weak* convergence result expressed in \eqref{eq:conv:weakstar} follows directly from the bounds collected in \eqref{eq:regularity}.
The weak convergence of the discrete Laplacian towards a limit process $\widetilde{L}$ also follows from the bounds stated in \eqref{eq:regularity:discLap} and the almost sure convergence deduced in Theorem \ref{thm:jakubowski}.
Hence, it remains to identify the limit $\widetilde{L}$ with $\Delta\widetilde{\phi}$.
For this reason, we chose a test function $\psi\in L^{p/\trkla{p-1}}\trkla{\widetilde{\Omega};L^2\trkla{0,T;C^\infty_0\trkla{\Om}}}$ and denote its Scott--Zhang interpolations by $\psi\hj$.
These interpolations exhibit some useful properties (see e.g.~\cite[Section 1.6.2]{Ern2004} and \cite{ScottZhang90}):
On the one hand, they preserve the homogeneous boundary conditions.
On the other hand, the Scott--Zhang interpolation is $H^1\trkla{\Om}$-stable and also converges strongly in $H^1\trkla{\Om}$ on quasi-uniform partitions.
Hence, we have
\begin{multline}
\expectedt{\int_0^T\iO \widetilde{L}\psi\dx\dt}\leftarrow \expectedt{\int_0^T\iO \Delta\hj\widetilde{\phi}_j^+\psi\hj\dx\dt}\\
=\expectedt{\int_0^T\iO\Ihj{\Delta\hj\widetilde{\phi}_j^+\psi\hj}\dx\dt}-\expectedt{\int_0^T\iO\trkla{1-\Ihjop}\gkla{\Delta\hj\widetilde{\phi}_j^+\psi\h}\dx\dt}\\
=-\expectedt{\int_0^T\iO\nabla\widetilde{\phi}_j^+\cdot\nabla\psi\h\dx\dt}-\expectedt{\int_0^T\iO\trkla{1-\Ihjop}\gkla{\Delta\hj\widetilde{\phi}_j^+\psi\h}\dx\dt}\\
\rightarrow -\expectedt{\int_0^T\iO\nabla\widetilde{\phi}\cdot\nabla\psi\dx\dt}=\expectedt{\int_0^T\iO\Delta\widetilde{\phi}\psi\dx\dt}\,,
\end{multline}
as the error terms vanish due to the stability of the Scott--Zhang interpolation and Lemma \ref{lem:interpolation}.\\
To establish the strong convergence \eqref{eq:conv:potential}, we follow the lines of \cite{Metzger2023}, Corollary 4.1, and first show that the nodal interpolation operators $\Ihjop$ are negligible.
For this reason, we use estimate (4.8) from \cite{Metzger2023} which reads
\begin{align}
\esssup_{t\in\trkla{0,T}}\iO\abs{\trkla{1-\Ihjop}\gkla{F\trkla{\widetilde{\phi}_j\pml}}}\dx\leq Ch_j\tabs{\Om}+C h_j\esssup_{t\in\trkla{0,T}}\norm{\widetilde{\phi}_j\pml}_{H^1\trkla{\Om}}^6\,.
\end{align}
Therefore, we have
\begin{align}
\norm{\iO\abs{\trkla{1-\Ihjop}\gkla{F\trkla{\widetilde{\phi}_j\pml}}}\dx}_{L^p\trkla{\widetilde{\Omega};L^p\trkla{0,T}}}\rightarrow 0
\end{align}
for all $p\in[1,\infty)$.
As the strong convergence of $\widetilde{\phi}_j\pml$ implies the existence of a subsequence converging pointwise almost everywhere towards $\widetilde{\phi}$, we may use the continuity of $F$, the growth condition in \ref{item:potentialF}, and Vitali's convergence theorem to obtain \eqref{eq:conv:potential}.\\
The strong convergence expressed in \eqref{eq:conv:strongr} follows from \eqref{eq:conv:potential} and Lemma \ref{lem:SAVerror}.
The weak*-convergence in \eqref{eq:conv:weakstarr} can then be deduced from the bounds in \eqref{eq:regularity:energy} and \eqref{eq:conv:strongr}.\\
Recalling Corollary \ref{cor:muerror} and \eqref{eq:def:errormu}, we immediately obtain \eqref{eq:conv:muerrorstrong}.
Hence, it remains to show that the expression $\frac{\widetilde{r}_j^+}{\sqrt{E\hj\trkla{\widetilde{\phi}_j^-}}}\Ihj{F^\prime\trkla{\widetilde{\phi}_j^-}}$ converges weakly towards $F^\prime\trkla{\widetilde{\phi}}$ to establish \eqref{eq:conv:weakmu}.
The strong convergence of $\Ihj{F^\prime\trkla{\widetilde{\phi}_j^-}}$ towards $F^\prime\trkla{\widetilde{\phi}}$ follows by similar arguments as \eqref{eq:conv:potential}.
Now, \eqref{eq:conv:weakmu} follows from \eqref{eq:conv:weakstarr} and \eqref{eq:conv:potential}.
\end{proof}
\begin{corollary}\label{cor:identification}
Let the assumptions of Theorem \ref{thm:jakubowski} and Lemma \ref{lem:convergence} hold true.
Then, the random variables $\widetilde{r}$, $\widetilde{L}$, and $\widetilde{\mu}$ introduced in Theorem \ref{thm:jakubowski} can be identified with $\sqrt{\iO F\trkla{\widetilde{\phi}}\dx}$, $\Delta\widetilde{\phi}$, and $-\Delta\widetilde{\phi}+F^\prime\trkla{\widetilde{\phi}}$.
\end{corollary}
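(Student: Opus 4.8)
The plan is to read off the identification from the uniqueness of weak limits. Theorem~\ref{thm:jakubowski} already provides, $\widetilde\Prob$-almost surely, the convergences $\widetilde r_j\to\widetilde r$ in $L^2\trkla{0,T}_{\weaktop}$, $\Delta\hj\widetilde\phi_j^+\to\widetilde L$ in $L^2\trkla{0,T;L^2\trkla{\Om}}_{\weaktop}$, and $\widetilde\mu_j^+\to\widetilde\mu$ in $L^2\trkla{0,T;L^2\trkla{\Om}}_{\weaktop}$, while Lemma~\ref{lem:convergence} identifies the limits of the \emph{same} sequences in the corresponding Bochner topologies as $\sqrt{\iO F\trkla{\widetilde\phi}\dx}$, $\Delta\widetilde\phi$, and $-\Delta\widetilde\phi+F^\prime\trkla{\widetilde\phi}$. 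Since both limits live in the same $L^2$-type spaces and weak limits are unique, it only remains to bridge the two notions of convergence; the bridge is provided by the uniform moment bounds in \eqref{eq:regularity} together with Vitali's convergence theorem.

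For $\widetilde L$ (and verbatim for $\widetilde\mu$) I would fix $\rho\in L^\infty\trkla{\widetilde\Omega}$ and $\eta\in L^2\trkla{0,T;L^2\trkla{\Om}}$ and consider the scalar random variables $Y_j:=\int_0^T\iO\Delta\hj\widetilde\phi_j^+\,\eta\dx\dt$. The $\widetilde\Prob$-almost sure weak convergence from Theorem~\ref{thm:jakubowski} gives $Y_j\to\int_0^T\iO\widetilde L\,\eta\dx\dt$ pointwise in $\omega$, and by \eqref{eq:regularity:discLap} (which, by equality of laws, holds on $\widetilde\Omega$ as well) the family $\trkla{\rho Y_j}_j$ is bounded in $L^{2q}\trkla{\widetilde\Omega}$ for every $q$, hence uniformly integrable. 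Vitali's theorem therefore yields $\expectedt{\rho Y_j}\to\expectedt{\rho\int_0^T\iO\widetilde L\,\eta\dx\dt}$. On the other hand, testing the Bochner weak convergence \eqref{eq:conv:weakLap} against $\rho\,\eta\in L^{p/\trkla{p-1}}\trkla{\widetilde\Omega;L^2\trkla{0,T;L^2\trkla{\Om}}}$ gives $\expectedt{\rho Y_j}\to\expectedt{\rho\int_0^T\iO\Delta\widetilde\phi\,\eta\dx\dt}$. Equating the two limits, and letting $\rho$ range over $L^\infty\trkla{\widetilde\Omega}$ and $\eta$ over a countable dense subset of $L^2\trkla{0,T;L^2\trkla{\Om}}$, forces $\widetilde L=\Delta\widetilde\phi$ a.e. in $\widetilde\Omega\times\trkla{0,T}\times\Om$. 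The identical computation with $\widetilde\mu_j^+$ in place of $\Delta\hj\widetilde\phi_j^+$, using the bound on $\mu\h\tp$ in \eqref{eq:regularity:energy} for the uniform integrability and \eqref{eq:conv:weakmu} for the Bochner limit, identifies $\widetilde\mu=-\Delta\widetilde\phi+F^\prime\trkla{\widetilde\phi}$.

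For $\widetilde r$ the argument is even more direct, since Lemma~\ref{lem:convergence} provides \emph{strong} convergence: by \eqref{eq:regularity:energy} we have $\norm{\widetilde r_j^+-\widetilde r_j^-}_{L^{2\p}\trkla{\widetilde\Omega;L^2\trkla{0,T}}}\leq C\tau^{1/2}\to0$, so the linear interpolant $\widetilde r_j$ and the piecewise constant interpolants $\widetilde r_j^\pm$ share the same limit, and \eqref{eq:conv:strongr} gives $\widetilde r_j\to\sqrt{\iO F\trkla{\widetilde\phi}\dx}$ strongly in $L^p\trkla{\widetilde\Omega;L^p\trkla{0,T}}$ for $p\geq2$. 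Passing to a subsequence that converges $\widetilde\Prob$-almost surely in $L^2\trkla{0,T}$, hence weakly in $L^2\trkla{0,T}$, and comparing with the $\widetilde\Prob$-almost sure weak limit $\widetilde r$ from Theorem~\ref{thm:jakubowski}, uniqueness of weak limits yields $\widetilde r=\sqrt{\iO F\trkla{\widetilde\phi}\dx}$. The main technical point throughout is precisely this passage from the almost sure convergence in the non-metrizable weak topologies (on which Jakubowski's theorem operates, effectively restricted to the weakly compact balls where these topologies are metrizable) to the Bochner weak convergence of Lemma~\ref{lem:convergence}; it is exactly the uniform $L^{2\p}\trkla{\widetilde\Omega}$-bounds of \eqref{eq:regularity} that make Vitali's theorem applicable and thereby close the gap.
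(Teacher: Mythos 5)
Your proposal is correct and follows essentially the same route as the paper, which states the corollary as an immediate consequence of Lemma \ref{lem:convergence}: the identifications are already carried out there (Scott--Zhang testing for $\Delta\widetilde{\phi}$, the convergences \eqref{eq:conv:strongr} and \eqref{eq:conv:weakmu}), and the corollary follows by uniqueness of limits once the $\widetilde{\Prob}$-almost sure weak-topology limits from Theorem \ref{thm:jakubowski} are matched with the Bochner-space limits. Your Vitali/uniform-integrability argument is a careful and valid way of making that matching explicit, which the paper leaves implicit.
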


\section{Passage to the limit}\label{sec:limit}
In this section, we use a more precise notation for our time grid and write $t_j^n$ ($n=0,\ldots,N_j$) for the nodes of the equidistant grid obtained using the step size $\tau_j$.
We shall denote the increments $\widetilde{\bs{\xi}}_j\trkla{t_j\nn}-\widetilde{\bs{\xi}}_j\trkla{t_j\no}$ by $\sinctilde{n}$.
With the stochastic processes defined in the last section, we can now rewrite \eqref{eq:modeldisc:phimu} as
\begin{multline}\label{eq:timecont}
\iO\Ihj{\rkla{\widetilde{\phi}_j\trkla{t}-\widetilde{\phi}_j^-}\psi\hj}\dx+\trkla{t-t\no_j}\iO\Ihj{\widetilde{\mu}_j^+\psi\hj}\dx\\
 = \frac{t-t_j\no}{\tau_j}\iO\Ihj{\Phi\hj\trkla{\widetilde{\phi}^-_j}\sinctilde{n}\psi\hj}\dx
\end{multline}
for $t\in[t_j\no,t_j\nn)$ and $\psi\hj\in U\hj$.
Due to the structure of \eqref{eq:timecont}, an equivalent formulation is
\begin{align}
\widetilde{\phi}_j\trkla{t}-\widetilde{\phi}_j^- +\trkla{t-t_j\no}\widetilde{\mu}_j^+=\frac{t-t_j\no}{\tau_j}\Phi\hj\trkla{\widetilde{\phi}_j^-}\sinctilde{n}
\end{align}
with both sides being elements of $\Uhj$.
While the convergence of the left-hand side can be shown using the results from Lemma \ref{lem:convergence}, identifying the limit of the right-hand side as a suitable It\^o-integral is more intricate.
We start by introducing the filtration $\trkla{\widetilde{\mathcal{F}}_t}_{t\in\tekla{0,T}}$ as the augmentation of the filtration generated by $\widetilde{\phi}$ and $\widetilde{W}$ and show that $\widetilde{W}$ is a $\mathcal{Q}$-Wiener process with respect to $\trkla{\widetilde{\mathcal{F}}_t}_{t\in\tekla{0,T}}$.
\begin{lemma}\label{lem:QWiener}
The process $\widetilde{W}$ obtained in Theorem \ref{thm:jakubowski} is a $\mathcal{Q}$-Wiener process adapted to the augmentation of the filtration $\trkla{\widetilde{\mathcal{F}}_t}_{t\in\tekla{0,T}}$ and can be written as
\begin{align*}
\widetilde{W}=\sum_{k\in\mathds{Z}}\lambda_k\g{k}\widetilde{\beta}_k\,.
\end{align*}
Here, $\trkla{\widetilde{\beta}_k}_{k\in\mathds{Z}}$ is a family of independently and identically distributed Brownian motions with respect to the augmentation of $\trkla{\widetilde{\mathcal{F}}_t}_{t\in\tekla{0,T}}$.
\end{lemma}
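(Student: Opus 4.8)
The plan is to verify that $\widetilde{W}$ satisfies Lévy's martingale characterisation of a $\mathcal{Q}$-Wiener process on the stochastic basis furnished by Theorem \ref{thm:jakubowski}. Writing $\widetilde{\beta}_k:=\lambda_k^{-1}\trkla{\widetilde{W},\g{k}}_{L^2\trkla{\Om}}$ for the coordinates of $\widetilde{W}$ with respect to the orthonormal basis $\trkla{\g{k}}_{k\in\mathds{Z}}$ (for indices with $\lambda_k\neq0$), it suffices to show that $\widetilde{W}$ has $\widetilde{\Prob}$-a.s.\ continuous paths with $\widetilde{W}\trkla{0}=0$, that $\widetilde{W}$ is an $\trkla{\widetilde{\mathcal{F}}_t}$-martingale, and that the cross variations satisfy $\crossvar{\widetilde{\beta}_k}{\widetilde{\beta}_l}_t=\delta_{kl}\,t$. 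The first property is immediate, since $\widetilde{\bs{\xi}}_j\rightarrow\widetilde{W}$ in $C\trkla{\tekla{0,T};L^2\trkla{\Om}}$ $\widetilde{\Prob}$-almost surely by Theorem \ref{thm:jakubowski} and each $\widetilde{\bs{\xi}}_j$ is continuous and vanishes at $t=0$.

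The core of the proof is a passage to the limit that propagates the discrete independence structure encoded in \ref{item:filtration}--\ref{item:randomvariables} to the limit. I fix $0\leq s<t\leq T$, finitely many times $0\leq s_1<\dots<s_p\leq s$, and a bounded continuous functional on $\trkla{L^s\trkla{\Om}}^p\times\trkla{L^2\trkla{\Om}}^p$; let $\Gamma$ abbreviate the resulting random variable $\Gamma\trkla{\widetilde{\phi}\trkla{s_i},\widetilde{W}\trkla{s_i}}_{i=1,\dots,p}$ and set $\widetilde{\zeta}:=\widetilde{W}\trkla{t}-\widetilde{W}\trkla{s}$. The aim is to establish
\begin{align*}
\expectedt{\Gamma\,\widetilde{\zeta}}&=0\,,\\
\expectedt{\Gamma\,\trkla{\widetilde{\zeta},g}_{L^2\trkla{\Om}}\trkla{\widetilde{\zeta},h}_{L^2\trkla{\Om}}}&=\trkla{t-s}\trkla{\mathcal{Q}g,h}_{L^2\trkla{\Om}}\,\expectedt{\Gamma}
\end{align*}
for all $g,h\in L^2\trkla{\Om}$. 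Because the laws of $\trkla{\widetilde{\phi}_j,\widetilde{\bs{\xi}}_j}$ and $\trkla{\phi_j,\bs{\xi}_j}$ on $\mathcal{X}$ coincide, it is enough to verify the corresponding identities for the discrete processes on the original basis. There the partial sums $\bs{\xi}_j^{m,\tau}=\sum_{n=1}^m\sinc{n}$ form genuine $\trkla{\mathcal{F}^\tau_{t^m}}$-martingales, and since each increment $\sinc{n}=\sqrt{\tau}\sum_{k\in\mathds{Z}\h}\lambda_k\g{k}\xi_k^{n,\tau}$ is mean-zero and independent of the past with $\expected{\xi_k^{n,\tau}\xi_l^{n,\tau}}=\delta_{kl}$ (cf.~\ref{item:randomvariables}), the discrete analogues hold exactly at the grid nodes; the error incurred by evaluating $s_i$, $s$, $t$ between nodes and by the piecewise-linear interpolation of $\widetilde{\bs{\xi}}_j$ is controlled uniformly in $j$ by the Hölder bound of Lemma \ref{lem:regWiener}. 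For the second identity one additionally uses $\sum_{n=m+1}^{m'}\tau\to t-s$ and $\sum_{k\in\mathds{Z}\hj}\lambda_k^2\trkla{\g{k},g}_{L^2\trkla{\Om}}\trkla{\g{k},h}_{L^2\trkla{\Om}}\to\trkla{\mathcal{Q}g,h}_{L^2\trkla{\Om}}$ as $j\rightarrow\infty$, which holds by the definition of $\mathcal{Q}$ in \ref{item:stochastic}. Passing to the limit is justified by the $\widetilde{\Prob}$-almost sure convergence of Theorem \ref{thm:jakubowski} together with the uniform higher moments of Lemma \ref{lem:regWiener} and \ref{item:randomvariables}, which yield equi-integrability and hence convergence of the expectations via Vitali's convergence theorem.

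With the two identities in hand, the conclusion follows by a density argument. The first identity, valid for all such $\Gamma$, shows that $\widetilde{\zeta}$ is orthogonal to every bounded $\widetilde{\mathcal{F}}_s$-measurable random variable generated by the histories of $\widetilde{\phi}$ and $\widetilde{W}$ on $\tekla{0,s}$, i.e.~$\expectedt{\widetilde{\zeta}\mid\widetilde{\mathcal{F}}_s}=0$, so $\widetilde{W}$, and likewise each $\widetilde{\beta}_k$, is an $\trkla{\widetilde{\mathcal{F}}_t}$-martingale. Choosing $g=\g{k}$, $h=\g{l}$ in the second identity and invoking the orthonormality of $\trkla{\g{k}}_k$ and the explicit form of $\mathcal{Q}$ gives $\crossvar{\trkla{\widetilde{W},\g{k}}_{L^2\trkla{\Om}}}{\trkla{\widetilde{W},\g{l}}_{L^2\trkla{\Om}}}_t=\delta_{kl}\lambda_k^2\,t$, that is $\crossvar{\widetilde{\beta}_k}{\widetilde{\beta}_l}_t=\delta_{kl}\,t$. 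Thus each $\widetilde{\beta}_k$ is a continuous $\trkla{\widetilde{\mathcal{F}}_t}$-martingale with $\qvar{\widetilde{\beta}_k}_t=t$ and vanishing cross variations; by Lévy's characterisation (applied to finite subfamilies) the $\trkla{\widetilde{\beta}_k}_{k\in\mathds{Z}}$ are mutually independent standard Brownian motions, and $\widetilde{W}=\sum_{k\in\mathds{Z}}\lambda_k\g{k}\widetilde{\beta}_k$ is the asserted $\mathcal{Q}$-Wiener process.

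I expect the main obstacle to lie in the passage to the limit in the quadratic-variation identity rather than in the linear martingale identity: the interpolants $\widetilde{\bs{\xi}}_j$ are not martingales between grid nodes, so the exact discrete relations hold only at the nodes and must be married with interpolation-error estimates, and the quadratic terms $\xi_k^{n,\tau}\xi_l^{n,\tau}$ require the uniform higher-moment control of \ref{item:randomvariables} together with the coloring \ref{item:color} both to secure the equi-integrability needed for Vitali's theorem and to guarantee the convergence of the truncated spectral sums $\sum_{k\in\mathds{Z}\hj}$ to the full covariance $\mathcal{Q}$.
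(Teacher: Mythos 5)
Your proposal is correct and follows essentially the same route as the paper: both verify the martingale property and the (cross-)variation identities by testing against bounded functionals of the histories of $\widetilde{\phi}$ and $\widetilde{W}$, transferring to the original discrete processes via equality of laws, exploiting the independence and normalization of the increments in \ref{item:increments}--\ref{item:randomvariables}, controlling the off-grid and interpolation errors with the uniform bounds of Lemma \ref{lem:regWiener} together with Vitali's theorem, and concluding with L\'evy's characterization. The only cosmetic difference is that the paper also includes $\widetilde{\mu}$ in the test functionals (noting it adds nothing to the filtration), whereas you work directly with $\widetilde{\phi}$ and $\widetilde{W}$; this changes nothing of substance.
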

\begin{proof}
Due to Corollary \ref{cor:identification}, we have that $\widetilde{\mu}$ depends continuously on $\widetilde{\phi}$. Hence, we can add $\widetilde{\mu}$ to the generator without changing the filtration.
As the laws of $\bs{\xi}\hj^{\tau_j}$ and $\widetilde{\bs{\xi}}_j$ coincide, we have that $\widetilde{\bs{\xi}}_j$ is $\widetilde{\Prob}$-almost surely piecewise linear in time and satisfies
\begin{align}
\widetilde{\bs{\xi}}_j\trkla{t_j^m}=\sum_{n=1}^m\sqrt{\tau_j}\sum_{k\in\mathds{Z}}\lambda_k\g{k}\widetilde{\xi}_k^{n,\tau_j}\,,
\end{align}
with mutually independent random variables $\widetilde{\xi}_k^{n,\tau_j}$ satisfying \ref{item:randomvariables}.
Although $\widetilde{\bs{\xi}}_j$ are not martingales, we can use the $\widetilde{\Prob}$-almost sure convergence of $\widetilde{\phi}_j$, $\widetilde{\mu}_j^+$, and $\widetilde{\bs{\xi}}_j$ established in Theorem \ref{thm:jakubowski} to show that the limit process $\widetilde{W}$ is a martingale.
Following \cite[Lemma 5.8]{Ondrejat2022}, it suffices to thow that
\begin{align}
\expectedt{\rkla{\widetilde{W}\trkla{t_2}-\widetilde{W}\trkla{t_1}}\prod_{i=1}^q\prod_{\iota=1}^o f_{i\iota}\rkla{\Psi_{i\iota}\rkla{\widetilde{\phi}_{s_\iota},\widetilde{\mu}_{s_\iota},\widetilde{W}_{s_\iota}}}}=0
\end{align}
for all times $0\leq s_1<\ldots<s_q\leq t_1<t_2\leq T$, all $f_{i\iota}\in\mathcal{A}_{s_\iota}$, where $\mathcal{A}_{s_\iota}$ is a subset of real-valued functions on
\begin{align}
\widehat{\mathcal{X}}_{s_\iota}:=C\trkla{\tekla{0,s_\iota};L^s\trkla{\Om}}\times L^2\trkla{0,s_\iota;L^2\trkla{\Om}}_{\weaktop}\times C\trkla{\tekla{0,s_\iota};L^2\trkla{\Om}}
\end{align}
for which the $\sigma$-algebra of $\mathcal{A}_{s_\iota}$ equals the Borel $\sigma$-algebra on $\widehat{\mathcal{X}}_{s_\iota}$.
Here, we denoted the restriction of a process $Y$ onto the subinterval $\tekla{0,s_\iota}$ by $Y_{s_\iota}$.
As explained in Example 5.9 in \cite{Ondrejat2022}, these functions can always be chosen in a manner such that they are continuous with respect to weakly converging sequences.
Hence, by combining the uniform integrability of $\widetilde{\bs{\xi}}_j$ established in Lemma \ref{lem:regWiener} and a Vitali argument, we obtain
\begin{align}
\begin{split}
&\expectedt{\rkla{\widetilde{W}\trkla{t_2}-\widetilde{W}\trkla{t_1}}\prod_{i=1}^q\prod_{\iota=1}^o f_{i\iota}\rkla{\Psi_{i\iota}\rkla{\widetilde{\phi}_{s_\iota},\widetilde{\mu}_{s_\iota},\widetilde{W}_{s_\iota}}}}\\
&~=\lim_{j\rightarrow\infty} \expectedt{\rkla{\widetilde{\bs{\xi}}_j\trkla{t_2}-\widetilde{\bs{\xi}}_j\trkla{t_1}}\prod_{i=1}^q\prod_{\iota=1}^o f_{i\iota}\rkla{\Psi_{i\iota}\rkla{\rkla{\widetilde{\phi}_j}_{s_\iota},\rkla{\widetilde{\mu}_j^+}_{s_\iota},\rkla{\widetilde{\bs{\xi}}_j}_{s_\iota}}}}\\
&~=\lim_{j\rightarrow\infty} \expectedt{\rkla{\widetilde{\bs{\xi}}_j\trkla{t_j^n}-\widetilde{\bs{\xi}}_j\trkla{t_j^m}}\prod_{i=1}^q\prod_{\iota=1}^o f_{i\iota}\rkla{\Psi_{i\iota}\rkla{\rkla{\widetilde{\phi}_j}_{s_\iota},\rkla{\widetilde{\mu}_j^+}_{s_\iota},\rkla{\widetilde{\bs{\xi}}_j}_{s_\iota}}}}\\
&\qquad+\lim_{j\rightarrow\infty} \expectedt{\rkla{\widetilde{\bs{\xi}}_j\trkla{t_2}-\widetilde{\bs{\xi}}_j\trkla{t_j^n}}\prod_{i=1}^q\prod_{\iota=1}^o f_{i\iota}\rkla{\Psi_{i\iota}\rkla{\rkla{\widetilde{\phi}_j}_{s_\iota},\rkla{\widetilde{\mu}_j^+}_{s_\iota},\rkla{\widetilde{\bs{\xi}}_j}_{s_\iota}}}}\\
&\qquad-\lim_{j\rightarrow\infty} \expectedt{\rkla{\widetilde{\bs{\xi}}_j\trkla{t_1}-\widetilde{\bs{\xi}}_j\trkla{t_j^m}}\prod_{i=1}^q\prod_{\iota=1}^o f_{i\iota}\rkla{\Psi_{i\iota}\rkla{\rkla{\widetilde{\phi}_j}_{s_\iota},\rkla{\widetilde{\mu}_j^+}_{s_\iota},\rkla{\widetilde{\bs{\xi}}_j}_{s_\iota}}}}\\
&~=\lim_{j\rightarrow\infty}\expectedt{\sum_{a=m+1}^n\sqrt{\tau_j}\sum_{k\in\mathds{Z}}\lambda_k\g{k}\widetilde{\xi}_j^{a,\tau_j}}=0\,.
\end{split}
\end{align}
Here, we compared $t_1$ and $t_2$ to grid points $t_j^n$ and $t_j^m$ satisfying $0\leq t_j^n-t_2\leq \tau$ and $0\leq t_j^m-t_1\leq\tau_j$ and used the continuity of $\widetilde{\bs{\xi}}_j$.
Defining $\widetilde{\beta}_k\trkla{t}:=\lambda_k^{-1}\iO\widetilde{W}\trkla{t,x}\g{k}\trkla{x}\dx$, we can use similar arguments to show that
\begin{align}
\widetilde{\beta}_k\trkla{t}\widetilde{\beta}_l\trkla{t}-\delta_{kl}t
\end{align}
with $\delta_{kl}$ denoting the usual Kronecker delta is a martingale.
Hence, by Levy's characterization of Brownian motions we obtain the result.
\end{proof}

Using similar arguments, we shall now show that the limit process of the left-hand side of \eqref{eq:timecont} is a martingale with respect to $\trkla{\widetilde{\mathcal{F}}_t}_{t\in\tekla{0,T}}$, identify its quadratic variation process and deduce a suitable representation as an It\^o-integral.\\
For fixed $v\in L^2\trkla{\Om}$, we consider the family of processes
\begin{align}\label{eq:defMjv}
\widetilde{M}_j^v\trkla{t}:=\iO\trkla{\widetilde{\phi}_j\trkla{t}-\widetilde{\phi}_j\trkla{0}}v\dx+\int_0^t\iO\widetilde{\mu}_j^+v\dx\ds\,.
\end{align}
A suitable candidate for the limit of $\widetilde{M}_j^v$ is the process
\begin{align}\label{eq:defMv}
\widetilde{M}^v\trkla{t}:=\iO\trkla{\widetilde{\phi}\trkla{t}-\widetilde{\phi}\trkla{0}}v\dx+\int_0^t\iO\widetilde{\mu}v\dx\ds\,.
\end{align}
\begin{lemma}\label{lem:martingale}
Let the assumptions of Lemma \ref{lem:convergence} hold true.
Then, the process $\widetilde{M}^v$ defined in \eqref{eq:defMv} with $v$ in $L^2\trkla{\Om}$ is a martingale with respect to the filtration $\trkla{\widetilde{\mathcal{F}}_t}_{t\in\tekla{0,T}}$.
\end{lemma}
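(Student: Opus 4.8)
The plan is to follow the strategy used in the proof of Lemma \ref{lem:QWiener}, which rests on the martingale criterion of \cite[Lemma 5.8]{Ondrejat2022}. Since by Corollary \ref{cor:identification} the limit $\widetilde{\mu}=-\Delta\widetilde{\phi}+F^\prime\trkla{\widetilde{\phi}}$ depends continuously on $\widetilde{\phi}$, adjoining $\widetilde{\mu}$ to the generator does not enlarge the filtration $\trkla{\widetilde{\mathcal{F}}_t}_{t\in\tekla{0,T}}$. Hence it suffices to verify that
\begin{align*}
\expectedt{\rkla{\widetilde{M}^v\trkla{t_2}-\widetilde{M}^v\trkla{t_1}}\prod_{i=1}^q\prod_{\iota=1}^o f_{i\iota}\rkla{\Psi_{i\iota}\rkla{\widetilde{\phi}_{s_\iota},\widetilde{\mu}_{s_\iota},\widetilde{W}_{s_\iota}}}}=0
\end{align*}
for all $0\leq s_1<\ldots<s_q\leq t_1<t_2\leq T$ and all bounded functionals $f_{i\iota}\in\mathcal{A}_{s_\iota}$, which by Example 5.9 in \cite{Ondrejat2022} may be chosen continuous with respect to weakly converging sequences.

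First I would pass from the limit quantities to the discrete ones. Using the $\widetilde{\Prob}$-almost sure convergences $\widetilde{\phi}_j\rightarrow\widetilde{\phi}$, $\widetilde{\mu}_j^+\weak\widetilde{\mu}$, and $\widetilde{\bs{\xi}}_j\rightarrow\widetilde{W}$ from Theorem \ref{thm:jakubowski} and Lemma \ref{lem:convergence}, together with the uniform moment bounds \eqref{eq:regularity} and a Vitali argument, the left-hand side equals $\lim_{j\rightarrow\infty}\expectedt{\trkla{\widetilde{M}_j^v\trkla{t_2}-\widetilde{M}_j^v\trkla{t_1}}\prod_{i,\iota}f_{i\iota}\trkla{\Psi_{i\iota}\trkla{\trkla{\widetilde{\phi}_j}_{s_\iota},\trkla{\widetilde{\mu}_j^+}_{s_\iota},\trkla{\widetilde{\bs{\xi}}_j}_{s_\iota}}}}$. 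As in Lemma \ref{lem:QWiener}, the merely weak convergence of $\widetilde{\mu}_j^+$ is harmless because the $\Psi_{i\iota}$ are weakly continuous; moreover the linear drift term $\int_0^t\iO\widetilde{\mu}_j^+v\dx\ds$ entering $\widetilde{M}_j^v$ converges by \eqref{eq:conv:weakmu}, while the $\widetilde{\phi}_j$-contributions converge strongly by \eqref{eq:conv:strong:cont}.

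Next I would establish the discrete martingale identity. Comparing $t_1,t_2$ to grid points $t_j^m,t_j^n$ with $0\leq t_j^m-t_1\leq\tau_j$ and $0\leq t_j^n-t_2\leq\tau_j$, the increment $\widetilde{M}_j^v\trkla{t_j^n}-\widetilde{M}_j^v\trkla{t_j^m}$ reads $\iO\trkla{\widetilde{\phi}_j^n-\widetilde{\phi}_j^m}v\dx+\sum_{a=m+1}^n\tau_j\iO\widetilde{\mu}_j^a v\dx$. Testing the discrete scheme \eqref{eq:modeldisc:phimu} — valid $\widetilde{\Prob}$-almost surely for $\widetilde{\phi}_j,\widetilde{\mu}_j^+$ by equality of laws — with a finite element approximation $\psi\hj$ of $v$ and summing from $a=m+1$ to $n$ shows that, up to the nodal interpolation error $\trkla{1-\Ihjop}$ (controlled by Lemma \ref{lem:interpolation} together with the uniform $H^1$- and $H^2$-bounds) and the approximation error $v-\psi\hj$, this increment equals $\sum_{a=m+1}^n\iO\Ihj{\Phi\hj\trkla{\widetilde{\phi}_j^{a-1}}\sinctilde{a}\psi\hj}\dx$. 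Since each $\sinctilde{a}$ is $\widetilde{\mathcal{F}}_{t_j^a}^{\tau_j}$-measurable, mean-zero, and independent of the past (\ref{item:increments}, \ref{item:randomvariables}), whereas the product $\prod_{i,\iota}f_{i\iota}$ is measurable with respect to the information up to $s_\iota\leq t_1\leq t_j^m$, each summand has vanishing conditional expectation, so the corresponding expectation is zero. The mismatch between $t_1,t_2$ and the grid points is absorbed using the uniform time-regularity \eqref{eq:regularity:hoelder} for $\widetilde{\phi}_j$ and the increment bound of Lemma \ref{lem:regWiener} for $\widetilde{\bs{\xi}}_j$, so that all error contributions vanish as $j\rightarrow\infty$.

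The main obstacle I anticipate is the combination, in the passage to the limit, of the weak-only convergence of $\widetilde{\mu}_j^+$ with the nonlinear functionals $f_{i\iota}$ — resolved, as in Lemma \ref{lem:QWiener}, by the weak continuity of the $\Psi_{i\iota}$ — together with the careful bookkeeping of the three vanishing error sources (the interpolation error, the finite element approximation of $v$, and the time-mismatch at $t_1,t_2$). The genuinely new point relative to Lemma \ref{lem:QWiener} is that the increment of $\widetilde{M}_j^v$ now carries the deterministic drift $\int\widetilde{\mu}_j^+v$, which must be cancelled against the $\widetilde{\phi}_j$-increment via \eqref{eq:modeldisc:phimu} so that only the mean-zero stochastic increments remain.
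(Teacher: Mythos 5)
Your proposal is correct and follows essentially the same route as the paper: the martingale criterion of \cite[Lemma 5.8]{Ondrejat2022}, passage to the discrete processes $\widetilde{M}_j^v$ via the a.s.\ convergences and a Vitali argument, and cancellation of the drift against the $\widetilde{\phi}_j$-increment through the scheme so that only mean-zero increments $\sinctilde{a}$ remain. The only (harmless) extra work you introduce is approximating $v$ by finite element functions and tracking interpolation errors; the paper avoids this by using the pointwise formulation of \eqref{eq:timecont} as an identity in $\Uhj\subset L^2\trkla{\Om}$, which can be paired directly with any $v\in L^2\trkla{\Om}$.
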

\begin{proof}
Due to the strong convergence of $\widetilde{\phi}_j$ stated in \eqref{eq:conv:strong:cont} and the $\widetilde{\Prob}$-almost sure convergence of $\widetilde{\mu}_j^+$ in the weak topology together with a Vitali argument, we deduce the strong convergence of $\widetilde{M}_j^v\trkla{t}$ towards $\widetilde{M}^v\trkla{t}$ in $L^p\trkla{\widetilde{\Omega}}$ for $t\in\tekla{0,T}$.
Hence, we can repeat the arguments from the proof of Lemma \ref{lem:QWiener} and show that
\begin{align}
\begin{split}
\expectedt{\rkla{\widetilde{M}^v\trkla{t_2}-\widetilde{M}^v\trkla{t_1}}\mathfrak{f}^{qo}}=&\,\lim_{j\rightarrow\infty}\expectedt{\rkla{\widetilde{M}^v_j\trkla{t_j^n}-\widetilde{M}_j^v\trkla{t_j^m}}\mathfrak{f}_j^{qo}}\\
=&\,\lim_{j\rightarrow\infty}\expectedt{\sum_{a=m+1}^n\iO\Phi\hj\rkla{\widetilde{\phi}_j\trkla{t_j^{a-1}}}\sinctilde{a}v\dx}=0\,,
\end{split}
\end{align}
due to \ref{item:randomvariables}.
Here, we again used grid points $t_j^n$ and $t_j^m$ satisfying $0\leq t_j^n-t_2\leq\tau_j$ and $0\leq t_j^m-t_1\leq\tau_j$ and introduced the abbreviations
\begin{subequations}\label{eq:tmp:abbreviations}
\begin{align}
\mathfrak{f}^{qo}:=&\,\prod_{i=1}^q\prod_{\iota=1}^of_{i\iota}\rkla{\Psi_{i\iota}\rkla{\widetilde{\phi}_{s_\iota},\widetilde{\mu}_{s_\iota},\widetilde{W}_{s_\iota}}}\,,\\
\mathfrak{f}^{qo}_j:=&\,\prod_{i=1}^q\prod_{\iota=1}^of_{i\iota}\rkla{\Psi_{i\iota}\rkla{\rkla{\widetilde{\phi}_j}_{s_\iota},\rkla{\widetilde{\mu}_j^+}_{s_\iota},\rkla{\widetilde{\bs{\xi}}_j}_{s_\iota}}}
\end{align}
\end{subequations}
for times $0\leq s_1<\ldots<s_q\leq t_1<t_2$ to simplify the notation.
Hence, the limit process $\widetilde{M}^v$ is a martingale.
\end{proof}

In the next step, we identify the quadratic variation process $\skla{\!\!\skla{\widetilde{M}^v}\!\!}$ of $\widetilde{M}^v$, as well as the cross variation processes $\skla{\!\!\skla{\widetilde{M}^v,\widetilde{\beta}_k}\!\!}$.
Natural candidates for these processes are
\begin{subequations}\label{eq:variations}
\begin{align}
\skla{\!\!\skla{\widetilde{M}^v}\!\!}\trkla{t}:=&\,\int_0^t\sum_{k\in\mathds{Z}}\rkla{\iO \sigma\trkla{\widetilde{\phi}}\lambda_k\g{k} v\dx}^2\ds\,,\label{eq:def:quadvar}\\
\skla{\!\!\skla{\widetilde{M}^v,\widetilde{\beta}_k}\!\!}\trkla{t}:=&\int_0^t\iO\sigma\trkla{\widetilde{\phi}}\lambda_k\g{k}v\dx\ds\,.\label{eq:def:crossvar}
\end{align}
\end{subequations}
\begin{lemma}
The processes defined in \eqref{eq:variations} are the quadratic and cross variation processes of the martingale $\widetilde{M}^v$ defined \eqref{eq:defMv}.
\end{lemma}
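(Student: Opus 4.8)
The plan is to verify the defining property of the quadratic and cross variation: since $\skla{\!\!\skla{\widetilde{M}^v}\!\!}$ is the unique predictable process for which $\trkla{\widetilde{M}^v}^2-\skla{\!\!\skla{\widetilde{M}^v}\!\!}$ is an $\trkla{\widetilde{\mathcal{F}}_t}_{t\in\tekla{0,T}}$-martingale, and likewise $\widetilde{M}^v\widetilde{\beta}_k-\skla{\!\!\skla{\widetilde{M}^v,\widetilde{\beta}_k}\!\!}$ a martingale, it suffices to show for all $0\le s_1<\ldots<s_q\le t_1<t_2\le T$ and all products $\mathfrak{f}^{qo}$ as in \eqref{eq:tmp:abbreviations} that
\begin{align*}
&\expectedt{\rkla{\trkla{\widetilde{M}^v\trkla{t_2}}^2-\trkla{\widetilde{M}^v\trkla{t_1}}^2-\skla{\!\!\skla{\widetilde{M}^v}\!\!}\trkla{t_2}+\skla{\!\!\skla{\widetilde{M}^v}\!\!}\trkla{t_1}}\mathfrak{f}^{qo}}=0\,,\\
&\expectedt{\rkla{\widetilde{M}^v\trkla{t_2}\widetilde{\beta}_k\trkla{t_2}-\widetilde{M}^v\trkla{t_1}\widetilde{\beta}_k\trkla{t_1}-\skla{\!\!\skla{\widetilde{M}^v,\widetilde{\beta}_k}\!\!}\trkla{t_2}+\skla{\!\!\skla{\widetilde{M}^v,\widetilde{\beta}_k}\!\!}\trkla{t_1}}\mathfrak{f}^{qo}}=0\,.
\end{align*}
As in the proofs of Lemma \ref{lem:QWiener} and Lemma \ref{lem:martingale}, I would establish these identities by passing to the limit $j\to\infty$ in the corresponding discrete expressions built from $\trkla{\widetilde{\phi}_j,\widetilde{\mu}_j^+,\widetilde{\bs{\xi}}_j}$, comparing $t_1,t_2$ with grid points $t_j^m,t_j^n$ satisfying $0\le t_j^m-t_1\le\tau_j$ and $0\le t_j^n-t_2\le\tau_j$, so that the adapted factor $\mathfrak{f}^{qo}_j$ is $\widetilde{\mathcal{F}}_{t_j^m}$-measurable.

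At the discrete level the grid-point increments of $\widetilde{M}_j^v$ reduce, via \eqref{eq:timecont}, to the martingale transform $\widetilde{M}_j^v\trkla{t_j^n}-\widetilde{M}_j^v\trkla{t_j^m}=\sum_{a=m+1}^n\Delta_a$ with $\Delta_a:=\iO\Phi\hj\rkla{\widetilde{\phi}_j\trkla{t_j^{a-1}}}\sinctilde{a}v\dx$ (up to nodal interpolation errors controlled by Lemma \ref{lem:interpolation}). Telescoping gives $\trkla{\widetilde{M}_j^v\trkla{t_j^n}}^2-\trkla{\widetilde{M}_j^v\trkla{t_j^m}}^2=\sum_a\rkla{2\widetilde{M}_j^v\trkla{t_j^{a-1}}\Delta_a+\Delta_a^2}$. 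The first contribution and the increments $\Delta_a^2-\expected{\Delta_a^2\mid\widetilde{\mathcal{F}}_{t_j^{a-1}}}$ are martingale transforms, hence have vanishing expectation against $\mathfrak{f}^{qo}_j$. Inserting the representation \eqref{eq:deffinteprocess} of $\sinctilde{a}$ and the definition \eqref{eq:defPhih} of $\Phi\hj$ and invoking \ref{item:randomvariables} (independence, symmetry, and $\expected{|\widetilde{\xi}_k^{a,\tau_j}|^2}=1$, which annihilates the off-diagonal terms), the compensator evaluates to
\begin{align*}
\sum_{a=m+1}^n\expected{\Delta_a^2\mid\widetilde{\mathcal{F}}_{t_j^{a-1}}}=\sum_{a=m+1}^n\tau_j\sum_{k\in\mathds{Z}\hj}\rkla{\iO\Ihj{\sigma\trkla{\widetilde{\phi}_j\trkla{t_j^{a-1}}}\lambda_k\g{k}}v\dx}^2\,,
\end{align*}
which is precisely a Riemann sum for $\skla{\!\!\skla{\widetilde{M}^v}\!\!}\trkla{t_2}-\skla{\!\!\skla{\widetilde{M}^v}\!\!}\trkla{t_1}$ in \eqref{eq:def:quadvar}. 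The cross variation is identical: using that the increment of $\widetilde{\beta}_k$ over $\tekla{t_j^{a-1},t_j^a}$ is approximated by $\sqrt{\tau_j}\widetilde{\xi}_k^{a,\tau_j}$ and that $\expected{\widetilde{\xi}_l^{a,\tau_j}\widetilde{\xi}_k^{a,\tau_j}}=\delta_{kl}$, the corresponding compensator reduces to $\sum_a\tau_j\iO\Ihj{\sigma\trkla{\widetilde{\phi}_j\trkla{t_j^{a-1}}}\lambda_k\g{k}}v\dx$, matching \eqref{eq:def:crossvar}.

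It then remains to pass to the limit $j\to\infty$. Convergence of the compensators to the claimed integrals follows from the strong convergence \eqref{eq:conv:strong:lp} of $\widetilde{\phi}_j\pml$, the boundedness and Lipschitz continuity of $\sigma$ (\ref{item:sigma}), the color assumption \ref{item:color} (which makes $\bigcup_h\mathds{Z}\h=\mathds{Z}$ harmless and controls the tail of the $k$-sum), and a Vitali argument; the nodal interpolation operators $\Ihjop$ are removed exactly as in the proof of \eqref{eq:conv:potential}. For the squared terms, $\expectedt{\trkla{\widetilde{M}_j^v\trkla{t_j^n}}^2\mathfrak{f}^{qo}_j}\to\expectedt{\trkla{\widetilde{M}^v\trkla{t_2}}^2\mathfrak{f}^{qo}}$ uses the strong $L^p\trkla{\widetilde{\Omega}}$-convergence of $\widetilde{M}_j^v$ obtained in the proof of Lemma \ref{lem:martingale} together with the continuity of the $f_{i\iota}$ along weakly converging sequences. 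I expect the main obstacle to be securing the uniform integrability needed after squaring $\widetilde{M}_j^v$: since the quadratic variation raises the integrability demand, one must feed in the bounds of Lemma \ref{lem:energy} at an exponent $\p$ strictly larger than the one dictated by the chosen test functions, so that the products stay uniformly integrable and Vitali's theorem applies. Once both compensators are identified, uniqueness of the predictable quadratic and cross variation processes yields the claim.
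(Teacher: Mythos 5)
Your proposal is correct and follows essentially the same route as the paper: testing the compensated squares and products against the adapted factors $\mathfrak{f}^{qo}$, reducing the grid-point increments of $\widetilde{M}_j^v$ to martingale transforms whose conditional second moments produce exactly the discrete Riemann sums for \eqref{eq:variations}, proving $L^p\trkla{\widetilde{\Omega}}$-convergence of these discrete compensators (including the removal of the nodal interpolation via Lemma \ref{lem:interpolation}), and concluding with Vitali using the higher-moment bounds of Lemma \ref{lem:energy}. The only cosmetic difference is that you spell out the telescoping decomposition $\sum_a\trkla{2\widetilde{M}_j^v\trkla{t_j^{a-1}}\Delta_a+\Delta_a^2}$ explicitly, which the paper leaves implicit in its final displayed computation.
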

\begin{proof}
We start by showing that $\rkla{\widetilde{M}^v}^2-\skla{\!\!\skla{\widetilde{M}^v}\!\!}$ is a martingale.
Using the abbreviations introduced in  \eqref{eq:tmp:abbreviations}, we need to show that
\begin{align}
\expectedt{\rkla{\rkla{\widetilde{M}^v\trkla{t_2}}^2-\rkla{\widetilde{M}^v\trkla{t_1}}^2-\skla{\!\!\skla{\widetilde{M}^v}\!\!}\trkla{t_2}+\skla{\!\!\skla{\widetilde{M}^v}\!\!}\trkla{t_1}}\mathfrak{f}^{qo}}=0\,.
\end{align}
We consider the following family $\rkla{\skla{\!\!\skla{\widetilde{M}^v_j}\!\!}}_{j\in\mathds{N}}$ of approximations defined via
\begin{align}
\skla{\!\!\skla{\widetilde{M}_j^v}\!\!}\trkla{t_j^m}:=\int_0^{t_j^m}\sum_{k\in\mathds{Z}\hj}\rkla{\iO\Ihj{\sigma\trkla{\widetilde{\phi}_j^-}\lambda_k\g{k}}v\dx}^2\ds
\end{align}
and establish the convergence of $\skla{\!\!\skla{\widetilde{M}_j^v}\!\!}\trkla{t_j^m}$ towards $\skla{\!\!\skla{\widetilde{M}^v}\!\!}\trkla{t}$ in $L^p\trkla{\widetilde{\Omega}}$ for $t_j^m\searrow t$.

\begin{multline}
\expectedt{\abs{\skla{\!\!\skla{\widetilde{M}^v}\!\!}\trkla{t}-\skla{\!\!\skla{\widetilde{M}_j^v}\!\!}\trkla{t_j^m}}^p}\\
\leq C\expectedt{\abs{\int_0^t\sum_{k\in\mathds{Z}}\rkla{\iO\sigma\trkla{\widetilde{\phi}}\lambda_k\g{k} v\dx}^2\ds-\int_0^t\sum_{k\in\mathds{Z}\hj}\rkla{\iO\sigma\trkla{\widetilde{\phi}_j^-}\lambda_k\g{k}v\dx}^2\ds}^p}\\
+C\expectedt{\abs{\int_t^{t_j^m}\sum_{k\in\mathds{Z}\hj}\rkla{\iO \sigma\trkla{\widetilde{\phi}_j^-}\lambda_k\g{k} v\dx}^2\ds}^p}\\
+C\expectedt{\abs{\int_0^{t_j^m}\sum_{k\in\mathds{Z}\hj}\ekla{\rkla{\iO\sigma\trkla{\widetilde{\phi}_j^-}\lambda_k\g{k}v\dx}^2-\rkla{\iO\Ihj{\sigma\trkla{\widetilde{\phi}_j^-}\lambda_k\g{k}}v\dx}^2}\ds}^p}\\
=:\mathfrak{R}_1+\mathfrak{R}_2+\mathfrak{R}_3\,.
\end{multline}
Combining \ref{item:color}, \ref{item:sigma}, and Lemma \ref{lem:convergence} shows that $\mathfrak{R}_1$ vanishes.
Assumptions \ref{item:color} and \ref{item:sigma} also provide $\mathfrak{R}_2\leq C\trkla{v}\tabs{t_j^m-t}^p\rightarrow 0$.
Hence, it remains to show $\mathfrak{R}_3\rightarrow0$.
Using the Lipschitz continuity of $\sigma$ (cf.~\ref{item:sigma}), standard error estimates for the interpolation operator (see e.g.~\cite{BrennerScott}), the error estimates collected in Lemma \ref{lem:interpolation}, and a standard inverse estimate, we compute
\begin{multline}\label{eq:tmp:convergenceinterpol}
\abs{\iO \sigma\trkla{\widetilde{\phi}_j^-}\lambda_k\g{k} v\dx-\iO\Ihj{\sigma\trkla{\widetilde{\phi}_j^-}\lambda_k\g{k}}v\dx}\\
\leq \abs{\iO\trkla{1-\Ihjop}\gkla{\sigma\trkla{\widetilde{\phi}_j^-}}\lambda_k\g{k} v\dx}+\abs{\iO\Ihj{\sigma\trkla{\widetilde{\phi}_j^-}}\lambda_k\trkla{1-\Ihjop}\gkla{\g{k}}v\dx}\\
+\abs{\iO\trkla{1-\Ihjop}\gkla{\Ihj{\sigma\trkla{\widetilde{\phi}_j^-}}\Ihj{\g{k}}}v\dx}\\
\leq Ch_j\norm{\nabla\widetilde{\phi}_j^-}_{L^2\trkla{\Om}}\tabs{\lambda_k}\norm{\g{k}}_{L^\infty\trkla{\Om}}\norm{v}_{L^2\trkla{\Om}} + C h_j\tabs{\lambda_k}\norm{\g{k}}_{W^{1,\infty}\trkla{\Om}}\norm{v}_{L^2\trkla{\Om}}\\
+Ch_j\tabs{\lambda_k}\norm{\g{k}}_{W^{1,\infty}\trkla{\Om}}\norm{v}_{L^2\trkla{\Om}}\,.
\end{multline}
Hence,
\begin{align}
\begin{split}
\mathfrak{R}_3\leq&\,C\widetilde{\mathds{E}}\left[\left|\int_0^{t_j^m}\sum_{k\in\mathds{Z}\hj}\abs{\iO\trkla{1+\Ihjop}\gkla{\sigma\trkla{\widetilde{\phi}_j^-}\lambda_k\g{k}}v\dx}\right.\right.\\
&\left.\left.\phantom{\sum_{k\in\mathds{Z}\hj}}\qquad\times h_j\tabs{\lambda_k}\norm{\g{k}}_{W^{1,\infty}\trkla{\Om}}\norm{v}_{L^2\trkla{\Om}}\rkla{1+\norm{\nabla\widetilde{\phi}_j^-}_{L^2\trkla{\Om}}} \right|^p\right]\\
\leq&\, C\trkla{v}\expectedt{\abs{h_j\sum_{k\in\mathds{Z}}\tabs{\lambda_k}^2\norm{\g{k}}_{W^{1,\infty}\trkla{\Om}}^2\int_0^{t_j^m}\rkla{1+\norm{\nabla\widetilde{\phi}_j^-}_{L^2\trkla{\Om}}}\ds}^p}\rightarrow 0\,.
\end{split}
\end{align}
%First estimate: Lipschitz und Vergleich mit Ecken
Computations similar to the ones above also provide uniform bounds for higher moments of $\skla{\!\!\skla{\widetilde{M}^v_j}\!\!}$.
Having established the convergence in $L^p\trkla{\widetilde{\Omega}}$, we have $\widetilde{\Prob}$-almost sure convergence for a subsequence.
Hence, we can reuse the ideas from the proof of Lemma \ref{lem:martingale} and deduce for $0\leq t_j^n-t_2\leq\tau_j$ and $0\leq t_j^m-t_1\leq\tau_j$:
\begin{align}
\begin{split}
&\expectedt{\rkla{\rkla{\widetilde{M}^v\trkla{t_2}}^2-\rkla{\widetilde{M}^v\trkla{t_1}}^2-\skla{\!\!\skla{\widetilde{M}^v}\!\!}\trkla{t_2}+\skla{\!\!\skla{\widetilde{M}^v}\!\!}\trkla{t_1}}\mathfrak{f}^{qo}}\\
&=\lim_{j\rightarrow\infty}\expectedt{\rkla{\rkla{\widetilde{M}_j^v\trkla{t_j^n}}^2-\rkla{\widetilde{M}^v_j\trkla{t_j^m}}^2-\skla{\!\!\skla{\widetilde{M}^v_j}\!\!}\trkla{t_j^n}+\skla{\!\!\skla{\widetilde{M}^v_j}\!\!}\trkla{t_j^m}}\mathfrak{f}^{qo}_j}\\
&=\lim_{j\rightarrow\infty}\widetilde{\mathds{E}}\left[\left(\sum_{a=m+1}^n\rkla{\iO\Ihj{\Phi\hj\trkla{\widetilde{\phi}_j\trkla{t_j^{a-1}}}\sinctilde{a}}v\dx}^2\right.\right.\\
&\left.\left.\qquad\qquad-\sum_{a=m+1}^n\tau_j\sum_{k\in\mathds{Z}\hj}\rkla{\iO\Ihj{\sigma\trkla{\widetilde{\phi}_j\trkla{t_j^{a-1}}}\lambda_k\g{k}}v\dx}^2\right)\mathfrak{f}_j^{qo}\right]\\
&=0\,,
\end{split}
\end{align}
due to the mutual independence of the stochastic increments and $\expectedt{\tabs{\widetilde{\xi}_k^{n,\tau_j}}^2}=1$ (cf.~\ref{item:randomvariables}).
We shall now show that $\skla{\!\!\skla{\widetilde{M}^v,\widetilde{\beta}_k}\!\!}$ defined in \eqref{eq:def:crossvar} are indeed the cross variation processes by applying similar arguments.
Again, we start by defining suitable time discrete approximations for time points $t_j^m$ via
\begin{align}\label{eq:def:disc:crossvar}
\skla{\!\!\skla{\widetilde{M}_j^v,\sum_{a=1}^m\sqrt{\tau_j}\widetilde{\xi}_k^{a,\tau_j}}\!\!}:=\left\{\begin{array}{cl}
\int_0^{t_j^m}\iO\lambda_k\Ihj{\sigma\trkla{\widetilde{\phi}_j^-}\g{k}}v\dx\ds&\text{if~}k\in\mathds{Z}\hj\,,\\
0&\text{else}\,,
\end{array}\right.
\end{align}
and establish its convergence towards $\skla{\!\!\skla{\widetilde{M}^v,\widetilde{\beta}_k}\!\!}\trkla{t}$ for $j\rightarrow\infty$ and $t_j^m\searrow t$.
As $\lim_{j\rightarrow\infty}\mathds{Z}\hj=\mathds{Z}$, we can assume without loss of generality that $k\in\mathds{Z}\hj$.
Hence,
\begin{multline}
\expectedt{\abs{\int_0^t\iO\sigma\trkla{\widetilde{\phi}}\lambda_k\g{k} v\dx\ds-\int_0^{t_j^m}\iO\Ihj{\sigma\trkla{\widetilde{\phi}^-_j}\g{k}}v\dx\ds}^p}\\
\leq C\expectedt{\abs{\int_0^t\iO\rkla{\sigma\trkla{\widetilde{\phi}}-\sigma\trkla{\widetilde{\phi}_j^-}}\lambda_k\g{k}v\dx\ds}^p} \\
+C\expectedt{\abs{\int_t^{t_j^m}\iO\sigma\trkla{\widetilde{\phi}_j^-}\lambda_k\g{k}v\dx\ds}^p}\\
+ C\expectedt{\abs{\int_0^t\iO\trkla{1-\Ihjop}\gkla{\sigma\trkla{\widetilde{\phi}_j^-}\lambda_k\g{k}}v\dx\ds}^p}\\
=:\mathfrak{Q}_1+\mathfrak{Q}_2+\mathfrak{Q}_3\,.
\end{multline}
As before, $\mathfrak{Q}_1$ and $\mathfrak{Q}_2$ vanish due to \ref{item:color}, \ref{item:sigma}, and Lemma \ref{lem:convergence}.
The remaining term $\mathfrak{Q}_3$ vanishes due to \eqref{eq:tmp:convergenceinterpol}.\\
Hence, we can again deduce for $0\leq t_j^n-t_2\leq\tau_j$ and $0\leq t_j^m-t_1\leq\tau_j$ that
{\allowdisplaybreaks
\begin{multline}
\expectedt{\rkla{\widetilde{M}^v\trkla{t_2}\widetilde{\beta}_k\trkla{t_2}-\widetilde{M}^v\trkla{t_1}\widetilde{\beta}_k\trkla{t_1}-\skla{\!\!\skla{\widetilde{M}^v,\widetilde{\beta}_k}\!\!}\trkla{t_2}+\skla{\!\!\skla{\widetilde{M}^v,\widetilde{\beta}_k}\!\!}\trkla{t_1}}\mathfrak{f}^{qo}}\\
=\lim_{j\rightarrow\infty}\widetilde{\mathds{E}}\left[\left(\widetilde{M}^v_j\trkla{t_j^n}\sum_{a=1}^n\sqrt{\tau_j}\widetilde{\xi}_k^{a,\tau_j}-\widetilde{M}_j^v\trkla{t_j^m}\sum_{a=1}^m\sqrt{\tau_j}\widetilde{\xi}_j^{a,\tau_j}\right.\right.\\
\left.\left.-\int_{t_j^m}^{t_j^n}\iO\lambda_k\Ihj{\sigma\trkla{\widetilde{\phi}_j^-}\g{k}}v\dx\ds\right)\mathfrak{f}_j^{qo}\right]\\
=\lim_{j\rightarrow\infty}\expectedt{\rkla{\sum_{b=1}^n\iO\Ihj{\Phi\hj\trkla{\widetilde{\phi}_j\trkla{t_j^{b-1}}}\sinctilde{b}}v\dx\sum_{a=1}^n\sqrt{\tau_j}\widetilde{\xi}_k^{a,\tau_j}}\mathfrak{f}_j^{qo}}\\
-\lim_{j\rightarrow\infty}\expectedt{\rkla{\sum_{b=1}^m\iO\Ihj{\Phi\hj\trkla{\widetilde{\phi}_j\trkla{t_j^{b-1}}}\sinctilde{b}}v\dx\sum_{a=1}^m\sqrt{\tau_j}\widetilde{\xi}_k^{a,\tau_j}}\mathfrak{f}_j^{qo}}\\
-\lim_{j\rightarrow\infty}\expectedt{\rkla{\int_{t_j^m}^{t_j^n}\iO\lambda_k\Ihj{\sigma\trkla{\widetilde{\phi}_j^-}\g{k}}v\dx\ds}\mathfrak{f}_j^{qo}}\\
=\lim_{j\rightarrow\infty}\expectedt{\rkla{\sum_{b=m+1}^n\sum_{a=1}^n\iO\Ihj{\Phi\hj\trkla{\widetilde{\phi}_j\trkla{t_j^{b-1}}}\sinctilde{b}\sqrt{\tau_j}\widetilde{\xi}_k^{a,\tau_j}}v\dx}\mathfrak{f}_j^{qo}}\\
+\lim_{j\rightarrow\infty}\expectedt{\rkla{\sum_{b=1}^m\sum_{a=m+1}^n\iO\Ihj{\Phi\hj\trkla{\widetilde{\phi}_j\trkla{t_j^{b-1}}}\sinctilde{b}\sqrt{\tau_j} \widetilde{\xi}_k^{a,\tau_j}}v\dx}\mathfrak{f}_j^{qo}}\\
-\lim_{j\rightarrow\infty}\expectedt{\rkla{\int_{t_j^m}^{t_j^n}\iO\lambda_k\Ihj{\sigma\trkla{\widetilde{\phi}_j^-}\g{k}}v\dx\ds}\mathfrak{f}_j^{qo}}\\
=\lim_{j\rightarrow\infty}\expectedt{\rkla{\sum_{b=m+1}^n\iO\Ihj{\Phi\hj\trkla{\widetilde{\phi}_j\trkla{t_j^{b-1}}}\lambda_k\g{k}\widetilde{\xi}_k^{b,\tau_j}\tau_j\widetilde{\xi}_k^{b,\tau_j}}v\dx}\mathfrak{f}_j^{qo}}\\
-\lim_{j\rightarrow\infty}\expectedt{\rkla{\int_{t_j^m}^{t_j^n}\iO\lambda_k\Ihj{\sigma\trkla{\widetilde{\phi}_j^-}\g{k}}v\dx\ds}\mathfrak{f}_j^{qo}}=0\,.
\end{multline}}
Here, we used \eqref{eq:defMjv} and \ref{item:randomvariables}.
\end{proof}

Having obtained explicit expressions for the quadratic and cross variations of the martingale $\widetilde{M}^v$, we can follow the approach introduced in \cite{BrzezniakOndrejat, Ondrejat2010, HofmanovaSeidler} (see also \cite{HofmanovaRoegerRenesse,BreitFeireislHofmanova}) to show that $\widetilde{M}^v$ can be written as an It\^o-integral with the Wiener process $\widetilde{W}$.
\begin{lemma}\label{lem:itointegral}
Let the assumptions of Theorem \ref{thm:jakubowski} and Lemma \ref{lem:convergence} hold true. 
Then, the martingale $\widetilde{M}^v$ can be written as an It\^o-integral with the $\mathcal{Q}$-Wiener process $\widetilde{W}$, i.e.
\begin{align}
\widetilde{M}^v\trkla{t}=\int_0^t\sum_{k\in\mathds{Z}}\iO\sigma\trkla{\widetilde{\phi}}\lambda_k\g{k} v\dx\,\mathrm{d}\widetilde{\beta}_k\,.
\end{align}
\end{lemma}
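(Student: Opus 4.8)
The plan is to exhibit the candidate Itô-integral explicitly and then identify it with $\widetilde{M}^v$ by matching quadratic and cross variations, following the abstract representation results of \cite{BrzezniakOndrejat, Ondrejat2010, HofmanovaSeidler}. First I would introduce
\begin{align*}
\widetilde{N}^v\trkla{t}:=\int_0^t\sum_{k\in\mathds{Z}}\rkla{\iO\sigma\trkla{\widetilde{\phi}}\lambda_k\g{k}v\dx}\,\mathrm{d}\widetilde{\beta}_k
\end{align*}
and verify that it is well defined: the integrand $s\mapsto\trkla{\iO\sigma\trkla{\widetilde{\phi}\trkla{s}}\lambda_k\g{k}v\dx}_{k\in\mathds{Z}}$ is progressively $\trkla{\widetilde{\mathcal{F}}_t}_{t\in\tekla{0,T}}$-measurable, since $\widetilde{\phi}$ is, and it lies in $L^2\trkla{\widetilde{\Omega}\times\trkla{0,T};\ell^2}$ because $\sigma\in L^\infty\trkla{\mathds{R}}$ (cf.~\ref{item:sigma}), $v\in L^2\trkla{\Om}$, and $\sum_{k\in\mathds{Z}}\lambda_k^2\norm{\g{k}}_{L^\infty\trkla{\Om}}^2<\infty$ by the colouring \eqref{eq:Wcolor}. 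Hence $\widetilde{N}^v$ is a continuous, square-integrable, real-valued martingale with respect to $\trkla{\widetilde{\mathcal{F}}_t}_{t\in\tekla{0,T}}$.

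Next I would compute the quadratic and cross variations of $\widetilde{N}^v$ from the defining properties of the Itô-integral together with the mutual independence of the Brownian motions $\widetilde{\beta}_k$ established in Lemma \ref{lem:QWiener}, obtaining
\begin{align*}
\qvar{\widetilde{N}^v}\trkla{t}=\int_0^t\sum_{k\in\mathds{Z}}\rkla{\iO\sigma\trkla{\widetilde{\phi}}\lambda_k\g{k}v\dx}^2\ds\,,\qquad\crossvar{\widetilde{N}^v}{\widetilde{\beta}_l}\trkla{t}=\int_0^t\iO\sigma\trkla{\widetilde{\phi}}\lambda_l\g{l}v\dx\,\ds\,.
\end{align*}
By the preceding lemma these coincide with $\qvar{\widetilde{M}^v}$ and $\crossvar{\widetilde{M}^v}{\widetilde{\beta}_l}$, respectively.

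I would then consider the difference $\widetilde{D}^v:=\widetilde{M}^v-\widetilde{N}^v$, which is again a continuous martingale with $\widetilde{D}^v\trkla{0}=0$. Using the bilinearity of the cross variation and the representation of $\widetilde{N}^v$ as a sum of stochastic integrals against the $\widetilde{\beta}_k$, the identity $\crossvar{\widetilde{M}^v}{\int_0^\cdot H\,\mathrm{d}\widetilde{\beta}_k}=\int_0^\cdot H\,\mathrm{d}\crossvar{\widetilde{M}^v}{\widetilde{\beta}_k}$ together with the explicit form of $\crossvar{\widetilde{M}^v}{\widetilde{\beta}_k}$ yields $\crossvar{\widetilde{M}^v}{\widetilde{N}^v}=\qvar{\widetilde{N}^v}$. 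Consequently
\begin{align*}
\qvar{\widetilde{D}^v}=\qvar{\widetilde{M}^v}-2\crossvar{\widetilde{M}^v}{\widetilde{N}^v}+\qvar{\widetilde{N}^v}=\qvar{\widetilde{M}^v}-\qvar{\widetilde{N}^v}=0\,.
\end{align*}
A continuous martingale starting at zero with vanishing quadratic variation is $\widetilde{\Prob}$-almost surely identically zero, whence $\widetilde{M}^v=\widetilde{N}^v$, which is the assertion.

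The main obstacle will be the rigorous justification of the cross-variation computation $\crossvar{\widetilde{M}^v}{\widetilde{N}^v}=\qvar{\widetilde{N}^v}$ inside the infinite-dimensional $\mathcal{Q}$-Wiener framework, i.e.~checking that the integrand genuinely defines the Hilbert--Schmidt-valued predictable process to which the abstract representation lemma of \cite{Ondrejat2010, HofmanovaSeidler} applies, and that the $\ell^2$-sum may be interchanged with the cross variation. Once the variations are matched, the conclusion follows immediately by Lévy-type uniqueness; a minor final point is to arrange the exceptional $\widetilde{\Prob}$-null set so that it is independent of $v$, which is achieved by running the argument over a countable dense set of test functions $v\in L^2\trkla{\Om}$ and passing to the limit, so that the representation ultimately holds simultaneously for all $v$ and hence as the $L^2\trkla{\Om}$-valued identity used in \eqref{eq:solution}.
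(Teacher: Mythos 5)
Your proposal is correct and follows essentially the same route as the paper: both form the difference between $\widetilde{M}^v$ and the candidate It\^o-integral, expand its quadratic variation via the bilinearity of the cross variation, use the cross-variation formula together with the absolute continuity of $s\mapsto\crossvar{\widetilde{M}^v}{\widetilde{\beta}_k}\trkla{s}$ to identify $\crossvar{\widetilde{M}^v}{\widetilde{N}^v}$ with $\qvar{\widetilde{N}^v}$, and conclude by the fact that a continuous martingale with vanishing quadratic variation is almost surely constant. The extra care you take with well-definedness of the integral and with the $v$-independence of the null set is sound but not a departure from the paper's argument.
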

\begin{proof}
As a martingale with vanishing quadratic variation is almost surely constant, it suffices to show that the quadratic variation of
\begin{align*}
\widetilde{M}^v\trkla{t}-\int_0^t\sum_{k\in\mathds{Z}}\iO\sigma\trkla{\widetilde{\phi}}\lambda_k\g{k} v\dx\,\mathrm{d}\widetilde{\beta}_k
\end{align*}
vanishes to establish the desired result.
We start by computing
\begin{multline}\label{eq:tmp:binomquadvar}
\skla{\!\!\skla{\widetilde{M}^v-\int_0^{\trkla{\cdot}}\sum_{k\in\mathds{Z}}\iO\sigma\trkla{\widetilde{\phi}}\lambda_k\g{k} v\dx\,\mathrm{d}\widetilde{\beta}_k}\!\!}\trkla{t}\\
=\skla{\!\!\skla{\widetilde{M}^v}\!\!}\trkla{t}+\skla{\!\!\skla{\int_0^{\trkla{\cdot}}\sum_{k\in\mathds{Z}}\iO\sigma\trkla{\widetilde{\phi}}\lambda_k\g{k} v\dx\,\mathrm{d}\widetilde{\beta}_k}\!\!}\trkla{t}\\
-2\skla{\!\!\skla{\widetilde{M}^v,\int_0^{\trkla{\cdot}}\sum_{k\in\mathds{Z}}\iO\sigma\trkla{\widetilde{\phi}}\lambda_k\g{k} v\dx\,\mathrm{d}\widetilde{\beta}_k}\!\!}\trkla{t}\,.
\end{multline}
Applying the cross-variation formula (see e.g.~\cite{KaratzasShreve2004}), we find for the last term on the right-hand side of \eqref{eq:tmp:binomquadvar}
\begin{align}
\skla{\!\!\skla{\widetilde{M}^v,\int_0^{\trkla{\cdot}}\sum_{k\in\mathds{Z}}\iO\sigma\trkla{\widetilde{\phi}}\lambda_k\g{k} v\dx\,\mathrm{d}\widetilde{\beta}_k}\!\!}\trkla{t}=\int_0^t\sum_{k\in\mathds{Z}}\iO\sigma\trkla{\widetilde{\phi}}\lambda_k\g{k} v\dx\,\mathrm{d}\skla{\!\!\skla{\widetilde{M}^v,\widetilde{\beta}_k}\!\!}\trkla{s}\,.
\end{align}
From \eqref{eq:def:crossvar} and \ref{item:sigma}, we obtain that the process $\tekla{0,T}\ni s\mapsto\skla{\!\!\skla{\widetilde{M}^v,\widetilde{\beta}_k}\!\!}\trkla{s}$ is absolutely continuous.
Hence, we have
\begin{align}
\mathrm{d}\skla{\!\!\skla{\widetilde{M}^v,\widetilde{\beta}_k}\!\!}=\lambda_k\iO\sigma\trkla{\widetilde{\phi}}\g{k}v\dx\ds
\end{align}
and consequently
\begin{align}
\skla{\!\!\skla{\widetilde{M}^v,\int_0^{\trkla{\cdot}}\sum_{k\in\mathds{Z}}\iO\sigma\trkla{\widetilde{\phi}}\lambda_k\g{k} v\dx\,\mathrm{d}\widetilde{\beta}_k}\!\!}\trkla{t}=\int_0^t\sum_{k\in\mathds{Z}}\tabs{\lambda_k}^2\rkla{\iO\sigma\trkla{\widetilde{\phi}}\g{k}v\dx}^2\ds\,.
\end{align}
Together with \eqref{eq:def:quadvar} and
\begin{align}
\skla{\!\!\skla{\int_0^{\trkla{\cdot}}\sum_{k\in\mathds{Z}}\iO\sigma\trkla{\widetilde{\phi}}\lambda_k\g{k} v\dx\,\mathrm{d}\widetilde{\beta}_k}\!\!}\trkla{t}=\int_0^t\sum_{k\in\mathds{Z}}\tabs{\lambda_k}^2\rkla{\iO\sigma\trkla{\widetilde{\phi}}\g{k} v\dx}^2\ds\,,
\end{align}
this concludes the proof.
\end{proof}

This provides the existence of martingale solutions and the convergence of discrete solutions of the linear stochastic SAV scheme.

\section{Pathwise uniqueness}\label{sec:uniqueness}
In this section, we complete the proof of Theorem \ref{thm:maintheorem} by establishing the pathwise uniqueness of the obtained solutions.
As a side effect, the pathwise uniqueness of the obtained solutions also allows us to extend the convergence results derived prior for subsequences to the complete sequence.\\
To show the pathwise uniqueness, we follow the approach in \cite{Liu2013}:
For two solutions $\widetilde{\phi}_1$ and $\widetilde{\phi}_2$ to \eqref{eq:solution} on the same stochastic basis $\trkla{\widetilde{\Omega},\widetilde{\mathcal{A}},\widetilde{\mathcal{F}},\widetilde{\Prob}}$ with the same $\mathcal{Q}$-Wiener process $\widetilde{W}$ sharing the same initial data $\phi_0$, we have the pathwise estimate
\begin{align}
\begin{split}
&\rkla{\Delta\widetilde{\phi}_1-\Delta\widetilde{\phi}_2,\widetilde{\phi}_1-\widetilde{\phi}_2}_{L^2\trkla{\Om}}-\rkla{F^\prime\rkla{\widetilde{\phi}_1}-F^\prime\rkla{\widetilde{\phi}_2},\widetilde{\phi}_1-\widetilde{\phi_2}}_{L^2\trkla{\Om}}\\
&\quad\leq -\norm{\nabla\widetilde{\phi}_1-\nabla\widetilde{\phi}_2}_{L^2\trkla{\Om}}^2 + C\norm{1+\abs{\widetilde{\phi}_1}^2+\abs{\widetilde{\phi}_2}^2}_{L^3\trkla{\Om}}\norm{\widetilde{\phi}_1-\widetilde{\phi}_2}_{H^1\trkla{\Om}}\norm{\widetilde{\phi}_1-\widetilde{\phi}_2}_{L^2\trkla{\Om}}\\
&\quad\leq C\rkla{1+\norm{\widetilde{\phi}_1}_{L^\infty\trkla{0,T;H^1\trkla{\Om}}}^2+\norm{\widetilde{\phi}_2}_{L^\infty\trkla{0,T;H^1\trkla{\Om}}}^2}\norm{\widetilde{\phi}_1-\widetilde{\phi}_2}_{L^2\trkla{\Om}}^2\\
&\qquad+ C\rkla{1+\norm{\widetilde{\phi}_1}_{L^\infty\trkla{0,T;H^1\trkla{\Om}}}^2+\norm{\widetilde{\phi}_2}_{L^\infty\trkla{0,T;H^1\trkla{\Om}}}^2}^2\norm{\widetilde{\phi}_1-\widetilde{\phi}_2}_{L^2\trkla{\Om}}^2\\
&\quad=:\rkla{C+\rho\rkla{\widetilde{\phi}_1,\widetilde{\phi_2}}}\norm{\widetilde{\phi}_1-\widetilde{\phi}_2}_{L^2\trkla{\Om}}^2\,,
\end{split}
\end{align}
due to the growth condition stated in \ref{item:potentialF} and Young's inequality.
Recalling \eqref{eq:Phi} and \ref{item:sigma}, we note that \eqref{eq:solution} satisfies a local monotonicity condition (cf.~(H2) in \cite{Liu2013}).
Hence, we can conclude by It\^o's formula
\begin{multline}
e^{-\int_0^t\trkla{C+\rho\trkla{\widetilde{\phi}_1,\widetilde{\phi}_2}}\ds} \norm{\widetilde{\phi}_1\trkla{t}-\widetilde{\phi}_2\trkla{s}}_{L^2\trkla{\Om}}^2\\
\leq 2\int_0^t e^{-\int_0^s\trkla{C+\rho\trkla{\widetilde{\phi}_1,\widetilde{\phi}_2}}\dr}\rkla{\widetilde{\phi}_1\trkla{s}-\widetilde{\phi}_2\trkla{s}, \Phi\rkla{\widetilde{\phi}_1\trkla{s}}\mathrm{d}\widetilde{W} - \Phi\rkla{\widetilde{\phi}_2\trkla{s}}\mathrm{d}\widetilde{W}}_{L^2\trkla{\Om}}\,, 
\end{multline}
which in particular provides
\begin{align}
\widetilde{\mathds{E}}\ekla{e^{-\int_0^t\trkla{C+\rho\trkla{\widetilde{\phi}_1,\widetilde{\phi}_2}}\ds} \norm{\widetilde{\phi}_1\trkla{t}-\widetilde{\phi}_2\trkla{s}}_{L^2\trkla{\Om}}^2}\leq 0
\end{align}
for $t\in\tekla{0,T}$.
As $\widetilde{\phi}_1$ and $\widetilde{\phi}_2$ are martingale solutions, they have the regularity stated in Theorem \ref{thm:maintheorem}.
In particular, we have 
\begin{align}
\norm{\widetilde{\phi}_1}_{L^\infty\trkla{0,T;H^1\trkla{\Om}}}+\norm{\widetilde{\phi}_2}_{L^\infty\trkla{0,T;H^1\trkla{\Om}}} <\infty
\end{align}
$\widetilde{\Prob}$-almost surely.
As a direct consequence, we obtain $\int_0^t \trkla{C+\rho\trkla{\widetilde{\phi}_1,\widetilde{\phi}_2}}\ds<\infty$ $\widetilde{\Prob}$-almost surely and hence $\widetilde{\phi}_1\trkla{t}=\widetilde{\phi}_2\trkla{t}$ $\widetilde{\Prob}$-almost surely for $t\in\tekla{0,T}$.
Therefore, the pathwise uniqueness follows from the path continuity of $\widetilde{\phi}_1$ and $\widetilde{\phi}_2$ in $L^2\trkla{\Om}$.

\section{Proof of Theorem \ref{thm:strongconvergence}: Convergence towards strong solutions}\label{sec:strong}
In this section, we shall assume that the stochastic increments introduced in assumption \ref{item:randomvariables} are given as increments of a $\mathcal{Q}$-Wiener process satisfying \ref{item:stochastic} and \ref{item:Wcolor}, and prove Theorem \ref{thm:strongconvergence}.
As increments of this $\mathcal{Q}$-Wiener process satisfy assumptions \ref{item:filtration}-\ref{item:color}, the results established in the previous sections remain valid.
In particular, we still have the pathwise uniqueness of martingale solutions.
Hence, we can prove $\Prob$-almost sure convergence using a generalization of the Gyöngy--Krylov characterization of convergence in probability (cf.~\cite{GyongyKrylov1996}) to the setting of quasi-Polish spaces that was established in \cite{BreitFeireislHofmanova}.\\
As shown before, there exists a sequence of stochastic processes defined on $\trkla{\Omega,\mathcal{A},\mathcal{F},\Prob}$ satisfying
\begin{subequations}
\begin{multline}
\iO\Ih{\rkla{\phi\h\tl\trkla{t}-\phi\h\tm}\psi\h}\dx+\trkla{t-t\no}\iO\Ih{\mu\h\tp\psi\h}\dx\\
=\frac{t-t\no}\tau\iO\Ih{\Phi\h\trkla{\phi\h\tm}\rkla{W\trkla{t\nn}-W\trkla{t\no}}}\dx\,
\end{multline}
for the given $\mathcal{Q}$-Wiener process $W$ with $\mu\h\tp$ depending on $\phi\h\tl$, $r\h\tl$, and $\Delta\h\phi\h\tp$ via
\begin{align}
\begin{split}
\mu\h\tp=&\,-\Delta\h\phi\h\tp+\frac{r\h\tp}{\sqrt{E\h\trkla{\phi\h\tm}}}\Ih{F^\prime\trkla{\phi\h\tm}}\\
&\,-\frac{r\h\tp}{4\ekla{E\h\trkla{\phi\h\tm}}^{3/2}}\iO\Ih{F^\prime\trkla{\phi\h\tm}\Phi\h\trkla{\phi\h\tm}\rkla{W\trkla{t\nn}-W\trkla{t\no}}}\dx\,\Ih{F^\prime\trkla{\phi\h\tm}}\\
&\,+\frac{r\h\tp}{2\sqrt{E\h\trkla{\phi\h\tm}}}\Ih{F^{\prime\prime}\trkla{\phi\h\tm}\Phi\h\trkla{\phi\h\tm}\rkla{W\trkla{t\nn}-W\trkla{t\no}}}
\end{split}
\end{align}
\end{subequations}
according to \eqref{eq:defmu}.
From this sequence of stochastic processes, we excerpt an arbitrary pair of subsequences $\trkla{\phi\hj^{\tau_j},r\hj^{\tau_j},\Delta\hj\phi\hj^{\tau_j,+},\mu\hj^{\tau_j,+}}_{j\in\mathds{N}}$ and $\trkla{\phi_{h_i}^{\tau_i},r_{h_i}^{\tau_i},\Delta_{h_i}\phi_{h_i}^{\tau_i,+},\mu_{h_i}^{\tau_i,+}}_{i\in\mathds{N}}$.
For this pair of subsequences, we consider their joint laws $\rkla{\nu_{i,j}}_{i,j\in\mathds{N}}$ on $\mathcal{Y}\times\mathcal{Y}$ with
\begin{align}
\mathcal{Y}:=C\trkla{\tekla{0,T};L^s\trkla{\Om}}\times L^2\trkla{0,T}_{\weaktop}\times L^2\trkla{0,T;L^2\trkla{\Om}}_{\weaktop}\times L^2\trkla{0,T;L^2\trkla{\Om}}_{\weaktop}\,.
\end{align}
We want to show that for the sequence of joint laws there exists a further subsequence which converges weakly to a probability measure $\nu$ such that
\begin{align}\label{eq:diagonallaw}
\nu\rkla{\trkla{x,y}\in\mathcal{Y}\times\mathcal{Y}\,:\,x=y}=1\,.
\end{align}
Then, according to \cite[Proposition A.4]{BreitFeireislHofmanova}, there exists a subsequence of $\trkla{\phi\h^{\tau},r\h^{\tau},\Delta\h\phi\h^{\tau,+},\mu\h^{\tau,+}}_{h,\tau}$ which converges $\Prob$-almost surely in the topology of $\mathcal{Y}$.\\
To establish \eqref{eq:diagonallaw}, we define the extended path space
\begin{align}
\widehat{\mathcal{X}}:=\mathcal{Y}\times\mathcal{Y}\times C\trkla{\tekla{0,T};L^2\trkla{\Om}}\,,
\end{align}
and consider the sequence 
\begin{align}
\rkla{z_{ij}}_{i,j\in\mathds{N}}:=\rkla{\phi\hj^{\tau_j},r\hj^{\tau_j},\Delta\hj\phi\hj^{\tau_j,+},\mu\hj^{\tau_j,+},\phi_{h_i}^{\tau_i},r_{h_i}^{\tau_i},\Delta_{h_i}\phi_{h_i}^{\tau_i,+},\mu_{h_i}^{\tau_i,+},W}_{i,j\in\mathds{N}}\,.
\end{align}
A slight adaption of the arguments in Section \ref{sec:compactness} shows that the joint laws of this sequence are tight on $\widehat{\mathcal{X}}$.
Hence, we can repeat the arguments of Theorem \ref{thm:jakubowski} to show that for a subsequence $\trkla{z_{i_\alpha j_\alpha}}_{\alpha\in\mathds{N}}$, there exists a sequence of random variables
\begin{align}
\rkla{\widetilde{z}_\alpha}_{\alpha\in\mathds{N}}:=\rkla{\widetilde{\phi}_{j_\alpha},\widetilde{r}_{j_\alpha},\Delta_{h_{j_\alpha}}\widetilde{\phi}_{j_\alpha}^+,\widetilde{\mu}_{j_\alpha}^+,\widetilde{\phi}_{i_\alpha},\widetilde{r}_{i_\alpha},\Delta_{h_{i_\alpha}}\widetilde{\phi}_{i_\alpha}^+,\widetilde{\mu}_{i_\alpha}^+,\widetilde{W}_\alpha}_{\alpha\in\mathds{N}}
\end{align}
defined on different probability space $\trkla{\widetilde{\Omega},\widetilde{\mathcal{A}},\widetilde{\Prob}}$ having the same law as $\trkla{z_{i_\alpha j_\alpha}}_{\alpha\in\mathds{N}}$ and converging towards random variables 
\begin{align}
\rkla{\widetilde{\phi}, \widetilde{r}, \widetilde{L}, \widetilde{\mu},\widehat{\phi},\widehat{r},\widehat{L},\widehat{\mu},\widetilde{W}}\,.
\end{align}
Arguments similar to the ones used in Lemma \ref{lem:convergence} yield $\widetilde{r}=\sqrt{\iO F\trkla{\widetilde{\phi}}\dx}$, $\widetilde{L}=\Delta\widetilde{\phi}$, $\widetilde{\mu}=-\Delta\widetilde{\phi}+F^\prime\trkla{\widetilde{\phi}}$, $\widehat{r}=\sqrt{\iO F\trkla{\widehat{\phi}}\dx}$, $\widehat{L}=\Delta\widehat{\phi}$, and $\widehat{\mu}=-\Delta\widehat{\phi}+F^\prime\trkla{\widehat{\phi}}$.
Repeating the arguments of Section \ref{sec:limit}, shows that $\trkla{\widetilde{\phi},\widetilde{W}}$ and $\trkla{\widehat{\phi},\widetilde{W}}$ are martingale solutions to the stochastic Allen--Cahn equation.
As both martingale solutions are subjected to the same initial conditions and share the same Wiener process $\widetilde{W}$ and a common filtration generated by $\widetilde{\phi}$, $\widehat{\phi}$, and $\widetilde{W}$, the pathwise uniqueness established in Section \ref{sec:uniqueness} provides $\widetilde{\phi}=\widehat{\phi}$ $\widetilde{\Prob}$-almost surely.
Hence, we also have $\widetilde{r}=\widehat{r}$, $\widetilde{L}=\widehat{L}$, and $\widetilde{\mu}=\widehat{\mu}$, which by equality of laws provides \eqref{eq:diagonallaw}.\\
Hence, we can deduce $\Prob$-almost sure convergence for a subsequence of $\trkla{\phi\h^{\tau},r\h^{\tau},\Delta\h\phi\h^{\tau,+},\mu\h^{\tau,+}}_{h,\tau}$ in the topology of $\mathcal{Y}$, i.e.~we have a convergence result similar to Theorem \ref{thm:jakubowski} without switching to a new probability space.
Repeating the arguments of the previous sections concludes the proof of Theorem \ref{thm:strongconvergence}.

\section{Numerical simulations}\label{sec:numerics}
In this section, we demonstrate the practicality of the proposed augmented SAV scheme \eqref{eq:discscheme} and the importance of the additional augmentation terms by comparing it to a standard SAV discretization and a scheme based on the discretization studied in \cite{MajeeProhl2018} for polynomial double-well potentials.
Hence, we set $F\trkla{\phi}:=\tfrac14\trkla{\phi^2-1}^2+10^{-5}$ and use the following discrete scheme for comparison:
\begin{multline}\label{eq:majeeprohl}
\iO\Ih{\trkla{\phi\h\nn-\phi\h\no}\psi\h}\dx+\tau\iO\nabla\phi\h\nn\cdot\nabla\psi\h\dx+\tau\iO\Ih{\tfrac12\trkla{\tabs{\phi\h\nn}^2-1}\trkla{\phi\h\nn+\phi\h\no}}\dx\\
=\iO\Ih{\Phi\h\trkla{\phi\h\no}\sinc{n}\psi\h}\dx\,.
\end{multline} 
To deal with the nonlinearity in \eqref{eq:majeeprohl}, Newton's method was employed.
For this time discretization convergence of order $1/2-\delta$ for arbitrarily small $\delta>0$ was established in \cite{MajeeProhl2018}.
Furthermore, we will compare our results to computations based on the standard SAV discretization
\begin{subequations}\label{eq:SAV}
\begin{multline}
\iO\Ih{\trkla{\phi\h\nn-\phi\h\no}\psi\h}\dx+\tau\iO\nabla\phi\h\nn\cdot\nabla\psi\h\dx+\tau\frac{r\h\nn}{\sqrt{E\h\trkla{\phi\h\no}}}\iO\Ih{F^\prime\trkla{\phi\h\no}\psi\h}\dx\\
=\iO\Ih{\Phi\h\trkla{\phi\h\no}\sinc{n}\psi\h}\dx\,,
\end{multline}
\begin{align}
r\h\nn-r\h\no=\frac{1}{2\sqrt{E\h\trkla{\phi\h\no}}}\iO\Ih{F^\prime\trkla{\phi\h\no}\trkla{\phi\h\nn-\phi\h\no}}\dx\,,
\end{align}
\end{subequations}
which is \eqref{eq:discscheme} without the additional augmentation terms.
This scheme was already used for numerical simulations in \cite{QiZhangXu2023}.
Yet, the convergence of the scheme has not been investigated analytically.\\
All schemes were implemented in the \texttt{C++} framework EconDrop (cf.~\cite{Campillo2012, Grun2013c, GrunGuillenMetzger2016, Metzger_2018b, Metzger2021a, Metzger2023}).
In order to obtain the pseudo-random numbers for the approximation of the $\mathcal{Q}$-Wiener process, version 4.24 of the TRNG library was used (for details we refer to \cite{BaukeMertens2007}).\\
In the following we shall study the evolution of an elliptical droplet on $\Om=\trkla{-2,2}^2$ subjected to periodic boundary conditions.
Initially, the elliptical droplet is origin centered with diameter 1.5 in $x$-direction and diameter 1 in $y$-direction (cf.~Fig.~\ref{fig:initial}).
\begin{figure}
\begin{center}
\includegraphics[width=0.3\textwidth]{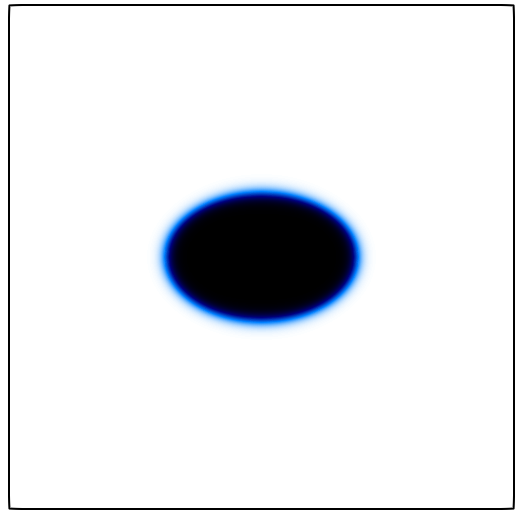}
\end{center}
\caption{Elliptical droplet used as initial condition for the numerical simulations.}
\label{fig:initial}
\end{figure}
In our study, we are interested in the effects of the size of time increments, the distributions of the pseudo-random numbers, and the strength of the stochastic source term.
In order to allow for comparability of the results, we leave the remaining parameters untouched.
In particular, we will use the following parameters:
We use the shifted polynomial double-well potential $F\trkla{\phi}:=\tfrac14\trkla{\phi^2-1}^2+10^{-5}$ and the parameter $\varepsilon$ which is related to the width of the interface is chosen as $\varepsilon=0.04$.
The spatial domain $\Om=\trkla{-2,2}^2$ is discretized using right-angled isosceles triangles with diameter $h=2^{-6}=0.015625$ which corresponds to $\dim\, \Uh=131,072$.
The approximations of the $\mathcal{Q}$-Wiener process are computed based on the smallest time increment $\tau_{\text{min}}$ considered in each subsection, i.e.~for $t^m=m\tau_{\text{min}}$ we use
\begin{align}\label{eq:defincremet}
\bs{\xi}\h^{m,\tau}\trkla{x,y}=\sum_{n=1}^m\sqrt{\tau_{\text{min}}}\sum_{k=-K}^{K}\sum_{l=-L}^{L} \lambda\trkla{k}\lambda\trkla{l} g_k^x\trkla{x} g_l^y\trkla{y} \xi_{k,l}^{n,\tau_{\text{min}}}\,.
\end{align}
Here, $g_k^x$ and $g_l^y$ are the $\sin$- and $\cos$-functions defined in Remark \ref{rem:eigenfunctions}, $\lambda\trkla{k}:=k^{-2}$ for $k\neq0$ and $\lambda\trkla{0}:=1$, and $\xi_{k,l}^{n,\tau_{\text{min}}}$ are pseudo-random numbers computed using the \texttt{xoshiro256plus} generator from the TRNG library.
Depending on the test case, these pseudo-random numbers either mimic independently, identically $\mathcal{N}\trkla{0,1}$-distributed random variables or independently, identically distributed Rademacher random variables (i.e.~$\Prob\trkla{\xi_{k,l}^{n,\tau_{\text{min}}}=1}=\Prob\trkla{\xi_{k,l}^{n,\tau_{\text{min}}}=-1}=0.5$).
$\Phi\h$ is defined as described in \eqref{eq:defPhih} with an indicator function $\sigma\trkla{\phi}:=\alpha\max\tgkla{1-\phi^2,0}$ of the interface.
Here, the parameter $\alpha$ describes the strength of the source term.
In each simulation, we consider 350 independent sample paths.
\subsection{Moderate noise}\label{subsec:moderate}
In our first batch of numerical experiments, we shall use a stochastic source term of moderate strength ($\alpha=2$) based on Rademacher distributed random variables and investigate the convergence behavior of solutions computed using the different discrete schemes.
As these discretizations differ only in the temporal direction, we will focus on the convergence with respect to time.
We shall consider  time-increments $\tau\in\tgkla{5\cdot 10^{-4}, 1\cdot10^{-3}, 2\cdot10^{-3}, 4\cdot10^{-3}, 1\cdot10^{-2}, 2\cdot10^{-2}}$ and compute the discrete approximations of the $\mathcal{Q}$-Wiener process using \eqref{eq:defincremet} with $\tau_\text{min}=5\cdot10^{-4}$ and $K=L=10$. \\
The evolution of the droplet over time is visualized in Fig.~\ref{fig:simpleEvolution}.
Here, the left column shows the evolution of the expected value of the phase-field parameter, while the middle and right column depict the evolution of two individual paths.
As a reference, the zero level set of a deterministic droplet is shown as an orange line.
For a better visual comparison of the different numerical schemes, we shall later discuss line plots of the phase-field profiles (cf.~Figs.~\ref{fig:linePlotExpected}, \ref{fig:linePlotPath14}, and \ref{fig:linePlotPath16}).
The area depicted in these line plots is indicated in Fig.~\ref{fig:simpleEvolution} by a red line.\\
While the expected value of the phase-field parameter remains origin centered, the droplet's barycenter can shift for individual paths.
As the noise terms can act as sources or as sinks, there exist paths where the droplet is growing and paths where the droplet is shrinking faster than anticipated by the deterministic evolution.
Yet, the droplet maintains a round shape with little deviation in the mean curvature.
As the deterministic Allen--Cahn equation is an approximation of the mean curvature flow, droplets will always try to evolve towards convex shapes with almost constant mean curvature.
The stochastic noise terms in this example are not chosen large enough to counteract this effect.\\
To compare the different numerical methods, we start by analyzing their numerical convergence properties.
Using the solutions obtained with the respective methods for $\tau_{\text{min}}=5\cdot 10^{-4}$ as reference solutions, we compute the experimental order of convergence for \eqref{eq:majeeprohl} and \eqref{eq:discscheme} individually.
The results are collected in Tab.~\ref{tab:moderateEOCrandomwalk}.
They indicate that the errors in the computations based on the augmented SAV-scheme \eqref{eq:discscheme} are slightly larger than the ones occurring in computations based on \eqref{eq:majeeprohl}.
The experimental order of convergence in these schemes is roughly the same for small time-increments. 
The order of convergence for the augmented SAV-scheme deteriorates slightly, however, if $\tau$ becomes too large.
This behavior can be expected as SAV-schemes are basically an additional approximation of the implicitly treated nonlinear terms in \eqref{eq:majeeprohl}.\\
The advantage of these approximation, on the other hand, lies in the linearization of the scheme, i.e.~SAV schemes reduce the computational costs to solving one linear system per time step.
The computational costs per path for different time increments are collected in the middle column of Tab.~\ref{tab:moderateComputationalCostsrandomwalk}.
These values indicate that using the augmented SAV scheme instead of \eqref{eq:majeeprohl}, where the nonlinearity is treated using Newton's method, allows for a speed-up by at least factor 2 in most cases.\\
Next, we investigate the importance of the augmentation terms, which were added to the standard SAV scheme to ensure its convergence.
Hence, we use the already extensively studied scheme \eqref{eq:majeeprohl} to compute a reference solution for each $\tau$ and compare the results obtained via the augmented SAV scheme \eqref{eq:discscheme} and the standard SAV scheme \eqref{eq:SAV} to it.
The result for this comparison at $t=4$ can be found in Tab.~\ref{tab:moderateComparisonNewtonSAVrandomwalk}.
These values clearly show that for vanishing $\tau$, \eqref{eq:majeeprohl} and the proposed augmented SAV scheme \eqref{eq:discscheme} will provide the same solutions.
The standard SAV scheme without the additional term $\Xi\h\nn$, however, does not provide the same results as \eqref{eq:majeeprohl} (despite the pathwise uniqueness of the solution to the stochastic Allen--Cahn equation).\\
To illustrate  these differences, we plotted the phase-field profiles along the red line shown in Fig.~\ref{fig:simpleEvolution}.
Here, solutions obtained using \eqref{eq:majeeprohl} are indicated by solid lines, solutions computed using the augmented SAV scheme \eqref{eq:discscheme} are indicated by dashed lines, and solutions based on \eqref{eq:SAV} are plotted using dotted lines.
%To mark the different sizes of the time increments, we use the color-coding listed in Tab.~\ref{tab:moderateComputationalCostsrandomwalk}.
In Fig.~\ref{fig:linePlotExpected}, the profile of the expected value of the phase-field parameter (cf.~left column of Fig.~\ref{fig:simpleEvolution}) at $t=2$ and $t=4$ is shown.
When comparing the different numerical schemes, one notices that the standard SAV scheme \eqref{eq:SAV} suggests a more centered and higher profile than \eqref{eq:majeeprohl}, while the proposed augmented SAV scheme \eqref{eq:discscheme} suggests a wider, but lower profile.
This can indicate that the augmented SAV scheme predicts a faster shrinking of the droplet than the other schemes.
As Fig.~\ref{fig:linePlotExpected} depicts the profile of the expected value, these differences can also indicate that there is more variation in the position of the droplet.\\
Profiles for individual paths can be found in Figs.~\ref{fig:linePlotPath14} \& \ref{fig:linePlotPath16}.
Fig.~\ref{fig:linePlotPath14} shows the phase-field profiles at $t=4$ for the path depicted in the middle column of Fig.~\ref{fig:simpleEvolution}.
These profile lines indicate that the standard SAV scheme \eqref{eq:SAV} provides a steeper interface profile than the implicit scheme \eqref{eq:majeeprohl}, while the augmented SAV scheme tends to a larger interfacial area. 
As the stochastic source term is located on the interface, this results in the solutions to the augmented SAV scheme being more susceptible to the source terms, which influence the growth and position of the droplet.
Fig.~\ref{fig:linePlotPath16} shows the droplet profiles of the path depicted in the right column of Fig.~\ref{fig:simpleEvolution} at $t=3$ and $t=4$.
Again, we note the interface profile suggested by the SAV scheme \eqref{eq:SAV} is steeper than the one predicted by the implicit scheme \eqref{eq:majeeprohl}, while the augmented SAV scheme suggests a flatter profile resulting in a faster shrinking of the droplet.
However, we want to emphasize that all three figures also clearly indicate that the differences between the augmented SAV scheme and \eqref{eq:majeeprohl} vanish for smaller time increments, while the differences between \eqref{eq:SAV} and \eqref{eq:majeeprohl} persist.
This verifies that the additional terms added in the augmented SAV scheme are indeed necessary -- not only for analytical results, but also for practical computations.\\

We conclude this section by repeating the simulations discussed above using $\mathcal{N}\trkla{0,1}$-distributed random variables for the increments in \eqref{eq:defincremet}.
The experimental order of convergence for \eqref{eq:majeeprohl} and the augmented SAV scheme \eqref{eq:discscheme} can be found in Tab.~\ref{tab:moderateEOCnormal}.
The computational costs per path are collected in Tab.~\ref{tab:moderateComputationalCostsrandomwalk}.
As there is no strict upper bound for $\mathcal{N}\trkla{0,1}$-distributed random variables, the increments in \eqref{eq:defincremet} can be larger, which explains the slightly higher computational costs and the slightly worse experimental order of convergence.
As shown in Tab.~\ref{tab:moderateComparisonNewtonSAVnormal}, the augmented SAV scheme \eqref{eq:discscheme} converges towards the same limit as the implicit scheme \eqref{eq:majeeprohl}, while the standard SAV scheme \eqref{eq:SAV} does not.

\begin{figure}
\begin{center}
\newcommand{\scale}{0.275}
\subfloat[][$t=0.5$]{
\includegraphics[width=\scale\textwidth]{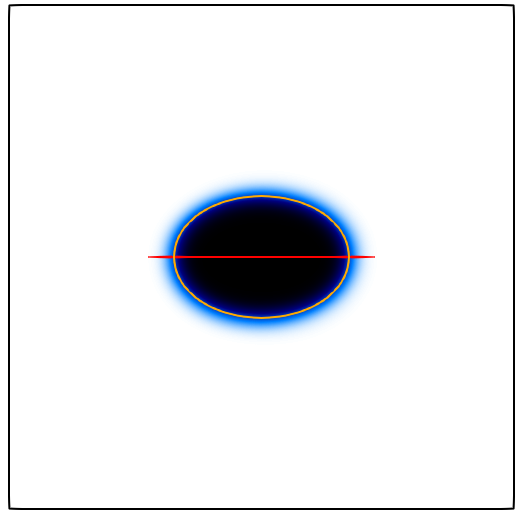}\hfill
\includegraphics[width=\scale\textwidth]{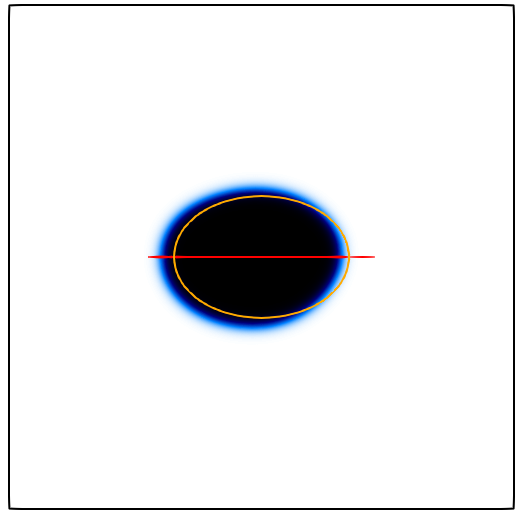}\hfill
\includegraphics[width=\scale\textwidth]{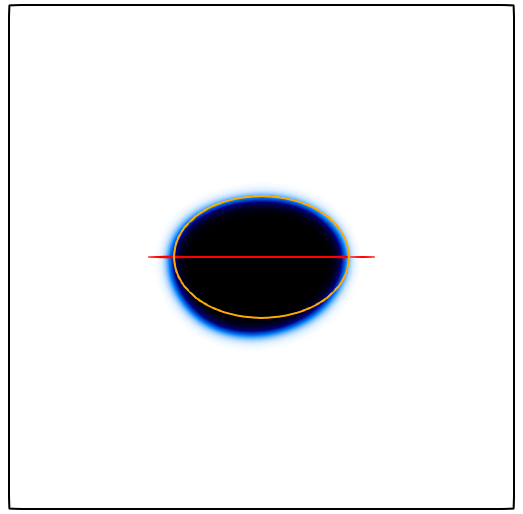}
}\\

\subfloat[][$t=1.5$]{
\includegraphics[width=\scale\textwidth]{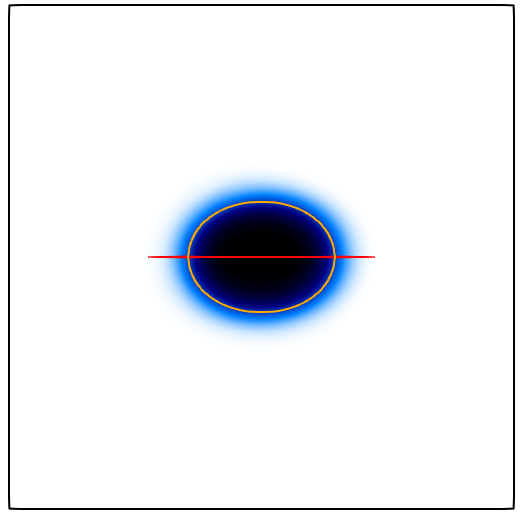}\hfill
\includegraphics[width=\scale\textwidth]{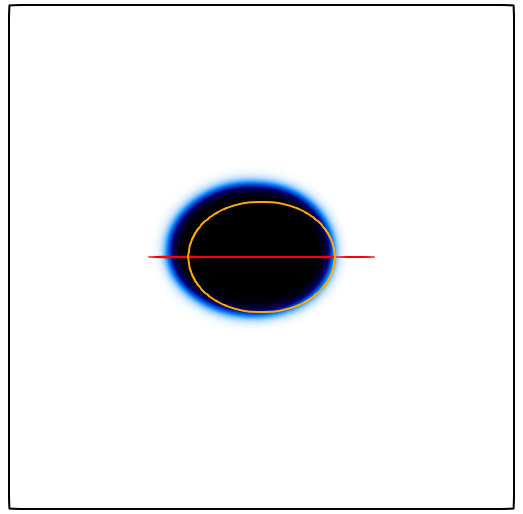}\hfill
\includegraphics[width=\scale\textwidth]{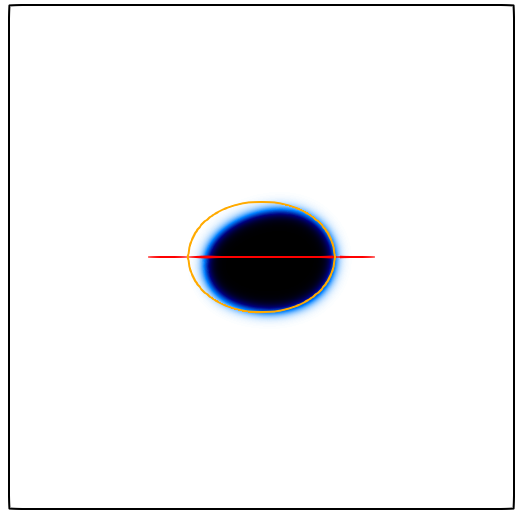}
}\\

%\subfloat[][$t=2.2$]{
%\includegraphics[width=\scale\textwidth]{}\hfill
%\includegraphics[width=\scale\textwidth]{}\hfill
%\includegraphics[width=\scale\textwidth]{}
%}\\

\subfloat[][$t=3.0$]{
\includegraphics[width=\scale\textwidth]{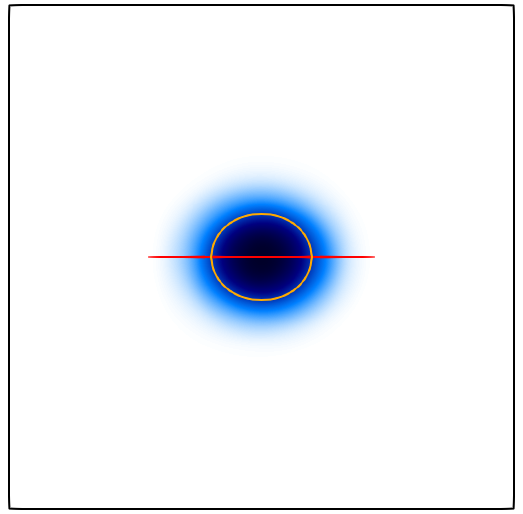}\hfill
\includegraphics[width=\scale\textwidth]{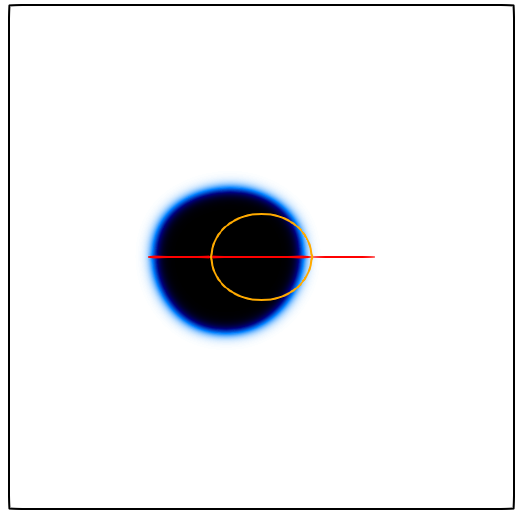}\hfill
\includegraphics[width=\scale\textwidth]{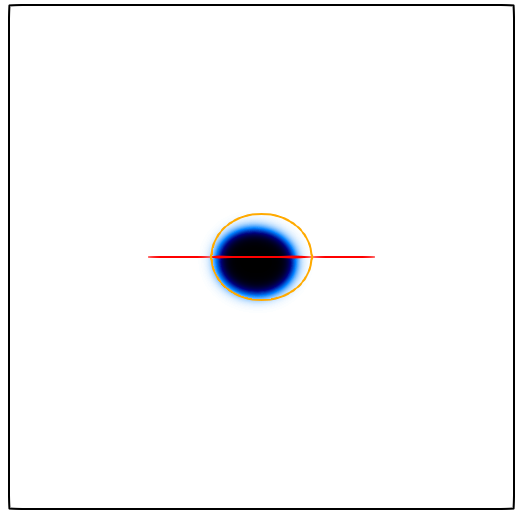}
}\\

\subfloat[][$t=4.0$]{
\includegraphics[width=\scale\textwidth]{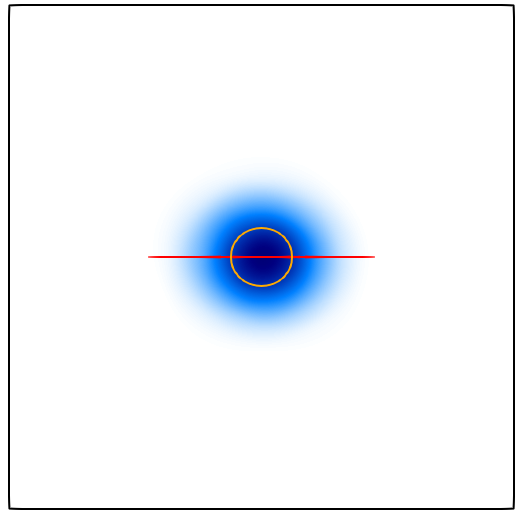}\hfill
\includegraphics[width=\scale\textwidth]{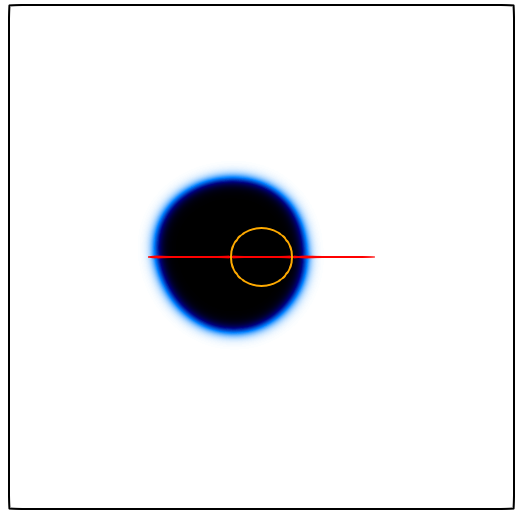}\hfill
\includegraphics[width=\scale\textwidth]{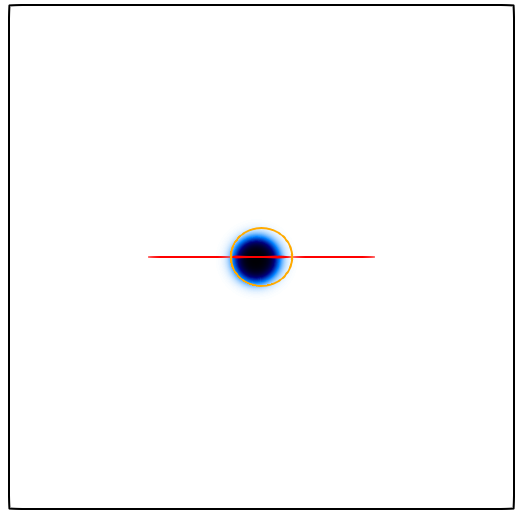}
}
\end{center}
\caption{Evolution of a droplet computed using scheme \eqref{eq:discscheme} with $\tau=5\cdot10^{-4}$. The orange line indicates the zero level set of the deterministic evolution and the red line indicates the area depicted in the line plots in Figs.~\ref{fig:linePlotExpected}, \ref{fig:linePlotPath14}, and \ref{fig:linePlotPath16}. The left column depicts the expected value, while the middle and the right column show two individual sample paths.}
\label{fig:simpleEvolution}
\end{figure}

\begin{table}
\begin{tabular}{c|cc|cc}
&\multicolumn{2}{c}{nonlinear \eqref{eq:majeeprohl}}&\multicolumn{2}{c}{augmented~SAV \eqref{eq:discscheme}}\\
$\tau$ & $\sqrt{\expected{\norm{\cdot}_{L^2\trkla{\Om}}^2\big\vert_{t=4}}}$& EOC& $\sqrt{\expected{\norm{\cdot}_{L^2\trkla{\Om}}^2\big\vert_{t=4}}}$& EOC\\
\hline
$1\cdot10^{-3}$&2.11E-02&--&2.83E-02&--\\
$2\cdot10^{-3}$&5.62E-02&1.42&7.68E-02&1.44\\
$4\cdot10^{-3}$&1.17E-01&1.05&1.59E-01&1.05\\
$1\cdot10^{-2}$&2.64E-01&0.89&3.39E-01&0.83\\
$2\cdot10^{-2}$&4.56E-01&0.79&5.70E-01&0.75
\end{tabular}
\caption{Experimental order of convergence w.r.t.~$\tau$ for the nonlinear scheme \eqref{eq:majeeprohl} and the augmented SAV method \eqref{eq:discscheme} using Rademacher random variables and $\alpha=2$. }
\label{tab:moderateEOCrandomwalk}
\end{table}

\begin{table}
\begin{tabular}{cc|ll|ll}
&&\multicolumn{2}{c}{Rademacher}&\multicolumn{2}{c}{$\mathcal{N}\trkla{0,1}$}\\
&$\tau$& nonlinear& aug.~SAV & nonlinear & aug.~SAV\\
\hline
{~\color{colorONE}$\blacksquare$}&$5\cdot10^{-4}$&$\approx60$ min& $\approx 34$ min    & $\approx 71.5$ min & $\approx 35$ min\\
{~\color{colorTWO}$\blacksquare$}&$1\cdot10^{-3}$&$\approx40$ min& $\approx 18$ min    & $\approx 47$ min & $\approx 20$ min\\
{~\color{colorTHREE}$\blacksquare$}&$2\cdot10^{-3}$&$\approx28$ min& $\approx 11$ min    & $\approx 31.5$ min & $\approx 13$ min \\
{~\color{colorFOUR}$\blacksquare$}&$4\cdot10^{-3}$&$\approx 22$ min& $\approx 7.5$ min  & $ \approx 22$ min & $\approx 8$ min\\
{~\color{colorFIVE}$\blacksquare$}&$1\cdot10^{-2}$&$\approx 12$ min& $\approx 5$ min    & $\approx 14.5$ min & $\approx 5.5$ min\\
{~\color{colorSIX}$\blacksquare$}&$2\cdot10^{-2}$&$\approx 10.5$ min& $\approx 3.5$ min& $\approx 10 $ min & $\approx 4$ min
\end{tabular}
\caption{Computational costs per path for the simulations in Sec.~\ref{subsec:moderate}.}
\label{tab:moderateComputationalCostsrandomwalk}
\end{table}

\begin{figure}
\subfloat[][$t=2$]{
\includegraphics[width=\textwidth]{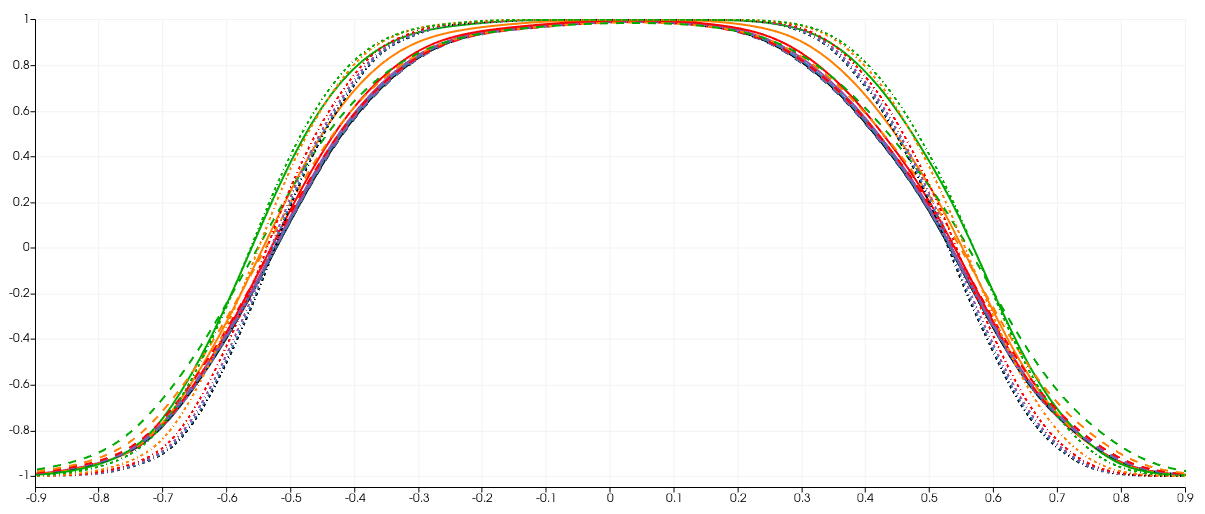}
}\\

\subfloat[][$t=4$]{
\includegraphics[width=\textwidth]{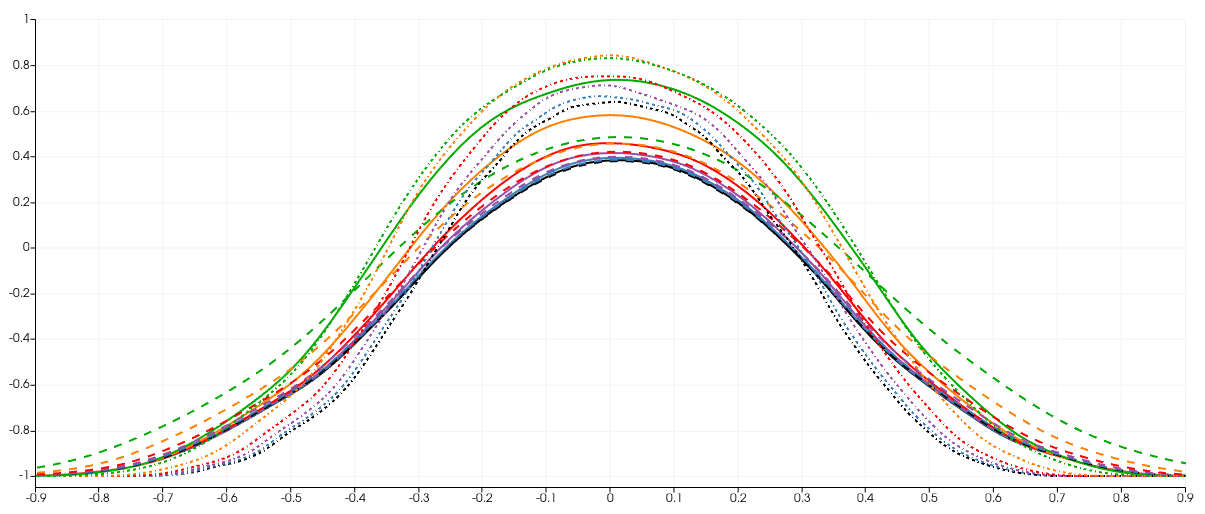}
}
\caption{Line plot of expected values (based on 350 samples) of the phase-field profile using different numerical schemes (implicit scheme \eqref{eq:majeeprohl} solid, augmented SAV \eqref{eq:discscheme} dashed, SAV \eqref{eq:SAV} dotted). Time-increments are color-coded ({~\color{colorONE}$\blacksquare$} $\tau=5\cdot10^{-4}$, {~\color{colorTWO}$\blacksquare$} $\tau=1\cdot10^{-3}$, {~\color{colorTHREE}$\blacksquare$} $\tau=2\cdot10^{-3}$,
{~\color{colorFOUR}$\blacksquare$} $\tau=4\cdot10^{-3}$,
{~\color{colorFIVE}$\blacksquare$} $\tau=1\cdot10^{-2}$, {~\color{colorSIX}$\blacksquare$} $\tau=2\cdot10^{-2}$).}
\label{fig:linePlotExpected}
\end{figure}

\begin{figure}
\includegraphics[width=\textwidth]{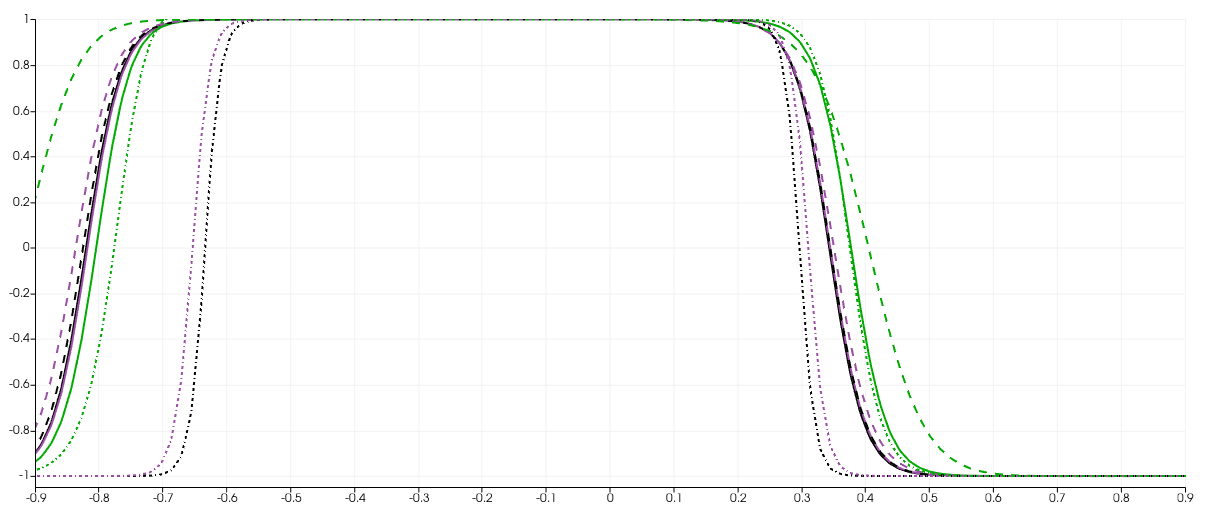}
\caption{Line plots of phase-field profiles at $t=4$ for one sample path using different numerical schemes (implicit scheme \eqref{eq:majeeprohl} solid, augmented SAV \eqref{eq:discscheme} dashed, SAV \eqref{eq:SAV} dotted) and different time increments ({~\color{colorONE}$\blacksquare$} $\tau=5\cdot10^{-4}$, {~\color{colorTHREE}$\blacksquare$} $\tau=2\cdot10^{-3}$, {~\color{colorSIX}$\blacksquare$} $\tau=2\cdot10^{-2}$).}
\label{fig:linePlotPath14}
\end{figure}

\begin{figure}
\subfloat[][$t=3$]{
\includegraphics[width=\textwidth]{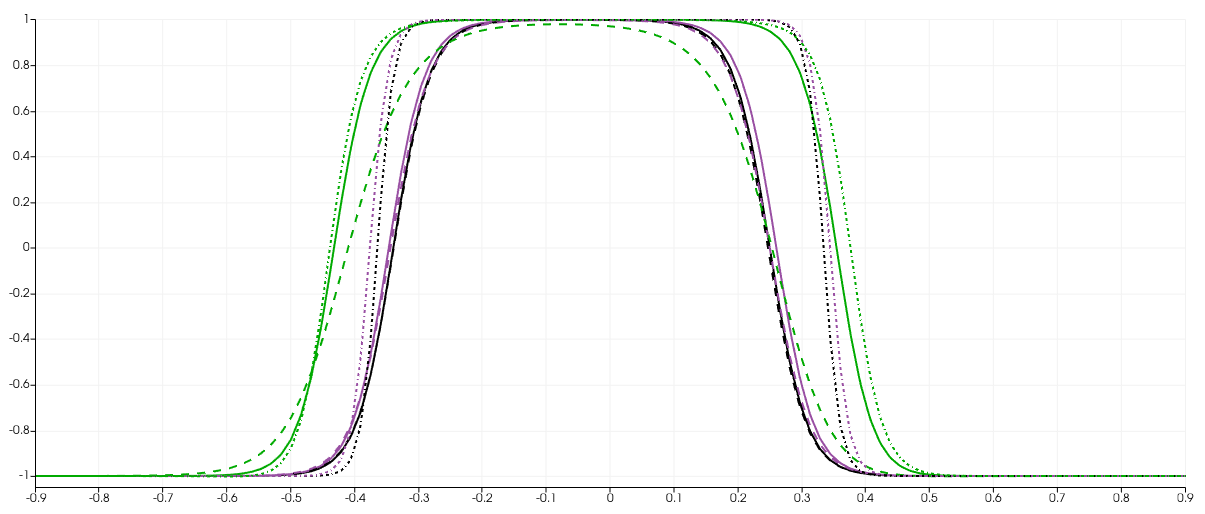}
}

\subfloat[][$t=4$]{
\includegraphics[width=\textwidth]{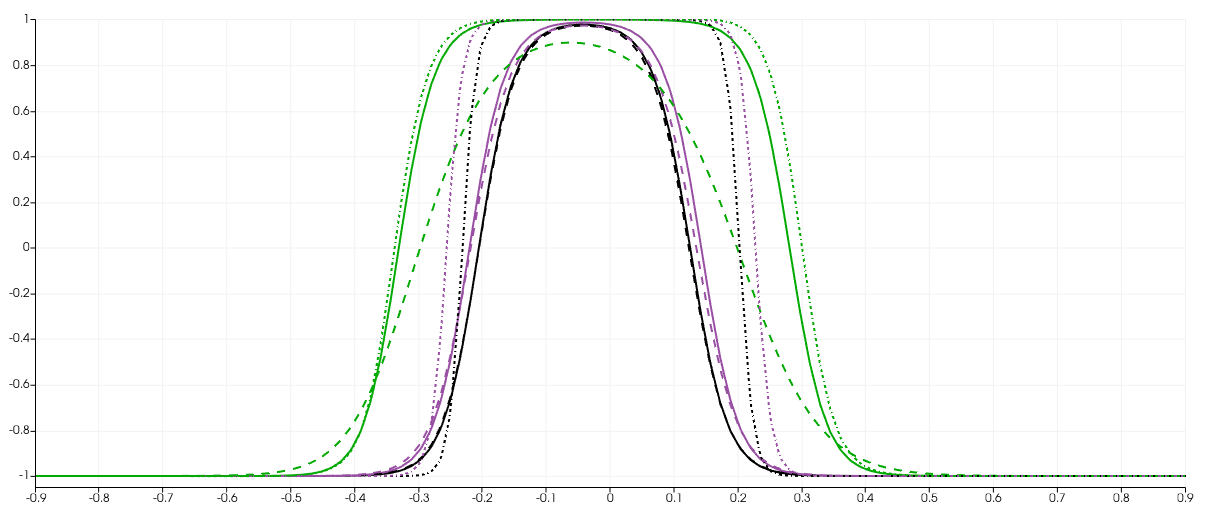}
}
\caption{Line plots of phase-field profiles for one sample path using different numerical schemes (implicit scheme \eqref{eq:majeeprohl} solid, augmented SAV \eqref{eq:discscheme} dashed, SAV \eqref{eq:SAV} dotted) and different time increments ({~\color{colorONE}$\blacksquare$} $\tau=5\cdot10^{-4}$, {~\color{colorTHREE}$\blacksquare$} $\tau=2\cdot10^{-3}$, {~\color{colorSIX}$\blacksquare$} $\tau=2\cdot10^{-2}$).}
\label{fig:linePlotPath16}
\end{figure}

\begin{table}
\begin{tabular}{c|cc|cc}
&\multicolumn{2}{c}{augmented~SAV}&\multicolumn{2}{c}{standard SAV}\\
$\tau$ & $\sqrt{\expected{\norm{\cdot}_{L^2\trkla{\Om}}^2\big\vert_{t=4}}}$& EOC& $\sqrt{\expected{\norm{\cdot}_{L^2\trkla{\Om}}^2\big\vert_{t=4}}}$& EOC\\
\hline
$5\cdot10^{-4}$& 2.05E-02 &--&   5.55E-01&--\\
$1\cdot10^{-3}$& 4.05E-02 &0.98& 5.43E-01&-0.03\\
$2\cdot10^{-3}$& 7.75E-02 &0.94& 5.22E-01&-0.06\\
$4\cdot10^{-3}$& 1.43E-01 &0.88& 4.91E-01&-0.09\\
$1\cdot10^{-2}$& 2.98E-01 &0.81& 4.00E-01&-0.22\\
$2\cdot10^{-2}$& 5.11E-01 &0.78& 1.64E-01&-1.29
\end{tabular}
\caption{Comparison between SAV methods and the nonlinear scheme \eqref{eq:majeeprohl} using Rademacher random variables and $\alpha=2$.}
\label{tab:moderateComparisonNewtonSAVrandomwalk}
\end{table}

\begin{table}
\begin{tabular}{c|cc|cc}
&\multicolumn{2}{c}{nonlinear}&\multicolumn{2}{c}{augmented~SAV}\\
$\tau$ & $\sqrt{\expected{\norm{\cdot}_{L^2\trkla{\Om}}^2\big\vert_{t=4}}}$& EOC& $\sqrt{\expected{\norm{\cdot}_{L^2\trkla{\Om}}^2\big\vert_{t=4}}}$& EOC\\
\hline
$1\cdot10^{-3}$&2.43E-02&--&3.18E-02&--\\
$2\cdot10^{-3}$&6.40E-02&1.39&7.96E-02&1.32\\
$4\cdot10^{-3}$&1.39E-01&1.12&1.51E-01&0.93\\
$1\cdot10^{-2}$&3.21E-01&0.91&2.88E-01&0.70\\
$2\cdot10^{-2}$&5.39E-01&0.75&4.31E-01&0.58\\
\end{tabular}
\caption{Experimental order of convergence w.r.t.~$\tau$ for the nonlinear scheme \eqref{eq:majeeprohl} and the augmented SAV method \eqref{eq:discscheme} using $\mathcal{N}\trkla{0,1}$-random variables and $\alpha=2$.}
\label{tab:moderateEOCnormal}
\end{table}

\begin{table}
\begin{tabular}{c|cc|cc}
&\multicolumn{2}{c}{augmented~SAV}&\multicolumn{2}{c}{standard SAV}\\
$\tau$ & $\sqrt{\expected{\norm{\cdot}_{L^2\trkla{\Om}}^2\big\vert_{t=4}}}$& EOC& $\sqrt{\expected{\norm{\cdot}_{L^2\trkla{\Om}}^2\big\vert_{t=4}}}$& EOC\\
\hline
$5\cdot10^{-4}$& 3.02E-02 &--&   3.31E-01&--\\
$1\cdot10^{-3}$& 5.40E-02 &0.84& 3.21E-01&-0.04\\
$2\cdot10^{-3}$& 9.02E-02 &0.74& 3.11E-01&-0.04\\
$4\cdot10^{-3}$& 1.41E-01 &0.64& 3.04E-01&-0.03\\
$1\cdot10^{-2}$& 2.30E-01 &0.54& 3.03E-01&-0.00\\
$2\cdot10^{-2}$& 3.21E-01 &0.48& 2.75E-01&-0.14
\end{tabular}
\caption{Comparison between SAV methods and the nonlinear scheme \eqref{eq:majeeprohl} using $\mathcal{N}\trkla{0,1}$-random variables and $\alpha=2$.}
\label{tab:moderateComparisonNewtonSAVnormal}
\end{table}

\subsection{Strong noise}\label{subsec:strong}
In this section, we discuss the performance of the different numerical schemes for larger stochastic source terms.
In particular, we set $\alpha=10$.
As this requires smaller time increments, we approximate the $\mathcal{Q}$-Wiener process using \eqref{eq:defincremet} with $\tau_{\text{min}}=5\cdot10^{-5}$ and $L=K=3$, and compare numerical solutions for 
$\tau\in\tgkla{5\cdot 10^{-5}, 1\cdot10^{-4},2\cdot10^{-4},4\cdot10^{-4},1\cdot10^{-3},2\cdot10^{-3}}$ in the time interval $\tekla{0,2}$.
Similar to the last section, we start by considering Rademacher-distributed increments.
The evolution of the expected value of the phase-field parameter is depicted in the left column of Fig.~\ref{fig:strongEvolution}.
The other columns of Fig.~\ref{fig:strongEvolution} show the droplet evolution for two independent sample paths.
Again, the orange line indicates the zero level set of the deterministic evolution.
When comparing Fig.~\ref{fig:strongEvolution} and Fig.~\ref{fig:simpleEvolution}, the difference in the droplet shapes is striking.
The less regular droplet shapes with large variations in the mean curvature indicate that in our second example the stochastic source terms are sufficiently large to counteract the regularizing effects of the mean curvature flow.\\
As in the last section, an experimental order of convergence was computed for the nonlinear scheme \eqref{eq:majeeprohl} and the augmented SAV scheme \eqref{eq:discscheme} using their respective solutions for $\tau_{\text{min}}=5\cdot10^{-5}$ as reference.
The results are collected in Tab.~\ref{tab:strongEOCrandomwalk}.
In comparison to the results presented in the previous section, the experimental order of convergence is smaller and deteriorates more for larger time increments.
Again, a pathwise comparison of the solutions obtained from the different schemes (cf.~Tab.~\ref{tab:strongComparisonNewtonSAVrandomwalk}) shows that the augmented SAV scheme still converges towards the same solution as \eqref{eq:majeeprohl} while the SAV scheme without augmentation \eqref{eq:SAV} does not.\\
In order to visualize the differences between these schemes, the profile of the expected value of the phase-field parameter along the line $(-2,0)$---$(2,0)$ (cf.~red line in Fig.~\ref{fig:strongEvolution}) is depicted in Fig.~\ref{fig:stronglinePlotExpected} for different time increments.
Again, solutions based on \eqref{eq:majeeprohl} are plotted using solid lines, solution obtained from the augmented SAV scheme \eqref{eq:discscheme} are plotted using dashed lines, and the solutions computed via \eqref{eq:SAV} are shown as dotted lines.
These profile lines show good agreement between the expected values computed via \eqref{eq:majeeprohl} and \eqref{eq:discscheme}.
The standard SAV scheme without augmentation on the other hand provides different results. 
The profiles for the individual paths depicted in the middle and right columns of Fig.~\ref{fig:strongEvolution} are shown in Figs.~\ref{fig:stronglinePlotPath103} \& \ref{fig:stronglinePlotPath104}.
These profiles indicate that using the augmented SAV scheme with time increments that are too large results in unwanted oscillations.
These oscillations can be attributed to the fact that SAV schemes are not based on the original energy $\iO\tfrac\varepsilon2\tabs{\nabla\phi}^2+\tfrac{1}{4\varepsilon} \trkla{1-\phi^2}^2\dx$ but only on the modified energy $\iO\tfrac{\varepsilon}2\tabs{\nabla\phi}^2\dx +\tfrac1\varepsilon\tabs{r}^2$.
Hence, the main mechanism to keep $\phi$ close to the interval $\tekla{-1,+1}$ is severely weakened.
For smaller time increments however, the profiles computed with the augmented SAV scheme align with the ones obtained via \eqref{eq:majeeprohl}.
The phase-field profiles computed with \eqref{eq:SAV} again predict a sharper transition between the phases.
As the stochastic source terms are placed in the interfacial area, the solutions to \eqref{eq:SAV} are less susceptible to the noise.
Hence, there are less oscillations for large time increments but there is also no convergence towards the correct solution for small increments.
A simulation of the path depicted in the right column of Fig.~\ref{fig:strongEvolution} using \eqref{eq:SAV} with the smallest considered size of the time increment $\tau_{\text{min}}=5\cdot10^{-5}$ is depicted in Fig.~\ref{fig:SAVsimple}.
Here, it is noticeable that the interfacial area in Fig.~\ref{fig:SAVsimple} is almost not visible and the predicted evolution of the droplet deviates significantly from the one computed with the other schemes.\\
Concerning the computational costs for the different schemes, the data collected in Tab.~\ref{tab:strongComputationalCosts} suggest that using the augmented SAV scheme instead of the nonlinear approach \eqref{eq:majeeprohl} with Newton's method leads to a speed-up by more than factor 2. \\
When using $\mathcal{N}\trkla{0,1}$-distributed random variables for the increments in \eqref{eq:defincremet} we obtain similar results.
As it can be seen in Tab.~\ref{tab:strongEOCnormal}, the deviations from the reference solutions computed with $\tau_{\text{min}}=5\cdot10^{-5}$ are larger for the augmented SAV scheme.
However, as Tab.~\ref{tab:strongComparisonNewtonSAVnormal} shows, the deviations between the solutions to the nonlinear scheme \eqref{eq:majeeprohl} and the ones to the augmented SAV scheme vanish for $\tau\rightarrow0$, which indicates convergence towards the correct solution.
Solutions computed via \eqref{eq:SAV}, on the other hand, do not converge towards the same limit.

\begin{figure}
\begin{center}
\newcommand{\scale}{0.27}
\subfloat[][$t=0.5$]{
\includegraphics[width=\scale\textwidth]{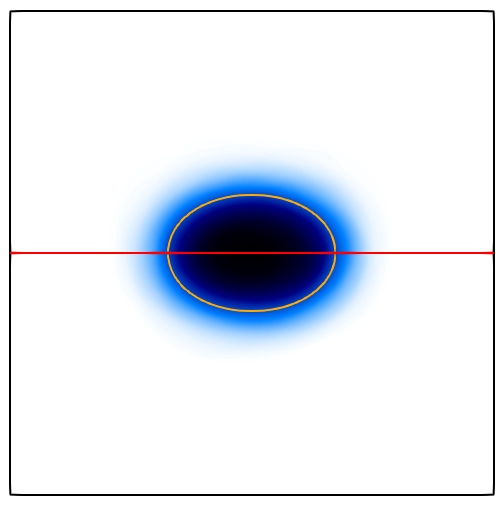}\hfill
\includegraphics[width=\scale\textwidth]{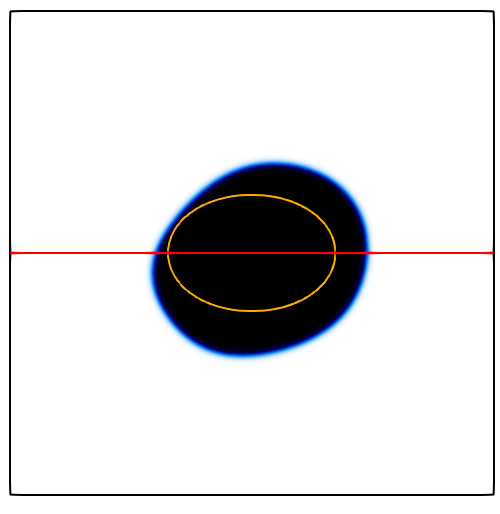}\hfill
\includegraphics[width=\scale\textwidth]{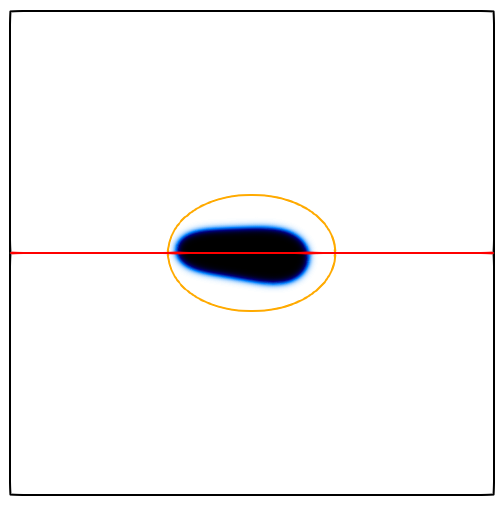}
}\\

\subfloat[][$t=1.0$]{
\includegraphics[width=\scale\textwidth]{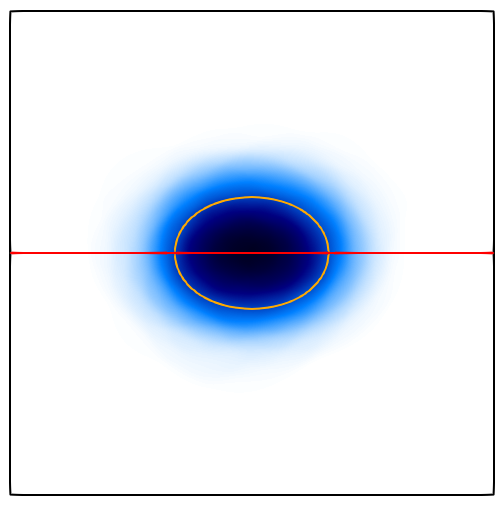}\hfill
\includegraphics[width=\scale\textwidth]{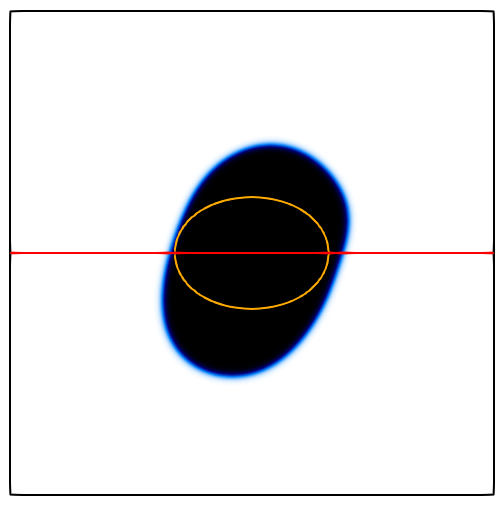}\hfill
\includegraphics[width=\scale\textwidth]{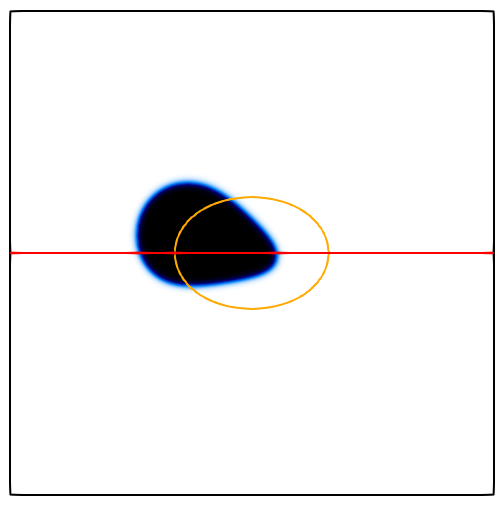}
}\\

\subfloat[][$t=1.5$]{
\includegraphics[width=\scale\textwidth]{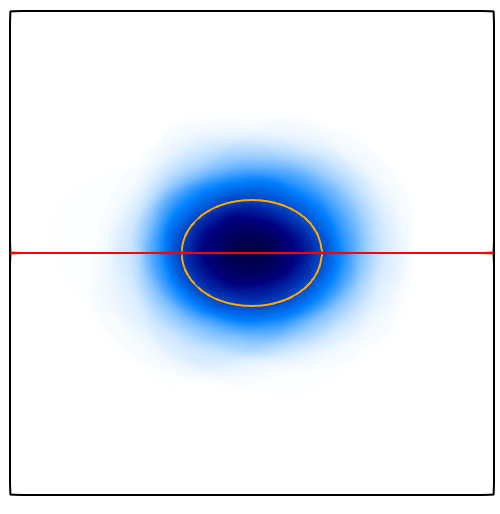}\hfill
\includegraphics[width=\scale\textwidth]{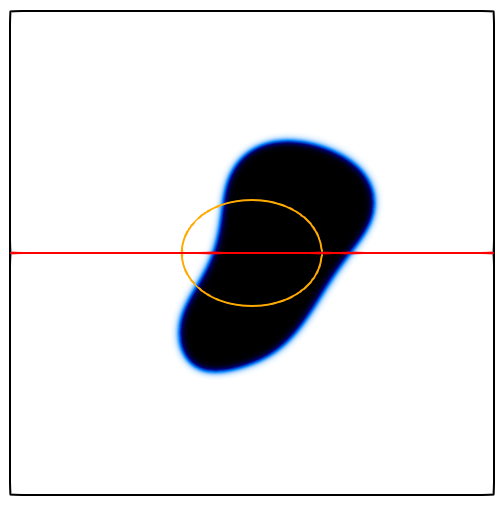}\hfill
\includegraphics[width=\scale\textwidth]{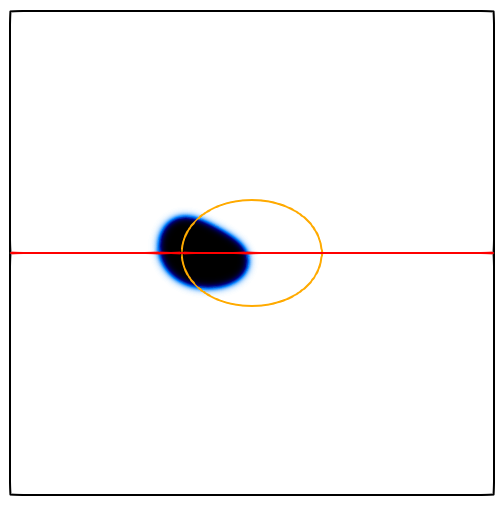}
}\\

\subfloat[][$t=2.0$]{
\includegraphics[width=\scale\textwidth]{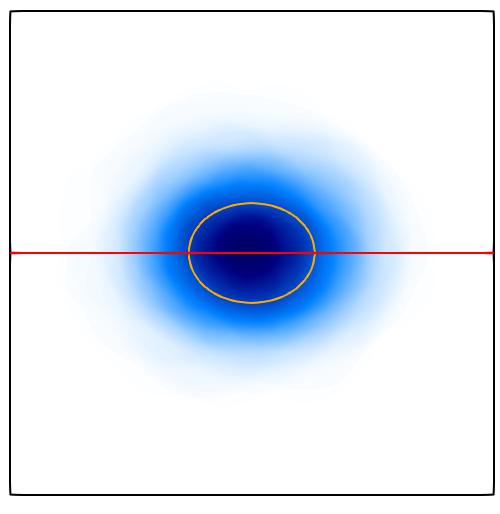}\hfill
\includegraphics[width=\scale\textwidth]{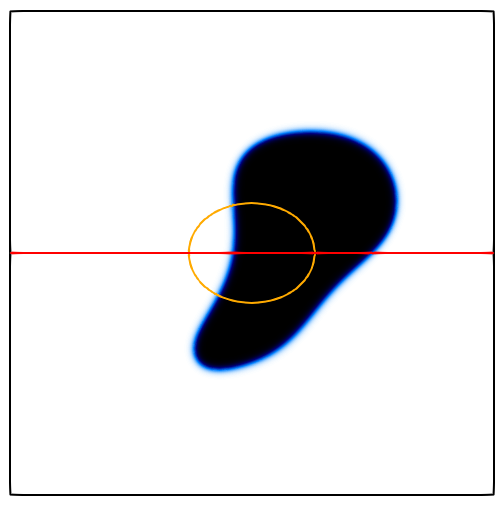}\hfill
\includegraphics[width=\scale\textwidth]{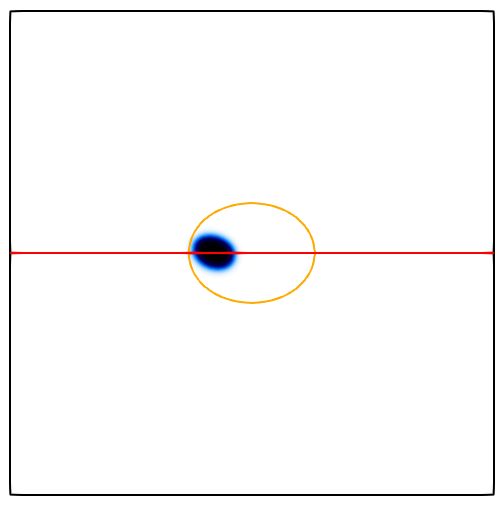}
}
\end{center}
\caption{Evolution of a droplet computed using scheme \eqref{eq:discscheme} with $\tau=5\cdot10^{-5}$. The orange line indicates the zero level set of the deterministic evolution and the red line indicates the area depicted in the line plots in Figs.~\ref{fig:stronglinePlotExpected}, \ref{fig:stronglinePlotPath103}, and \ref{fig:stronglinePlotPath104}. The left column depicts the expected value, while the middle and the right column show two individual sample paths.}
\label{fig:strongEvolution}
\end{figure}

\begin{table}
\begin{tabular}{c|cc|cc}
&\multicolumn{2}{c}{nonlinear}&\multicolumn{2}{c}{augmented~SAV}\\
$\tau$ & $\sqrt{\expected{\norm{\cdot}_{L^2\trkla{\Om}}^2\big\vert_{t=2}}}$& EOC& $\sqrt{\expected{\norm{\cdot}_{L^2\trkla{\Om}}^2\big\vert_{t=2}}}$& EOC\\
\hline
$1\cdot10^{-4}$&1.09E-01&--&2.57E-01&--\\
$2\cdot10^{-4}$&1.91E-01&0.80&5.30E-01&1.04\\
$4\cdot10^{-4}$&2.68E-01&0.49&7.73E-01&0.54\\
$1\cdot10^{-3}$&4.04E-01&0.45&1.04E-00&0.33\\
$2\cdot10^{-3}$&5.20E-01&0.36&1.30E-00&0.31
\end{tabular}
\caption{Experimental order of convergence w.r.t.~$\tau$ for the nonlinear scheme \eqref{eq:majeeprohl} and the augmented SAV method \eqref{eq:discscheme} using Rademacher random variables and $\alpha=10$.}
\label{tab:strongEOCrandomwalk}
\end{table}

\begin{table}
\begin{tabular}{c|cc|cc}
&\multicolumn{2}{c}{augmented~SAV}&\multicolumn{2}{c}{standard SAV}\\
$\tau$ & $\sqrt{\expected{\norm{\cdot}_{L^2\trkla{\Om}}^2\big\vert_{t=2}}}$& EOC& $\sqrt{\expected{\norm{\cdot}_{L^2\trkla{\Om}}^2\big\vert_{t=2}}}$& EOC\\
\hline
$5\cdot10^{-5}$& 1.86E-01 &--&   1.53E-00&--\\
$1\cdot10^{-4}$& 3.53E-01& 0.85& 1.51E-00&-0.02\\
$2\cdot10^{-4}$& 5.98E-01& 0.76& 1.49E-00&-0.02\\
$4\cdot10^{-4}$& 7.81E-01& 0.39& 1.45E-00&-0.04\\
$1\cdot10^{-3}$& 1.03E-00& 0.30& 1.36E-00&-0.07\\
$2\cdot10^{-3}$& 1.26E-00& 0.30& 1.20E-00&-0.17
\end{tabular}
\caption{Comparison between SAV methods and the nonlinear scheme \eqref{eq:majeeprohl} using Rademacher random variables and $\alpha=10$.}
\label{tab:strongComparisonNewtonSAVrandomwalk}
\end{table}

\begin{table}
\begin{tabular}{cc|ll|ll}
&&\multicolumn{2}{c}{Rademacher}&\multicolumn{2}{c}{$\mathcal{N}\trkla{0,1}$}\\
&$\tau$& nonlinear& aug.~SAV & nonlinear & aug.~SAV\\
\hline
{~\color{colorONE}$\blacksquare$}&$5\cdot10^{-5}$&$\approx216$ min& $\approx 86$ min    & $\approx 214.5$ min & $\approx 84.5$ min\\
{~\color{colorTWO}$\blacksquare$}&$1\cdot10^{-4}$&$\approx125$ min& $\approx 51$ min    & $\approx 120$ min & $\approx 51.5$ min\\
{~\color{colorTHREE}$\blacksquare$}&$2\cdot10^{-4}$&$\approx73$ min& $\approx 28$ min    & $\approx 75$ min & $\approx 28$ min \\
{~\color{colorFOUR}$\blacksquare$}&$4\cdot10^{-4}$&$\approx 45$ min& $\approx 17$ min  & $ \approx 45$ min & $\approx 17$ min\\
{~\color{colorFIVE}$\blacksquare$}&$1\cdot10^{-3}$&$\approx 28$ min& $\approx 9.5$ min    & $\approx 28.5$ min & $\approx 9$ min\\
{~\color{colorSIX}$\blacksquare$}&$2\cdot10^{-3}$&$\approx 20$ min& $\approx 6.5$ min& $\approx 20 $ min & $\approx 6$ min
\end{tabular}
\caption{Computational costs per path for the simulations in Sec.~\ref{subsec:strong}.}
\label{tab:strongComputationalCosts}
\end{table}

\begin{figure}
\subfloat[][$t=1$]{
\includegraphics[width=\textwidth]{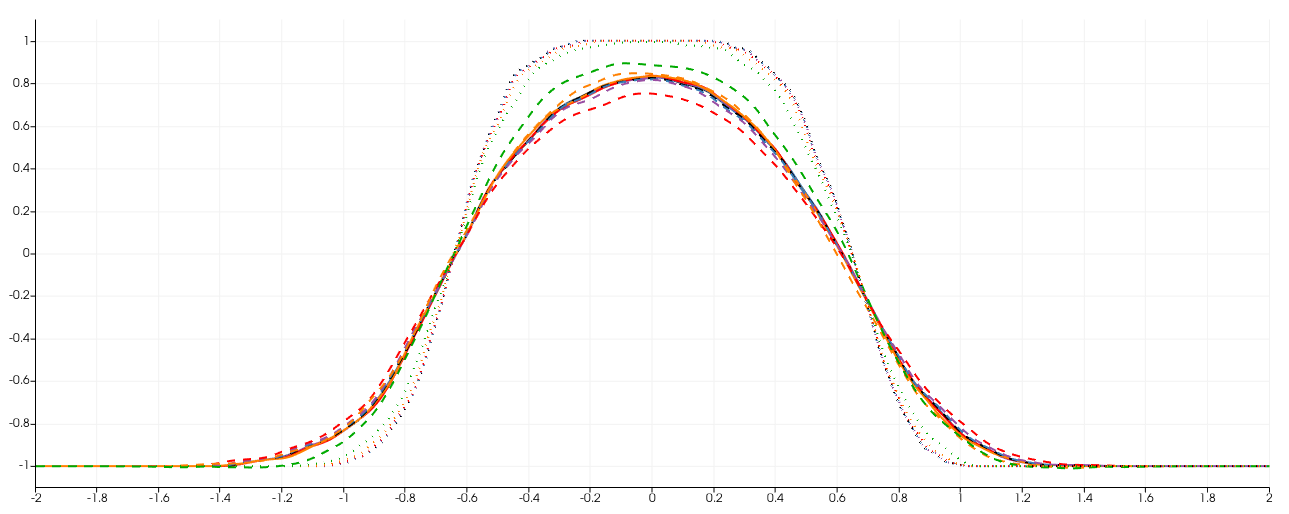}}\\

\subfloat[][$t=2$]{
\includegraphics[width=\textwidth]{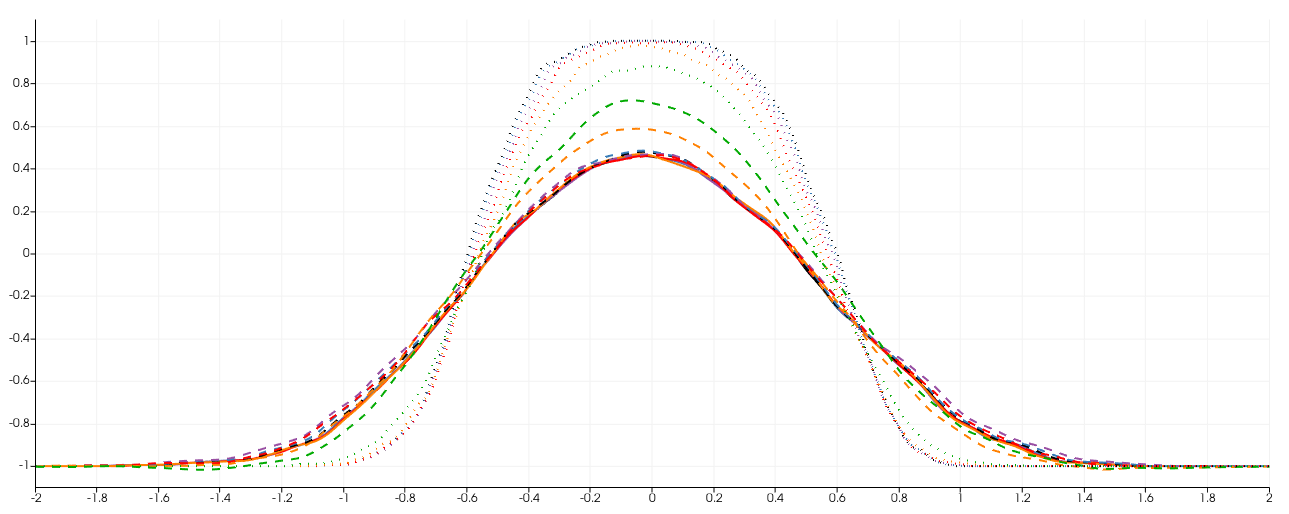}
}
\caption{Line plot of expected values (based on 350 samples) of the phase-field profile using different numerical schemes (implicit scheme \eqref{eq:majeeprohl} solid, augmented SAV \eqref{eq:discscheme} dashed, SAV \eqref{eq:SAV} dotted). Time-increments are color-coded:{~\color{colorONE}$\blacksquare$} $\tau=5\cdot10^{-5}$, {~\color{colorTWO}$\blacksquare$} $\tau=1\cdot10^{-4}$, {~\color{colorTHREE}$\blacksquare$} $\tau=2\cdot10^{-4}$,
{~\color{colorFOUR}$\blacksquare$} $\tau=4\cdot10^{-4}$,
{~\color{colorFIVE}$\blacksquare$} $\tau=1\cdot10^{-3}$, {~\color{colorSIX}$\blacksquare$} $\tau=2\cdot10^{-3}$.}
\label{fig:stronglinePlotExpected}
\end{figure}

\begin{figure}
\subfloat[][$t=1$]{
\includegraphics[width=\textwidth]{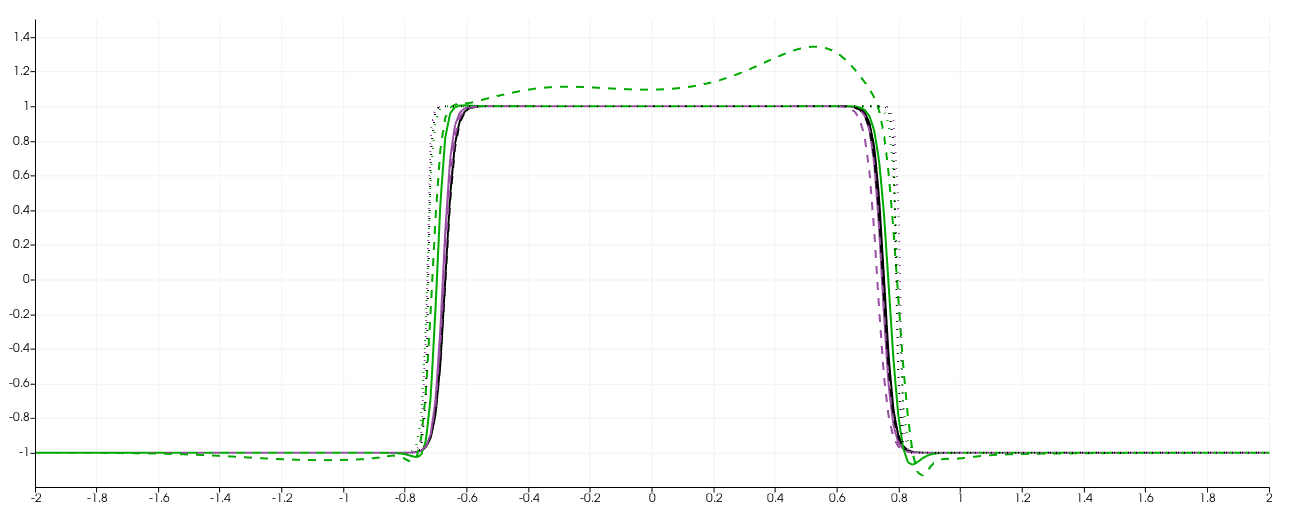}}\\

\subfloat[][$t=2$]{
\includegraphics[width=\textwidth]{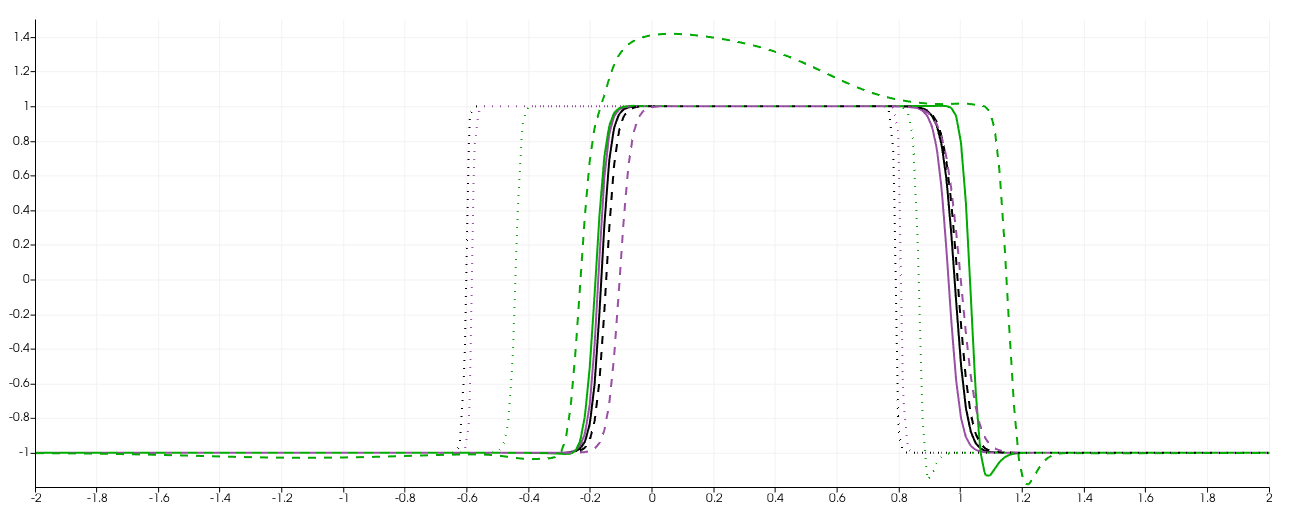}}
\caption{Line plots of phase-field profiles for one sample path using different numerical schemes (implicit scheme \eqref{eq:majeeprohl} solid, augmented SAV \eqref{eq:discscheme} dashed, SAV \eqref{eq:SAV} dotted) and different time increments ({~\color{colorONE}$\blacksquare$} $\tau=5\cdot10^{-5}$, {~\color{colorTHREE}$\blacksquare$} $\tau=2\cdot10^{-4}$, {~\color{colorSIX}$\blacksquare$} $\tau=2\cdot10^{-3}$).}
\label{fig:stronglinePlotPath103}
\end{figure}

\begin{figure}
\includegraphics[width=\textwidth]{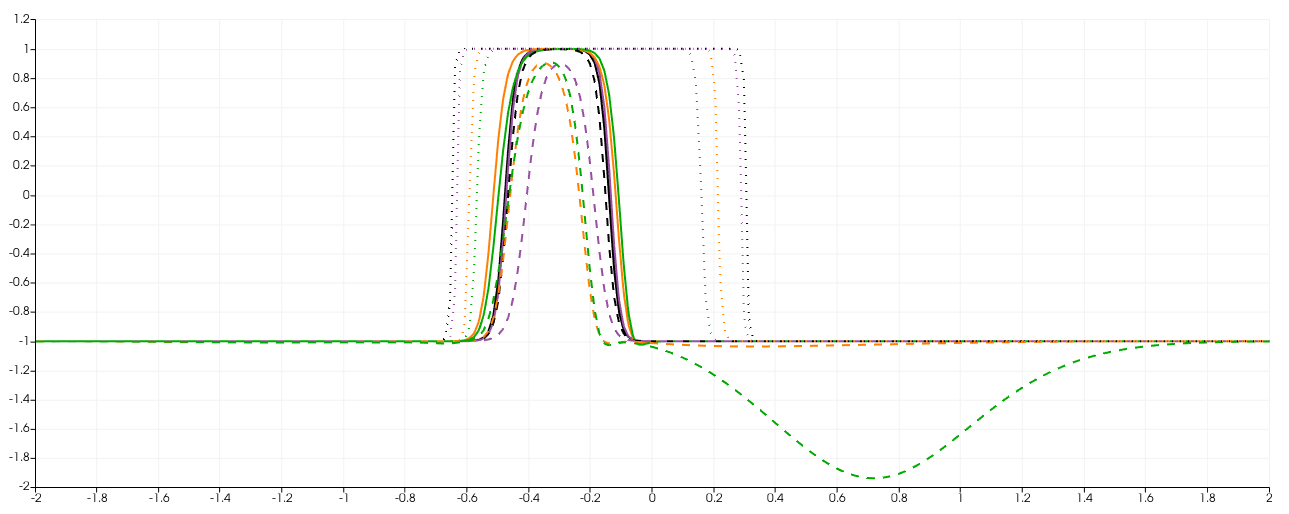}
\caption{Line plots of phase-field profiles for one sample path at $t=2$ using different numerical schemes (implicit scheme \eqref{eq:majeeprohl} solid, augmented SAV \eqref{eq:discscheme} dashed, SAV \eqref{eq:SAV} dotted) and different time increments ({~\color{colorONE}$\blacksquare$} $\tau=5\cdot10^{-5}$, {~\color{colorTHREE}$\blacksquare$} $\tau=2\cdot10^{-4}$, {~\color{colorFIVE}$\blacksquare$} $\tau=1\cdot10^{-3}$, {~\color{colorSIX}$\blacksquare$} $\tau=2\cdot10^{-3}$).}
\label{fig:stronglinePlotPath104}
\end{figure}

\begin{figure}
\begin{center}
\newcommand{\scale}{0.24}
\subfloat[][$t=0.5$]{
\includegraphics[width=\scale\textwidth]{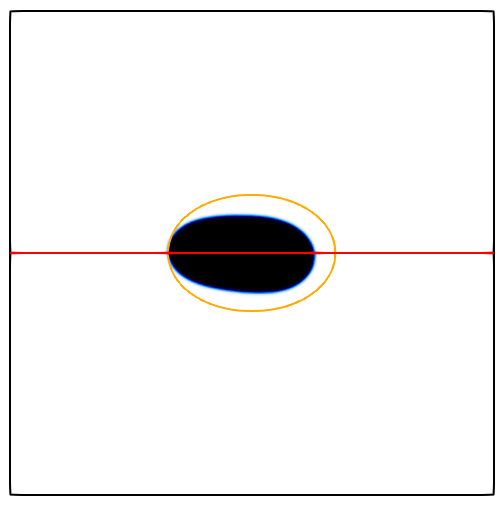}
}
\subfloat[][$t=1.0$]{
\includegraphics[width=\scale\textwidth]{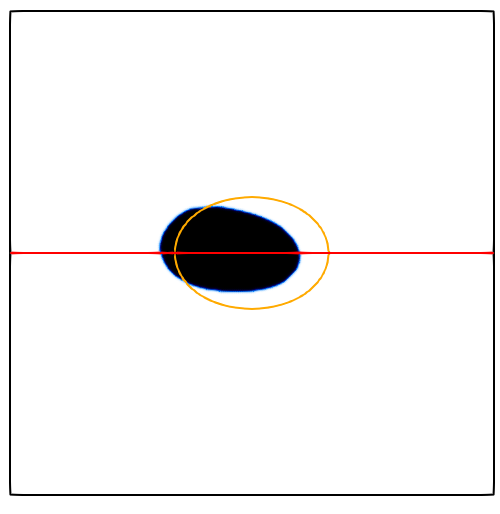}
}
\subfloat[][$t=1.5$]{
\includegraphics[width=\scale\textwidth]{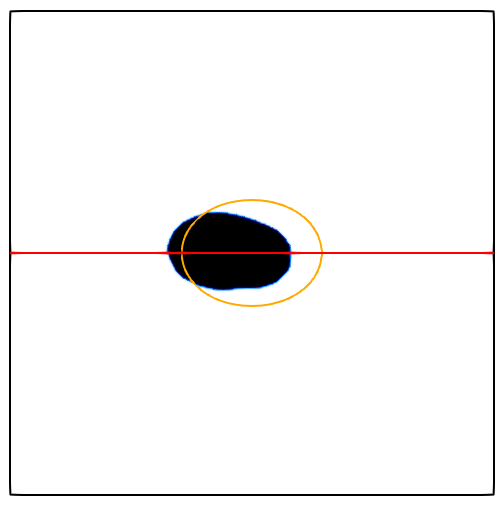}
}
\subfloat[][$t=2.0$]{
\includegraphics[width=\scale\textwidth]{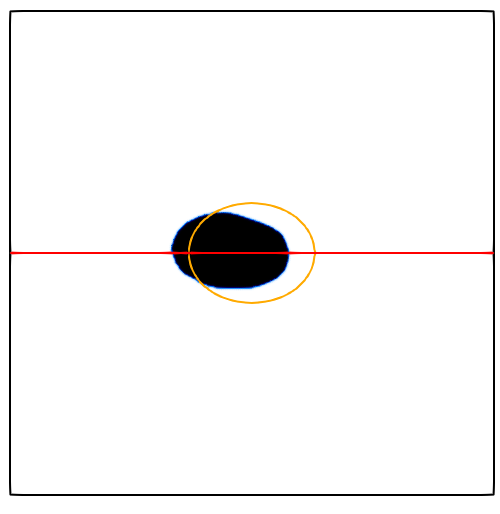}
}
\end{center}
\caption{Evolution of a droplet computed using scheme \eqref{eq:SAV} with $\tau=5\cdot10^{-5}$. The depicted path corresponds to the right column of Fig.~\ref{fig:strongEvolution}.}
\label{fig:SAVsimple}
\end{figure}

\begin{table}
\begin{tabular}{c|cc|cc}
&\multicolumn{2}{c}{nonlinear}&\multicolumn{2}{c}{augmented~SAV}\\
$\tau$ & $\sqrt{\expected{\norm{\cdot}_{L^2\trkla{\Om}}^2\big\vert_{t=2}}}$& EOC& $\sqrt{\expected{\norm{\cdot}_{L^2\trkla{\Om}}^2\big\vert_{t=2}}}$& EOC\\
\hline
$1\cdot10^{-4}$& 1.39E-01&--& 2.69E-01&--\\
$2\cdot10^{-4}$& 2.03E-01& 0.55& 5.43E-01& 1.01\\
$4\cdot10^{-4}$& 2.93E-01& 0.53& 7.99E-01& 0.56\\
$1\cdot10^{-3}$& 4.18E-01& 0.39& 1.05E-00& 0.30\\
$2\cdot10^{-3}$& 5.05E-01& 0.27& 1.30E-00& 0.32
\end{tabular}
\caption{Experimental order of convergence w.r.t.~$\tau$ for the nonlinear scheme \eqref{eq:majeeprohl} and the augmented SAV method \eqref{eq:discscheme} using $\mathcal{N}\trkla{0,1}$-random variables and $\alpha=10$.}
\label{tab:strongEOCnormal}
\end{table}

\begin{table}
\begin{tabular}{c|cc|cc}
&\multicolumn{2}{c}{augmented~SAV}&\multicolumn{2}{c}{standard SAV}\\
$\tau$ & $\sqrt{\expected{\norm{\cdot}_{L^2\trkla{\Om}}^2\big\vert_{t=2}}}$& EOC& $\sqrt{\expected{\norm{\cdot}_{L^2\trkla{\Om}}^2\big\vert_{t=2}}}$& EOC\\
\hline
$5\cdot10^{-5}$& 2.29E-01 &--&   1.14E-00&--\\
$1\cdot10^{-4}$& 3.79E-01& 0.73& 1.52E-00&0.42\\
$2\cdot10^{-4}$& 6.20E-01& 0.71& 1.50E-00&-0.03\\
$4\cdot10^{-4}$& 8.10E-01& 0.39& 1.46E-00&-0.04\\
$1\cdot10^{-3}$& 1.03E-00& 0.26& 1.37E-00&-0.07\\
$2\cdot10^{-3}$& 1.27E-00& 0.30& 1.21E-00&-0.18
\end{tabular}
\caption{Comparison between SAV methods and the nonlinear scheme \eqref{eq:majeeprohl} using $\mathcal{N}\trkla{0,1}$-random variables and $\alpha=10$.}
\label{tab:strongComparisonNewtonSAVnormal}
\end{table}

\begin{appendix}

\section{Proof of Lemma \ref{lem:BDG}}\label{sec:prelim}
\begin{proof}[Proof of Lemma \ref{lem:BDG}]
We start by establishing \eqref{eq:bdg1} for $p\in2\mathds{N}$.
The general case for $p\in[1,\infty)$ then follows by Hölder's inequality.
Introducing a sequence of independent Rademacher random variables $\trkla{\widehat{\epsilon}_k}_k$ on a new probability space $\trkla{\widehat{\Omega},\widehat{\mathcal{A}},\widehat{\Prob}}$, we note that $\trkla{\xi_k^{n,\tau}}_k$ and $\trkla{\widehat{\epsilon}_k \xi_k^{n,\tau}}$ are equal in distribution and obtain
\begin{align}
\expected{\norm{\Phi\h\no\sinc{n}}_H^p}=\tau^{p/2}\int_\Omega\norm{ \sum_{k\in\mathds{Z}\h}\lambda_k\Phi\h\no\g{k}\xi_k^{n,\tau}}_H^p\mathrm{d}\Prob=\tau^{p/2}\int_\Omega\int_{\widehat{\Omega}}\norm{ \sum_{k\in\mathds{Z}\h}\lambda_k\Phi\h\no\g{k}\xi_k^{n,\tau}\widehat{\epsilon}_k}_H^p\mathrm{d}\widehat{\Prob}\mathrm{d}\Prob\,.
\end{align}
Applying the Kahane--Khintchine inequality (see e.g.~\cite{pisier_2016,Hytonen2016}) pathwise for all $\omega\in\Omega$ yields the estimate
\begin{multline}
\expected{\norm{\Phi\h\no\sinc{n}}_H^p}\leq C_p\tau^{p/2}\int_\Omega \rkla{\int_{\widehat{\Omega}}\norm{\sum_{k\in\mathds{Z}\h}\lambda_k\Phi\h\no\g{k}\xi_k^{n,\tau}\widehat{\epsilon}_k}_H^2\mathrm{d}\widehat{\Prob}}^{p/2}\mathrm{d}\Prob\\
=C_p\tau^{p/2}\int_\Omega \rkla{\sum_{k\in\mathds{Z}\h}\norm{\lambda_k\Phi\h\no\g{k}}_H^2\abs{\xi_k^{n,\tau}}^2}^{p/2}\mathrm{d}\Prob=:\trkla{\star}
\end{multline}
with a constant $C_p$ depending on $p$, but not on $h$ or $\tabs{\mathds{Z}\h}$, respectively.
As by assumption $p/2\in\mathds{N}$, we can introduce a multi-index $\alpha\in\tgkla{0,\ldots,p/2}^{\tabs{\mathds{Z}\h}}$ and obtain due to the polynomial theorem
\begin{multline}
\trkla{\star}=C_p\tau^{p/2}\sum_{\tabs{\alpha}=p/2} K_\alpha\expected{\prod_{k\in\mathds{Z}\h}\norm{\lambda_k\Phi\h\no\g{k}}_H^{2\alpha_k}\abs{\xi_k^{n,\tau}}^{2\alpha_k}}\\
 \leq \overline{C}_p\tau^{p/2}\sum_{\tabs{\alpha}=p/2} K_\alpha\expected{\prod_{k\in\mathds{Z}\h}\norm{\lambda_k\Phi\h\no\g{k}}_H^{2\alpha_k}}=\overline{C}_p\tau^{p/2}\expected{\rkla{\sum_{k\in\mathds{Z}\h}\norm{\lambda_k\Phi\h\no\g{k}}_H^2}^{p/2}}\,
\end{multline}
with $K_\alpha$ being the multinomial coefficients.
Here we used \ref{item:randomvariables} to deduce $\displaystyle\expected{\prod_{k\in\mathds{Z}\h}\abs{\xi_k^{n,\tau}}^{2\alpha_k}}\leq C_p$ for all $\tabs{\alpha}=p/2$ with a constant $C_p$ independent of $h$.\\
To establish \eqref{eq:bdg2}, we adapt the ideas of Lemma 2.8 in \cite{Ondrejat2022} and obtain due to Doob's maximal inequality and the Burkholder--Rosenthal inequality (cf.~\cite[Theorem 5.50]{pisier_2016})
\begin{align}\label{eq:rosenthal}
\begin{split}
\expected{ \max_{0\leq l\leq m} \norm{\sum_{n=1}^l\Phi\h\no\sinc{n}}_H^p}\leq&\, C\expected{\rkla{\sum_{n=1}^m\expected{\norm{\Phi\h\no\sinc{n}}_H^2\,\big\vert\,\mathcal{F}_{t\no}^\tau}}^{p/2}}\\
&+C\expected{\max_{1\leq n\leq m}\norm{\Phi\h\no\sinc{n}}_H^p}\,,
\end{split}
\end{align}
where the constant $C$ depends on $p$, but not on $h$.
Due to \ref{item:increments} and \ref{item:randomvariables}, we obtain for the first term on the right-hand side of \eqref{eq:rosenthal}
\begin{align}
\begin{split}
&\expected{\rkla{\sum_{n=1}^m\expected{\norm{\Phi\h\no\sinc{n}}_H^2\,\big\vert\,\mathcal{F}_{t\no}^\tau}}^{p/2}}\\
&= \expected{\rkla{\sum_{n=1}^m\tau \expected{\sum_{k,l\in\Zh}\rkla{\lambda_k\Phi\h\no\g{k},\lambda_l\Phi\h\no\g{l}}_H\xi_k^{n,\tau}\xi_l^{n,\tau}\,\big\vert\,\mathcal{F}_{t\no}^\tau}}^{p/2}}\\
&=\expected{\rkla{\sum_{n=1}^m\tau \sum_{k\in\Zh}\norm{\lambda_k\Phi\h\no\g{k}}_{H}^2}^{p/2}}\leq \expected{\trkla{m\tau}^{(p-2)/2}\sum_{n=1}^m\tau\rkla{\sum_{k\in\Zh}\norm{\lambda_k\Phi\h\no\g{k}}_H^2}^{p/2}}\,.
\end{split}
\end{align}
Applying \eqref{eq:bdg1} to the second term, we obtain
\begin{align}
\begin{split}
\expected{\max_{1\leq n\leq m}\norm{\Phi\h\no\sinc{n}}_H^p}\leq C\sum_{n=1}^m\tau^{p/2}\expected{\rkla{\sum_{k\in\Zh}\norm{\lambda_k\Phi\h\no\g{k}}_H^2}^{p/2}}\,,
\end{split}
\end{align}
which provides \eqref{eq:bdg2}.

\end{proof}

\end{appendix}

%\begin{appendix}
%\input{appendix.tex}
%\end{appendix}

\bibliographystyle{amsplain}
%\bibliography{../../complete}
\providecommand{\bysame}{\leavevmode\hbox to3em{\hrulefill}\thinspace}
\providecommand{\MR}{\relax\ifhmode\unskip\space\fi MR }
% \MRhref is called by the amsart/book/proc definition of \MR.
\providecommand{\MRhref}[2]{%
  \href{http://www.ams.org/mathscinet-getitem?mr=#1}{#2}
}
\providecommand{\href}[2]{#2}

\end{document}